\documentclass[11pt]{amsart}
\usepackage{times}

\usepackage{bbm}%mathbb for lowercase k

\usepackage[hang]{footmisc} %for customization of footnotes, has to be loaded BEFORE hyperref
													  %for referencing to footnote to work properly

\usepackage{varioref}
\usepackage[bookmarks=false]{hyperref}
\usepackage{cleveref}

\usepackage{amsthm, amssymb, amsmath, a4wide}
\usepackage[inline]{enumitem}
\usepackage{graphicx} %use the 'final' option to overwrite the global

\usepackage{tikz}
\usetikzlibrary{calc}
\usepackage{subfig}
\usepackage{pgfplots}
%\pgfplotsset{compat=1.14}
%\usetikzlibrary{positioning}

\usepackage{footnote} %to have footnotes from inside parbox

%\usepackage[backend=bibtex,
%style=alphabetic,
%%citestyle=authoryear
%]{biblatex}
\usepackage{bm} %for better boldmath
\usepackage{cancel} %\cancel{0}
\usepackage{dsfont} %for \mathds{1}
\usepackage{mathtools} %for \mathclap which reduces spacing between the sum operator and summand when the sum indices are long texts.
\usepackage{mathrsfs}
%\bibliography{../../utilities/bibi}
%\bibliography{bibi}

\usepackage{amsfonts}

\usepackage{xcolor}
\hypersetup{
    colorlinks,
    linkcolor={red!50!black},
    citecolor={blue!50!black},
    urlcolor={blue!80!black}
}

\author{Pinaki Mondal}
%\address{University of the Bahamas}
\email{pinakio@gmail.com}

%\title[How may zeroes? Part II]{
%	How many zeroes? Part II: excess intersections
%}

\title[Breaking the symmetry in excess intersection]{
	Breaking the symmetry in excess intersection and counting solutions of systems of polynomials
}
%\title[Breaking the symmetry in excess intersection]{
%	Breaking the symmetry in excess intersection and an application to counting solutions of systems of polynomials
%}
%\title[Asymmetric intersection multiplicity]{
%	An asymmetric approach to intersection multiplicities and counting solutions of systems of polynomials
%}

%% ***   NOTE   ***
%% Put here all other formatting commands that belong in the preamble.

\setcounter{secnumdepth}{4}
%% For example, to list only down to subsections in table of contents
%% (-1=part, 0=chapter, 1=section, 2=subsection, 3=subsubsection,
%%  4=paragraph, 5=subparagraph, 6=subsubparagraph).
%
\setcounter{tocdepth}{1}

%%%%%%%%%%%%      MAIN  DOCUMENT      %%%%%%%%%%%%
%All the makeatletters should go here:
\makeatletter

\newcommand{\Rmnum}[1]{\expandafter\@slowromancap\romannumeral #1@}
\makeatother

%\renewcommand{\qedsymbol}{$\Delta$}

%\newcommand{\eqref}[1]{(\oldref{#1})}

 %to indent the paragraph inside an item of asparaenum

\DeclareMathOperator\ord{ord}

\DeclareMathOperator\sing{Sing}

\DeclareMathOperator\supp{Supp}
\DeclareMathOperator\vol{Vol}

\newcommand{\scrA}{\ensuremath{\mathcal{A}}}

\newcommand{\scrE}{\ensuremath{\mathcal{E}}}

\newcommand{\scrH}{\ensuremath{\mathcal{H}}}
\newcommand{\scrI}{\ensuremath{\mathcal{I}}}

\newcommand{\scrL}{\ensuremath{\mathcal{L}}}
\newcommand{\scrM}{\ensuremath{\mathcal{M}}}

\newcommand{\scrO}{\ensuremath{\mathcal{O}}}
\newcommand{\scrP}{\ensuremath{\mathcal{P}}}
\newcommand{\scrQ}{\ensuremath{\mathcal{Q}}}

\newcommand{\scrS}{\ensuremath{\mathcal{S}}}

\newcommand{\scrV}{\ensuremath{\mathcal{V}}}

\newcommand{\scrZ}{\ensuremath{\mathcal{Z}}}

\newcommand{\kk}{\ensuremath{\mathbb{K}}}

\newcommand{\pp}{\ensuremath{\mathbb{P}}}
\newcommand{\qq}{\ensuremath{\mathbb{Q}}}
\newcommand{\rr}{\ensuremath{\mathbb{R}}}

\newcommand{\zz}{\ensuremath{\mathbb{Z}}}

\newcommand{\ppp}{\ensuremath{\mathfrak{p}}}
\newcommand{\qqq}{\ensuremath{\mathfrak{q}}}

%objects - geometric
%\newcommand{\affine}[2]{\ensuremath{\mathbb{A}^{#1}(#2)}}
%\newcommand{\ank}{\affine{n}{\kk}}
%\newcommand{\aNk}{\affine{N}{\kk}}

%\newcommand{\pLk}{\projective{L}{\kk}}
%\newcommand{\pnk}{\projective{n}{\kk}}
%\newcommand{\pNk}{\projective{N}{\kk}}
%\newcommand{\projective}[2]{\ensuremath{\pp^{#1}(#2)}}
%\newcommand{\torus}{\ensuremath{\mathbb{T}}}
%\newcommand{\znnzero}[1]{\zzero^{#1}}
%\newcommand{\znzero}{\znnzero{n}}
%\newcommand{\znnplus}[1]{\zplus^{#1}}
%\newcommand{\znplus}{\znnplus{n}}
%\newcommand{\zplus}{\zz_{> 0}}
%\newcommand{\zzero}{\zz_{\geq 0}}

%objects - algebraic
%\newcommand{\cXn}{\ensuremath{\cc[X_1,\ldots,X_n]}}
%\newcommand{\cxn}{\ensuremath{\cc[x_1,\ldots,x_n]}}
%\newcommand{\laurentcring}{\ensuremath{\cc[x_1,x_1^{-1},\ldots,x_n,x_n^{-1}]}}
%\newcommand{\laurentkring}{\kk[x_1, x_1^{-1}, \ldots, x_n, x_n^{-1}]}

%symbols

%operators

%shorthand
\newcommand{\woutlog}{without loss of generality}
\newcommand{\Woutlog}{Without loss of generality}

%theorems
\newtheorem{thm}[subsection]{Theorem}
\newtheorem*{thm*}{Theorem}

\newtheorem{prothm}[subsubsection]{Theorem}
\newtheorem{claim}[subsection]{Claim}
\newtheorem*{claim*}{Claim}

\newtheorem*{conjecture*}{Conjecture}
\newtheorem{cor}[subsection]{Corollary}
\newtheorem{procor}[subsubsection]{Corollary}
\newtheorem{lemma}[subsection]{Lemma}
\newtheorem*{lemma*}{Lemma}

\newtheorem{proclaim}[subsubsection]{Claim}
\newtheorem{subproclaim}[paragraph]{Subclaim}

\newtheorem{prop}[subsection]{Proposition}
\newtheorem*{prop*}{Proposition}
\newtheorem{proprop}[subsubsection]{Proposition}
\newtheorem{prolemma}[subsubsection]{Lemma}

\theoremstyle{definition}

\newtheorem{bold-question}[subsection]{Question}
\newtheorem*{bold-question*}{Question}

\newtheorem*{constrinition*}{Construction-Definition}

\newtheorem*{convention*}{Convention}
\newtheorem{defn}[subsection]{Definition}
\newtheorem*{defn*}{Definition}

\newtheorem*{definotation*}{Definition-Notation}
\newtheorem{example}[subsection]{Example}
\newtheorem*{example*}{Example}

\newtheorem*{fact*}{Fact}

\newtheorem*{facts*}{Facts}

\newtheorem*{bold-note*}{Note}
\newtheorem{problem}[subsection]{Problem}
\newtheorem*{problem*}{Problem}
\newtheorem{rem}[subsection]{Remark}

\newtheorem{remexample*}{Remark-Example}

\newtheorem*{reminition*}{Remark-Definition}

\newtheorem*{remtation*}{Remark-Notation}

\newtheorem*{remuestion*}{Remark-Question}

\newtheorem*{remvention*}{Remark-Convention}
\newtheorem{prorem}[subsubsection]{Remark}

\newtheorem{proexample}[subsubsection]{Example}

\theoremstyle{remark}
\newtheorem*{note*}{Note}
\newtheorem*{notation*}{Notation}
\newtheorem*{question*}{Question}

\newtheorem*{questions*}{Questions}
\newtheorem*{rem*}{Remark} 
\DeclareMathOperator\Center{center}
\DeclareMathOperator\character{char}
\DeclareMathOperator{\codim}{codim}
\newcommand{\dist}[2]{\Phi^{(#2)}_{#1}}
\newcommand{\distprime}[2]{\Phi'^{(#2)}_{#1}}
\newcommand{\hatlocal}[2]{\hat{\mathcal{O}}_{#2,#1}}  %Usage: $\hatlocal{X}{x}$
\DeclareMathOperator\In{In}
\renewcommand{\kk}{\mathbbm{k}}
\DeclareMathOperator{\ld}{Ld}
\DeclareMathOperator\len{len}
\newcommand{\local}[2]{\mathcal{O}_{#2, #1}}  %Usage: $\local{X}{x}$

\newcommand{\mult}[2]{\multonly{#1, \ldots, #2}}
\newcommand{\multsub}[3]{\mult{#1}{#2}_{#3}}
\newcommand{\multsubsup}[4]{\mult{#1}{#2}_{#3}^{#4}}

\newcommand{\multdeform}[4]{\multsubsup{#1}{#2}{#3}{#4}}
\newcommand{\multfzero}{\multzero{f_1}{f_n}}
\newcommand{\multGammazero}{\multzero{\Gamma_1}{\Gamma_n}}

\newcommand{\multiso}[3]{\multsubsup{#1}{#2}{#3}{iso}}
\newcommand{\multisoonly}[2]{\multonlysubsup{#1}{#2}{iso}}

\newcommand{\multonly}[1]{(#1)}
\newcommand{\multonlysub}[2]{\multonly{#1}_{#2}}
\newcommand{\multonlysubsup}[3]{\multonly{#1}_{#2}^{#3}}

\newcommand{\multord}[3]{\multsubsup{#1}{#2}{#3}{\ord}}
\newcommand{\multordonly}[2]{\multonlysubsup{#1}{#2}{\ord}}

\newcommand{\multzero}[2]{\multsub{#1}{#2}{\origin}}
\newcommand{\multzeroonly}[1]{\multonlysub{#1}{\origin}}
\newcommand{\multzerostar}[2]{\multsubsup{#1}{#2}{\origin}{*}}
\newcommand{\multzerostaronly}[1]{\multonlysubsup{#1}{\origin}{*}}

\DeclareMathOperator\mv{MV}
\DeclareMathOperator\nd{ND}

\newcommand{\origin}{0}
\def\picfontsize{\scriptsize}

\newcommand{\rnonneg}{\rr_{\geq 0}}
\newcommand{\rnonnegg}[1]{(\rnonneg)^{#1}}
\newcommand{\rnnuperp}{\rr^n_{\nu^\perp}}
\newcommand{\rnstar}{(\rr^n)^*}

\newcommand{\tGammaOne}{\mscrT_{\Gamma,1}}
\newcommand{\ti}{T^I}
\newcommand{\tiGamma}{\ti_{\Gamma}}

\newcommand{\znonneg}{\zz_{\geq 0}}
\newcommand{\znonnegg}[1]{(\znonneg)^{#1}}

\newlist{defnlist}{enumerate}{3}
\setlist[defnlist,1]{label=(\alph*)}
\setlist[defnlist,2]{label=(\arabic*), ref=(\alph{defnlisti}.\arabic*)}
\setlist[defnlist,3]{label=(\roman*), ref=(\alph{defnlisti}.\arabic{defnlistii}.\roman*)}

\newlist{prooflist}{enumerate}{3}
\setlist[prooflist,1]{label=(\roman*)}
\setlist[prooflist,2]{label=(\alph*), ref=(\roman{prooflisti}.\alph*)}
\setlist[prooflist,3]{label=(\arabic*), ref=(\roman{prooflisti}.\alph{prooflistii}.\arabic*)}

\newlist{prooflist'}{enumerate}{3}
\setlist[prooflist',1]{label=(\roman*$'$), ref=(\roman*$'$)}
\setlist[prooflist',2]{label=(\arabic*$'$), ref=(\roman{prooflist'i}.\arabic*$'$)}
\setlist[prooflist',3]{label=(\alph*$'$), ref=(\roman{prooflist'i}.\arabic{prooflist'ii}.\alph*$'$)}

\newlist{prooflist''}{enumerate}{3}
\setlist[prooflist'',1]{label=(\roman*$''$), ref=(\roman*$''$)}
\setlist[prooflist'',2]{label=(\arabic*$''$), ref=(\roman{prooflist''i}.\arabic*$''$)}
\setlist[prooflist'',3]{label=(\alph*$''$), ref=(\roman{prooflist''i}.\arabic{prooflist'ii}.\alph*$''$)}

\newcommand{\mscrI}{\mathscr{I}}

\newcommand{\mscrT}{\mathscr{T}}

\crefname{claim}{claim}{claims}
\crefname{cor}{corollary}{corollaries}
\crefname{defn}{definition}{definitions}
%\crefname{exercise}{exercise}{exercises}
%\crefname{fact}{fact}{facts}
%\crefname{observation}{observation}{observations}
\crefname{paragraph}{paragraph}{paragraphs}
\crefname{problem}{problem}{problems}
\crefname{proclaim}{claim}{claims}
\crefname{proexample}{Example}{examples}
\crefname{prop}{proposition}{propositions}
\crefname{subclaim}{subclaim}{subclaims}
\crefname{subsection}{section}{sections}
\crefname{subsubsection}{paragraph}{paragraphs}
%\Crefname{subsubsection}{paragraph}{paragraphs}
%\Crefname{paragraph}{paragraph}{paragraphs}
\crefname{thm}{theorem}{theorems}
\crefname{prothm}{theorem}{theorems}

\let\theoldsubsection\thesubsection
\renewcommand{\thesubsection}{\bf \theoldsubsection}

\begin{document}

\begin{abstract}
We revisit the fundamental problem of assigning intersection multiplicities to subsets of solutions of (square) systems of polynomials. Severi [Ann.\ Mat.\ Pura Appl.\ 26 (4), 1947] suggested an intuitive {\em dynamic} solution to this problem which was later corrected and made rigorous by Lazarsfeld [Compos.\ Math.\ 43, 1981]. We consider an {\em asymmetric} variant of this approach and find an explicit description of the resulting ``ordered intersection multiplicity'' which opens pathways to step by step solutions to the {\em affine B\'ezout problem} of counting isolated solutions to (square) systems of polynomials via ``Bernstein-Kushnirenko type'' estimates in terms of Newton diagrams. To illustrate our methods we compute the number of common tangent lines to $4$ general spheres in the affine $3$-space (which is known to be 12 due to Macdonald, Pach, and Theobald [Discrete Comput.\ Geom.\ 26 (1), 2001]) via certain ordered intersection multiplicities on the corresponding Grassmannian.
\end{abstract}

\maketitle

\tableofcontents

\section{Introduction} \label{sec:intro}
This work is motivated by the {\em affine B\'ezout problem} of counting the precise number of isolated solutions of a system $f_1, \ldots, f_n$ of $n$ polynomials in $n$ variables over an algebraically closed field $\kk$. A general approach to this problem is to consider appropriate vector spaces of polynomials containing $f_i$, which corresponds to embedding the $f_i$ in families of base-point free linear systems $\scrL_i$ (on a ``natural'' ambient space). In the generic case the given system is ``non-degenerate'', and the number of its solutions equals the number of common solutions of generic divisors $G_i \in \scrL_i$, which is calculated by a version of B\'ezout's theorem or Bernstein-Kushnirenko theorem. However, if the system is ``degenerate'', then this calculated bound is not sharp, and the intersection of the (effective) divisors $F_1, \ldots, F_n$ corresponding to the given polynomials contains ``problematic'' points (which could be ``points at infinity'' or non-isolated components). This leads one to the following general problem:

\begin{problem} \label{problem0}
Given a subset $Z$ of $\bigcap_i \supp(F_i)$, count (with appropriate multiplicity) how many of the points in $\bigcap_i \supp(G_i)$ ``move to'' $Z$ as the $G_i$ ``move to'' $F_i$ (where $G_i$ are generic elements of $\scrL_i$).
\end{problem}

\subsection{}
In this paper we approach \Cref{problem0} by considering deformations of $\bigcap_i F_i$ defined locally by $f_1 - t^{r_1}g_1 = \cdots = f_n -t^{r_n}g_n = 0$, where the $f_i, g_i$ are local representatives respectively of $F_i, G_i$, and computing the {\em ordered intersection multiplicity} of $Z$, which is the number of points  which ``move to'' generic points of $Z$ as $t$ moves to $0$ in the case that each $r_i/r_{i+1} \gg 1$.
%
%
%This associates to every subvariety of $X$ an {\em asymmetric} variant of the dynamic intersection number defined by Lazarsfeld [Compos.\ Math.\ 43, 1981] \`a la Severi [Ann.\ Mat.\ Pura Appl.\ 26 (4), 1947]. We explicitly describe the ``distinguished components'' of this ``ordered intersection'' (i.e.\ all $Z$ such that $N_Z > 0$), and give an explicit formula for $N_Z$ in terms of sums of products of various ``degrees'' of $Z$ (the ``global'' factors) and lengths of certain ideals in the local ring of $Z$ (the ``local'' factors). We also consider certain scenarios in which $N_Z$ can be computed in terms of associated Newton diagrams, and describe how these point towards inductive solutions to the {\em affine B\'ezout problem} of counting isolated solutions of systems of polynomials. To illustrate our methods we compute the number of common tangent lines to $4$ general spheres in the affine $3$-space (which is known to be 12 due to Macdonald, Pach, and Theobald [Discrete Comput.\ Geom.\ 26 (1), 2001]) via certain ordered intersection multiplicities on the corresponding Grassmannian.
%
%In the first part of this article we describe an explicit solution to \Cref{problem0} in terms of %various ``degrees'' of $Z$ and ``ordered intersection multiplicities'' of certain collections of the $F_j$ along $Z$.
%{\em ordered intersection multiplicities}.
%
The point is that the ordered intersection multiplicity admits a fairly explicit and elementary description in terms of sums of products of various ``degrees'' of $Z$ (the ``global'' factors) and lengths of certain ideals in the local ring of $Z$ (the ``local'' factors). In turn this opens a possibility for a step by step solution to the affine B\'ezout problem via estimates in terms of associated {\em Newton diagrams}.
%The point is that these ordered intersection multiplicities and corresponding ``distinguished components'' (i.e.\ the varieties along which the ordered intersection multiplicity is nonzero) admit fairly explicit and elementary descriptions, and in particular, opens a possibility for a step by step solution to the affine B\'ezout problem via estimates in terms of associated {\em Newton diagrams}.

\subsection{} In \Cref{sec:overview} below we provide a detailed introduction to ordered intersection multiplicity and the main results of this article. The subsequent sections are divided into two largely independent parts: in the first part (comprising of \Crefrange{sec:prelim}{sec:formula}) we show that the ordered intersection multiplicity is well defined and find an explicit formula for it; the second part (spanning \Crefrange{sec:local}{sec:ind}) provides various expressions of the ordered intersection multiplicity in terms of usual intersection multiplicities of divisors, and resulting estimates in terms of Newton diagrams. Note that for applications to the affine B\'ezout problem one does not need the dynamic interpretation of ordered intersection multiplicity - it suffices to read only the proof of \Cref{thm:comb:intersection} in \Cref{sec:combinatorics}, and then \Crefrange{sec:local}{sec:ind} in the second part.
%In \Cref{intro:mult0} we describe our solution to \Cref{problem0} and the relation with related work of Lazarsfeld \cite{lazarsfeld-excess} following Severi \cite{severi}. In \Cref{intro:bezout} we describe how it applies to the affine B\'ezout problem and extends the work of Philippon and Sombra \cite{philippon-sombra}.

%To illustrate our methods from the second part, we compute the number of common tangent lines to $4$ general spheres in $\kk^3$ (which is known to be 12 due to Macdonald, Pach, and Theobald \cite{macdonald-pach-theobald}) by computing ordered intersection multiplicities of some relevant excess intersections on the Grassmannian of lines in $\pp^3$.

\subsection{The big picture in the context of the affine B\'ezout problem} \label{sec:big}
A problem associated to, and deeply intertwined with, the affine B\'ezout problem is that of computing the multiplicity of a (proper) isolated intersection at a nonsingular point. In the ``non-degenerate'' case this multiplicity can be estimated in terms of associated Newton diagrams, and in the ``degenerate'' case this multiplicity is equal to the non-degenerate multiplicity plus the multiplicity of certain excess intersections on a blow up at the point (see e.g. \cite[Chapter IX]{howmanyzeroes}).% In turn, %we note the following:

%A problem associated to, and deeply intertwined with, the affine B\'ezout problem is that of computing the multiplicity of a (proper) isolated intersection at a nonsingular point. In the ``non-degenerate'' case this multiplicity can be estimated in terms of associated Newton diagrams, and in the ``degenerate'' case this multiplicity is equal to the non-degenerate multiplicity plus the multiplicity of certain excess intersections on a blow up at the point (see e.g. \cite[Chapter IX]{howmanyzeroes}). In turn, the formula for our ``ordered intersection multiplicities'' of excess intersections can be expressed as a sum of orders of vanishing of certain regular functions at points on certain curves. This has the following implications:\\

\subsubsection{} \label{sec:big:asymp}
In turn, the ``ordered intersection multiplicities'' can be expressed in terms of the (usual) intersection multiplicities of divisors at isolated points of intersections on a birational modification of the given variety (this modification can be described in terms of the given $F_i$ and ``generic'' $G_i$ from \Cref{problem0} - see \Cref{sec:ind:asymp}). In a sense this provides an inductive ``solution'' to the affine B\'ezout problem%\footnote{In zero characteristic it is still unknown whether every affine curve is a set-theoretic complete intersection; if it turns out to be true, then this ``solution'' would hold in zero characteristic as well.}
: start with the non-degenerate bound; if there are excess intersections, then express the corresponding ordered multiplicities in terms of the usual intersection multiplicities of divisors at isolated points of intersection on an appropriate modification; then compute the ``non-degenerate portion'' of the latter multiplicities in terms of Newton diagrams and express the ``degenerate portion'' in terms of excess intersections, and then repeat. We are however not sure about the practicality or complexity of this ``solution.''

\subsubsection{} \label{sec:big:res-pos}
There are similar inductive routes to the affine B\'ezout problem when $\bigcup_i \supp(F_i)$ is the support of a strict normal crossing divisor, i.e.\ when resolution of singularities is available (e.g.\ in characteristic zero), or when certain curves are locally set-theoretic complete intersections (which is true in positive characteristics by a result of Cowsik and Nori \cite{cowsik-nori}) - see \Cref{sec:ind:pos,sec:ind:res}. %\footnote{In zero characteristic it is still unknown whether every affine curve is a set-theoretic complete intersection; if it turns out to be true, then this ``solution'' would hold in zero characteristic as well.}
%: start with the generic bound; if there are excess intersections, then express the corresponding ordered multiplicities in terms of the usual intersection multiplicities of divisors at isolated points of intersection; then compute the ``non-degenerate portion'' of the latter multiplicities in terms of Newton diagrams and express the ``degenerate portion'' in terms of excess intersections, and then repeat. We are however not sure about the practicality or complexity of this ``solution.''\\

\subsubsection{}
In general it is unknown if every curve is a local set-theoretic complete intersection. Short of the birational ``modifications'' mentioned in \Cref{sec:big:asymp,sec:big:res-pos}, we do not know of any general method to obtain combinatorial or convex geometric estimates of the ordered intersection multiplicity if the associated curves are not set-theoretic complete intersection near the corresponding ``centers''. Sometimes it is possible to give such an estimate when a ``punctured'' germ of the curve is a complete intersection in the complement of a union of ``coordinate subspaces'' (see \Cref{sec:newton0}); in order to extend this approach to a general method one would need to get a handle on the {\em tropicalization} of a curve $C$ near a point $P$, which seems to be a hard problem if $C$ is not a (set-theoretic) complete intersection near $P$.
%\end{itemize}

\section{An overview of ordered intersection multiplicity and the main results} \label{sec:overview}
\subsection{Ordered intersection multiplicity} \label{intro:mult0}
Consider the set up of \Cref{problem0}. More precisely, let $X$ be a %nonsingular\footnote{More generally, we can assume that $X$ is an arbitrary variety in zero characteristic, or Cohen-Macaulay in general - see \Cref{sec:assumptions}.}
variety of dimension $n$, and $F_i$ be an effective Cartier divisor on $X$ which is an element of a base-point free linear system $\scrL_i$, $i = 1, \ldots, n$.
%Note that if $V := \bigcap_i \supp(F_i)$ is isolated near each point on $Z$, then the solution is clear - the required number is simply the sum of intersection multiplicities of $F_1, \ldots, F_n$ over all points of $Z$; the nontrivial scenario is when $Z$ contains a point at which $V$ is not isolated.
For each choice of $G_i \in \scrL_i$, $i = 1, \ldots, n$, we denote by $C^* = C^*(G_1, \ldots, G_n; r_1, \ldots, r_n)$ the subscheme of $X \times (\kk \setminus \{0\})$ which is defined locally as $V(f_1 - t^{r_1}g_1, \ldots, f_n - t^{r_n}g_n)$, where $f_i,g_i$ are local representatives of respectively $F_i,G_i$. If the $G_i$ are generic, then for almost all $a \in \kk$, the number of points on $C^*$ at $t = a$, counted with appropriate multiplicities, is precisely the intersection number $\mult{\scrL_1}{\scrL_n}$. %, which we assume is positive (for, in the context of \Cref{problem0}, there is nothing to do otherwise). Then $C^*$ is a {\em curve} (i.e.\ a purely one dimensional scheme) on $X \times (\kk \setminus S)$. Consequently one can take a (scheme-theoretic) closure\footnote{This is a scheme $C$ supported at the closure of the support of $C^*$ and equipped with an open embedding $C^* \into C$.} $C = C(G_1, \ldots, G_n; r_1, \ldots, r_n)$ of $C^*$ in $X \times \kk$ which is also a curve, and it can be ensured that $t-a$ is a non zero-divisor in $\local{C}{(z,a)}$ for each $(z,a) \in C \cap (X \times \{a\})$. %(see e.g.\ \cite[Theorem IV.12]{howmanyzeroes}).
%It then follows from elementary intersection theory %(see e.g.\ \cite[Corollary IV.25]{howmanyzeroes})
%that for generic $a \in \kk$,
%\begin{align*}
%\sum_{z \in X} \ord_{(z,a)}((t-a)|_C)
%    = \mult{F_1}{F_n}
%\end{align*}
%where $\ord_{(z,a)}((t-a)|_C)$ is the dimension of $\local{C}{(z,a)}/(t-a)\local{C}{(z,a)}$ as a vector space over $\kk$.
Consequently, $C^*$ can be viewed as a moving collection of $\mult{\scrL_1}{\scrL_n}$ points, and it seems reasonable, as Severi \cite{severi} suggested, to count how many of these points approach ``generic'' points of a given subvariety $Z \subseteq X$, and take it as a measure of the intersection multiplicity of $F_1, \ldots, F_n$ along $Z$; we informally denote this number as $\multdeform{F_1}{F_n}{Z}{\vec r}$ (where $\vec r := (r_1, \ldots, r_n)$). In the case that $X$ is nonsingular, Lazarsfeld \cite{lazarsfeld-excess} rigorously developed this notion for $\vec r = (1, \ldots, 1)$ following (and correcting) the idea of Severi \cite{severi}\footnote{Severi \cite{severi} suggested taking the {\em minimum} possible number of points approaching generic points of $Z$. Lazarsfeld \cite{lazarsfeld-excess} showed that one should instead take the number of points in a {\em generic} such deformation.}. The point of departure for this article is the observation that $\multdeform{F_1}{F_n}{Z}{\vec r}$ admits an explicit description in the extreme limit that $r_1 \gg r_2 \gg \cdots \gg r_n > 0$; motivated by this somewhat of a miracle we consider the {\em ordered intersection multiplicity}
\begin{align*}
\multord{F_1}{F_n}{Z} &:= \lim_{\min\{\frac{r_1}{r_2}, \ldots, \frac{r_{n-1}}{r_n}\} \to \infty} \multdeform{F_1}{F_n}{Z}{\vec r}
\end{align*}
We show that $\multord{F_1}{F_n}{Z}$ is well defined and provide explicit descriptions and estimates of this number. Unlike the symmetric version $\multdeform{F_1}{F_n}{Z}{(1, \ldots, 1)}$ considered by Severi and Lazarsfeld, the ordered intersection multiplicity in general depends on the ordering of $F_1, \ldots, F_n$, and corresponds to a ``step-by-step expansion of the intersection product $\mult{F_1}{F_n}$ from left to right'' (\Cref{thm:comb:intersection}). However, in the case that $Z$ is an {\em isolated} point of $V := \bigcap_i \supp(F_i)$, the ordered intersection multiplicity is independent of the ordering, and equals the usual intersection multiplicity of the $F_i$ at $Z$. It follows that
\begin{align}
\multiso{F_1}{F_n}{X} &= \mult{\scrL_1}{\scrL_n} - \sum_{Z \subseteq V_+} \multord{F_1}{F_n}{Z}
\label{eq:=multiso}
\end{align}
where $\multiso{F_1}{F_n}{X}$ is the sum of intersection multiplicities of $F_1, \ldots, F_n$ over all isolated points of $V$, and $V_+$ is the union of the positive dimensional irreducible components of $V$. Identity \eqref{eq:=multiso} opens the passage to applications of this ordered intersection multiplicity to the affine B\'ezout problem.

\begin{example} \label{ex:L2:11}
We start with a classical example, considered e.g.\ in \cite{lazarsfeld-excess}. Let $F_1 = V(x_0^2x_1)$, $F_2 = V(x_0x_1^2) \subseteq \pp^2$, and take $\scrL_1 = \scrL_2 = \scrO(3)$, the linear system on $\pp^2$ cut out by the cubics. Take $U := \pp^2 \setminus V(x_2)$, and affine coordinates $(u_0, u_1) := (x_0/x_2, x_1/x_2)$ on $U$. Then $F_i = V(f_i)$ on $U$, where $f_1 := u_0^2u_1$ and $f_2 := u_0u_1^2$. Let $\phi_i := f_i - t^{r_i}g_i$, where $g_1, g_2$ are cubic polynomials in $(u_0, u_1)$. First we consider the case that $r_1 = r_2 = 1$. Then
\begin{align*}
u_1\phi_1 - u_0\phi_2 = t(u_0g_2 - u_1g_1)
\end{align*}
Write $C$ for the (scheme-theoretic) closure of $C^*$. If $g_1, g_2$ are generic, then $C = V(\phi_1, \phi_2)$ is defined near $U \times \{0\}$ by
\begin{align*}
f_1 - tg_1 &= u_0^2u_1 - ta_1 + \cdots,\ \text{and} \\
u_0g_2 - u_1g_1 &= u_0a_2 - u_1a_1 + \cdots
\end{align*}
where the $a_i$ are the constant terms of $g_i$ (and ``$\cdots$'' denotes terms with higher order in $t$). In the generic case both $a_i$ are nonzero, which implies that $C$ is nonsingular at the origin $O := (0,0,0)$, and both $u_0$ and $u_1$ are parameters of $C$ at $O$. It follows that
\begin{align*}
\multonlysubsup{F_1,F_2}{V(x_0,x_1)}{(1,1)} = \ord_{(0,0,0)}(t|_C) = \ord_{(0,0,0)}(u_0^2u_1|_C) = 3
\end{align*}
Now we consider $Z = V(x_0)$. Then $Z \cap U = V(u_0)$, and the intersection of $C \cap \{t = 0\}$ and $(Z \cap U) \times \{0\}$ is $V(f_1 - tg_1, u_0g_2 - u_1g_1, t, u_0) = V(f_1, u_1g_1, t, u_0)$, which set theoretically consists of the origin $O$ and $V(g_1, u_0) \times \{0\}$. In the generic case the latter set consists of three generic points on $Z \times \{0\}$ distinct from $O$, and it is not hard to see that at each of these points $C$ is nonsingular and $t$ is a parameter. Consequently,
\begin{align*}
\multonlysubsup{F_1,F_2}{V(x_0)}{(1,1)}
    &= 3,\ \text{and similarly,} \\
\multonlysubsup{F_1,F_2}{V(x_1)}{(1,1)}
    &= 3
\end{align*}
Since $\multonly{F_1, F_2} = 9$, it follows that
\begin{align*}
\multonly{F_1, F_2} &= \multonlysubsup{F_1,F_2}{V(x_0,x_1)}{(1,1)} + \multonlysubsup{F_1,F_2}{V(x_0)}{(1,1)}  + \multonlysubsup{F_1,F_2}{V(x_1)}{(1,1)}
\end{align*}
is a complete decomposition of $(F_1, F_2)$, and $\multonlysubsup{F_1,F_2}{Z}{(1,1)} = 0$ for all $Z$ different from $V(x_0), V(x_1)$ or $V(x_0,x_1)$.
\end{example}

\begin{example} \label{ex:L2:1>2}
We continue with the same set up as \Cref{ex:L2:11}. If $r_1 \gg r_2$, then we claim that $g_2(z) = 0$ for each $z \in (C \cap \{t=0\}) \cap (U \times \{0\})$. Indeed, $f_2 = u_0u_1^2 \in \sqrt{u_0^2u_1\local{C}{z}} = \sqrt{f_1\local{C}{z}}$. Consequently, there is $k \geq 1$ such that $f_2^k = f_1h$ for some $h \in \local{C}{z}$ (it is clear that $k = 2$ works for this example), and
\begin{align*}
t^{kr_2}(g_2)^k = (f_2)^k = f_1h = ht^{r_1}g_1 \in \local{C}{z}
\end{align*}
Consequently, if $r_1 > kr_2$, then $(g_2)^k$ vanishes at $z$, as claimed. Since a generic $g_2$ does {\em not} vanish at the origin, it follows that
\begin{align*}
\multonlysubsup{F_1,F_2}{V(x_0,x_1)}{\vec r} = 0
\end{align*}
Since a generic $g_2$ vanishes at $3$ generic points on $V(x_0) \cap U = V(u_0)$, and since $\ord_{u_0}(f_1) = 2$, it can be shown that (see e.g.\ assertion \eqref{local=mult:1:1} of \Cref{prop:local=mult:1})
\begin{align*}
\multonlysubsup{F_1,F_2}{V(x_0)}{\vec r} &= 2 \times 3 = 6.
\end{align*}
Similarly, since $\ord_{u_1}(f_1) = 1$, we have
\begin{align*}
\multonlysubsup{F_1,F_2}{V(x_1)}{\vec r} &= 1 \times 3 = 3.
\end{align*}
Since $\multonly{F_1, F_2} = 9 = 6 + 3$, it follows that if $r_1 \gg r_2$, then
\begin{align*}
\multonlysubsup{F_1,F_2}{Z}{\vec r} &=
\begin{cases}
  6 & \text{if}\ Z = V(x_0), \\
  3 & \text{if}\ Z = V(x_1), \\
  0 & \text{otherwise.}
\end{cases}
\end{align*}
If $r_2 \gg r_1$, then it similarly follows that
\begin{align*}
\multonlysubsup{F_1,F_2}{Z}{\vec r} &=
\begin{cases}
  3 & \text{if}\ Z = V(x_0), \\
  6 & \text{if}\ Z = V(x_1), \\
  0 & \text{otherwise.}
\end{cases}
\end{align*}
\end{example}

\begin{example}[Cf.\ \Cref{ex:2-1}] \label{ex:L2:1'1}
Let $F_2 = V(x_0x_1^2)$ be as above and $F'_1 = V(x_0^2)$, i.e.\ $F'_1$ is a ``component'' of $F_1$ from above. The same arguments as the previous example show %(see \Cref{ex:2-1})
that if $r_1 \gg r_2$, then
\begin{align*}
\multonlysubsup{F'_1,F_2}{Z}{\vec r} &=
\begin{cases}
  6 & \text{if}\ Z = V(x_0), \\
  0 & \text{otherwise.}
\end{cases}
\end{align*}
whereas, if $r_2 \gg r_1$, then
\begin{align*}
\multonlysubsup{F'_1,F_2}{Z}{\vec r} &=
\begin{cases}
  2 & \text{if}\ Z = V(x_0), \\
  4 & \text{if}\ Z = V(x_0, x_1), \\
  0 & \text{otherwise.}
\end{cases}
\end{align*}
\end{example}

%\subsection{Main results}
\subsection{}
We now describe the main results. Recall the basic set up: we are given fixed elements $F_i$ of base-point free linear systems $\scrL_i$, $i = 1, \ldots, n$, on a variety $X$ of dimension $n$. Note that we allow $F_i$ to be identically zero on $X$. For generic $G_i \in \scrL_i$, the subscheme of $X \times \kk$ locally defined by $V(f_1 - t^{r_1}g_1, \ldots, f_n - t^{r_n}g_n)$, where $f_i, g_i$ are local representatives respectively of $F_i$ and $G_i$, restricts to a (possibly non-reduced) curve $C^*(G_1, \ldots, G_n; r_1, \ldots, r_n)$ on $X \times (\kk \setminus S)$, where $S$ is a (possibly empty) finite subset of $\kk$; we write $C(G_1, \ldots, G_n; r_1, \ldots, r_n)$, or in short, $C(\vec G; \vec r)$, or simply $C$, for the scheme-theoretic closure of this curve in $X \times \kk$.

\begin{thm}[Ordered intersection multiplicity - definition] \label{thm:defn}
Let $Z$ be an irreducible subvariety of $X$. Then there is $\nu \in \rr$ and a proper closed subset $\check{Z}$ of $Z$ such that one (and only one) of the following holds:
\begin{enumerate}
\item for each $\vec{r}$ such that $r_i/r_{i+1} > \nu$ for each $i = 1, \ldots, n-1$, there is a nonempty Zariski open subset $\scrL^*$ of global sections of $\prod_i \scrL_i$ such that $C$ does not intersect $(Z \setminus \check{Z}) \times \{0\}$ for any $(G_1, \ldots, G_n) \in \scrL^*$,
\item or there is a positive integer $N_Z$ depending only on $Z$ such that for each $\vec{r}$ such that $r_i/r_{i+1} > \nu$ for each $i = 1, \ldots, n-1$, and each nonempty open subset $Z^*$ of $Z$, there is a nonempty open subset $\scrL^*$ of $\prod_i \scrL_i$ such that
    \begin{enumerate}
    \item $C \cap ((Z^* \setminus \check{Z}) \times \{0\}) \neq \emptyset$ for each $(G_1, \ldots, G_n) \in \scrL^*$, and
    \item the sum of $\ord_B(t)$ over the branches\footnote{\label{sec:branch-defn} Let $C$ be a curve and $\pi: C' \to C$ be a desingularization of $C$. A {\em branch} $B$ of $C$ is the germ of a point $z$ in $C'$. The {\em center} of $B$ is $y := \pi(z) \in C$; we also say that $B$ is a {\em branch of $C$ at $y$}. The {\em order} at $B$ of a regular function $f$ on $C$, denoted $\ord_B(f)$, is the order of vanishing of $f$ at $z$.} $B$ of $C$ centered at $(Z^* \setminus \check{Z}) \times \{0\}$ is precisely $N_Z$.
    \end{enumerate}
\end{enumerate}
We define $\multord{F_1}{F_n}{Z}$ to be zero in the first case, and equal to $N_Z$ in the second case. There are {\em finitely} many irreducible subvarieties $Z$ of $X$ such that $\multord{F_1}{F_n}{Z}$ is nonzero. If $X$ is complete, then
\begin{align*}
\mult{\scrL_1}{\scrL_n} = \sum_Z \multord{F_1}{F_n}{Z}
\end{align*}
The subset $\check{Z}$ can be described as follows: $\check{Z}$ is the intersection of $Z$ with the union of all irreducible subvarieties $Z' \neq Z$ of $X$ such that $\codim(Z') \geq \codim(Z)$ and $\multord{F_1}{F_n}{Z'} \neq 0$.
\end{thm}

The subvarieties $Z$ for which $\multord{F_1}{F_n}{Z}$ is nonzero can be explicitly described in terms of intersections of irreducible components of $\supp(F_i)$. We found this surprising since descriptions of ``distinguished varieties'' tend to require passing to a ``modification'' of the original space (normal cone in Fulton-MacPherson theory \cite{fultersection}, and a transcendental extension in St\"{u}ckrad-Vogel theory \cite{vogel}).

\begin{thm} \label{thm:Z}
Let $i^*$ be the smallest index such that $F_{i^*}$ is not identically zero. For each collection of indices $i^* = i_1 < \cdots < i_\rho \leq n$, where $1\leq \rho \leq n$, construct subvarieties $Z^{(j)}_{i_1, \ldots, i_\rho}$, $0 \leq j \leq \rho$, as follows:
\begin{defnlist}
\item $Z^{(0)}_{i_1, \ldots, i_\rho} = X$,
\item for $1 \leq j \leq \rho-1$, $Z^{(j)}_{i_1, \ldots, i_\rho}$ is the union of all irreducible components of $Z^{(j-1)}_{i_1, \ldots, i_\rho} \cap \supp(F_{i_j})$ which are not contained in $\bigcup_{j' >j} \supp(F_{i_{j'}})$, but are contained in $\supp(F_i)$ for each $i$, $i_j < i < i_{j+1}$,
\item $Z^{(\rho)}_{i_1, \ldots, i_\rho}$ is the union of all irreducible components of $Z^{(\rho - 1)}_{i_1, \ldots, i_\rho} \cap \supp(F_{i_\rho})$ which are contained in $\bigcap_{i=1}^n \supp(F_i)$.
\end{defnlist}
Then $\multord{F_1}{F_n}{Z} > 0$ only if there are $i^* = i_1 < \cdots < i_\rho \leq n$, where $\rho := \codim(Z)$, such that $Z$ is an irreducible component of some $Z^{(\rho)}_{i_1, \ldots, i_\rho}$.
\end{thm}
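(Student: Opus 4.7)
The plan is to use the dynamic characterisation from \Cref{thm:defn} to associate to $Z$ an explicit chain of irreducible subvarieties of $X$, determined by the branch structure of the deformation curve $C$, that realises $Z$ as an irreducible component of some $Z^{(\rho)}_{i_1, \ldots, i_\rho}$.

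First, by \Cref{thm:defn} the assumption $\multord{F_1}{F_n}{Z} > 0$ provides, for generic $(G_1, \ldots, G_n) \in \prod_i \scrL_i$, a branch $B$ of $C = C(\vec G; \vec r)$ centred at a point $(z,0)$ with $z$ generic in $Z$. Setting $e := \ord_B(t) > 0$, the relation $f_i = t^{r_i} g_i$ along $B$ gives $\ord_B(f_i) = r_i e + \ord_B(g_i)$. Since $\scrL_i$ is base-point free and $g_i$ is a generic section, $g_i(z) \neq 0$ for generic $z \in Z$, so $\ord_B(g_i) = 0$; hence $\ord_B(f_i) = r_i e > 0$ for every $i \geq i^*$ with $F_i \not\equiv 0$, which forces $f_i(z) = 0$ and therefore $Z \subseteq \supp(F_i)$ for all such $i$.

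Next, I would build a chain $X = Y_0 \supsetneq Y_1 \supsetneq \cdots$ of irreducible subvarieties, together with indices $i_1 < i_2 < \cdots$, inductively as follows. Setting $i_0 := i^* - 1$, once $Y_{j-1}$ and $i_{j-1}$ have been defined, let $i_j$ be the smallest index $> i_{j-1}$ for which $F_{i_j} \not\equiv 0$ and $Y_{j-1} \not\subseteq \supp(F_{i_j})$, and take $Y_j$ to be any irreducible component of $Y_{j-1} \cap \supp(F_{i_j})$ containing $Z$. Two properties then follow automatically from the minimality of the $i_j$'s. First, for each $i$ with $i_{j-1} < i < i_j$ and $F_i \not\equiv 0$ one has $Y_{j-1} \subseteq \supp(F_i)$, yielding the ``filler'' inclusions in the definition of $Z^{(j-1)}_{i_1, \ldots, i_\rho}$. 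Second, for each $j' > j$ the chain $Y_{j'-1} \subseteq \cdots \subseteq Y_j$ combined with the minimality of $i_{j'}$ forces $Y_j \not\subseteq \supp(F_{i_{j'}})$, giving the non-containment condition. Since $\supp(F_{i_j})$ is a Cartier divisor and $Y_{j-1} \not\subseteq \supp(F_{i_j})$, Krull's principal ideal theorem yields $\codim Y_j = \codim Y_{j-1} + 1$; iterating $\rho$ steps gives $\codim Y_\rho = \rho = \codim Z$, and the inclusion $Z \subseteq Y_\rho$ together with the irreducibility of $Y_\rho$ forces $Y_\rho = Z$. Combined with $Z \subseteq \bigcap_i \supp(F_i)$ from the first step, a direct induction on $j$ then shows that $Y_j$ is an irreducible component of $Z^{(j)}_{i_1, \ldots, i_\rho}$, and in particular $Z = Y_\rho$ is an irreducible component of $Z^{(\rho)}_{i_1, \ldots, i_\rho}$.

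The main obstacle I anticipate is proving that the chain actually runs for exactly $\rho$ steps: a priori the construction could stall at some $j - 1 < \rho$ if $Y_{j-1} \subseteq \supp(F_i)$ for every remaining nonzero $F_i$, producing a thick excess component $Y_{j-1} \subseteq \bigcap_i \supp(F_i)$ strictly containing $Z$. Ruling out such stalling is where the asymptotic regime $r_1/r_2, \ldots, r_{n-1}/r_n \to \infty$ becomes essential: under this hypothesis the branch $B$ must be compatible with the step-by-step left-to-right expansion of the intersection product $\mult{F_1}{F_n}$ (in the spirit of \Cref{thm:comb:intersection}), and a stalled chain would force the projection $\pi(B) \subseteq X$ to lie in a subvariety too small to carry a branch with $\ord_B(t) > 0$ above a generic point of $Z$. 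Translating this asymptotic dominance into a combinatorial constraint on the valuations $\ord_B(f_i)$ that rules out stalling is what I expect to be the technically most delicate step.
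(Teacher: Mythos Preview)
Your first step contains an error that points to the heart of the matter. You claim that since $g_i$ is generic and $\scrL_i$ base-point free, $g_i(z) \neq 0$ at the branch centre $z$. But $z$ is not independent of the $G_i$: it is a point of $C(\vec G; \vec r) \cap (Z^* \times \{0\})$ and moves with them. In fact the paper proves (\Cref{prop:nnonzero-2}, and this is already nontrivial) that under \eqref{condition:>>} exactly $\rho = \codim(Z)$ of the $g_i$ are nonzero at $z$, not all $n$. Your conclusion $Z \subseteq \bigcap_i \supp(F_i)$ survives, but for the trivial reason that $(z,0) \in C$ forces $f_i(z) = 0$; the finer valuation claim $\ord_B(f_i) = r_i e$ for all $i$ is false.

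Your greedy chain in the second step recovers only the elementary fact that every irreducible \emph{component} of $\bigcap_i \supp(F_i)$ is distinguished; you correctly flag stalling as the real obstacle when $Z$ sits strictly inside such a component, but your sketch (``$\pi(B)$ would lie in a subvariety too small'') is not an argument, and appealing to \Cref{thm:comb:intersection} does not help since that result is purely formal and never touches branches. The paper's route is different in kind: rather than building the chain upward from $Z$, \Cref{prop:nonzero} shows contrapositively that if no chain of the required type exists for a given $(i_1, \ldots, i_\rho)$, then generically no branch centred on $Z$ can satisfy \eqref{condition:nonzero}. The proof reduces via de Jong's alterations to the case where $\bigcup_i \supp(F_i)$ is strict normal crossing, after which explicit monomial order estimates (\Cref{claim:nonzero:nozero',claim:nonzero:nozero''}) force a contradiction with \eqref{condition:nonzero>nu} for large $\nu$. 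That reduction to a normal-crossing model, together with the order bookkeeping of \Cref{prop:zero,prop:nnonzero}, is the technical core you are missing.
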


\begin{defn} \label{defn:distinguished}
%\red{This comment is probably not necessary.}
By an {\em $(i_1, \ldots, i_\rho)$-type distinguished component} for (the ordered intersection of) $F_1, \allowbreak \ldots, \allowbreak F_n$, we mean an irreducible component of some $Z^{(\rho)}_{i_1, \ldots, i_\rho}$ from \Cref{thm:Z}. Note that it is possible for a distinguished component to have zero ordered intersection multiplicity - see \Cref{cor:distinguished} below.
\end{defn}

\begin{rem} \label{rem:all-zero}
If all $F_i$ are identically zero, then the only distinguished component for $F_1, \ldots, F_n$ is $X$, and the number prescribed by \Cref{thm:length} below for $\multord{F_1}{F_n}{X}$ is precisely the intersection number $\mult{\scrL_1}{\scrL_n}$.
\end{rem}

\begin{rem}
Every irreducible component $Z$ of $\bigcap_i \supp(F_i)$ is a distinguished component for $F_1, \ldots, F_n$. Indeed, if $\rho := \codim(Z) = 1$, then $i_1 = i^*$ works (where $i^*$ is as in \Cref{thm:Z}). Otherwise if $\rho > 1$, then set $i_1 = i^*$ and choose an irreducible component $H_1$ of $V(f_{i^*})$ containing $Z$, and let $i_2$ be the smallest index such that $H_1$ is not contained in any irreducible component of $V(f_{i_2})$. If $\rho = 2$, then stop. Otherwise pick an irreducible component $H_2$ of $H_1 \cap V(f_{i_2})$ containing $Z$, and pick the smallest $i_3$ such that $H_2$ is not contained in $V(f_{i_3})$. It is straightforward to check that this process can be continued up to $\rho$ steps, and at the end we have $i_1, \ldots, i_\rho$ as claimed.
%For every irreducible component $Z$ of $\bigcap_i \supp(F_i)$, there are $1 = i_1 < \cdots < i_\rho \leq n$, where , such that $Z$ is an irreducible component of $Z^{(\rho)}_{i_1, \ldots, i_\rho}$. Indeed, if $\rho = 1$, then
\end{rem}

%\begin{proexample}
%Illustrate with Lazarsfeld's example. Describe all possible $Z$.
%\end{proexample}
%
%\begin{proexample}
%An example where the same $Z$ appears for multiple $(i_1, \ldots, i_\rho)$. Best would be a case where $Z$ is an irreducible component of $\bigcap_i \supp(F_i)$. Describe all possible $Z$.
%\end{proexample}

For the distinguished components for (the ordered intersection of) $F_1, \ldots, F_n$, we can give an explicit expression of $\multord{F_1}{F_n}{Z}$ in terms of the length of certain local rings.

\begin{thm} \label{thm:length}
Let $Z$ be an $(i_1, \ldots, i_\rho)$-type distinguished component for $F_1, \ldots, F_n$, where $\rho := \codim(Z)$. Fix an open affine neighborhood $U$ of $X$ such that $U \cap Z \neq \emptyset$ and each $F_i$ is defined by a regular function $f_i$ on $U$ which is either identically zero (in the case that $F_i$ is identically zero on $X$), or a non zero-divisor in $\kk[U]$. Consider ideals $I^{(j)}_{i_1, \ldots, i_\rho}$, $0 \leq j \leq \rho$, of $\kk[U]$ defined as follows:
\begin{defnlist}
\item $I^{(0)}_{i_1, \ldots, i_\rho}$ is the zero ideal,
%\item $I^{(1)}_{i_1, \ldots, i_\rho}$ is the intersection of all the $(n-1)$-dimensional primary components of the principal ideal generated by $f_{i_1}$,
\item $I^{(1)}_{i_1, \ldots, i_\rho}$ is the principal ideal generated by $f_{i_1}$,
\item for $2 \leq j \leq \rho$, $I^{(j)}_{i_1, \ldots, i_\rho}$ is the ideal generated by $f_{i_j}$ and the intersection of all primary components $\qqq$ of $I^{(j-1)}_{i_1, \ldots, i_\rho}$ such that the corresponding prime ideal $\sqrt{\qqq}$ contains $f_i$ for each $i < i_j$, but does {\em not} contain $f_{i_j}$.
\end{defnlist}
Then
\begin{align*}
\multord{F_1}{F_n}{Z}
    &= \sum_{i_1, \ldots, i_\rho}
        \len(\local{X}{Z}/I^{(\rho)}_{i_1, \ldots, i_\rho} \local{X}{Z})
        \deg_{i'_1, \ldots, i'_{n-\rho}}(Z)
        %\mult{\scrL_{i'_1}|_Z}{\scrL_{i'_{n-\rho}}|_Z}
\end{align*}
where the sum is over all $i_1, \ldots, i_\rho$ such that $Z$ is an $(i_1, \ldots, i_\rho)$-type distinguished component for $F_1, \ldots, F_n$, and $i'_1, \ldots, i'_{n-\rho}$ are the elements of $\{1, \ldots, n\} \setminus \{i_1, \ldots, i_\rho\}$, and
\begin{align*}
\deg_{i'_1, \ldots, i'_{n-\rho}}(Z)
    &:= \mult{\scrL_{i'_1}|_Z}{\scrL_{i'_{n-\rho}}|_Z}
\end{align*}
\end{thm}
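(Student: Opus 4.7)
\textbf{Proof plan for \Cref{thm:length}.}

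The plan is to combine the dynamic definition of $\multord{F_1}{F_n}{Z}$ from \Cref{thm:defn} (branches of $C = C(\vec G; \vec r)$ centered at generic points of $Z \times \{0\}$, weighted by $\ord_B(t)$) with the step-by-step expansion result referenced as \Cref{thm:comb:intersection}, proceeding by induction on $\rho = \codim(Z)$. The base case $\rho = 0$ (so $Z = X$, forcing all $F_i$ to be identically zero) reduces to \Cref{rem:all-zero}: the formula collapses to $1 \cdot \mult{\scrL_1}{\scrL_n}$, which is consistent.

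For the inductive step, I would first show that the contribution of any single ``type'' $(i_1, \ldots, i_\rho)$ is the corresponding summand, and then that distinct types contribute independently (so the total equals the sum). Fix a type and a generic point $z$ of $Z$. Exploit genericity of $G_i$ for $i \notin \{i_1, \ldots, i_\rho\}$: each such $G_i$ restricts to a generic element of $\scrL_i|_Z$, and their common intersection on $Z$ consists of exactly $\deg_{i'_1, \ldots, i'_{n-\rho}}(Z)$ isolated smooth points. Branches of $C$ centered at $Z^\ast \times \{0\}$ project to one of these points, so it suffices to compute the contribution at a single such $p$ and multiply.

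At such $p$, I would work in $\hat{\scrO}_{X,p}$. The $n-\rho$ equations $f_i - t^{r_i} g_i = 0$ with $i \notin \{i_1, \ldots, i_\rho\}$ are ``transversal'' in the sense that their linearizations in directions tangent to $Z$ are generic (while $f_i$ vanishes on $Z$, making the $t^{r_i}g_i$ term drive the equation). Using the implicit function theorem along the tangent directions of $Z$ (possibly after Puiseux parametrization to account for ramification in $t$), one can eliminate the coordinates along $Z$, reducing the problem to $\rho$ equations $f_{i_j} - t^{r_{i_j}} g_{i_j} = 0$ in the $\rho$ coordinates transverse to $Z$ (plus $t$). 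The crucial asymptotic $r_{i_1}/r_{i_2} \gg 1, \ldots, r_{i_{\rho-1}}/r_{i_\rho} \gg 1$ then lets one analyze the $\rho$ equations \emph{sequentially}: the first equation determines $f_{i_1}$ up to $t^{r_{i_1}}$; then on this component, $f_{i_2}$ is determined up to $t^{r_{i_2}}$; and so on. The primary components of $I^{(j-1)}_{i_1,\ldots,i_\rho}$ that do \emph{not} contain $f_{i_j}$ correspond to branches escaping to other distinguished components (not $Z$), and are therefore correctly discarded in the definition of $I^{(j)}_{i_1,\ldots,i_\rho}$.

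The output of this analysis, matched against \Cref{thm:comb:intersection}, is that the total $\sum_B \ord_B(t)$ at $(p,0)$ equals $\len(\scrO_{X,Z}/I^{(\rho)}_{i_1,\ldots,i_\rho} \scrO_{X,Z})$ for the fixed type; multiplying by the number $\deg_{i'_1, \ldots, i'_{n-\rho}}(Z)$ of such points $p$ yields the claimed summand. The main obstacle will be the sequential asymptotic analysis at $(p,0)$: tracking Puiseux branches as each $r_{i_j}/r_{i_{j+1}}$ tends to infinity while ensuring that branches belonging to other distinguished components are correctly excluded, and verifying that the ideal $I^{(\rho)}_{i_1,\ldots,i_\rho}$ (built by discarding primary components containing $f_{i_j}$) is exactly the combinatorial bookkeeping device that matches this geometric process. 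Once this local-to-global factorization is established for a single type, summing over the finitely many $(i_1,\ldots,i_\rho)$ with $Z$ an $(i_1,\ldots,i_\rho)$-type distinguished component delivers the formula.
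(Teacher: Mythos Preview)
Your proposal correctly identifies the local-times-global structure of the formula and the role of the ideals $I^{(j)}_{i_1,\ldots,i_\rho}$ as bookkeeping for the sequential intersection. However, the paper's proof takes a fundamentally different route, and your ``sequential asymptotic analysis'' step is a genuine gap rather than a technical detail.

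The paper does \emph{not} compute $\sum_B \ord_B(t)$ directly at each point. Instead it runs a global squeeze: it proves only the \emph{inequality}
\[
\len(\local{C(\vec{ag})}{(z,0)}/t\local{C(\vec{ag})}{(z,0)}) \;\leq\; \len\bigl(\local{X}{z}/\langle I^{(\rho)}_{i_1,\ldots,i_\rho}, g_{i'_1},\ldots,g_{i'_{n-\rho}}\rangle\bigr)
\]
at each $z$, then sums over all distinguished components and all types. The left-hand total equals $\mult{\scrL_1}{\scrL_n}$ by the deformation interpretation, and the right-hand total equals $\mult{\scrL_1}{\scrL_n}$ by \Cref{thm:comb:intersection}; hence every inequality is an equality (\Cref{thm:length:confirmation}, \Cref{cor:length:confirmation}). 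The inequality itself rests on three devices absent from your plan: a weighted blow-up $\tilde X \subseteq \bar X \times \kk \times \pp^n(r_1,\ldots,r_n,1)$ on which the strict transform of $C$ becomes $V(u_1-a_1,\ldots,u_n-a_n)$ for generic $\vec a$ (\Cref{prop:formula:blow-up}); an ``$m$-th power trick'' replacing $f_{i'}-a_{i'}t^{r_{i'}}g_{i'}$ by $f_{i'}^m-a_{i'}^mt^{mr_{i'}}g_{i'}^m$ for $i'\notin\{i_1,\ldots,i_\rho\}$ (\Cref{prop:length:confirmation:constant-m}); and the containment $I^{(\rho)}_{i_1,\ldots,i_\rho}\hatlocal{C}{(z,0)}\subseteq t\hatlocal{C}{(z,0)}$ together with $g_{i'}^M\in t\hatlocal{C}{(z,0)}$ under \eqref{condition:>>} (\Cref{length:confirmation:I-g-subset}, proved in \Cref{proof:length:confirmation:I-g-subset}).

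Your implicit-function reduction is problematic: at the points $p\in Z$ where the generic $g_{i'}$ vanish, both $f_{i'}$ and $g_{i'}$ vanish, and $f_{i'}$ may vanish to high order along $Z$, so the equations $f_{i'}-t^{r_{i'}}g_{i'}=0$ are not transversal in any usable sense and the tangential coordinates cannot be eliminated by a standard argument. More seriously, the ``sequential analysis'' you flag as the main obstacle is the entire content of the theorem; proving the \emph{exact} equality $\sum_B\ord_B(t)=\len(\local{X}{Z}/I^{(\rho)}\local{X}{Z})$ locally, without the global squeeze, does not follow from Puiseux or implicit-function considerations, and the paper offers no direct route of that kind.
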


\begin{cor} \label{cor:distinguished}
Let $Z$ be an irreducible subvariety of $X$. Then $\multord{F_1}{F_n}{Z} > 0$ if and only if there are $i^* = i_1 < \cdots < i_{\rho} \leq n$, where $\rho := \codim(Z)$ and $i^*$ is the smallest index such that $F_{i^*}$ is not identically zero, such that
\begin{enumerate}
\item $Z$ is an $(i_1, \ldots, i_\rho)$-type distinguished component for $F_1, \ldots, F_n$, and
\item $\deg_{i'_1, \ldots, i'_{n-\rho}}(Z) > 0$, %$\mult{\scrL_{i'_1}|_Z}{\scrL_{i'_{n-\rho}}|_Z} > 0$,
\end{enumerate}
where $i'_1, \ldots, i'_{n-\rho}$ are the elements of $\{1, \ldots, n\} \setminus \{i_1, \ldots, i_\rho\}$. \qed
\end{cor}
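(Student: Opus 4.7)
The plan is to extract \Cref{cor:distinguished} as essentially an algebraic bookkeeping on top of \Cref{thm:Z} and \Cref{thm:length}. The forward direction is immediate: if $\multord{F_1}{F_n}{Z} > 0$, then \Cref{thm:Z} forces $Z$ to be an $(i_1, \ldots, i_\rho)$-type distinguished component for some $i^* = i_1 < \cdots < i_\rho \leq n$, and since every summand in the formula of \Cref{thm:length} is a product of nonnegative integers, positivity of the total sum forces at least one summand to be strictly positive; in particular $\deg_{i'_1, \ldots, i'_{n-\rho}}(Z) > 0$ for some tuple exhibiting $Z$ as a distinguished component, which is exactly the pair of conditions claimed.

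For the reverse direction the only nontrivial content is to check that whenever $Z$ is an $(i_1, \ldots, i_\rho)$-type distinguished component, the length factor $\len(\local{X}{Z}/I^{(\rho)}_{i_1, \ldots, i_\rho} \local{X}{Z})$ in the formula of \Cref{thm:length} is strictly positive. Once this is known, combining with the hypothesis $\deg_{i'_1, \ldots, i'_{n-\rho}}(Z) > 0$ makes the corresponding summand of \Cref{thm:length} strictly positive, and the remaining summands contribute nonnegatively, giving $\multord{F_1}{F_n}{Z} > 0$. Positivity of the length is equivalent to $I^{(\rho)}_{i_1, \ldots, i_\rho} \subseteq \mathfrak{p}_Z$, where $\mathfrak{p}_Z$ denotes the prime ideal of $Z \cap U$ in $\kk[U]$.

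To establish this, I would prove by induction on $j = 0, 1, \ldots, \rho$ the following stronger statement: every irreducible component of $Z^{(j)}_{i_1, \ldots, i_\rho} \cap U$ is contained in $V(\mathfrak{q})$ for some primary component $\mathfrak{q}$ of $I^{(j)}_{i_1, \ldots, i_\rho}$ whose associated prime contains $f_{i_1}, \ldots, f_{i_j}$ (so in particular it is contained in $V(I^{(j)}_{i_1, \ldots, i_\rho})$). The base case $j = 0$ is trivial. For the inductive step, a component $Y$ of $Z^{(j)}$ lies inside some component $Y'$ of $Z^{(j-1)}$, and by the inductive hypothesis $Y' \subseteq V(\mathfrak{q})$ for some primary component $\mathfrak{q}$ of $I^{(j-1)}$ whose radical contains $f_{i_1}, \ldots, f_{i_{j-1}}$. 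Since $Y \subseteq Z^{(j)}$, by construction $Y \not\subseteq \supp(F_{i_{j'}})$ for any $j' > j$, and in particular $Y' \not\subseteq \supp(F_{i_j})$ (otherwise $Y'$ would have been removed from $Z^{(j-1)}$), so $f_{i_j} \notin \sqrt{\mathfrak{q}}$. Hence $\mathfrak{q}$ is among the primary components that survive in the construction of $I^{(j)}$, and $Y \subseteq V(\mathfrak{q}) \cap V(f_{i_j}) \subseteq V(I^{(j)})$, completing the induction; taking $j = \rho$ gives $I^{(\rho)}_{i_1, \ldots, i_\rho} \subseteq \mathfrak{p}_Z$.

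The main obstacle I foresee is the delicate bookkeeping in this inductive step — specifically, verifying that the geometric pruning conditions used to build $Z^{(j)}$ (the containment/non-containment in $\supp(F_{i_{j'}})$ for various $j'$) match up exactly with the algebraic conditions used to select primary components of $I^{(j-1)}$ in forming $I^{(j)}$. Care is needed to track that the non-containment of $Y'$ in $\supp(F_{i_j})$ really is forced by $Y \subseteq Z^{(j)}$ (which prohibits containment in $\supp(F_{i_{j'}})$ for $j' > j$ but not $j' = j$), using that $Y$ itself arises as a component of $Y' \cap \supp(F_{i_j})$ of strictly larger codimension than $Y'$, so $Y' \not\subseteq \supp(F_{i_j})$ automatically.
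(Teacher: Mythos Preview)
Your approach is correct and matches the paper's implicit reasoning (the paper simply places a \qed, treating the corollary as an immediate consequence of \Cref{thm:Z} and \Cref{thm:length}; the identification of the components of $Z^{(\rho)}_{i_1,\ldots,i_\rho}$ with the support of $V(I^{(\rho)}_{i_1,\ldots,i_\rho})$ is spelled out later in \Cref{sec:comb:Z,sec:comb:m}). You correctly isolate ``positivity of the length factor'' as the only nontrivial content of the reverse direction.

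There is one small gap in your inductive step. For the primary component $\mathfrak{q}$ of $I^{(j-1)}_{i_1,\ldots,i_\rho}$ to be among those selected when building $I^{(j)}_{i_1,\ldots,i_\rho}$, you need $f_i \in \sqrt{\mathfrak{q}}$ for \emph{every} $i < i_j$, not only for $i \in \{i_1,\ldots,i_{j-1}\}$ as your inductive hypothesis records. For $i \leq i_{j-1}$ this is automatic: one checks directly from the recursive definition that $f_i \in \sqrt{I^{(j-1)}_{i_1,\ldots,i_\rho}}$ for all $i \leq i_{j-1}$, and $\sqrt{\mathfrak{q}} \supseteq \sqrt{I^{(j-1)}_{i_1,\ldots,i_\rho}}$. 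The remaining range $i_{j-1} < i < i_j$ is exactly where the geometric definition of $Z^{(j-1)}_{i_1,\ldots,i_\rho}$ enters: its components $Y'$ lie in $\supp(F_i)$ for such $i$, so $f_i \in I(Y')$. To transfer this to $\sqrt{\mathfrak{q}}$ you need $Y' = V(\sqrt{\mathfrak{q}})$ rather than merely $Y' \subseteq V(\sqrt{\mathfrak{q}})$; this follows from the codimension count, since $I^{(j-1)}_{i_1,\ldots,i_\rho}$ has pure codimension $j-1$ (so $\mathfrak{q}$ is minimal) and $Y'$ is irreducible of the same codimension. Strengthening your inductive hypothesis to record this equality closes the gap cleanly.
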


\begin{rem}
%$\mult{\scrL_{i'_1}|_Z}{\scrL_{i'_{n-\rho}}|_Z}$ is the number (counted with appropriate multiplicities) of points in $Z \cap \bigcap_j \supp(G_{i'_j})$ for generic $G_{i'_j} \in \scrL_{i'_j}$. We call it the ``global factor'' since it depends on the intersection theory of $X$. It is an analogue of the usual degree of the variety, and it makes sense to denote this term also by something like $\deg_{i'_1, \ldots, i'_{n-\rho}}(Z)$. The ``local factor'' is $\len(\local{X}{Z}/I^{(\rho)}_{i_1, \ldots, i_\rho} \local{X}{Z})$, since it depends on how the $f_{i_j}$ vanish along $Z$.
$\deg_{i'_1, \ldots, i'_{n-\rho}}(Z)$ is the number (counted with appropriate multiplicities) of points in $Z \cap \bigcap_j \supp(G_{i'_j})$ for generic $G_{i'_j} \in \scrL_{i'_j}$. It is analogous to the usual degree of $Z$, and can be considered a ``global factor'' since it depends on the intersection theory of $X$ and the line bundles $\scrL_i$. The term $\len(\local{X}{Z}/I^{(\rho)}_{i_1, \ldots, i_\rho} \local{X}{Z})$ is the ``local factor'' since it depends on how the $F_{i_j}$ vanish along $Z$.
\end{rem}

\begin{rem}
Let $Z$ be a distinguished component for $F_1, \ldots, F_n$. If $Z$ is a point, then the ``global factor'' is $1$ (since $\rho = n$, and therefore $\{i'_1, \ldots, i'_{n-\rho}\}$ is the empty set), and if in addition $Z$ is an isolated point of $\bigcap_i \supp(F_i)$, then the ``local factor'' $\len(\local{X}{Z}/I^{(\rho)}_{i_1, \ldots, i_\rho} \local{X}{Z})$ is precisely the intersection multiplicity of $F_1, \ldots, F_n$ at $Z$ (this follows e.g.\ %by first blowing up $X$ away from $Z$ so that the supports of the strict transforms of the $F_i$ intersect only at $Z$, and then applying the fact that for
since intersections of divisors are well defined as cycle classes on the support of the intersection \cite[Theorem 2.4]{fultersection}). On the other extreme, if $Z = X$ (which is only possible if all the $F_i$ are identically zero), then as stated in \Cref{rem:all-zero}, the local factor is $1$, and the global factor is $\mult{\scrL_1}{\scrL_n}$.
\end{rem}

%
%\begin{proexample}
%Illustrate with the two examples.
%\end{proexample}

\begin{rem}
The ordered multiplicity is additive in the ``first'', i.e.\ the ``left most'' component. However, it is in general {\em not} additive in any other component. Indeed, let $F_1 = V(x_0^2x_1)$, $F_2 = V(x_0x_1^2)$, $F'_1 = V(x_0^2)$ be the divisors on $X := \pp^2$ from \Crefrange{ex:L2:11}{ex:L2:1'1}. It follows from the computations in \Cref{ex:L2:1>2,ex:L2:1'1} that
\begin{align*}
\multordonly{F_2,F_1}{Z}
    &=
    \begin{cases}
      3 & \text{if}\ Z = V(x_0), \\
      6 & \text{if}\ Z = V(x_1), \\
      0 & \text{otherwise.}
    \end{cases} \\
\multordonly{F_2,F'_1}{Z}
    &=
    \begin{cases}
      2 & \text{if}\ Z = V(x_0), \\
      4 & \text{if}\ Z = V(x_0, x_1), \\
      0 & \text{otherwise.}
    \end{cases}
\end{align*}
Write $F''_1 := V(x_1)$, so that $F_1 = F'_1 + F''_1$. It can be shown similarly (cf.\ \Cref{ex:2-1}) that
\begin{align*}
\multordonly{F_2,F''_1}{Z}
    &=
    \begin{cases}
      2 & \text{if}\ Z = V(x_1), \\
      1 & \text{if}\ Z = V(x_0, x_1), \\
      0 & \text{otherwise.}
    \end{cases}
\end{align*}
In particular, it follows that in general $\multordonly{F_2,F_1}{Z} \neq \multordonly{F_2,F'_1}{Z} + \multordonly{F_2,F''_1}{Z}$, even though $\multonly{F_2, F_1} = \multonly{F_2, F'_1} + \multonly{F_2, F''_1}$.
\end{rem}

\subsection{}%{Estimates in terms of Newton diagrams}
\Cref{thm:defn,thm:Z,thm:length} are proved in \Cref{part:ordered} of this article (i.e.\ \Crefrange{sec:prelim}{sec:formula}). In \Cref{part:bezout} we explicitly compute the ordered intersection multiplicity in some special cases and present examples to illustrate applications to the affine B\'ezout problem of counting isolated solutions of systems of polynomials. In particular, we consider some criteria under which the following holds:
\begin{align}
\parbox{0.67\textwidth}{
the ``local factor'' $\len(\local{X}{Z}/I^{(\rho)}_{i_1, \ldots, i_\rho} \local{X}{Z})$ can be represented as an intersection multiplicity of hypersurfaces on a nonsingular variety at a proper isolated point of intersection.
} \tag{property I} \label{property:I}
\end{align}
If $X$ is nonsingular, then \eqref{property:I} holds e.g.\ when $\codim(Z) \leq 2$ or $\codim(Z) = n$, or when $Z$ is an isolated component of $\bigcap_{j = 1}^\rho \supp(F_{i_j})$ (\Cref{prop:local=mult:1}). In case \eqref{property:I} holds, one can estimate the intersection multiplicity in terms of corresponding Newton diagrams and can also explicitly identify when this estimate is not exact using the results from \cite[Chapter IX]{howmanyzeroes} - this is described in \Cref{sec:newton:0,sec:newton:complete}. In \Cref{sec:newton0} we describe a scenario under which a similar convex geometric estimate for the local factor is available even when \eqref{property:I} does not hold. To illustrate an application of all these estimates to the affine B\'ezout problem, in \Cref{sec:tangent} we compute the number of common tangent lines to $4$ general spheres in $\kk^3$ by setting it up as a counting problem on the Grassmannian of lines in $\pp^3$ (or equivalently, planes in $\kk^4$), and estimating the ordered intersection multiplicity of corresponding excess intersections\footnote{This problem was solved by Macdonald, Pach, and Theobald [Discrete Comput.\ Geom.\ 26 (1), 2001] via modelling it as a different counting problem without any excess intersections. Our exposition was inspired by a paragraph by Sottile and Theobald in \cite{sottile-theobald} which mentions a similar explanation in terms of usual excess intersection multiplicities in a manuscript by Aluffi and Fulton.}. Finally, \Cref{sec:ind} presents a few inductive ``solutions'' to the affine B\'ezout problem mentioned in \Cref{sec:big}.

\part{Ordered intersection multiplicity} \label{part:ordered}

\section{Initial observations} \label{sec:prelim}
%we make a few observations which are the basis of everything. The main result is this:. The arguments are elementary.

\subsection{} \label{sec:prelim:C}
In this section we make a few elementary observations which are the basis of all our results on ordered intersection multiplicity. We work locally on an irreducible affine variety $X$ with dimension $n$ and fixed regular functions $f_1, \ldots, f_\eta$ (in this section $\eta$ can be an arbitrary positive integer, i.e.\ does not have to equal $n$). Given $g_i \in \kk[X]$ and positive integers $r_i$, $1 \leq i \leq \eta$, let
\begin{align*}
C^*(g_1, \ldots, g_\eta; r_1, \ldots, r_\eta)
    &:= V(f_1 - t^{r_1}g_1, \ldots, f_\eta - t^{r_\eta}g_\eta) \cap (X \times (\kk \setminus \{0\}))
\end{align*}
and let $C(g_1, \ldots, g_\eta; r_1, \ldots, r_\eta)$ be the closure of $C^*(g_1, \ldots, g_\eta; r_1, \ldots, r_\eta)$ in $X \times \kk$. Note that in this section and the next we treat these objects only as algebraic {\em varieties}, not schemes (in \Cref{sec:formula} we would treat these as schemes), and often denote them simply by $C^*$ and $C$ if the $g_i$ and $r_j$ are clear from the context. Given a subvariety $Z$ of $X$, we will study under what conditions $C$ intersect generic points of $Z$ at $t = 0$ for generic $g_i$ in the case that $r_i/r_{i+1} \gg 1$ for each $i$. We start with a simple observation.

\begin{lemma}\label{observation:zero}
Let $B$ be a branch of a curve on $C^*(g_1, \allowbreak \ldots, \allowbreak g_\eta; \allowbreak r_1, \allowbreak \ldots, \allowbreak r_\eta)$ centered at $(z,0)$ for some $z \in X$. Then
\begin{align*}
\ord_B(f_i) \geq r_i\ord_B(t)
\end{align*}
for each $i$; moreover $\ord_B(f_i) = r_i\ord_B(t)$ if and only if $g_i(z) \neq 0$. (See \Cref{sec:branch-defn} on page \pageref{sec:branch-defn} for the definition of (order of a function at) a branch of a curve.)\qed
\end{lemma}

\subsection{}
For $\nu \in \rr$, we write \eqref{condition:>nu} as a shorthand for the following condition:
\begin{align}
r_i > 0\ \text{and}\ r_i/r_{i+1} > \nu,\ 1 \leq i \leq \eta-1, \text{and}\ r_\eta > \nu
\tag{condition $r_{.}/r_{.+} > \nu$} \label{condition:>nu}
\end{align}
We say that ``a certain property $P$ is true under \eqref{condition:>>}'' to mean the following:
\begin{align}
\parbox{0.5\textwidth}{
there is a real number $\nu$ such that property $P$ holds whenever \eqref{condition:>nu} holds.
}
\tag{condition $r_{.}/r_{.+} >> 1$} \label{condition:>>}
\end{align}

\begin{prop} \label{prop:zero}
Let $Z$ be an irreducible subvariety of $X$ such that $\prod_{i=1}^s f_{k'_i} \in \sqrt{\langle f_1, \ldots, f_k \rangle \local{X}{Z}}$ for $k'_1, \ldots, k'_s > k$. Then there is a nonempty open subset $Z^*$ of $Z$ such that under \eqref{condition:>>}, for all $z \in Z^*$, if $(z,0) \in C$, then $g_{k'_i}(z) = 0$ for some $i = 1, \ldots, s$.
\end{prop}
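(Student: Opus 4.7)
The strategy is to convert the radical membership hypothesis into a concrete algebraic identity, lift it to the curve $C^*$, restrict to a branch, and then play off orders of vanishing in $t$ against the assumed imbalance of the exponents $r_i$.

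The plan is as follows. First, from the hypothesis $\prod_{i=1}^s f_{k'_i} \in \sqrt{\langle f_1, \ldots, f_k\rangle \local{X}{Z}}$ I would extract an integer $m \geq 1$ and elements $h_1, \ldots, h_k \in \local{X}{Z}$ with
\begin{align*}
\Bigl(\prod_{i=1}^s f_{k'_i}\Bigr)^m = \sum_{j=1}^k h_j f_j.
\end{align*}
Each $h_j$ is a rational function on $X$ regular at the generic point of $Z$, so there is a nonempty open subset $U_j$ of $X$ meeting $Z$ on which $h_j$ is regular. I would then set $Z^* := Z \cap \bigcap_{j=1}^k U_j$, a nonempty open subset of $Z$.

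Next, I would pick an arbitrary $z \in Z^*$ and a branch $B$ of $C^* = C^*(g_1, \ldots, g_\eta; r_1, \ldots, r_\eta)$ centered at $(z, 0)$. On $C^*$ we have $f_j = t^{r_j} g_j$, so the identity above pulls back to
\begin{align*}
t^{m\sum_{i=1}^s r_{k'_i}} \prod_{i=1}^s g_{k'_i}^m \;=\; \sum_{j=1}^k h_j\, t^{r_j} g_j
\end{align*}
as an equality of regular functions on $C^*$ near $(z,0)$ (note each $h_j$ is regular there because $z \in U_j$). Set $\tau := \ord_B(t) > 0$. Suppose, toward a contradiction, that $g_{k'_i}(z) \neq 0$ for every $i$. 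Then $\ord_B(g_{k'_i}) = 0$ by \Cref{observation:zero}, so the left-hand side has $\ord_B$-order exactly $m \tau \sum_{i=1}^s r_{k'_i}$. On the right, $\ord_B(h_j), \ord_B(g_j) \geq 0$ since $h_j$ and $g_j$ are regular at $(z, 0)$, hence each summand has order $\geq r_j \tau$, and therefore
\begin{align*}
\ord_B\!\Bigl(\sum_{j=1}^k h_j t^{r_j} g_j\Bigr) \;\geq\; \tau \min_{1 \leq j \leq k} r_j \;=\; \tau\, r_k,
\end{align*}
where the last equality uses that $r_1 > r_2 > \cdots$ (which holds as soon as $\nu \geq 1$). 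Equating orders yields $m \sum_{i=1}^s r_{k'_i} \geq r_k$.

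Finally, I would close the loop by exploiting \eqref{condition:>nu}. Since each $k'_i > k$, we have $r_{k'_i} \leq r_{k+1} < r_k/\nu$, so $m \sum_{i=1}^s r_{k'_i} < m s\, r_k/\nu$, forcing $\nu < ms$. Taking any $\nu \geq ms$ (e.g.\ $\nu := ms$) therefore rules out the assumption that all $g_{k'_i}(z)$ are nonzero, yielding the conclusion. The only slightly delicate point is the bookkeeping of where each $h_j$, $g_j$ is regular and that $\ord_B$ makes sense after restricting the identity to $C^*$; this is the main obstacle, but it is handled uniformly by shrinking $Z$ to $Z^*$ at the start.
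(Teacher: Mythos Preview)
Your argument is correct and follows essentially the same route as the paper: extract the identity $(\prod f_{k'_i})^m = \sum h_j f_j$, restrict to a branch $B$ centered at $(z,0)$ with $z$ in the open locus where the $h_j$ are regular, and compare $\ord_B$ of both sides against the threshold $\nu \geq ms$. The only cosmetic difference is that you phrase it as a contradiction from assuming all $g_{k'_i}(z)\neq 0$, whereas the paper directly picks the index $i'$ maximizing $\ord_B(f_{k'_{i'}})$ and shows $\ord_B(f_{k'_{i'}}) > r_{k'_{i'}}\ord_B(t)$, but these are the same computation.
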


\begin{proof}
Assume $(\prod_{i=1}^s f_{k'_i})^m = \sum_{j=1}^k h_jf_j$ with $h_j \in \local{X}{Z}$. Let $U$ be an open subset of $X$ such that $U \cap Z \neq \emptyset$ and each $h_j$ is a regular function on $U$. Choose $r_1, \ldots, r_\eta$ such that
\begin{align*}
r_j > sm\max\{r_{k'_1}, \ldots, r_{k'_s}\}
\end{align*}
for each $j = 1, \ldots, k$. Let $B$ be a branch (of a curve) on $C^* = C^*(g_1, \ldots, g_\eta; r_1, \ldots, r_\eta)$ centered at $(z,0)$ where $g_i \in L_i$, $i = 1, \ldots, \eta$. Pick $i'$ such that $\ord_B(f_{k'_{i'}}) = \max_{i=1}^s \ord_B(f_{k'_i})$. Then
\begin{align*}
\ord_B(f_{k'_{i'}})
    &\geq   \frac{1}{s} \sum_{i=1}^s \ord_B(f_{k'_i})
    =       \frac{1}{sm} \ord_B((\prod_{i=1}^sf_{k'_i})^m) \\
    &\geq   \frac{1}{sm} \min_{j=1}^k \ord_B(f_j)
    \geq    \frac{1}{sm} \min_{j=1}^k r_j\ord_B(t)
    > r_{k'_{i'}}\ord_B(t)
\end{align*}
It follows that $g_{k'_{i'}}(z) = 0$ (\Cref{observation:zero}), as required.
\end{proof}

\subsection{} \label{sec:condition:nonzero}
Fix a subvariety $Z$ of $X$ and $1 \leq i_1 < \cdots < i_\rho \leq \eta$. \Cref{prop:zero} suggests that it may be fruitful to understand when there is a nonempty open subset $Z^*$ of $Z$ such that under \eqref{condition:>>} there is $z \in Z^*$ which satisfies the following property:
\begin{align}
\parbox{0.6\textwidth}{
$(z,0) \in C$, and $g_{i_{j}}(z) \neq 0$ for all $j = 1, \ldots, \rho$.
}
\tag{$\cancel{0}$}
\label{condition:nonzero}
\end{align}
We say that a branch $B$ on $C$ satisfies \eqref{condition:nonzero} if the center of $B$ is of the form $(z,0)$ such that $z$ satisfies \eqref{condition:nonzero}. We write \eqref{condition:nonzero>nu} to denote the combination of \eqref{condition:>nu} and \eqref{condition:nonzero}, i.e.\ $z$ (respectively, $B$) and $r_1, \ldots, r_\eta$ satisfy \eqref{condition:nonzero>nu} if and only if
\begin{align}
\parbox{0.8\textwidth}{
$r_1, \ldots, r_\eta$ satisfy \eqref{condition:>nu}, and $z$ (respectively, $B$) satisfies \eqref{condition:nonzero}.
}
\tag{$\cancel{0}(\nu)$}
\label{condition:nonzero>nu}
\end{align}

\begin{rem} \label{observation:ord}
Assume $B$ is a branch on $C$ which satisfies \eqref{condition:nonzero}. Then \Cref{observation:zero} implies that
\begin{align*}
\ord_B(f_{i_j})
    &= \ord_B(g_{i_j}t^{r_{i_j}})
    = r_{i_j}\ord_B(t),\quad j = 1, \ldots, \rho.
    %\label{eq:nonzero-order}
\end{align*}
In particular, if $i^*$ is the smallest index such that $f_{i^*}$ is {\em not} identically zero, then $i_j \geq i^*$ for each $j$.
\end{rem}

\begin{prop} \label{prop:nnonzero}
%Let $Z$ be a subvariety of $X$. Then there is $\nu \in \rr$ and a nonempty open subset $Z^*$ of $Z$ such that \eqref{condition:nonzero2} holds for $Z^*$ only if the number of distinct $j$ such that $i_j > i^*$ is at most $\codim(Z) - 1$; in other words,
%\begin{align*}
%\rho &\leq
%    \begin{cases}
%    \codim(Z) & \text{if}\ i_1 = i^*, \\
%    \codim(Z) - 1 &\text{if}\ i_1 > i^*.
%    \end{cases}
%\end{align*}
%where $i^*$ is as in \Cref{observation:ord}.
Let $Z$ be an irreducible subvariety of $X$.
\begin{enumerate}
\item If each $f_i$ is identically zero, or $Z = X$, or $Z \not\subseteq V(f_1, \ldots, f_\eta)$, then there is a nonempty open subset $Z^*$ of $Z$ such that no $z \in Z^*$ satisfies \eqref{condition:nonzero} for any choice of $g_i, r_j, \rho, i_1, \ldots, i_\rho$ such that $1 \leq \rho \leq \eta$ and $1 \leq i_1 < \cdots < i_\rho \leq \eta$.
\item Otherwise, let $i^*$ be as in \Cref{observation:ord} the smallest index such that $f_{i^*}$ is not identically zero. Then there is $\nu \in \rr$ and a nonempty open subset $Z^*$ of $Z$ such that if there are $z \in Z^*$ and $r_1, \ldots, r_\eta$ which satisfy \eqref{condition:nonzero>nu} for some $g_1, \ldots, g_\eta$, then the number of distinct $j$ such that $i_j > i^*$ is at most $\codim(Z) - 1$.
\end{enumerate}
%; in other words,
%\begin{align*}
%\rho &\leq
%    \begin{cases}
%    \codim(Z) & \text{if}\ i_1 = i^*, \\
%    \codim(Z) - 1 &\text{if}\ i_1 > i^*.
%    \end{cases}
%\end{align*}
%where $i^*$ is as in \Cref{observation:ord}.
\end{prop}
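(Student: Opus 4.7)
Part (1) is handled by a direct case analysis, relying on the observation that $(z,0) \in C$ forces $f_i(z) = 0$ for every $i$ (by passing to the limit $t \to 0$ in the defining equations $f_i - t^{r_i} g_i = 0$). If not all $f_i$ are identically zero and either $Z = X$ or $Z \not\subseteq V(f_1, \ldots, f_\eta)$, taking $Z^* := Z \setminus V(f_1, \ldots, f_\eta)$ yields a nonempty open subset on which $(z,0) \notin C$ for any $z$, ruling out \eqref{condition:nonzero} irrespective of the choice of $g_i, r_j, \rho, i_1, \ldots, i_\rho$. If all $f_i$ vanish identically, then $C = V(g_1, \ldots, g_\eta) \times \kk$, so $(z,0) \in C$ forces $g_i(z) = 0$ for every $i$, again contradicting \eqref{condition:nonzero} (since $\rho \geq 1$).

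For part (2) we are in the remaining case: $Z \subsetneq X$, $Z \subseteq V(f_1, \ldots, f_\eta)$, and some $f_i$ is not identically zero. In particular, each $f_i$ with $i \geq i^*$ belongs to the prime ideal $\ppp_Z \subset \kk[X]$, and hence to the maximal ideal of $\local{X}{Z}$, a local ring of dimension $\rho_Z := \codim(Z)$. The plan is to argue by contradiction: suppose $|T| \geq \rho_Z$ where $T := \{j : i_j > i^*\}$, and set $S := \{i^*\} \cup \{i_j : j \in T\}$. In both subcases ($i^* \in \{i_j\}$ or not), $|S| \geq \rho_Z + 1$, and the corresponding $f_i$'s all lie in $\ppp_Z \local{X}{Z}$. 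The aim is to extract a relation of the form $\prod_{i \in T'} f_i \in \sqrt{(f_1, \ldots, f_k) \local{X}{Z}}$ for some $k$ and some nonempty $T' \subseteq \{i_j\}$ with $\min T' > k$; once produced, \Cref{prop:zero} yields $g_i(z) = 0$ for some $i \in T' \subseteq \{i_j\}$, contradicting \eqref{condition:nonzero}.

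The key algebraic step produces the relation via Krull's height theorem. The idea is to trace the ascending chain of ideals $I_k := (f_1, \ldots, f_k) \local{X}{Z}$ for $k = 0, 1, \ldots, \max\{i_j\}$: their heights are non-decreasing and bounded above by $\rho_Z$, while at least $\rho_Z + 1$ of the generators (those indexed by $S$) already lie in $\ppp_Z$. A pigeonhole argument then forces either (i) some $k^* < \max\{i_j\}$ at which $I_{k^*}$ first becomes $\ppp_Z$-primary, so that $\sqrt{I_{k^*}} = \ppp_Z \local{X}{Z}$ automatically contains every $f_{i_j}$ with $i_j > k^*$ (take $T' = \{i_j : i_j > k^*\}$, which is nonempty because it contains $\max\{i_j\}$), or (ii) some $i \in \{i_j\}$ at which adjoining $f_i$ fails to strictly enlarge $\sqrt{I_{i-1}}$, permitting $f_i$ (or a suitable product of $f_{i_j}$'s) to be absorbed into $\sqrt{I_{i-1}}$. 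The main obstacle is making case (ii) precise: a naive choice such as $k = i^*$, $T' = T$ is insufficient, since the codimension-$1$ components of $V(f_{i^*})$ through $Z$ need not be contained in any $V(f_{i_j})$ with $i_j \in T$, so one must select $k$ adaptively using the primary decomposition of $I_{i-1}$ in $\local{X}{Z}$ and track which associated primes persist as $k$ grows. Once the relation is produced, uniform $\nu$ and $Z^*$ are obtained by taking the maximum of the thresholds and intersection of the open subsets furnished by \Cref{prop:zero} across the finitely many tuples $(i_1, \ldots, i_\rho)$.
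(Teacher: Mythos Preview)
Your treatment of part (1) is correct and matches the paper's.

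For part (2), however, the pigeonhole on heights does not deliver the dichotomy you claim. Take $X = \kk^3$, $Z = V(x,y)$ (so $\rho_Z = 2$), $\eta = 3$, and
\[
f_1 = x(x+y),\qquad f_2 = x,\qquad f_3 = y.
\]
Here $i^* = 1$ and $\{i_j : i_j > i^*\} = \{2,3\}$, so $|S| = 3 = \rho_Z + 1$. The radicals in $\local{X}{Z}$ are $\sqrt{I_1} = (x)\cap(x+y)$, $\sqrt{I_2} = (x)$, $\sqrt{I_3} = (x,y)$: each inclusion is strict, so your case (ii) never occurs, and $I_k$ is $\ppp_Z$-primary only at $k = 3 = \max\{i_j\}$, so case (i) fails too. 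Moreover no relation $\prod_{i\in T'} f_i \in \sqrt{I_k}$ with $T' \subseteq \{2,3\}$ and $\min T' > k$ exists at all: $x^a y^b$ does not vanish on $V(x+y)$, and $y \notin (x)$. So \Cref{prop:zero} alone cannot close the argument; the ``adaptive selection via primary decomposition'' you allude to would have to go beyond relations among the $f_i$ themselves.

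The paper's proof takes a genuinely different route. Rather than seeking a containment $\prod f_{i_j} \in \sqrt{I_k}$, it builds auxiliary functions $h_1, \ldots, h_\rho$ inductively: $h_1$ is chosen to vanish on the components of $V(f_{i^*})$ through $Z$ that are \emph{not} contained in $\bigcup_j V(f_{i_j})$ (in the example above, $h_1 = x+y$), then $h_2$ cuts down $V(h_1, f_{i_1})$ similarly, and so on. The point is that the dynamic information from condition \eqref{condition:nonzero>nu} --- specifically $\ord_B(f_{i_j}) = r_{i_j}\ord_B(t)$ --- forces $\ord_B(h_k)$ to dominate $\ord_B(f_{i_k})$ when $\nu$ is large, because $h_k \cdot \prod_{j>k} f_{i_j}$ lies in $\sqrt{\langle h_1,\ldots,h_{k-1},f_{i_{k-1}}\rangle}$. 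After $\rho$ steps one has a codimension-$\rho$ variety $V(h_1,\ldots,h_\rho)$ containing $Z$, hence equal to $Z$ on an open set, which traps $f_{i_\rho}$ and yields the contradiction. The $h_k$ are not among the $f_i$, and the argument is not purely ideal-theoretic in $\local{X}{Z}$: it uses the branch $B$ at every stage.
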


\begin{proof}
If each $f_i$ is identically zero, then the first assertion holds with $Z^* := Z$ (\Cref{observation:zero}). So assume there is $i$ such that $f_i$ is not identically zero, or in other words, $i^* \leq \eta$. If $Z = X$ (respectively, $Z \not\subseteq V(f_1, \ldots, f_\eta)$), then again the first assertion holds with $Z^* := X \setminus V(f_{i^*})$ (respectively, $Z^*: = Z \setminus V(f_1, \ldots, f_\eta)$). So we may assume
\begin{align*}
Z \subseteq V(f_1, \ldots, f_\eta) \subsetneqq X
\end{align*}

\subsubsection{} \label{nnonzero:assumptions}
Now we assume the proposition is not true. Then, since there are only finitely many possibilities for the collections of $i_j$, there is a {\em fixed} collection $i^* < i_1 < \cdots < i_\rho \leq \eta$ such that
\begin{prooflist}
\item $\rho = \codim(Z) \geq 1$, and
\item \label{nnonzero:assumption:all} for {\em all} $\nu \in \rr$ and {\em all} nonempty open subsets $Z^*$ of $Z$, there are $z \in Z^*$ and $g_i,r_j$ which satisfy \eqref{condition:nonzero>nu}.
%    \begin{prooflist}
%    \item \eqref{condition:>nu} holds, and
%    \item there is $z \in Z^*$ such that \eqref{condition:nonzero} holds.
%    \end{prooflist}
\end{prooflist}

\subsubsection{} \label{nnonzero:rho=1}
We claim that at least one irreducible component of $V(f_{i^*})$ containing $Z$ is not contained in $\bigcup_{j=1}^\rho V(f_{i_j})$. Indeed, otherwise $\prod_{j=1}^\rho f_{i_j} \in \sqrt{f_{i^*}\local{X}{Z}}$, and we get a contradiction to property \ref{nnonzero:assumption:all} due to \Cref{prop:zero}. Choose an open subset $U_1 \subseteq X$ intersecting $Z$ and a regular function $h_1$ on $U_1$ such that
\begin{itemize}
\item $h_1$ identically vanishes on each irreducible component of $V(f_{i^*})$ which contains $Z$ but is not contained in $V(\prod_{j=1}^{\rho} f_{i_j})$,
\item $h_1$ does not vanish identically on any irreducible component of $V(\prod_{j=1}^{\rho} f_{i_j})$.
\end{itemize}
Since $h_1\prod_{j=1}^\rho f_{i_j}$ identically vanishes on each irreducible component of $V(f_{i^*})$ containing $Z$, there is an open subset $U^*_1 \subseteq U_1$ intersecting $Z$ and $m_1 \geq 1$ such that
\begin{align*}
(h_1\prod_{j=1}^\rho f_{i_j})^{m_1} \in f_{i^*}\kk[U^*_1]
\end{align*}
Now, if $B$ is a branch on $C$ centered at $U^*_1 \times \{0\}$ that satisfies \eqref{condition:nonzero>nu} for some $\nu \geq 1$, then \Cref{observation:ord} implies that
\begin{align*}
\ord_B(f_{i_1}) = r_{i_1}\ord_B(t) > r_{i_j}\ord_B(t) = \ord_B(f_{i_j}),\quad j = 2, \ldots, \rho,
\end{align*}
so that
\begin{align*}
m_1(\ord_B(h_1) + \rho r_{i_1}\ord_B(t))
    &\geq m_1(\ord_B(h_1) + \sum_{j=1}^\rho \ord_B(f_{i_j}))
    &\geq \ord_B(f_{i^*})
    \geq r_{i^*}\ord_B(t)
\end{align*}
It follows that
\begin{align*}
\ord_B(h_1)
    &> \frac{r_{i^*} - m_1\rho r_{i_1}}{m_1}\ord_B(t)
\end{align*}
Since $i^* < i_1$, it is then clear that we can find $\nu_1 \geq 1$ and $\mu_1 > 0$ such that if $B$ is a branch on $C$ centered at $U^*_1 \times \{0\}$ that satisfies \eqref{condition:nonzero>nu} for some $\nu \geq \nu_1$, then
%Note that if $\nu \geq \nu_1 := 2m_1(\rho -1) \geq 1$, then $r_{i_2} < r_{i_1}/\nu_1 = r_{i_1}/(2m_1(\rho -1))$, so that
\begin{align*}
%\ord_B(h_1) &> \frac{r_{i_1}}{2m_1} \ord_B(t) > r_{i_2}\ord_B(t)
\ord_B(h_1) &> \frac{r_{i^*}}{\mu_1}\ord_B(t) > r_{i_1}\ord_B(t) = \ord_B(f_{i_1})
%\label{nnonzero:ord-h1}
\end{align*}
If $\rho = 1$, then we stop. Otherwise we consider $V(h_1) \cap V(f_{i_1})$, which by construction of $h_1$, has pure codimension $2$ in $X$.

\begin{proclaim} \label{nnonzero:h1-cap-fi2}
At least one irreducible component of $V(h_1) \cap V(f_{i_1})$ containing $Z$ is not contained in $\bigcup_{j=2}^\rho V(f_{i_j})$.
\end{proclaim}

\begin{proof}
Indeed, otherwise $\prod_{j=2}^\rho f_{i_j} \in \sqrt{\langle h_1, f_{i_1} \rangle \local{X}{Z}}$. Then there is $U'_2 \subseteq U^*_1$ and $m'_2 \geq 1$ such that
\begin{align*}
(\prod_{j=2}^\rho f_{i_j})^{m'_2} \in \langle h_1, f_{i_1} \rangle \kk[U_2]
\end{align*}
If $B$ is a branch on $C$ centered at $U'_2 \times \{0\}$ which satisfies \eqref{condition:nonzero>nu} for some $\nu \geq \nu_1$ as in the end of \Cref{nnonzero:rho=1}, then
\begin{align*}
m'_2(\rho-1)r_{i_2}\ord_B(t)
    &= m'_2(\rho - 1) \ord_B(f_{i_2})) \\
    &\geq m'_2 \sum_{j=2}^\rho \ord_B(f_{i_j}) \\
    &\geq \min\{\ord_B(h_1), \ord_B(f_{i_1})\} \\
    &\geq \ord_B(f_{i_1})) \\
    &= r_{i_1}\ord_B(t)
\end{align*}
which implies
\begin{align*}
r_{i_2}
    &\geq \frac{r_{i_1}}{m'_2(\rho -1)}
\end{align*}
However, this is impossible if we take $\nu > m'_2(\rho - 1)$ (it is permissible to take arbitrarily large $\nu$ due to assumption \ref{nnonzero:assumption:all} in \Cref{nnonzero:assumptions}). This contradiction proves the claim.% and shows that there are irreducible components of $V(h_1) \cap V(f_{i_2})$ which contain $Z$ but are not contained in $\bigcup_{j=3}^\rho V(f_{i_j})$.
\end{proof}

\subsubsection{} Choose an open subset $U_2 \subseteq U^*_1$ intersecting $Z$ and a regular function $h_2$ on $U_2$ such that
\begin{itemize}
\item $h_2$ identically vanishes on each irreducible component of $V(h_1, f_{i_1})$ which contains $Z$ but is not contained in $V(\prod_{j=2}^{\rho} f_{i_j})$,
\item $h_2$ does not vanish identically on any irreducible component of $V(h_1\prod_{j=2}^{\rho} f_{i_j})$.
\end{itemize}
Since $h_2\prod_{j=2}^\rho f_{i_j}$ identically vanishes on each irreducible component of $V(h_1, f_{i_1})$ containing $Z$, there is an open subset $U^*_2 \subseteq U_2$ intersecting $Z$ and $m_2 \geq 1$ such that
\begin{align*}
(h_2\prod_{j=2}^\rho f_{i_j})^{m_2} \in \langle h_1, f_{i_1} \rangle \kk[U^*_2]
\end{align*}
It follows as in the proof of \Cref{nnonzero:h1-cap-fi2} that if $B$ is a branch on $C$ centered at $U^*_2 \times \{0\}$ which satisfies \eqref{condition:nonzero>nu} for some $\nu \geq \nu_1$, then
\begin{align*}
m_2(\ord_B(h_2) + (\rho - 1) r_{i_2}\ord_B(t))
    &\geq r_{i_1}\ord_B(t)
\end{align*}
so that
\begin{align*}
\ord_B(h_2)
    &\geq \frac{r_{i_1} - m_2(\rho - 1) r_{i_2}}{m_2}\ord_B(t)
\end{align*}
Consequently we can choose $\nu_2 \geq \nu_1$ and a positive number $\mu_2$ such that if $B$ is a branch on $C$ centered at $U^*_2 \times \{0\}$ which satisfies \eqref{condition:nonzero>nu} for some $\nu \geq \nu_2$, then
\begin{align*}
\ord_B(h_2) &> \frac{r_{i_1}}{\mu_2}\ord_B(t) > r_{i_2}\ord_B(t) = \ord_B(f_{i_2})
%&\geq \frac{r_{i_2}}{2m_2} \ord_B(t) \geq r_{i_3}\ord_B(t)
%\label{nnonzero:ord-h2}
\end{align*}

\subsubsection{} If $\rho = 2$, then we stop. Otherwise we proceed as above, and after $\rho$ steps we end up with an open subset $U^*_\rho$ of $X$ such that $U^*_\rho \cap Z \neq \emptyset$, and regular functions $h_1, \ldots, h_\rho$ on $U^*_\rho$, and positive real numbers $\nu_1, \ldots, \nu_\rho, \mu_1, \ldots, \mu_\rho$ such that for each $k$, $1 \leq k \leq \rho$,
\begin{enumerate}
\item $Y_k := V(h_1, h_2, \ldots, h_k) \cap U^*_\rho$ is a pure codimension $k$ (possibly reducible) subvariety of $X$ containing $Z$, and
\item if $B$ is a branch on $C$ centered at $U^*_\rho \times \{0\}$ which, together with $r_1, \ldots, r_\eta \in \rr$, satisfies \eqref{condition:nonzero>nu} for some $\nu \geq \nu_k$, then
\begin{align*}
\ord_B(h_k) > \frac{1}{\mu_k} r_{i_{k-1}}\ord_B(t) %> \ord_B(f_{i_{k}})
\end{align*}
where we set $i_0 := i^*$ for consistency.
\end{enumerate}
Now since $Y_\rho \supseteq Z$ has pure codimension $\rho = \codim(Z)$, there is an open subset $U^*$ of $U^*_\rho$ such that $Y_\rho \cap U^* = Z \cap U^* \neq \emptyset$. Since $Z \subseteq V(f_1, \ldots, f_\eta) \subseteq V(f_{i_\rho})$, it follows that $(f_{i_\rho})^{m_\rho} \in \langle h_1, \ldots, h_{\rho} \rangle \kk[U^*]$ for some $m_\rho \geq 1$. By assumption \ref{nnonzero:assumption:all} from \Cref{nnonzero:assumptions}, there are $z \in U^* \cap Z$ and $g_i, r_j$ which satisfy \eqref{condition:nonzero>nu} for some $\nu > \max\{\nu_1, \ldots, \nu_\rho, m_\rho\mu_1, \ldots, m_\rho\mu_\rho\}$. Then for each branch $B$ of $C$ centered at $(z,0)$,
\begin{align*}
\ord_B(f_{i_\rho})
    &\geq \min_{k=1}^\rho \frac{\ord_B(h_k)}{m_\rho}
    \geq \min_{k=1}^\rho \frac{r_{i_{k-1}}}{m_\rho\mu_k} \ord_B(t)
    %\geq \frac{\min_{k=1}^{\rho-1}r_{i_k}}{m_\rho\mu_{\rho-1}}\ord_B(t)
    > r_{i_{\rho}}\ord_B(t)
\end{align*}
which contradicts \Cref{observation:ord} and completes the proof of \Cref{prop:nnonzero}.
\end{proof}
%\end{proof}

\section{Distinguished components of an ordered intersection} \label{sec:distinguished}
In this section we determine the ``distinguished components'' of ordered intersection. The main result of this section is \Cref{thm:Z-existence} which shows that the property of ``having zero (or nonzero) ordered intersection multiplicity'' is well defined, and there can be at most finitely many irreducible subvarieties with nonzero ordered intersection multiplicity, and all of them must be of the form described in \Cref{thm:Z}. The arguments used in this section are elementary (but involved) except for one place in the proof of \Cref{prop:nonzero} where de Jong's theorem on alteration \cite[Theorem 4.1]{dejong} is used\footnote{We believe the results of this article could have been proved without using  de Jong's theorem on alteration; however, that would have made the arguments much more involved.}.

%\subsection{}

\subsection{Set up} \label{sec:distinguished-setup}
We start with a set up which is a bit more general than that of \Cref{problem0}. We work locally on an irreducible affine variety $X$ of dimension $n$ with fixed regular functions $f_1, \ldots, f_n$ and subsets $L_1, \ldots, L_n$ of $\kk[X]$ such that for each $i$,
\begin{enumerate}
\item $L_i$ is an {\em irreducible constructible} set of dimension $\geq 1$, and
\item $L_i$ is {\em base-point free}, i.e.\ for each $x \in X$, there is $g \in L_i$ such that $g(x) \neq 0$.
\end{enumerate}
%Given $g_i \in L_i$ and positive integers $r_i$, $1 \leq i \leq \eta$, let
%\begin{align*}
%C^*(g_1, \ldots, g_\eta; r_1, \ldots, r_\eta)
%    &:= V(f_1 - t^{r_1}g_1, \ldots, f_\eta - t^{r_\eta}g_\eta) \cap (X \times (\kk \setminus \{0\}))
%\end{align*}
%and let $C(g_1, \ldots, g_\eta; r_1, \ldots, r_\eta)$ be the closure of $C^*(g_1, \ldots, g_\eta; r_1, \ldots, r_\eta)$ in $X \times \kk$; we often denote these sets simply by $C^*$ and $C$ if the $g_i$ and $r_j$ are clear from the context. Given a subvariety $Z$ of $X$, we study under what conditions $C$ intersect generic points of $Z$ at $t = 0$ for generic $g_i$ in the case that $r_i/r_{i+1} \gg 1$ for each $i$. The following (simple) fact is the basis of all our results on ordered intersection multiplicity.
%
Given $g_i \in L_i$ and positive integers $r_i$, $1 \leq i \leq n$, we consider $C = C(g_1, \ldots, g_n; r_1, \ldots, r_n)$ as in \Cref{sec:prelim}.
%The following theorem is the first main result of this section.
Building on \Cref{prop:zero,prop:nnonzero}, the following result shows how the ordering of $f_1, \ldots, f_n$ dictates the subvarieties $Z$ of $X$ for which $C = C(g_1, \ldots, g_n;\allowbreak r_1, \ldots, r_n)$ may intersect $Z \times \{0\}$ under \eqref{condition:>>} in the case that $\eta = n$.

%\begin{thm} \label{prop:nnonzero-2}
%Let $\rho := \codim(Z)$. Assume $\eta = n$ and each $L_i$ is base-point free. Then there is $\nu \in \rr$ such that one (and only one) of the following holds under \eqref{condition:>>}:
%\begin{enumerate}
%\item either there is a nonempty Zariski open subset $Z^*$ of $Z$, and a nonempty Zariski open subset $L^*$ of $\prod_i L_i$ such that $(Z^* \times \{0\}) \cap C = \emptyset$ for each $(g_1, \ldots, g_n) \in L^*$,
%\item or there are $\rho$ choices for $i$, say $1 \leq i_1 < \cdots < i_\rho \leq n$, such that for {\em every} nonempty open subset $Z^*$ of $Z$, there is a nonempty open subset $L^*$ of $L$ such that for each $(g_1, \ldots, g_n) \in L^*$,
%    \begin{enumerate}
%    \item there is $z \in Z^*$ such that $(z,0) \in C$, and $g_i(z) \neq 0$ if and only if $i = i_j$ for some $j = 1, \ldots, \rho$;
%    \item $i_1 = i^*$, where $i^*$, as in \Cref{observation:ord}, is the smallest index such that $f_{i^*}$ is not identically zero;
%    \item there are irreducible components $H_{i_j}$ of $V(f_{i_j})$ such that $Z$ is an irreducible component of $\bigcap_{j=1}^\rho H_{i_j}$.
%    \end{enumerate}
%\end{enumerate}
%\end{thm}

\begin{thm} \label{prop:nnonzero-2}
Assume we are in the set up of \Cref{sec:distinguished-setup}. Let $Z$ be a subvariety of $X$ with $\rho := \codim(Z) > 0$. Then there are $\nu^* \in \rr$, a nonempty Zariski open subset $Z^*$ of $Z$, and a nonempty Zariski open subset $L^*$ of $\prod_i L_i$ such that for each $(g_1, \ldots, g_n) \in L^*$, one (and only one) of the following holds for every $\nu \geq \nu^*$:
\begin{enumerate}
\item either $C$ does not intersect $Z^* \times \{0\}$ under \eqref{condition:>nu},
\item or there are $z \in Z^*$ and $r_1, \ldots, r_n \in \rr$ such that \eqref{condition:>nu} holds and $(z,0) \in C$. In this case there are $\rho$ choices for $i$, say $1 \leq i_1 < \cdots < i_\rho \leq n$, such that
    \begin{enumerate}[resume]
    \item $i_1 = i^*$, where $i^*$, as in \Cref{observation:ord}, is the smallest index such that $f_{i^*}$ is not identically zero, and
    \item $g_i(z) \neq 0$ if and only if $i = i_j$ for some $j = 1, \ldots, \rho$.
    \end{enumerate}
\end{enumerate}
\end{thm}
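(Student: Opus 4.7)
The plan is to obtain the dichotomy by combining an upper bound on $|I(z)|$, where $I(z) := \{i : g_i(z) \neq 0\}$, coming from \Cref{prop:nnonzero}(2), with a matching lower bound produced via a dimension count on the incidence in $\prod_i L_i \times Z$. First I handle the degenerate cases covered by \Cref{prop:nnonzero}(1): if every $f_i$ is identically zero, or $Z \not\subseteq V(f_1, \ldots, f_n)$ (the case $Z = X$ is excluded by $\rho := \codim(Z) > 0$), then the open $Z^*$ supplied by \Cref{prop:nnonzero}(1) -- together with a dimension count on $V(g_1, \ldots, g_n) \cap Z$ in the first case, and with the non-vanishing of some $f_i$ on $Z^*$ in the second -- shows that $(z,0) \notin C$ for every $z \in Z^*$ and every generic $(g_i)$, so alternative~(1) holds.

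In the remaining case $Z \subseteq V(f_1, \ldots, f_n) \subsetneq X$, the index $i^*$ is well-defined and $i^* \leq n$. \Cref{prop:nnonzero}(2) supplies $\nu^* \in \rr$ and a nonempty open $Z^* \subseteq Z$ such that for any $(g_i)$, any $z \in Z^*$, any $r_j$ with $r_./r_{.+} > \nu$ for some $\nu \geq \nu^*$, and any index sub-collection $\{i_1 < \cdots < i_\sigma\} \subseteq I(z)$ for which \eqref{condition:nonzero} holds (i.e.\ $(z, 0) \in C$ and $g_{i_j}(z) \neq 0$ for $j = 1, \ldots, \sigma$), one has $|\{j : i_j > i^*\}| \leq \rho - 1$. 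Applied to the maximal sub-collection $\{i_1, \ldots, i_\sigma\} = I(z)$, together with \Cref{observation:ord} which forces $I(z) \subseteq \{i \geq i^*\}$, this yields the uniform upper bounds $|I(z)| \leq \rho$ if $i^* \in I(z)$ and $|I(z)| \leq \rho - 1$ if $i^* \notin I(z)$.

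For the complementary lower bound I control $(g_i)$ generically. For each $J \subseteq \{1, \ldots, n\}$ with $|J| = n - \rho + 1$, consider the incidence $W_J := \{((g_1, \ldots, g_n), z) \in \prod_i L_i \times Z : g_j(z) = 0 \text{ for all } j \in J\}$. Base-point freeness of $L_j$ on $X$ makes $(g, z) \mapsto g(z)$ non-identically-zero on $L_j \times X$, so $\{g_j(z) = 0\}$ has codimension exactly $1$ in $L_j \times X$; the fiber of $W_J \to Z$ above each $z$ thus has codimension $|J|$ in $\prod_i L_i$, giving $\dim W_J \leq \dim Z + \dim \prod_i L_i - |J| = \dim \prod_i L_i - 1$. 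Since $\prod_i L_i$ is irreducible (product of irreducible constructibles), the finite union of the projections of these $W_J$ lies in a proper closed subset; its complement $L^*$ is a nonempty Zariski open subset on which no $z \in Z$ has more than $n - \rho$ of the $g_i$ vanishing at $z$, i.e.\ $|I(z)| \geq \rho$ for every $z \in Z$.

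Combining the two bounds: for $(g_i) \in L^*$, $\nu \geq \nu^*$, and any $z \in Z^*$ with $(z,0) \in C$ under $r_./r_{.+} > \nu$, the lower bound $|I(z)| \geq \rho$ together with the upper bound forces $i^* \in I(z)$ (otherwise $|I(z)| \leq \rho - 1$) and $|I(z)| = \rho$ exactly -- precisely alternative~(2), with $i_1 = i^*$. In the absence of such a $z$, alternative~(1) holds, and the two cases are mutually exclusive by construction. The main obstacle is keeping $\nu^*$, $Z^*$, and $L^*$ mutually independent -- but $\nu^*$ and $Z^*$ from \Cref{prop:nnonzero}(2) depend only on $(X, f_1, \ldots, f_n, Z)$, while $L^*$ is produced by a geometric dimension count independent of $r_j$, so the same triple works uniformly for every $\nu \geq \nu^*$ and every $(g_i) \in L^*$.
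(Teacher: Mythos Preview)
Your proof is correct and follows essentially the same approach as the paper: construct $L^*$ so that at most $n-\rho$ of the $g_i$ vanish at any point of $Z$ (the paper states this as $Z \cap V(g_{i'_1},\ldots,g_{i'_{n-\rho+1}}) = \emptyset$ for all index sets of size $n-\rho+1$, which is the same condition), then combine with the upper bound from \Cref{prop:nnonzero}. Your incidence-variety dimension count is exactly what justifies the paper's bare assertion that this $L'$ is ``full-dimensional constructible,'' and your explicit case split on whether $i^* \in I(z)$ spells out what the paper leaves to the phrase ``due to \Cref{prop:nnonzero} and the construction of $L^*$.'' The separate handling of the degenerate cases via \Cref{prop:nnonzero}(1) is a harmless organizational choice---the paper absorbs these into the same $L^*$ and $Z^*$ without comment.
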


%\begin{thm} \label{prop:nnonzero-2}
%Let $\rho := \codim(Z) > 0$. Assume $\eta = n$ in the set up of \Cref{sec:distinguished-setup}, and each $L_i$ is base-point free. Then there are $\nu^* \in \rr$ and a nonempty Zariski open subset $L^*$ of $\prod_i L_i$ such that for each $(g_1, \ldots, g_n) \in L^*$, one (and only one) of the following holds:
%\begin{enumerate}
%\item either there are $\nu \geq \nu^*$ and a nonempty open subset $Z^*$ of $Z$ such that $C$ does not intersect $Z^* \times \{0\}$ under \eqref{condition:>nu},
%\item or for {\em every} $\nu \geq \nu^*$ and {\em every} nonempty open subset $Z^*$ of $Z$, there are $z \in Z^*$ and $r_1, \ldots, r_n \in \rr$ such that \eqref{condition:>nu} holds and $(z,0) \in C$. In this case there are $\rho$ choices for $i$, say $1 \leq i_1 < \cdots < i_\rho \leq n$, such that
%    \begin{enumerate}[resume]
%    \item $i_1 = i^*$, where $i^*$, as in \Cref{observation:ord}, is the smallest index such that $f_{i^*}$ is not identically zero, and
%    \item $g_i(z) \neq 0$ if and only if $i = i_j$ for some $j = 1, \ldots, \rho$.
%    \end{enumerate}
%\end{enumerate}
%\end{thm}

\begin{prorem}
Note that \Cref{prop:nnonzero-2} leaves open the possibility that some $(g_1, \ldots, g_n) \in L^*$ satisfies the first assertion, whereas some other $(g_1, \ldots, g_n) \in L^*$ satisfies the second. Moreover, it does {\em not} give any control on whether any of these options holds ``uniformly''. \Cref{thm:Z-existence} below provides a uniform version.
\end{prorem}

\begin{prorem}
It is possible (when $1 < \rho < n$) that the same $Z$ satisfies the second assertion of \Cref{prop:nnonzero-2} for more than one choices of $i_1, \ldots, i_\rho$ (see e.g.\ identity \eqref{eq:3-2:2:m:0} in \Cref{ex:3-2}).
\end{prorem}

\subsubsection{Proof of \Cref{prop:nnonzero-2}}
Let $L'$ be the subset of $\prod_i L_i$ consisting of all $(g_1, \ldots, g_n)$ such that $Z \cap V(g_{i'_1}, \ldots, g_{i'_{n-\rho+1}}) = \emptyset$ for all choices of $1 \leq i'_1 < \cdots < i'_{n-\rho+1} \leq n$. Then $L'$ is a full-dimensional constructible subset of $\prod_i L_i$, and therefore contains a nonempty Zariski open subset $L^*$ of $\prod_i L_i$. Now due to \Cref{prop:nnonzero} there is $\nu^* \in \rr$ and a nonempty open subset $Z^*$ of $Z$ such that the following holds for all $\nu \geq \nu^*$: if $z \in Z^*$ and $r_1, \ldots, r_n \in \rr$ are such that $(z,0) \in C$ and \eqref{condition:>nu} holds, then there are at most $\rho$ choices for $k$ such that $g_k(z) \neq 0$. Consequently, for each $(g_1, \ldots, g_n) \in L^*$ and $\nu \geq \nu^*$ there are two possible scenarios:
\begin{prooflist}
\item either $C$ does not intersect $Z^* \times \{0\}$ under \eqref{condition:>nu},
\item or there are $z \in Z^*$ and $r_1, \ldots, r_n \in \rr$ such that \eqref{condition:>nu} holds and $(z,0) \in C$.
\end{prooflist}
The first scenario is precisely the first possibility stated in \Cref{prop:nnonzero-2}, and under the second scenario, the second assertion of \Cref{prop:nnonzero-2} holds due to \Cref{prop:nnonzero} and the construction of $L^*$. \qed
%\end{proof}
%\begin{rem}
%For fixed $g_1, \ldots, g_n$ and an open subset $Z^*$ of a subvariety $Z$ of $X$, we say that ``$Z^*$ satisfies \eqref{condition:nonzero>nu}'' if there are $r_1, \ldots, r_n \in \rr$ and $z \in Z^*$ which satisfy \eqref{condition:nonzero>nu}.
%\end{rem}
%
%To avoid confusion we state it explicitly:
%\begin{align}
%\parbox{0.8\textwidth}{
%$(z,0) \in C$, and $g_{i_{j}}(z) \neq 0$ for all $j = 1, \ldots, \rho$, where $1 = i_1 < \cdots < i_\rho \leq n$.
%}
%\tag{$\clubsuit_1$}
%\label{condition:nonzero}
%\end{align}

\subsection{} \label{sec:nonzero:+}
In light of \Cref{prop:nnonzero-2}, from now on we consider conditions \eqref{condition:nonzero} and \eqref{condition:nonzero>nu} only in the special case $i_1 = i^*$. Before proceeding further, we explicitly state a somewhat technical, but straightforward, implication of the condition \eqref{condition:nonzero>nu} that was implicitly used several times in the proof of \Cref{prop:nnonzero}. Fix $i^* = i_1 < i_2 < \cdots < i_\rho \leq n$, a nonempty open subset $Z^*$ of $Z$ and a collection $\scrH^*$ of pairs $(h,i)$ such that $1 \leq i \leq n$, and $h$ is a regular function on an open subset of $X$ containing $Z^*$. We say that $\scrH^*, Z^*$ satisfy \eqref{nonzero:+} for $\mu^*, \nu^* \in \rr$ if the following holds:
\begin{align}
\parbox{0.85\textwidth}{
there are $\nu \geq \nu^*$, $r_1, \ldots, r_n \in \rr$ and a branch $B$ of $C = C(g_1, \ldots, g_n;r_1, \ldots, r_n)$ centered at $Z^* \times \{0\}$ such that \eqref{condition:nonzero>nu} holds, and in addition, $\ord_B(h) > \mu^* \ord_B(f_{i}) \geq \mu^* r_i\ord_B(t)$ for each $(h,i) \in \scrH^*$.
}
\tag{$\cancel{0}(\mu^*, \nu^*)$}
\label{nonzero:+}
\end{align}

\begin{lemma} \label{prop:nonzero:+j}
Fix $i^* = i_1 < i_2 < \cdots < i_\rho \leq n$ and an open subset $U^*$ of $X$ such that $U^* \cap Z \neq \emptyset$. Let $\scrH$ be a collection of pairs $(h, i)$ of $h \in \kk[U^*]$ and $1 \leq i \leq n$ such that
\begin{prooflist}
\item property \eqref{nonzero:+} holds with $\scrH^* = \scrH$ and $Z^* = U^* \cap Z$ for {\em each} $\mu^*, \nu^* \in \rr$;
\item there is $j$, $1 \leq j \leq \rho - 1$, such that $i \leq i_{j+1}$ for each $(h,i) \in \scrH$.
\end{prooflist}
Let $Y := V(h: (h,i) \in \scrH)$. Then
\begin{enumerate}
\item \label{nonzero:+j:notsubset} $Y \cap U^* \not\subseteq \bigcup_{j'>j}V(f_{i_{j'}})$.
\item \label{nonzero:+j:order} If $h^* \in \kk[U^*]$ is such that $h^*\prod_{j' > j}f_{i_{j'}}$ vanishes on $Y \cap U^*$, then for each $\mu^*, \nu^* \in \rr$, then $\scrH^* = \scrH \cup\{(h^*, i_{j+1})\}$ also satisfies \eqref{nonzero:+} with $Z^* = U^* \cap Z$.
    %; in particular, there is a branch $B$ of $C$ centered at $(U^* \cap Z) \times \{0\}$ such that
%\begin{align*}
%\ord_{B}(h) > \mu \ord_B(f_i)
%\end{align*}
%for each $(h,i) \in \scrH' \cup \{h^*\}$.
\end{enumerate}
\end{lemma}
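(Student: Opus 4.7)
The plan is to deduce both parts from two ingredients: the equality $\ord_B(f_{i_{j'}}) = r_{i_{j'}}\ord_B(t)$ for $j' = 1, \ldots, \rho$ from \Cref{observation:ord}, valid for any branch $B$ satisfying \eqref{condition:nonzero>nu}, together with elementary order inequalities obtained by applying $\ord_B$ to polynomial relations coming from the Nullstellensatz. The quantitative fact that drives the argument is that under \eqref{condition:>nu} with $\nu$ large, each ratio $r_{i_{j'}}/r_{i_{j+1}}$ with $j' > j+1$ is bounded by $1/\nu$, so $\sum_{j'>j} r_{i_{j'}}/r_{i_{j+1}}$ can be made smaller than, say, $2$.

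For part (1), I would argue by contradiction. If $Y \cap U^* \subseteq \bigcup_{j'>j} V(f_{i_{j'}})$, then $\prod_{j'>j} f_{i_{j'}}$ vanishes on $V(\scrH) \cap U^*$, so after shrinking $U^*$ to a principal open $U^{**}$ intersecting $Z$, the Nullstellensatz yields $(\prod_{j'>j} f_{i_{j'}})^m \in \langle h : (h,i) \in \scrH \rangle \kk[U^{**}]$ for some $m \geq 1$. Invoking hypothesis (i) with arbitrarily large $\mu^*, \nu^*$, I obtain a branch $B$ with $\ord_B(h) > \mu^* r_i \ord_B(t)$ for each $(h,i) \in \scrH$. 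Applying $\ord_B$ to the polynomial relation, using \Cref{observation:ord} on the left and $r_i \geq r_{i_{j+1}}$ on the right (which follows from $i \leq i_{j+1}$ and \eqref{condition:>nu}), the inequality collapses to $m\sum_{j'>j} r_{i_{j'}}/r_{i_{j+1}} > \mu^*$. Combined with the uniform bound on the left side this contradicts the arbitrariness of $\mu^*$.

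For part (2), I would run the dual computation. Given a target pair $\mu^{**}, \nu^{**}$, the hypothesis that $h^* \prod_{j'>j} f_{i_{j'}}$ vanishes on $Y \cap U^*$ yields, after shrinking $U^*$, a relation $(h^* \prod_{j'>j} f_{i_{j'}})^m \in \langle \scrH \rangle$ for some $m \geq 1$. I then apply hypothesis (i) with inflated parameters $\hat\mu := m(\mu^{**} + 3)$ and $\hat\nu$ large enough to force $\sum_{j'>j} r_{i_{j'}}/r_{i_{j+1}} < 2$. The resulting branch $B$ satisfies
\begin{align*}
m\ord_B(h^*) + m\sum_{j'>j} r_{i_{j'}}\ord_B(t) \geq \min_{(h,i) \in \scrH} \ord_B(h) > \hat\mu\,r_{i_{j+1}}\ord_B(t),
\end{align*}
and rearranging gives $\ord_B(h^*) > \mu^{**} r_{i_{j+1}}\ord_B(t) = \mu^{**}\ord_B(f_{i_{j+1}})$, which is exactly the extra inequality required to verify \eqref{nonzero:+} for $\scrH \cup \{(h^*, i_{j+1})\}$.

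The main obstacle I anticipate is a mild bookkeeping issue: the Nullstellensatz-derived relation holds only on the shrinking $U^{**} \subseteq U^*$, while hypothesis (i) only asserts the branch is centered somewhere in $U^* \cap Z \times \{0\}$. I would handle this by exploiting the existential quantification over $g_i \in L_i$ hidden in the definition of \eqref{nonzero:+}: since each $L_i$ is base-point free and $U^{**} \cap Z$ is a dense open subset of $Z$, one can arrange the choice of $g_i$ so that the branch's center lies in $U^{**} \cap Z$, making the polynomial relation applicable to $\ord_B$ and preserving the conclusion $Z^* = U^* \cap Z$ as stated in the lemma.
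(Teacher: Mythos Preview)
Your argument is correct and matches the paper's: both invoke the Nullstellensatz to write $(h^*\prod_{j'>j} f_{i_{j'}})^m$ in the ideal generated by the $h$'s, apply $\ord_B$, and combine with \Cref{observation:ord}. The paper differs only cosmetically---it bounds $\sum_{j'>j}\ord_B(f_{i_{j'}})$ by $(\rho-j)\,\ord_B(f_{i_{j+1}})$ (valid once $\nu\geq 1$) rather than by your sharper $\nu$-dependent estimate, and it derives assertion \eqref{nonzero:+j:notsubset} from assertion \eqref{nonzero:+j:order} by setting $h^*:=1$.

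One correction: the obstacle you anticipate is not actually present. The paper works directly in $\kk[U^*]$ without shrinking; a uniform exponent $m$ exists because $U^*$ is covered by finitely many principal affine opens of the affine variety $X$, and one takes the maximum of the exponents over this cover, so the relation holds in $\local{X}{z}$ for every $z\in U^*\cap Z$ with the same $m$. Your proposed remedy via base-point-freeness of the $L_i$ would not work in any case: the hypothesis~(i) is purely existential and gives no control over the location of the center beyond $U^*\cap Z$, and the $L_i$ themselves are not introduced until \Cref{sec:distinguished-setup}.
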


\begin{proof}
First we prove assertion \eqref{nonzero:+j:order}. By assumption (and Hilbert's Nullstellensatz) there is $m \geq 1$ such that $(h^*\prod_{j' > j}f_{i_{j'}})^m$ is in the ideal of $\kk[U^*]$ generated by all $h$ such that $(h,i) \in \scrH$ for some $i$. If $B$ is any branch $B$ of $C$ centered at $U^* \times \{0\}$ which satisfies \eqref{condition:nonzero>nu} for some $\nu \geq 1$, then
\begin{align*}
\ord_B(h^*)
    &\geq \frac{1}{m} (\min_{(h,i) \in \scrH} \ord_B(h) - m\sum_{j'>j} \ord_B(f_{i_{j'}}))
    \geq \frac{1}{m} (\min_{(h,i) \in \scrH} \ord_B(h) - m(\rho-j) \ord_B(f_{i_{j+1}}))
\end{align*}
Now, given $\mu, \nu$, choose $r_1, \ldots, r_n \in \rr$ and a branch $B$ of $C$ centered at $(U^* \cap Z) \times \{0\}$ such that \eqref{nonzero:+} holds for $\mu^* = m(\rho-j + \mu)$ and $\nu^* = \nu$. Then $\ord_B(h^*) > \mu\ord_B(f_{i_{j+1}})$, as required to complete the proof of assertion \eqref{nonzero:+j:order}. Now, if assertion \eqref{nonzero:+j:notsubset} is not true, then $\prod_{j'>j}f_{i_{j'}}$ vanishes on $Y \cap U^*$, and assertion \eqref{nonzero:+j:order} applied with $h^* := 1$ gives a contradiction. This completes the proof.
\end{proof}

\subsection{}
Our next result shows that any subvariety of $X$ not listed in \Cref{thm:Z} has zero ordered intersection multiplicity. %Assume $\eta = n$ and each $L_i$ is base-point free.
Let $Z$ be an irreducible subvariety of $X$. Assume at least one $f_i$ is not identically zero; as in \Cref{observation:ord}, let $i^*$ be the smallest such $i$. Fix $i^* = i_1 < \cdots < i_\rho \leq n$, where $\rho := \codim(Z)$ and define subvarieties $\dist{Z}{0}, \ldots, \dist{Z}{\rho}$ of $X$ as follows:
\begin{defnlist}
    \item $\dist{Z}{0} := X$,
\item for each $j = 1, \ldots, \rho-1$, $\dist{Z}{j}$ is the union of all irreducible components of $\dist{Z}{j-1} \cap V(f_{i_j})$ containing $Z$ which are not contained in $\bigcup_{j' = j+1}^\rho V(f_{i_{j'}})$, but are contained in $V(f_i)$ for each $i$, $i_j < i < i_{j+1}$.
\item for $j = \rho$,
\begin{align*}
\dist{Z}{\rho}
    &=
    \begin{cases}
    Z
        & \text{if}\ \dist{Z}{\rho-1} \neq \emptyset\ \text{and}\ Z \subseteq V(f_1, \ldots, f_n), \\
    \emptyset
        & \text{otherwise}.
    \end{cases}
\end{align*}
\end{defnlist}

\begin{lemma} \label{prop:nonzero}
If $\dist{Z}{j} = \emptyset$ for some $j = 1, \ldots, \rho$, then there is $\nu \in \rr$, a nonempty open subset $Z^*$ of $Z$, and a nonempty Zariski open subset $L^*$ of $\prod_i L_i$ such that no branch $B$ of $C = C(g_1, \ldots, g_n;r_1, \ldots, r_n)$ centered at $Z^* \times \{0\}$ satisfies \eqref{condition:nonzero>nu} for {\em any} $(g_1, \ldots, g_n) \in L^*$. In other words, for each $(g_1, \ldots, g_n) \in L^*$, and each choice of positive $r_1, \ldots, r_n$ such that $r_i/r_{i+1} > \nu$, $1 \leq i \leq n-1$, and each $z \in Z^*$ such that $(z,0) \in C$, there is $j$, $1 \leq j \leq \rho$, such that $g_{i_j}(z) = 0$.
\end{lemma}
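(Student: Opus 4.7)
The plan is to first dispose of the trivial case $Z \not\subseteq V(f_1, \ldots, f_n)$ by taking $Z^* := Z \setminus V(f_1, \ldots, f_n)$ and $L^* := \prod_i L_i$: any point of $C$ of the form $(z,0)$ forces $f_i(z) = \lim_{t \to 0} t^{r_i}g_i(z) = 0$ for every $i$, so no branch can be centered at $Z^* \times \{0\}$. Henceforth I would assume $Z \subseteq V(f_1, \ldots, f_n)$ and let $j_0 \in \{1, \ldots, \rho - 1\}$ be minimal with $\dist{Z}{j_0} = \emptyset$ (the case $j_0 = \rho$ reduces to $j_0 = \rho -1$ via the definition of $\dist{Z}{\rho}$ in this regime). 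The argument is by contradiction: negating the conclusion amounts, in the notation of \Cref{sec:nonzero:+}, to the statement that the empty collection $\scrH^* = \emptyset$ together with $Z \cap U$ satisfies \eqref{nonzero:+} for all $\mu^*, \nu^* \in \rr$ and every affine open $U \subseteq X$ meeting $Z$.

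Adapting the inductive construction in the proof of \Cref{prop:nnonzero}, I would build, for $k = 0, 1, \ldots, j_0$, shrinking open sets $U^*_k \subseteq X$ meeting $Z$, regular functions $h_1, \ldots, h_k \in \kk[U^*_k]$, and an irreducible subvariety $W_k$ of codimension $k$ containing $Z \cap U^*_k$, such that (i) $W_k$ is a component of $\dist{Z}{k} \cap U^*_k$, (ii) $W_k \subseteq V(h_1, \ldots, h_k)$, and (iii) $\scrH_k := \{(h_s, i_{s+1}) : 1 \le s \le k\}$ with $U^*_k \cap Z$ satisfies \eqref{nonzero:+} for all $\mu^*, \nu^*$. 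The base case $k = 0$ is immediate with $W_0 = X$. For the inductive step, condition (c) for $W_k$ gives $W_k \not\subseteq V(f_{i_{k+1}})$, so $W_k \cap V(f_{i_{k+1}})$ has pure codimension $k+1$. The argument of \Cref{nnonzero:h1-cap-fi2} (via \Cref{prop:zero}) lets one pick a component $W'_{k+1}$ of $W_k \cap V(f_{i_{k+1}})$ containing $Z$ that satisfies condition (c); one then enforces condition (d) by iteratively replacing $W'_{k+1}$ with a component of $W'_{k+1} \cap V(f_i)$ containing $Z$ whenever $W'_{k+1} \not\subseteq V(f_i)$ for some $i_{k+1} < i < i_{k+2}$, each replacement again verified via an application of \Cref{prop:zero} to the intermediate $f_i$'s. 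Finally $h_{k+1}$ is chosen as in \Cref{prop:nnonzero}, vanishing on bad components of $W_k \cap V(f_{i_{k+1}})$ but not on $W_{k+1}$, and \Cref{prop:nonzero:+j}(\ref{nonzero:+j:order}) promotes $\scrH_k$ to $\scrH_{k+1}$. At step $k = j_0$, property (i) forces $\dist{Z}{j_0}$ to have a component, contradicting the hypothesis.

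The main obstacle is the handling of condition (d): the intermediate indices $i$ with $i_{k+1} < i < i_{k+2}$ are not part of $\scrH_k$, so the corresponding vanishings of $f_i$ must be accounted for separately. The key observation is that on any branch $B$ satisfying \eqref{condition:nonzero>nu} one has $\ord_B(f_i) \ge r_i \ord_B(t)$ by \Cref{observation:zero}, and under \eqref{condition:>nu} with $\nu$ sufficiently large $r_i \ord_B(t) \gg r_{i_{k+2}} \ord_B(t)$; this lets intermediate $f_i$'s play the role of auxiliary $h_s$'s when applying \Cref{prop:nonzero:+j}(\ref{nonzero:+j:order}), at the cost of further shrinking $U^*_k$ and enlarging the $\nu$-threshold at each step. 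Provided the cumulative $\mu^*, \nu^*$ thresholds are tracked carefully (exactly as in the final paragraph of the proof of \Cref{prop:nnonzero}), the induction closes after at most $j_0$ steps and delivers the desired contradiction.
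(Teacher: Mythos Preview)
Your proposal has a genuine gap in the inductive step, specifically in enforcing condition (d) (the requirement that $W_{k+1} \subseteq V(f_i)$ for every intermediate index $i_{k+1} < i < i_{k+2}$), and the paper's proof takes a fundamentally different route precisely to avoid this difficulty.

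The problem is the following. Your ``iterative replacement'' of $W'_{k+1}$ by a component of $W'_{k+1} \cap V(f_i)$ strictly increases the codimension each time it is applied, so the resulting subvariety is no longer a codimension-$(k+1)$ component of $\Phi_Z^{(k)} \cap V(f_{i_{k+1}})$, and hence cannot be a component of $\Phi_Z^{(k+1)}$. Your alternative suggestion---adding the pairs $(f_i, i_{k+2})$ to $\scrH_k$ and invoking \Cref{prop:nonzero:+j}---does show that $Y := V(h_1, \ldots, h_k) \cap V(f_{i_{k+1}}) \cap \bigcap_{i_{k+1} < i < i_{k+2}} V(f_i)$ is not contained in $\bigcup_{j'>k+1} V(f_{i_{j'}})$, but $Y$ again has codimension strictly larger than $k+1$ in general, and nothing in the argument forces a codimension-$(k+1)$ component of $W_k \cap V(f_{i_{k+1}})$ containing $Z$ to sit inside all the intermediate $V(f_i)$. (There is a related, more minor, gap in your treatment of condition (c): the conclusion of \Cref{prop:nonzero:+j} concerns $V(h_1, \ldots, h_k)$, not $W_k$, and your hypothesis (ii) only gives $W_k \subseteq V(h_1, \ldots, h_k)$, so the component you extract need not lie in $W_k$ at all.)

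The paper resolves condition (d) by an entirely different mechanism: it first uses de~Jong's alteration theorem to reduce to the case where $\bigcup_i V(f_i)$ is the support of a strict normal crossing divisor (\Cref{nonzero:normal-reduction}). In that setting each $f_i$ is, up to a unit, a monomial in local coordinates $x_1, \ldots, x_n$, and one can explicitly track indices $k_1, \ldots, k_\rho$ such that $x_{k_j}$ divides $f_{i_j}$ but no $f_{i_{j'}}$ with $j' > j$ (\Cref{claim:nonzero:nozero'}). The key point is \Cref{claim:nonzero:nozero''}: for every intermediate $i$ with $i_j < i < i_{j+1}$, some $x_{k_{j'}}$ with $j' \le j$ must divide $f_i$ (otherwise a branch-order comparison gives $\ord_B(f_i) < r_i \ord_B(t)$, contradicting \Cref{observation:zero}). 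This forces $V(x_{k_1}, \ldots, x_{k_j}) \subseteq V(f_i)$ automatically, which is exactly condition (d). The normal-crossing structure is what makes this divisibility argument possible; without it, there is no obvious substitute, and the paper itself notes (in the footnote to the opening paragraph of \Cref{sec:distinguished}) that avoiding de~Jong's theorem would make the argument substantially more involved.
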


\begin{proof}
We prove the lemma in several steps. First we reduce it to the case that the union of the set of zeroes of the $f_i$ is the support of a {\em strict normal crossing divisor}\footnote{An effective Cartier divisor $D$ on a nonsingular variety $X$ is a {\em strict normal crossing divisor} if near each $x \in \supp(D)$, $D$ can be expressed as a sum of divisors of regular functions which collectively form a part of a regular system of parameters in $\local{X}{x}$.} using de Jong's {\em alterations}\footnote{We use the following weak version of de Jong's theorem on ``alterations'' \cite[Theorem 4.1]{dejong}: Let $Z$ be a proper closed subset of a variety $X$. There is a surjective generically finite morphism $\pi: X' \to X$ such that $X'$ is nonsingular, and $\pi^{-1}(Z)$ is the support of a strict normal crossing divisor on $X'$.}\cite[Theorem 4.1]{dejong}.

\begin{proprop} \label{nonzero:normal-reduction}
Let $\pi: X' \to X$ be a surjective morphism between varieties of the same dimension $n$. Assume \Cref{prop:nonzero} holds for the ordered intersection of $V(\pi^*(f_i))$, $i = 1, \ldots, n$, on $X'$. Then \Cref{prop:nonzero} holds for the ordered intersection of $V(f_i)$, $i = 1, \ldots, n$, on $X$.
\end{proprop}

\begin{proof}
It suffices to consider the case that $Z \subseteq V(f_1, \ldots, f_n)$, and $\Phi^{(j)}_Z$, as defined in \Cref{prop:nonzero}, is empty for some $j = 1, \ldots, \rho - 1$, where $\rho := \codim(Z)$.
%Consider an alteration $\pi:X' \to X$ such that $\bigcup_i V(\pi^*(f_i))$ with the reduced scheme structure is a strict normal crossing divisor on $X'$. By assumption \Cref{prop:nonzero} holds for the ordered intersection of $V(\pi^*(f_i))$, $i = 1, \ldots, n$, on $X'$.
Due to \Cref{prop:nnonzero-2} then it suffices to show that there is no subvariety $Z'$ of $\pi^{-1}(Z)$ such that
\begin{prooflist}
\item the codimension of $Z'$ in $X'$ equals the codimension $\rho$ of $Z$ in $X$,
\item $\pi(Z') = Z$,
\item $\dist{Z'}{j}$, as defined in \Cref{prop:nonzero}, are nonempty for each $j = 1, \ldots, \rho$.
\end{prooflist}
Assume to the contrary that there is a subvariety $Z'$ of $\pi^{-1}(Z)$ satisfying the above properties. Note the following (obvious) observation:
\begin{prooflist} [resume]
\item \label{nonzero:normal-reduction:obvious} given a subset $W'$ of $X'$, $W' \subseteq V(\pi^*(f_i))$ if and only if $\pi(W') \subseteq V(f_i)$. In particular, if $W'$ is irreducible and $W' \not \subseteq V(\pi^*(f_i))$, and $\dim(W') > \dim(\pi(W'))$, then
    \begin{prooflist}
    \item either $W' \cap V(\pi^*(f_i)) = \emptyset$,
    \item or $\dim(\pi(W') \cap V(f_i)) = \dim(\pi(W')) - 1 < \dim(W') - 1 = \dim(W' \cap V(\pi^*(f_i)))$.
    \end{prooflist}
\end{prooflist}
It follows from the definition of $\Phi^{(\cdot)}_\cdot$ and applications of the preceding observation that for each $j = 1, \ldots, \rho$,
\begin{prooflist} [resume]
\item $\pi(\Phi^{(j)}_{Z'})$ is the union of $\Phi^{(j)}_Z$ and a (possibly empty) subvariety $W_j$ of codimension $> j$ in $X$. %Indeed, for $j = 1$, it is immediate from the definition of $\Phi^{(\cdot)}_\cdot$ that for each irreducible component of $\Phi^{(1)}_{Z'}$, its image under $\pi$ is either an irreducible component of $\Phi^{(1)}_Z$, or a subvariety of codimension $\geq 2$ in $X$.
\end{prooflist}
Since $\Phi^{(\rho-1)}_Z = \emptyset$ by assumption, it follows that $\pi(\Phi^{(\rho-1)}_{Z'}) = Z \subseteq V(f_{i_\rho})$. However, then observation \ref{nonzero:normal-reduction:obvious} implies that $\Phi^{(\rho-1)}_{Z'} \subseteq V(\pi^*(f_{i_\rho}))$. This contradicts the definition of $\Phi^{(\rho-1)}_\cdot$, and therefore, proves \Cref{nonzero:normal-reduction}.
\end{proof}

\subsubsection{} \label{nonzero:assumptions}
We go back to the proof of \Cref{prop:nonzero}. It suffices to consider the case that
\begin{prooflist}
\item \label{nonzero:assumption:subset} $Z \subseteq V(f_1, \ldots, f_n)$, for otherwise \Cref{prop:nonzero} is obvious with $Z^* := Z \setminus V(f_1, \ldots, f_n)$,
\item \label{nonzero:assumption:codim} $\rho := \codim(Z) > 1$, since \Cref{prop:nonzero} is clearly true if $\rho \leq 1$,
\item \label{nonzero:assumption:normal} and $\bigcup_i V(f_i)$ with the reduced scheme structure is a strict normal crossing divisor (due to \Cref{nonzero:normal-reduction}).
\end{prooflist}
We will prove the contrapositive of \Cref{prop:nonzero}. In particular, we assume that
\begin{prooflist}[resume]
\item \label{nonzero:assumption:order} for each $\nu \in \rr$, each nonempty open subset $Z^*$ of $Z$, and each nonempty Zariski open subset $L^*$ of $\prod_i L_i$, there are $r_1, \ldots, r_n \in \rr$, $(g_1, \ldots, g_n) \in L^*$, and a branch $B$ of $C = C(g_1, \ldots, g_n;r_1, \ldots, r_n)$ centered at $Z^* \times \{0\}$ such that \eqref{condition:nonzero>nu} holds.
\end{prooflist}
Under these assumptions, we will show that $\dist{Z}{j}$ is nonempty for each $j = 1, \ldots, \rho$.

\subsubsection{}
By assumption \ref{nonzero:assumption:normal}, there is an open subset $U$ of $X$ with local system of coordinates $(x_1, \ldots, x_n)$ such that $U \cap Z \neq \emptyset$ and each $f_i$ which is not identically zero is an invertible function on $U$ multiplied by a monomial in the $x_i$. Define subvarieties $\distprime{Z}{j}$ of $X$ containing $\dist{Z}{j}$, $j = 0, \ldots, \rho$, as follows:
\begin{defnlist}
    \item $\distprime{Z}{0} := X$,
\item for each $j = 1, \ldots, \rho$, $\distprime{Z}{j}$ is the union of all irreducible components of $\distprime{Z}{j-1} \cap V(f_{i_j})$ containing $Z$ which are not contained in $\bigcup_{j' = j+1}^\rho V(f_{i_{j'}})$.
\end{defnlist}

\begin{proclaim} \label{claim:nonzero:nozero'}
Under the assumptions of \Cref{nonzero:assumptions}, $\distprime{Z}{j} \neq \emptyset$, $j = 0, \ldots, \rho$. There are indices $k_1, \ldots, k_\rho$ and a nonempty open subset $U'$ of $U$ such that
\begin{enumerate}
\item $U' \cap Z \neq \emptyset$,
\item for each $j$,
\begin{enumerate}
\item $x_{k_j}$ divides $f_{i_j}$, but does not divide $f_{i_{j'}}$ for any $j' > j$,
\item $\distprime{Z}{j} \cap U' \supseteq V(x_{k_1}, \ldots, x_{k_j}) \cap U' \neq \emptyset$,
\end{enumerate}
\item In particular,  $\distprime{Z}{\rho} \cap U' = V(x_{k_1}, \ldots, x_{k_\rho}) \cap U' \neq \emptyset$.
\end{enumerate}
\end{proclaim}

\begin{proof}
Write $f_{i_1} = u_{i_1}\prod_k x_k^{\alpha_k}$, where $u_{i_1}$ is invertible on $U$. Let $f'_{i_1}$ be the product of all $x_k^{\alpha_k}$ such that $x_k$ does {\em not} divide $f_{i_j}$ for any $j > 1$. Due to assumption \ref{nonzero:assumption:order} of \Cref{nonzero:assumptions}, we may apply \Cref{prop:nonzero:+j} with $j = 1$, $\scrH = \{(f_{i_1}, i_2)\}$ and $h^* = f'_{i_1}$ to obtain that property \eqref{nonzero:+} is satisfied for all $\mu^*, \nu^*$ with $\scrH^* = (f'_{i_1}, i_2)$ and $Z^* = Z \cap U$; in particular, $V(f'_{i_1}) \cap U \neq \emptyset$. Now assume we have inductively defined monomials $f'_{i_1}, \ldots, f'_{i_j}$ in the $x_i$ such that
\begin{prooflist}
\item $V(f'_{i_1}, \ldots, f'_{i_j}) \cap U \neq \emptyset$,
\item each $f'_{i_k}$ divides $f_{i_k}$, and is relatively prime to $\prod_{j' > k} f_{i_{j'}}$,
\item property \eqref{nonzero:+} is satisfied for all $\mu^*, \nu^*$ with $\scrH^* = \{(f'_{i_1}, i_2), \ldots, (f'_{i_j}, i_{j+1})\}$ and $Z^* = Z \cap U$.
\end{prooflist}
If $j = \rho - 1$, then we define $f'_{i_{j+1}} := f_{i_\rho}$, and stop. Otherwise, write $f_{i_{j+1}} = u_{i_{j+1}}\prod_k x_k^{\alpha'_k}$, where $u_{i_{j+1}}$ is invertible on $U$, and define $f'_{i_{j+1}}$ to be the product of all $x_k^{\alpha'_k}$ such that $x_k$ does {\em not} divide $f_{i_{j'}}$ for any $j' > j+1$. It is then straightforward to see by an application of \Cref{prop:nonzero:+j} for $j + 1$ with $\scrH = \{(f'_{i_1}, i_2), \ldots, (f'_{i_j}, i_{j+1})\}$ and $h^* = f'_{i_{j+1}}$ that the process can be continued to the next step with $f'_{i_{j+1}}$. This implies in particular that $V(f'_{i_1}, \ldots, f'_{i_\rho})$ is a nonempty union of ``coordinate subspaces'' of codimension $\rho$, i.e.\ varieties of the form $x_{k_1} = \cdots = x_{k_\rho} = 0$ for $1 \leq k_1 < \cdots k_\rho \leq n$. Since $(x_1, \ldots, x_n)$ is a local system of coordinates on $U$, and $Z$ is irreducible of codimension $\rho$, there is an open subset $U'$ of $U$ such that $U' \cap Z$ is nonempty and equals the intersection with $U'$ of precisely one such coordinate subspace. All assertions of the claim follow from this observation.
\end{proof}

Consider $k_1, \ldots, k_\rho$ from \Cref{claim:nonzero:nozero'}. Pick $j$, $1 \leq j \leq \rho - 1$, and $i$ such that $i_j < i < i_{j+1}$.

\begin{proclaim} \label{claim:nonzero:nozero''}
There is $j' \leq j$ such that $x_{k_{j'}}$ divides $f_i$.
\end{proclaim}

\begin{proof}
Write $f_i = u_i \prod_k x_k^{\alpha_k}$ where $u_i$ is invertible on $U$. Assume the claim does not hold, so that $\alpha_{k_{j'}} = 0$ for all $j' \leq j$. Since $(x_1, \ldots, x_n)$ form a local system of coordinates everywhere on $U$, it follows that $Z^* := (Z \cap U) \setminus V(x_k: k \neq k_{j'}$ for any $j')$ is nonempty. If $r_1, \ldots, r_n$ and $B$ satisfy property \ref{nonzero:assumption:order} of \Cref{nonzero:assumptions} for $Z^*$ with $\nu \geq 1$, then
\begin{align*}
\ord_B(f_i) &= \sum_{j' > j}\alpha_{k_{j'}}\ord_B(x_{k_{j'}})
    \leq \sum_{j' > j}\alpha_{k_{j'}}\ord_B(f_{i_{j'}})
    \leq (\rho - j)(\max_{j'=j+1}^\rho \alpha_{k_{j'}})r_{i_{j+1}}\ord_B(t)
\end{align*}
However, then $\ord_B(f_i) < r_i\ord_B(t)$ for $\nu \gg 1$, contradicting property \eqref{condition:>nu}. This proves the claim.
\end{proof}

If $U'$ is as in \Cref{claim:nonzero:nozero'}, then $\dist{Z}{j} \cap U' \supseteq V(x_{k_1},\ldots, x_{k_j}) \cap U' \neq \emptyset$ for each $j = 1, \ldots, \rho$ (\Cref{claim:nonzero:nozero',claim:nonzero:nozero''}). This completes the proof of \Cref{prop:nonzero}.
\end{proof}

\subsection{} \label{sec:Z}
We now show how \Cref{prop:nonzero} implies that the ``ordered intersection multiplicity'' can be nonzero for at most finitely many subvarieties of $X$. %
As in \Cref{observation:ord}, let $i^*$ be the smallest index such that $f_{i^*}$ is not identically zero. For each $i^* = i_1 < \cdots < i_\rho \leq n$, where $1\leq \rho \leq n := \dim(X)$, let $Z^{(j)}_{i_1, \ldots, i_\rho}$, $0 \leq j \leq \rho$, be as in \Cref{thm:Z}, i.e.\
\begin{defnlist}
\item $Z^{(0)}_{i_1, \ldots, i_\rho} = X$,
\item for $1 \leq j \leq \rho-1$, $Z^{(j)}_{i_1, \ldots, i_\rho}$ is the union of all irreducible components of $Z^{(j-1)}_{i_1, \ldots, i_\rho} \cap \supp(F_{i_j})$ which are not contained in $\bigcup_{j' >j} \supp(F_{i_{j'}})$, but are contained in $\supp(F_i)$ for each $i$, $i_j < i < i_{j+1}$,
\item $Z^{(\rho)}_{i_1, \ldots, i_\rho}$ is the union of all irreducible components of $Z^{(\rho - 1)}_{i_1, \ldots, i_\rho} \cap \supp(F_{i_\rho})$ which are contained in $\bigcap_{i=1}^n \supp(F_i)$.
\end{defnlist}
We write $\scrZ^{(\rho)}$ for the union of $Z^{(\rho)}_{i_1, \ldots, i_\rho}$ over all choices of $i_1, \ldots, i_\rho$.

\begin{thm} \label{thm:Z-existence}
Assume we are in the set up of \Cref{sec:distinguished-setup}. %Assume $\eta = n$ and each $L_i$ is base-point free.
\begin{enumerate}
\item \label{Z-existence:open} For each $\rho$ and each irreducible component $\scrZ^{(\rho)}_k$ of $\scrZ^{(\rho)}$, choose a nonempty open subset $\scrZ^{(\rho, *)}_k$ of $\scrZ^{(\rho)}_k$. Then there is $\nu \in \rr$, and a nonempty open subset $L^*$ of $\prod_i L_i$ such that under \eqref{condition:>nu} for each $(g_1, \ldots, g_n) \in L^*$ and each point $(z,0)$ on $C$ at $t = 0$:
\begin{enumerate}
\item \label{Z-existence:location} $z \in \bigcup_{\rho, k} \scrZ^{(\rho, *)}_k$,
\item \label{Z-existence:nonzero} if $z \in \scrZ^{(\rho, *)}_k$, then there are $i^* =i_1 < \cdots < i_\rho \leq n$, such that $\scrZ^{(\rho)}_k$ is an irreducible component of $Z^{(\rho)}_{i_1, \ldots, i_{\rho}}$, and in addition, $g_i(z) \neq 0$ if and only if $i = i_j$ for some $j$.
\end{enumerate}
\item \label{Z-existence:empty} In particular, given an irreducible subvariety $Z$ of $X$, if $Z$ is not an irreducible component of any $\scrZ^{(\rho)}$, then there is $\nu \in \rr$, a nonempty open subset $Z^*$ of $Z$ and a nonempty open subset $L^*$ of $\prod_i L_i$ such that under \eqref{condition:>nu}, $C \cap (Z^* \times \{0\}) = \emptyset$ for each $(g_1, \ldots, g_n) \in L^*$.
\end{enumerate}
\end{thm}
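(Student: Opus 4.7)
The plan is to prove \eqref{Z-existence:open} by Noetherian induction on closed subsets of $V_0 := V(f_1, \ldots, f_n)$, and then derive \eqref{Z-existence:empty} as a corollary. I would first observe that any $(z, 0) \in C$ forces $z \in V_0$: along a branch $(x_a, a) \in C^*$ approaching $(z, 0)$ one has $f_i(x_a) = a^{r_i} g_i(x_a) \to 0$, so $f_i(z) = 0$ for each $i$. Thus only centers inside $V_0$ need to be controlled, and $V_0$ is Noetherian.

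The key algebraic ingredient identifies the distinguished components: if $W \subseteq V_0$ is irreducible of codimension $\rho$ and the nonzero alternative of \Cref{prop:nnonzero-2} is realized for $W$ with some indices $i^* = i_1 < \cdots < i_\rho$, then $W$ is an irreducible component of $Z^{(\rho)}_{i_1, \ldots, i_\rho}$. I would prove this by induction on $j$. The contrapositive of \Cref{prop:nonzero} forces $\dist{W}{j} \neq \emptyset$ for every $j$, and a comparison of the two recursions shows that every irreducible component of $\dist{W}{j}$ is a component of $Z^{(j)}_{i_1, \ldots, i_\rho}$ containing $W$. At the last step, since $W \subseteq V_0 \subseteq V(f_{i_\rho})$ while $\dist{W}{\rho-1}$ has pure codimension $\rho - 1$ with no component in $V(f_{i_\rho})$ (by its own defining condition), $W$ must be an irreducible component of $\dist{W}{\rho-1} \cap V(f_{i_\rho})$, hence of $Z^{(\rho-1)}_{i_1, \ldots, i_\rho} \cap V(f_{i_\rho})$, and then of $Z^{(\rho)}_{i_1, \ldots, i_\rho}$ because $W \subseteq \bigcap_i V(f_i)$.

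The Noetherian induction then runs as follows. For a closed $V \subseteq V_0$ with irreducible components $V_1, \ldots, V_s$, apply \Cref{prop:nnonzero-2} to each $V_j$ to obtain nonempty opens $V^*_j \subseteq V_j$ and $L^*_j \subseteq \prod_i L_i$. In Case A, when $V_j$ is an irreducible component of some $\scrZ^{(\rho)}_k$, I shrink $V^*_j$ to $V^*_j \cap \scrZ^{(\rho, *)}_k$, which is nonempty because both are opens in the irreducible $V_j$. In Case B, when $V_j$ is not a component of any $\scrZ^{(\rho)}$, the key ingredient above rules out the nonzero alternative of \Cref{prop:nnonzero-2} for every index tuple $\vec{i}$ of length $\codim V_j$; applying \Cref{prop:nonzero} to each such $\vec{i}$ and intersecting the finitely many resulting opens (using irreducibility of $\prod_i L_i$ to keep the intersections nonempty) shrinks $V^*_j$ and $L^*_j$ so that the nonzero alternative is killed for every $\vec{i}$, leaving only the alternative that $C$ does not meet $V^*_j \times \{0\}$ at all. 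The residual closed set $V \setminus \bigcup_j V^*_j$ is a proper closed subset of $V$ (each $V^*_j$ is nonempty open in the irreducible $V_j$), so the inductive hypothesis applies; taking the maximum of the finitely many $\nu$-values and intersecting the $L^*$-opens closes the induction. Applied at $V = V_0$, this yields \eqref{Z-existence:open}. Assertion \eqref{Z-existence:empty} then follows by choosing each $\scrZ^{(\rho, *)}_k$ disjoint from $Z$ (possible since $Z$ is properly contained in any irreducible $\scrZ^{(\rho)}_k$ that contains it) and invoking \eqref{Z-existence:open}; if $Z \not\subseteq V_0$, take $Z^* := Z \setminus V_0$ and any $\nu, L^*$.

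The main obstacle I anticipate is the key algebraic ingredient—carefully matching the ``through-$W$'' inductive definition of $\dist{W}{j}$ with the unfiltered $Z^{(j)}_{i_1, \ldots, i_\rho}$, and especially handling the asymmetry at the final step $j = \rho$ where the defining condition shifts from ``not contained in $\bigcup_{j' > j} V(f_{i_{j'}})$'' to ``contained in $\bigcap_i V(f_i)$''. Once that ingredient is in place, the Noetherian induction amounts to bookkeeping of finitely many opens and of the resulting thresholds $\nu$.
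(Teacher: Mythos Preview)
Your approach is essentially the paper's: Noetherian descent on closed subsets of $V(f_1,\ldots,f_n)$, applying \Cref{prop:nnonzero-2} and \Cref{prop:nonzero} to each irreducible component, with the comparison between $\dist{W}{j}$ and $Z^{(j)}_{i_1,\ldots,i_\rho}$ as the bridge, and deducing \eqref{Z-existence:empty} from \eqref{Z-existence:open} by choosing the $\scrZ^{(\rho,*)}_k$ to avoid $Z$. The paper organizes the descent slightly differently (first peeling off the $\scrZ^{(\rho,*)}_k$, then treating the residual), but the content is the same.

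There is one gap. In Case~A you only shrink $V^*_j$ into $\scrZ^{(\rho,*)}_k$; that secures \eqref{Z-existence:location} but not \eqref{Z-existence:nonzero}. Your ``key ingredient'' as stated needs, as hypothesis, the \emph{universal} failure of the conclusion of \Cref{prop:nonzero} (for all $\nu$, $Z^*$, $L^*$ there is a branch\ldots), and a single occurrence of the second alternative of \Cref{prop:nnonzero-2} does not give that. What is actually needed in Case~A is the same move you already make in Case~B, but applied only to the ``wrong'' tuples: for each $\vec{i}$ of length $\codim V_j$ for which $V_j$ is \emph{not} a component of $Z^{(\rho)}_{\vec{i}}$, your comparison of recursions shows some $\dist{V_j}{l}$ (computed with respect to that $\vec{i}$) is empty, so \Cref{prop:nonzero} supplies opens in $V_j$ and in $\prod_i L_i$ on which that particular $\vec{i}$ cannot occur. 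Intersecting those into $V^*_j$ and $L^*_j$ (and enlarging $\nu$) forces the tuple arising from \Cref{prop:nnonzero-2} at any $z\in V^*_j$ with $(z,0)\in C$ to be one of the ``right'' ones. This extra pass over the wrong tuples is precisely what the paper carries out separately in its proof of \eqref{Z-existence:nonzero}.
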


\begin{prorem} \label{Z-existence:zero-remark}
In the case that $Z$ satisfies (respectively, does not satisfy) the conclusion of assertion \eqref{Z-existence:empty} of \Cref{thm:Z-existence}, we say that the ``ordered intersection multiplicity of $f_1, \ldots, f_n$ at $Z$ is zero (respectively, nonzero).''
\end{prorem}

\subsubsection{Proof of assertion \eqref{Z-existence:empty} of \Cref{thm:Z-existence}}
The second assertion of \Cref{thm:Z-existence} follows from the first. Indeed, given $Z$ as in assertion \eqref{Z-existence:empty}, apply assertion \eqref{Z-existence:open} with
\begin{align*}
\scrZ^{(\rho, *)}_k &:=
    \begin{cases}
      \scrZ^{(\rho)}_k \setminus Z
            & \text{if}\  \scrZ^{(\rho)}_k \supsetneq Z,\\
      \scrZ^{(\rho)}_k
            & \text{otherwise}.
    \end{cases}
\end{align*}
Then assertion \eqref{Z-existence:empty} holds with $Z^*$ being the complement in $Z$ of the union of all $\scrZ^{(\rho)}_k$ such that $\scrZ^{(\rho)}_k \not\supseteq Z$.

\subsubsection{Proof of assertion \eqref{Z-existence:location} of \Cref{thm:Z-existence} - step 1} \label{Z-existence:location:proof:1}
Starting with $Z'_0 := V(f_1, \ldots, f_n)$, inductively define $Z'_i$, $i \geq 1$, as follows: for every irreducible component $Z'_{i-1;j}$ of $Z'_{i-1}$, let
\begin{align*}
Z''_{i-1;j}
    &:=
    \begin{cases}
    Z'_{i-1;j} \setminus \scrZ^{(\rho,*)}_k
        & \text{if}\ Z'_{i-1;j} = \scrZ^{(\rho)}_k,\\
    Z'_{i-1;j}
        & \text{if there are no $\rho, k$ such that}\ Z'_{i-1;j} = \scrZ^{(\rho)}_k.
    \end{cases}
\end{align*}
Then define $Z'_i$ to be the union of $Z''_{i-1;j}$ over all $j$. This process can be repeated only finitely many times, and at the end we arrive at $Z'_{j_1}$ such that
\begin{prooflist}
\item \label{Z-existence:irr} no irreducible component of $Z'_{j_1}$ is an irreducible component of some $\scrZ^{(\rho)}$, and
\item $V(f_1, \ldots, f_n) = Z'_{j_1} \cup \bigcup_{\rho,k} \scrZ^{(\rho,*)}_k$.
\end{prooflist}
Now for each irreducible component $Z'_{j_1;k}$ of $Z'_{j_1}$, apply \Cref{prop:nonzero} with $Z = Z'_{j_1;k}$ and {\em every} possible choice of $i_1, \ldots, i_\rho$, where $\rho := \codim(Z'_{j_1;k})$. Due to property \ref{Z-existence:irr} this yields $\nu'_{j_1;k} \in \rr$ and nonempty open subsets $Z'^*_{j_1;k}$ of $Z'_{j_1;k}$ and $L'^*_{j_1;k}$ of $\prod_i L_i$ such that for each such choice of $i_1, \ldots, i_\rho$, each $(g_1, \ldots, g_n) \in L'^*_{j_1;k}$ and each $r_1, \ldots, r_n$ such that \eqref{condition:>nu} holds with $\nu = \nu'_{j_1;k}$, no $z \in Z'^*_{j_1;k}$ satisfies \eqref{condition:nonzero}. It then follows (due to \Cref{prop:nnonzero-2}) that there are $\nu''_{j_1;k} \geq \nu'_{j_1;k}$ and nonempty open subsets $Z''^*_{j_1;k}$ of $Z'^*_{j_1;k}$ and $L''^*_{j_1;k}$ of $L'^*_{j_1;k}$ such that $(Z''^*_{j_1;k} \times \{0\}) \cap C = \emptyset$ under \eqref{condition:>nu} with $\nu = \nu''_{j_1;k}$ for each $(g_1, \ldots, g_n) \in L''^*_{j_1;k}$. Define
\begin{align*}
Z''^{1,*} &:= \bigcup_k Z''^*_{j_1;k} \\
Z'^2_0 &:= \bigcup_k (Z'_{j_1;k} \setminus Z''^*_{j_1;k})
\end{align*}

\subsubsection{Proof of assertion \eqref{Z-existence:location} of \Cref{thm:Z-existence} - step 2}
Now, as in \Cref{Z-existence:location:proof:1}, for $i \geq 1$, inductively define $Z'^2_i$ from $Z'^2_{i-1}$ by taking each irreducible component $Z'^2_{i-1;j}$ of $Z'^2_{i-1}$ which equals $\scrZ^{(\rho)}_k$ for some $\rho, k$, and replacing it by $Z'^2_{i-1;j} \setminus \scrZ^{(\rho,*)}_k$. This process stops as above with $Z'^2_{j_2}$ such that
\begin{prooflist}
\item no irreducible component of $Z'^2_{j_2}$ is an irreducible component of any $\scrZ^{(\rho)}$, and
\item $V(f_1, \ldots, f_n) = Z'^2_{j_2} \cup Z'^{1,*} \cup \bigcup_{\rho, k} \scrZ^{(\rho, *)}_k$.
\end{prooflist}
As in \Cref{Z-existence:location:proof:1}, we apply \Cref{prop:nonzero} and \Cref{prop:nnonzero-2} to each irreducible component $Z'^2_{j_2;k}$ of $Z'^2_{j_2}$ to obtain a nonempty open subset $Z''^{2,*}_{j_2;k}$ of $Z'^2_{j_2;k}$ and $L''^{2,*}_{j_2;k}$ of $\prod_i L_i$ such that under \eqref{condition:>nu} for some $\nu \gg 1$, $(Z''^{2,*}_{j_2;k} \times \{0\}) \cap C = \emptyset$ for each $(g_1, \ldots, g_n) \in L''^{2,*}_{j_2;k}$. Define
\begin{align*}
Z''^{2,*} &:= \bigcup_k Z''^{2,*}_{j_2;k} \\
Z'^3_0 &:= \bigcup_k (Z'^2_{j_2;k} \setminus Z''^{2,*}_{j_2;k})
\end{align*}
and continue as above. Since $\dim(Z'^{i+1}_0) < \dim(Z'^{i})$ for each $i$, this process stops after finitely many steps and we end up with a decomposition
\begin{align*}
V(f_1, \ldots, f_n)
    &= Z''^{1,*} \cup \cdots \cup Z''^{j,*} \cup \bigcup_{\rho, k} \scrZ^{(\rho, *)}_k
\end{align*}
and an open subset $L^* := \bigcap_{i,k} L''^{i,*}_{j_i;k}$ of $\prod_i L_i$ such that under \eqref{condition:>nu} for a sufficiently large $\nu$,
\begin{align*}
(\bigcup_i Z''^{i,*} \times \{0\}) \cap C = \emptyset
\end{align*}
for each $(g_1, \ldots, g_n) \in L^*$. This immediately implies assertion \eqref{Z-existence:location} of \Cref{thm:Z-existence}.

\subsubsection{Proof of assertion \eqref{Z-existence:nonzero} of \Cref{thm:Z-existence}}
For each $\scrZ^{(\rho)}_k$, consider the collection $\scrI^{(\rho)}_k$ of all $i^* = i_1 < \cdots < i_\rho \leq n$ such that
\begin{itemize}
\item either $Z^{(\rho)}_{i_1, \ldots, i_{\rho}} = \emptyset$, or
\item $Z^{(\rho)}_{i_1, \ldots, i_{\rho}} \neq \emptyset$, but $\scrZ^{(\rho)}_k$ is {\em not} an irreducible component of $Z^{(\rho)}_{i_1, \ldots, i_\rho}$.
\end{itemize}
For each such choice of $i_1, \ldots, i_\rho$, due to \Cref{prop:nonzero} there is a nonempty open subset $\scrZ'^{(\rho)}_k(i_1, \ldots, i_\rho)$ of $\scrZ^{(\rho)}_k$, and a nonempty open subset $L'^{(\rho, *)}_k(i_1, \ldots, i_\rho)$ of $\prod_i L_i$ such that the following holds under \eqref{condition:>nu} for sufficiently large $\nu$:
\begin{prooflist}
\item \label{Z-existence:nonzero:zero} for each $(g_1, \ldots, g_n) \in L'^{(\rho, *)}_k(i_1, \ldots, i_\rho)$, if $z \in \scrZ'^*_k(i_1, \ldots, i_\rho)$ is such that $(z,0) \in C$, then there is $j$ such that $g_{i_j}(z) = 0$.
\end{prooflist}
Moreover, due to \Cref{prop:nnonzero-2} there is a nonempty open subset $\scrZ'^{(\rho, *)}_k$ of $\scrZ^{(\rho)}_k$, and a nonempty open subset $L'^{(\rho, *)}_k$ of $\prod_i L_i$ such that the following holds under \eqref{condition:>nu} for sufficiently large $\nu$:
\begin{prooflist}[resume]
\item \label{Z-existence:nonzero:nonzero} for each $(g_1, \ldots, g_n) \in L'^{(\rho, *)}_k$, if $z \in \scrZ'^{(\rho, *)}_k$ is such that $(z,0) \in C$, then there are $i^* = i_1 < \cdots < i_\rho \leq n$ such that $g_i(z) \neq 0$ if and only if $i = i_j$ for some $j$.
\end{prooflist}
Now let
\begin{align*}
\scrZ''^{(\rho, *)}_k &:= \scrZ^{(\rho, *)}_k \cap \scrZ'^{(\rho, *)}_k \cap \bigcap_{(i_1, \ldots, i_\rho) \in \scrI^{(\rho)}_k} \scrZ'^{(\rho, *)}_k(i_1, \ldots, i_\rho)
\end{align*}
where $\scrZ^*_k$ are the open subsets given in assertion \eqref{Z-existence:open} of \Cref{thm:Z-existence}. Applying the arguments of the proof of assertion \eqref{Z-existence:location} with $\scrZ''^{(\rho, *)}_k$ instead of $\scrZ^{(\rho, *)}_k$, we obtain an open subset $L'^*$ of $\prod_i L_i$ such that for sufficiently large $\nu$ the following holds under \eqref{condition:>nu}:
\begin{prooflist}[resume]
\item \label{Z-existence:nonzero:location} for each $(g_1, \ldots, g_n) \in L'^*$, if $(z,0) \in C$ at $t = 0$, then $z \in \bigcup_{\rho, k} \scrZ''^{(\rho, *)}_k$.
\end{prooflist}
Let
\begin{align*}
L''^* &:= L'^* \cap \bigcap_{\rho, k} L'^{(\rho, *)}_k \cap \bigcap_{\rho, k,i_1, \ldots, i_\rho} L'^{(\rho, *)}_k(i_1, \ldots, i_\rho)
\end{align*}
where the $\rho, k$ vary over all irreducible components $\scrZ^{(\rho)}_k$ of all $\scrZ^{(\rho)}$, and $(i_1, \ldots, i_\rho)$ vary over $\scrI^{(\rho)}_k$.
%, and $L'_k(\cdot, \ldots, \cdot)$ are as in property \ref{Z-existence:nonzero:zero}, and $L''_k$ are as in property \ref{Z-existence:nonzero:nonzero}.
Then property \ref{Z-existence:nonzero:location} implies that the following holds under \eqref{condition:>nu} for sufficiently large $\nu$: for each $(g_1, \ldots, g_n) \in L''^*$, if $(z,0) \in C$ at $t = 0$, then $z \in \bigcup_k \scrZ''^{(\rho, *)}_k$. Moreover, for each such $z \in \scrZ''^{(\rho, *)}_k$, property \ref{Z-existence:nonzero:nonzero} implies that there are $i^* = i_1 < \cdots < i_\rho \leq n$ such that $g_i(z) \neq 0$ if and only if $i = i_j$ for some $j$. However, then $Z$ must be an irreducible component of $Z^{(\rho)}_{i_1, \ldots, i_\rho}$ due to property \ref{Z-existence:nonzero:zero}. This completes the proof of assertion \eqref{Z-existence:nonzero}, and consequently, the proof of \Cref{thm:Z-existence}.
%
%Now, since $L_i$ is base-point free, there is a nonempty Zariski open subset $L''^*$ of $L'^*$ such that
%\begin{prooflist}[resume]
%\item for each $\scrZ_k$, and each collection of indices $1 \leq j_1 < \cdots < j_\delta \leq n$ such that $\delta > \dim(\scrZ_k)$,
%\begin{align*}
%\scrZ_k \cap V(g_{j_1}, \ldots, g_{j_\delta}) = \emptyset
%\end{align*}
%for each $(g_1, \ldots, g_n) \in L''^*$.
%\end{prooflist}
%Let
%\begin{align*}
%L &:= L''^* \cap \bigcap_{k,i_1, \ldots, i_{\rho_k}} L'_k(i_1, \ldots, i_{\rho_k})
%\end{align*}
%where the $k$ vary over all irreducible components $\scrZ_k$ of $\scrZ$, and $(i_1, \ldots, i_{\rho_k})$ vary over $\scrI_k$, and $L'_k(\cdot, \ldots, \cdot)$ are as in property \ref{Z-existence:nonzero:zero}. Then property \ref{Z-existence:nonzero:location} implies that for sufficiently large $\nu$ the following holds under \eqref{condition:>>}: for each $(g_1, \ldots, g_n) \in L'^*$, if $(z,0) \in C$ at $t = 0$, then $z \in \bigcup_k \scrZ'^*_k$. Moreover, properties \ref{Z-existence:nonzero:location}
\qed
%
%\begin{prop} \label{prop:nonzero}
%Assume $\eta = n$ and each $L_i$ is base-point free. Fix $1 = i_1 < \cdots < i_\rho \leq n$ and an $(i_1, \ldots, i_\rho)$-type distinguished component of the ordered intersection of $V(f_1), \ldots, V(f_n)$ on $X$. Define subvarieties $Z_0, \ldots, Z_{\rho-1}$ of $X$ as follows:
%\begin{enumerate}
%\item $Z_0 := X$,
%\item for each $j = 1, \ldots, \rho-1$, $Z_j$ is the union of all irreducible components of $Z_{j-1} \cap V(f_{i_j})$ containing $Z$ which are not contained in $\bigcup_{j' = j+1}^\rho V(f_{i_{j'}})$, but are contained in $V(f_i)$ for each $i$, $i_j < i < i_{j+1}$.
%\end{enumerate}
%Then $Z_j$ is a codimension $j$ subvariety of $X$ (in particular, $Z_j$ is {\em nonempty}) for $j \leq \rho -1$, and $Z$ is an irreducible component of $Z_{\rho - 1} \cap V(f_{i_\rho})$ which is contained in $V(f_1, \ldots, f_n)$.
%\end{prop}

\section[Guessing the formula for the ordered intersection multiplicity]{Combinatorics of intersections of divisors - guessing the formula for the ordered intersection multiplicity} \label{sec:combinatorics}
\subsection{} \label{sec:comb:step:0} Consider effective Cartier divisors $F_1, \ldots, F_n$ on a variety $X$ of pure dimension $n$. We allow for the scenario that $\bigcap \supp(F_i)$ has nonzero dimension, and try to express the intersection product $\mult{F_1}{F_n}$ as a sum of contributions from the ``distinguished components'', i.e.\ the irreducible components of $Z^{(\rho)}_{i_1, \ldots, i_\rho}$ from \Cref{thm:Z}. Recall from the definition of the $Z^{(\rho)}_{i_1, \ldots, i_\rho}$ that for all such components, $i_1$ takes a fixed value, which we denoted as $i^*$. Moreover, $Z^{(\rho)}_{i_1, \ldots, i_\rho}$ is constructed via an inductive step starting from irreducible components of $\supp(F_{i_1})$ which are contained in $\supp(F_{i_2})$, but {\em not} contained in $\supp(F_{i})$ for any $i$, $i_1 < i < i_2$; in other words, for each such component $Z$ of $\supp(F_{i_1})$, $i_2$ is the {\em smallest} value of $i$ such that $Z \not\subseteq \supp(F_i)$.

\subsection{} \label{sec:comb:step:1} Motivated by the observations in the preceding paragraph, we do the following:
\begin{prooflist}
\item Fix $i_1 := i^*$, $1 \leq i^* \leq n$. Denote the irreducible components of $F_{i_1}$ as $F_{i_1;j}$, so that as a Weil divisor $F_{i_1}$ is of the form $\sum_j m_{i_1;j} F_{i_1;j}$.
\item For each $j$ we consider the smallest index $k_j > i_1$ such that $F_{i_1;j} \not\subseteq \supp(F_{k_j})$
\end{prooflist}
Then
\begin{align*}
\mult{F_1}{F_n}
    &= \mult{F_{i_1}}{F_n, F_1, \ldots, F_{i_1-1}}
    = \mult{\sum_j m_{i_1;j} F_{i_1;j},F_2}{F_n, F_1, \ldots, F_{i_1-1}} \\
    %&= \sum_j m_{i_1;j} \mult{F_{i_1;j}, F_2}{F_n} \\
    &= \sum_j m_{i_1;j} \mult{F_{i_1;j}, F_{k_j}, F_{k_j+1}, \ldots, F_n, F_1, \ldots, F_{i_1-1}, F_2}{F_{k_j -1}}
\end{align*}
In line with the construction of the $Z^{(j)}_{i_1, \ldots, i_\rho}$ from \Cref{thm:Z}, we write $i_2 = k_j$, and rearrange the terms by fixing first a value of $i_2$ and summing over all $j$ such that $k_j = i_2$ to obtain
\begin{align*}
\mult{F_1}{F_n}
    &= \sum_{i_2}\sum_{k_j = i_2} m_{1;j} \mult{F_{i_1;j}, F_{i_2}, F_{i_2+1}, \ldots, F_n, F_1, \ldots, F_{i_1-1}, F_{i_1+1}}{F_{i_2 -1}} \\
    &= \sum_{i_2} \mult{F_{i_1;\Sigma i_2}, F_{i_2}, F_{i_2+1}, \ldots, F_n, F_1, \ldots, F_{i_1-1}, F_{i_1+1}}{F_{i_2 -1}}
\end{align*}
where $F_{i_1;\Sigma i_2}$ is the Weil divisor $\sum_{j:k_j = i_2}m_{1;j}F_{i_1;j}$.

\subsection{} \label{sec:comb:step:2}
Note that each irreducible component of $\supp(F_{i_1;\Sigma i_2}) \cap \supp(F_{i_2})$ has codimension two, so that the intersection
\begin{align*}
F_{i_1, i_2} := F_{i_1;\Sigma i_2} \cap F_{i_2}
\end{align*}
can be represented as a codimension two cycle $\sum_j m_{i_1,i_2;j}F_{i_1,i_2;j}$ consisting of linear combination of irreducible components of $\supp(F_{i_1;\Sigma i_2}) \cap \supp(F_{i_2})$. Then
\begin{align*}
\mult{F_1}{F_n}
    &= \sum_{i_2} \mult{F_{i_1,i_2}, F_{i_2+1}, \ldots, F_n, F_1, \ldots, F_{i_1-1}, F_{i_1+1}}{F_{i_2 -1}} \\
    &= \sum_{i_2, j} m_{i_1,i_2;j} \mult{F_{i_1,i_2;j}, F_{i_2+1}, \ldots, F_n, F_1, \ldots, F_{i_1-1}, F_{i_1+1}}{F_{i_2 -1}}
\end{align*}
We now apply the same procedure as in \Cref{sec:comb:step:1}: for each $j$, write
\begin{align*}
&\mult{F_{i_1,i_2;j}, F_{i_2+1}, \ldots, F_n, F_1, \ldots, F_{i_1-1}, F_{i_1+1}}{F_{i_2 -1}} \\
&\quad\qquad = \mult{F_{i_1,i_2;j}, F_{k'_j}, F_{k'_j + 1}, \ldots, F_n, F_1, \ldots, F_{i_1-1}, F_{i_1+1}, \ldots, F_{i_2-1}, F_{i_2+1}} {F_{k'_j -1}}
\end{align*}
where $k'_j$ is the smallest index such that $F_{i_1,i_2;j} \not\subseteq \supp(F_{k'_j})$. Then fix first a value of $i_3$ and sum over all $j$ such that $k'_j = i_3$ to obtain
\begin{align*}
& \mult{F_1}{F_n} \\
&\qquad = \sum_{i_2,i_3}
    \mult{F_{i_1,i_2;\Sigma i_3}, F_{i_3}, F_{i_3+1}, \ldots, F_n, F_1, \ldots, F_{i_1-1}, F_{i_1+1}, \ldots, F_{i_2 -1}, F_{i_2+1}} {F_{i_3-1}} \\
&\qquad = \sum_{i_2,i_3}
    \mult{F_{i_1,i_2,i_3}, F_{i_3+1}, \ldots, F_n, F_1, \ldots, F_{i_1-1}, F_{i_1+1}, \ldots, F_{i_2 -1}, F_{i_2+1}} {F_{i_3-1}}
\end{align*}
where
\begin{align*}
F_{i_1,i_2;\Sigma i_3}
    &:= \sum_{k'_j = i_3} m_{i_1,i_2;j} F_{i_1,i_2;j},\ \text{and} \\
F_{i_1,i_2,i_3}
    &:= F_{i_1,i_2;\Sigma i_3} \cap F_{i_3}
\end{align*}

\subsection{} \label{sec:comb:step:n}
Repeating this process as many times as possible we obtain
\begin{align*}
\mult{F_1}{F_n}
    &= \sum_{\rho=1}^n \sum_{i^* = i_1 < \cdots < i_\rho \leq n}
        \mult{F_{i_1, \ldots, i_\rho}, F_{i'_1}}{F_{i'_{n-\rho}}} \\
    &= \sum_{\rho=1}^n \sum_{i^* = i_1 < \cdots < i_\rho \leq n} \sum_j
        m_{i_1, \ldots, i_\rho;j}
        \mult{F_{i_1, \ldots, i_\rho;j}, F_{i'_1}}{F_{i'_{n-\rho}}}
\end{align*}
where $1 \leq i'_1 < \cdots < i'_{n-\rho} \leq n$ are the elements of $\{1, \ldots, n\} \setminus \{i_1, \ldots, i_\rho\}$. It follows from the construction of the $F_{i_1, \ldots, i_\rho;j}$ that for each $j$,
\begin{prooflist}
\item \label{comb:step:n:subset} $F_{i_1, \ldots, i_\rho; j} \subseteq \bigcap_{j'=1}^{n-\rho} \supp(F_{i'_{j'}})$ for each $j$, and
\item \label{comb:step:n:codim} $\codim(F_{i_1, \ldots, i_\rho;j}) = \rho$.
\end{prooflist}
Due to observation \ref{comb:step:n:codim}, we can rearrange the terms of the above sum to obtain
\begin{align*}
\mult{F_1}{F_n}
    &= \sum_Z \sum_{F_{i_1, \ldots, i_\rho;j} = Z}
        m_{i_1, \ldots, i_\rho;j}
        \deg_{i'_1, \ldots, i'_{n-\rho}}(Z)
        %\mult{\scrL_{i'_1}|_Z}{\scrL_{i'_{n-\rho}}|_Z}
%    \label{eq:Dintersection}
\end{align*}
where $\deg_{i'_1, \ldots, i'_{n-\rho}}(Z) := \mult{\scrL_{i'_1}|_Z}{\scrL_{i'_{n-\rho}}|_Z}$, the outer sum is over all irreducible subvarieties $Z$ of $X$, and the inner sum is over all $\rho, i_1, \ldots, i_\rho, j$ such that $\rho = \codim(Z)$ and $Z = F_{i_1, \ldots, i_\rho;j}$.

\subsection{} \label{sec:comb:Z} We claim that $F_{i_1, \ldots, i_\rho;j}$ from \Cref{sec:comb:step:n} are precisely the irreducible components of $Z^{(\rho)}_{i_1, \ldots, i_\rho}$ from \Cref{thm:Z}. Indeed, recall the definition of $Z^{(j)}_{i_1, \ldots, i_\rho}$:

\begin{defnlist}
\item $Z^{(0)}_{i_1, \ldots, i_\rho} = X$,
\item for $1 \leq j \leq \rho-1$, $Z^{(j)}_{i_1, \ldots, i_\rho}$ is the union of all irreducible components of $Z^{(j-1)}_{i_1, \ldots, i_\rho} \cap \supp(F_{i_j})$ which are not contained in $\bigcup_{j' >j} \supp(F_{i_{j'}})$, but are contained in $\supp(F_i)$ for each $i$, $i_j < i < i_{j+1}$,
\item $Z^{(\rho)}_{i_1, \ldots, i_\rho}$ is the union of all irreducible components of $Z^{(\rho-1)}_{i_1, \ldots, i_\rho} \cap \supp(F_{i_\rho})$ which are contained in $\bigcap_{i=1}^n \supp(F_i)$.
\end{defnlist}
It is clear from the definition that $\supp(F_{i_1; \Sigma i_2})$ is the union of $Z^{(1)}_{i_1, \ldots, i_\rho}$ and a (possibly empty) collection of irreducible components of $\supp(F_{i_1})$ which are contained in $\bigcup_{j' = 2}^\rho \supp(F_{i_{j'}})$. By an easy induction then it follows that $\supp(F_{i_1, \ldots, i_{k-1}; \Sigma i_k})$ is the union of $Z^{(k-1)}_{i_1, \ldots, i_\rho}$ and a (possibly empty) collection of irreducible components which are contained in $\bigcup_{j' = k+1}^\rho \supp(F_{i_{j'}})$. However, this means that $\supp(F_{i_1, \ldots, i_{\rho-1}; \Sigma i_\rho})$ is precisely $Z^{(\rho-1)}_{i_1, \ldots, i_\rho}$. This in turn implies (due to observation \ref{comb:step:n:subset} from \Cref{sec:comb:step:n}) that $Z^{(\rho)}_{i_1, \ldots, i_\rho}$ is precisely the union of the $F_{i_1, \ldots, i_{\rho}; j}$, as claimed.

\subsection{} \label{sec:comb:m} Now we compute the numbers $m_{i_1, \ldots, i_\rho;j}$ from \Cref{sec:comb:step:n}. Take an affine open subset $U$ of $X$ such that $U$ intersects $F_{i_1, \ldots, i_\rho;j}$, and each $F_i$ is the divisor of some $f_i \in \kk[U]$. Recall the definition of $I^{(k)}_{i_1, \ldots, i_\rho}$ from \Cref{thm:length}:
\begin{defnlist}
%\item $I^{(1)}_{i_1, \ldots, i_\rho}$ is the intersection of all the $(n-1)$-dimensional primary components of the principal ideal generated by $f_{i_1}$,
\item $I^{(1)}_{i_1, \ldots, i_\rho}$ is the principal ideal generated by $f_{i_1}$,
\item for $2 \leq k \leq \rho$, $I^{(k)}_{i_1, \ldots, i_\rho}$ is the ideal generated by $f_{i_k}$ and the intersection of all primary components $\qqq$ of $I^{(k-1)}_{i_1, \ldots, i_\rho}$ such that the corresponding prime ideal $\sqrt{\qqq}$ contains $f_i$ for each $i < i_j$, but does {\em not} contain $f_{i_k}$.
\end{defnlist}
It is straightforward to check from the definitions and \Cref{prop:length_f} that for each $k \leq \rho$, the subscheme of $U$ defined by $I^{(k)}_{i_1, \ldots, i_\rho}$ is precisely the cycle $F_{i_1, \ldots, i_k}$ constructed via the inductive procedure in \Crefrange{sec:comb:step:1}{sec:comb:step:n}. In particular, $m_{i_1, \ldots, i_\rho;j}$ is precisely the multiplicity of $Z = F_{i_1, \ldots, i_\rho;j}$ in $V(I^{(\rho)}_{i_1, \ldots, i_\rho})$. Consequently,
\begin{align}
m_{i_1, \ldots, i_\rho;j}
    &= \len_{\local{V(I^{(\rho)}_{i_1, \ldots, i_\rho})}{Z}}(\local{V(I^{(\rho)}_{i_1, \ldots, i_\rho})}{Z})
    = \len(\local{X}{Z}/I^{(\rho)}_{i_1, \ldots, i_\rho}\local{X}{Z})
    \label{eq:comb:m}
\end{align}
We summarize the observations from \Crefrange{sec:comb:step:n}{sec:comb:m} in the following theorem:

\begin{thm} \label{thm:comb:intersection}
Let $F_1, \ldots, F_n$ be effective Cartier divisors on a variety $X$ of pure dimension $n$. Fix $i^*$, $1 \leq i^* \leq n$. Then
\begin{align*}
\mult{F_1}{F_n}
    &= \sum_Z \sum_{i_1, \ldots, i_\rho}
        \len(\local{X}{Z}/I^{(\rho)}_{i_1, \ldots, i_\rho}\local{X}{Z})
        \deg_{i'_1, \ldots, i'_{n-\rho}}(Z)
        %\mult{\scrL_{i'_1}|_Z}{\scrL_{i'_{n-\rho}}|_Z}
%   \label{eq:Dintersection-len}
\end{align*}
where the outer sum is over all irreducible subvarieties $Z$ of $X$, the inner sum is over all $i_1, \ldots, i_\rho$ where $\rho := \codim(Z)$ and $i^* = i_1 < \cdots < i_\rho \leq n$ are such that $Z$ is an irreducible component of $Z^{(\rho)}_{i_1, \ldots, i_\rho}$ from \Cref{sec:comb:Z}, the $I^{(\rho)}_{i_1, \ldots, i_\rho}$ are as in \Cref{sec:comb:m}, and finally, $1 \leq i'_1 < \cdots < i'_{n-\rho} \leq n$ are the elements of $\{1, \ldots, n\} \setminus \{i_1, \ldots, i_\rho\}$, and $\deg_{i'_1, \ldots, i'_{n-\rho}}(Z)$ is the number (counted with appropriate multiplicities) of points in the intersection of $Z$ and generic Cartier divisors linearly equivalent to $F_{i'_j}$, $j = 1, \ldots, n-\rho$. \qed
\end{thm}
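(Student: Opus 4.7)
The plan is to prove this by an explicit inductive decomposition of the intersection product, working one index at a time starting from $i^* = i_1$, and at each stage splitting the intermediate cycle along the irreducible components whose vanishing pattern picks out the next distinguished index $i_{j+1}$.

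First, I would fix $i_1 := i^*$ and write $F_{i_1} = \sum_j m_{i_1;j}F_{i_1;j}$ as a sum of its irreducible (Weil) components, with the remaining $F_i$'s rearranged to the right of $F_{i_1}$ in the intersection product (this is harmless by symmetry/multilinearity of the intersection pairing on Cartier divisor classes). For each irreducible component $F_{i_1;j}$, define $k_j$ to be the smallest $k > i_1$ such that $F_{i_1;j} \not\subseteq \supp(F_k)$; this is the smallest index for which $F_k|_{F_{i_1;j}}$ is still an honest Cartier divisor (so that the intersection number on the subvariety $F_{i_1;j}$ is well-defined and expands properly). Regrouping by the value $i_2 := k_j$ and amalgamating the corresponding components into $F_{i_1;\Sigma i_2}$, one obtains an identity expressing $\mult{F_1}{F_n}$ as $\sum_{i_2}\mult{F_{i_1;\Sigma i_2}\cdot F_{i_2}, \ldots}$, where the dots indicate the remaining $F_i$'s. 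Since every irreducible component of $\supp(F_{i_1;\Sigma i_2})\cap\supp(F_{i_2})$ has codimension two, this intersection can be written as a codimension-two cycle $F_{i_1,i_2} := F_{i_1;\Sigma i_2}\cap F_{i_2}$.

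Next, I would iterate. At the $k$-th step, having arrived at a codimension-$k$ cycle $F_{i_1,\ldots,i_k} = \sum_j m_{i_1,\ldots,i_k;j}F_{i_1,\ldots,i_k;j}$, apply the same trick: for each irreducible component $F_{i_1,\ldots,i_k;j}$, choose the smallest index $k' > i_k$ such that $F_{i_1,\ldots,i_k;j}\not\subseteq \supp(F_{k'})$, group by the value $i_{k+1} := k'$, and form the intersection with $F_{i_{k+1}}$ to produce the codimension-$(k+1)$ cycle $F_{i_1,\ldots,i_{k+1}}$. This process terminates after at most $n$ steps and yields
\begin{align*}
\mult{F_1}{F_n}
    &= \sum_{\rho=1}^n \sum_{i^* = i_1 < \cdots < i_\rho \leq n} \sum_j
        m_{i_1,\ldots,i_\rho;j}\,
        \mult{F_{i_1,\ldots,i_\rho;j}, F_{i'_1}}{F_{i'_{n-\rho}}},
\end{align*}
where each $F_{i_1,\ldots,i_\rho;j}$ has codimension exactly $\rho$ and is contained in $\bigcap_{j'}\supp(F_{i'_{j'}})$ by construction. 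Rearranging by the underlying irreducible subvariety $Z$ and using the definition of $\deg_{i'_1,\ldots,i'_{n-\rho}}(Z)$ as the pushforward of the intersection product of the remaining $\scrL_{i'_j}$'s turns the triple sum into the double sum in the statement.

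The two remaining ingredients are the identification of the cycles and of the multiplicities. For the cycles, I would check by induction on $k$ that $\supp(F_{i_1,\ldots,i_{k-1};\Sigma i_k})$ agrees with $Z^{(k-1)}_{i_1,\ldots,i_\rho}$ modulo components already contained in $\bigcup_{j'>k}\supp(F_{i_{j'}})$; plugging in $k = \rho$ and using observation that $F_{i_1,\ldots,i_\rho;j}$ is contained in $\bigcap_i \supp(F_i)$, these extraneous components drop out, leaving exactly the irreducible components of $Z^{(\rho)}_{i_1,\ldots,i_\rho}$. For the multiplicities $m_{i_1,\ldots,i_\rho;j}$, I would argue by induction that the scheme-theoretic cycle produced at step $k$ coincides with $V(I^{(k)}_{i_1,\ldots,i_\rho})$ after discarding the primary components whose prime ideal does not contain all of $f_1,\ldots,f_{i_{k+1}-1}$; the inductive step uses Proposition~\ref{prop:length_f} on how intersecting with a Cartier divisor $F_{i_k}$ affects lengths of primary components. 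Taking lengths of the local ring of $Z$ on both sides then gives the claimed identity $m_{i_1,\ldots,i_\rho;j} = \len(\local{X}{Z}/I^{(\rho)}_{i_1,\ldots,i_\rho}\local{X}{Z})$.

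The main obstacle is the last bookkeeping step: making the identification between the iteratively constructed cycles (obtained by a mix of ``take the smallest index where containment fails'' and intersecting with $F_{i_k}$) and the subschemes cut out by the ideals $I^{(k)}_{i_1,\ldots,i_\rho}$ defined via primary decomposition. The two constructions track the same data, but keeping the compatibility straight — in particular verifying that the primary components one \emph{keeps} at step $k$ correspond to irreducible components of the cycle that survive into later steps (rather than being absorbed into some $F_{i_1,\ldots,i_{k-1};j}$ with $k_j > i_k$) — requires a careful invariant, and is where Proposition~\ref{prop:length_f} enters essentially.
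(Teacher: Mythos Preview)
Your proposal is correct and follows essentially the same approach as the paper: the iterative decomposition starting from $i_1=i^*$, choosing at each step the smallest index where containment fails, regrouping by that index, and then matching the resulting cycles with the $Z^{(\rho)}_{i_1,\ldots,i_\rho}$ and the multiplicities with $\len(\local{X}{Z}/I^{(\rho)}_{i_1,\ldots,i_\rho}\local{X}{Z})$ via \Cref{prop:length_f}. The bookkeeping obstacle you flag is exactly what the paper handles in \Cref{sec:comb:Z,sec:comb:m}, in the same inductive manner you outline.
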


\subsection{} \label{sec:comb:guess}
Based on \Cref{thm:comb:intersection} we may guess that the ordered intersection multiplicity of $F_1, \ldots, F_n$ along an irreducible subvariety $Z$ equals
\begin{align*}
    \sum_{i_1, \ldots, i_\rho}
        \len(\local{X}{Z}/I^{(\rho)}_{i_1, \ldots, i_\rho}\local{X}{Z})
        \deg_{i'_1, \ldots, i'_{n-\rho}}(Z)
        %\mult{\scrL_{i'_1}|_Z}{\scrL_{i'_{n-\rho}}|_Z}
\end{align*}
where $\rho := \codim(Z)$, and the sum is over all $i_1, \ldots, i_\rho$ such that $Z$ is an irreducible component of $Z^{(\rho)}_{i_1, \ldots, i_\rho}$. In the next section we show that this guess is indeed correct.

\section{Proof of the formula for ordered intersection multiplicity} \label{sec:formula}

The main result of this section is the computation of the formula of the ordered intersection multiplicity (\Cref{thm:length:confirmation}). All our main results on ordered intersection multiplicity, namely \Cref{thm:defn,thm:Z,thm:length}, are straightforward consequences of \Cref{thm:length:confirmation} combined with \Cref{thm:Z-existence,thm:comb:intersection}.

\subsection{} \label{sec:formula:set-up:0}
We continue with the set up from \Cref{sec:distinguished-setup}. In addition %, in \Crefrange{sec:formula:set-up:0}{sec:formula:blow-up}
we assume that for each $i$,
\begin{enumerate}
%\item $f_i$ is a non zero-divisor in $\kk[X]$, and
\item $L_i$ is a base-point free (finite dimensional) vector subspace of $\kk[X]$ containing $f_i$.
\end{enumerate}
By $\bar X$ we denote a compactification\footnote{i.e.\ a complete variety containing $X$ as a dense subset.} of $X$ such that
\begin{enumerate}[resume]
\item \label{length:confirmation:assumption:Xbar} there are linear systems $\scrL_i$ on $\bar X$ such that $L_i = \scrL_i|_X$, $i = 1, \ldots, n$; in particular, for each $i$, either $f_i \equiv 0$, or there is a Cartier divisor $F_i \in \scrL_i$ such that $f_i$ is a local representative of $F_i$ on $X$.
\end{enumerate}
(For property \eqref{length:confirmation:assumption:Xbar} one can e.g.\ take an arbitrary completion $\bar X_0$ of $X$, and then define $\bar X$ to be the closure in $\bar X_0 \times \prod_{i=1}^n \pp^{n_i}$ where $d_i := \dim(L_i)$, of the graph of the morphism $x \mapsto (x, [1: g_{1,1}(x): \cdots : g_{1, d_1}(x)], \ldots, [1: g_{n,1}(x): \cdots : g_{n, d_n}(x)])$, where $g_{i,1}, \ldots, g_{i,d_i}$ form a basis of $L_i$.)

\subsection{} \label{sec:formula:set-up:1}
Consider subvarieties $Z^{(\rho')}_{i_1, \ldots, i_{\rho'}}$ defined in the same way as in \Cref{thm:Z}, but with $\bar X$ in place of $X$, and as usual, let $\scrZ^{(\rho')}$ be the union of $Z^{(\rho')}_{i_1, \ldots, i_{\rho'}}$ over all $i_1, \ldots, i_{\rho'}$, and let $\scrZ^{(\rho')}_k$, $k = 1, 2, \ldots$, be the irreducible components of $\scrZ^{(\rho')}$. Pick nonempty Zariski open subsets $\scrZ^{(\rho', *)}_k$ of $\scrZ^{(\rho')}_k$ such that
\begin{enumerate}[resume]
%locally near $(z,0)$, $f_i/g_{i, k_i} - t^{r_i}g_i/g_{i,k_i}$
\item $\scrZ^{(\rho'_1, *)}_{k_1} \cap  \scrZ^{(\rho'_2, *)}_{k_2} = \emptyset$ unless $\rho'_1 = \rho'_2$ and $k_1 = k_2$.
\end{enumerate}
Due to \Cref{thm:Z-existence} we can pick $\nu^*_1 \in \rr$ and a nonempty Zariski open subset $L^*_1$ of $\prod_i L_i$ such that if \eqref{condition:>nu} holds with $\nu \geq \nu^*_1$, then for each $(g_1, \ldots, g_n) \in L^*_1$ and each point $(z,0)$ on $C = C(g_1, \ldots, g_n; r_1, \ldots, r_n)$ at $t = 0$, there are unique integers $\rho_z$, $k_z$, and $i^* =i_1(z) < \cdots < i_{\rho_z}(z) \leq n$, and $h_{\rho_z, k_z, i} \in \kk[X]$, $i = 1, \ldots, n$, such that
\begin{enumerate}[resume]
%locally near $(z,0)$, $f_i/g_{i, k_i} - t^{r_i}g_i/g_{i,k_i}$
%\item
%\begin{enumerate}
\item $z \in \scrZ^{(\rho_z, *)}_{k_z}$,
\item $\scrZ^{(\rho_z)}_{k_z}$ is an irreducible component of $Z^{(\rho_z)}_{i_1(z), \ldots, i_{\rho_z}(z)}$,
\item \label{length:confirmation:assumption:F_i} there is a neighborhood of $\scrZ^{(\rho_z, *)}_{k_z}$ in $\bar X$ such that for each $i = 1, \ldots, n$,
\begin{enumerate}
\item $F_i$ is locally represented by $f_i/h_{\rho_z, k_z, i}$,
\item $h/h_{\rho_z, k_z, i}$ is a regular function for each $h \in L_i$,
\end{enumerate}
\item on a punctured neighborhood of $(z,0)$ in $\bar X \times \kk$, $C$ is defined by $f_i/h_{\rho_z, k_z, i} - t^{r_i}g_i/h_{\rho_z, k_z, i} = 0$, $i = 1, \ldots, n$,
\item \label{length:confirmation:assumption:nonzero} $(g_i/h_{\rho_z, k_z, i})(z) \neq 0$ if and only if $i = i_j(z)$ for some $j. $% property \eqref{condition:nonzero>nu} holds with $\rho_z, i_1(z), \ldots, i_{\rho_z}(z), g_i/h_{\rho_z, k_z, i}$ in place of $\rho, i_1, \ldots, i_\rho, g_i$.
%\end{enumerate}
\end{enumerate}
Due to property \eqref{length:confirmation:assumption:Xbar} (from \Cref{sec:formula:set-up:0}) we may assume that
\begin{enumerate}[resume]
\item \label{length:confirmation:assumption:denom} if $\scrZ^{(\rho')}_{k}$ is the closure of a subvariety of $X$, then $\scrZ^{(\rho', *)}_{k} \subseteq X \cap Z$ and $h_{\rho', k, i} = 1$, $i = 1, \ldots, n$.
\end{enumerate}
Restricting $L^*_1$ to an appropriate open subset if necessary, we may further ensure that
\begin{enumerate}[resume]
\item \label{length:confirmation:assumption:finite} for each $\scrZ^{(\rho')}_{k}$ which is an irreducible component of $Z^{(\rho')}_{i_1, \ldots, i_{\rho'}}$,
\begin{align*}
|\scrZ^{(\rho', *)}_{k} \cap V(\frac{g_{i'_1}}{h_{\rho', k, i'_1}}, \ldots, \frac{g_{i'_{n-\rho'}}}{h_{\rho', k, i'_{n-\rho}}})| &< \infty
\end{align*}
where $i'_1, \ldots, i'_{n-\rho'}$ are the elements in the complement of $\{1, \ldots, n\} \setminus \{i_1, \ldots, i_{\rho'}\}$. Define
\begin{align*}
\scrZ'_0 := \bigcup_{\rho', k}
    \left(
    \scrZ^{(\rho', *)}_{k} \cap
        V(\frac{g_{i'_1}}{h_{\rho', k, i'_1}}, \ldots, \frac{g_{i'_{n-\rho'}}}{h_{\rho', k, i'_{n-\rho}}})
    \right)
\end{align*}
\end{enumerate}

\subsection{} \label{sec:formula:blow-up}
Given $\vec g = (g_1, \ldots, g_n) \in \prod_i L_i$, we write below $C(\vec g)$ for $C$ to explicitly mention the dependence on $\vec g$. For $\vec a = (a_1, \ldots, a_n) \in \kk^n$, we write $\vec{ag} := (a_1g_1, \ldots, a_ng_n)$. We now introduce a birational modification $\tilde X$ of $\bar X \times \kk$ on which the image of $C(\vec{ag})$ can be explicitly described for generic $\vec a \in \kk^n$. Indeed, let $W := \pp^n(r_1, \ldots, r_n, 1)$ be the $n$-dimensional weighted projective space with weights $(r_1, \ldots, r_n, 1)$, and $\tilde X$ be the closure in $\bar X \times \kk \times W$ of the graph of the map
\begin{align*}
\bar X \times \kk \ni (x, t) &\mapsto [\frac{f_1}{g_1}(x): \cdots : \frac{f_n}{g_n}(x): t] \in W
\end{align*}
Given $\vec a \in (\kk \setminus \{0\})^n$, let $\tilde C(\vec{ag})$ be the ``scheme-theoretic strict transform''\footnote{The scheme-structure on $\tilde C(\vec{ag})$ is given as follows: near all but finitely many points of $C(\vec{ag})$, $X$ is locally isomorphic to $\tilde X$, so that the scheme-structure on $C(\vec{ag})$ can be uniquely transferred to $\tilde C(\vec{ag})$. Over the remaining points the scheme-structure can be extended by taking the scheme-theoretic closure.\label{footnote:scheme-transform}} of $C(\vec{ag})$ on $\tilde X$. If $(z,0) \in C(\vec{ag})$ and $\tilde z \in \tilde C(\vec{ag})$ is a pre-image of $(z,0)$, then it is straightforward to see that $\tilde z \in \tilde U_{n+1} := \pi_W^{-1}(W \setminus V(w_{n+1}))$, where $\pi_W: \tilde X \to W$ is the natural projection and $[w_1: \cdots :w_{n+1}]$ are (weighted) homogeneous coordinates on $W$. Note that $u_i := w_i/(w_{n+1})^{r_i}$, $i = 1, \ldots, n$, are well-defined regular functions on $\tilde U_{n+1}$.

\begin{proclaim} \label{prop:formula:blow-up}
Consider the set up in \Cref{sec:formula:set-up:0,sec:formula:set-up:1}. Pick $\nu \geq \nu^*_1$ and $r_1, \ldots, r_n$ satisfying \eqref{condition:>nu}, and $\vec g \in L^*_1$ (where $\nu^*_1$ and $L^*_1$ are from \Cref{sec:formula:set-up:1}). There is a nonempty Zariski open subset $\scrA^*$ of $(\kk \setminus \{0\})^n$ such that for each $\vec a = (a_1, \ldots, a_n) \in \scrA^*$,
\begin{enumerate}
\item \label{formula:blow-up:set} $C(\vec{ag}) \cap (\bar X \times \{0\})$ is set-theoretically the same as $\scrZ'_0 \times \{0\}$ (where $\scrZ'_0$ is from property \eqref{length:confirmation:assumption:finite} of \Cref{sec:formula:set-up:1}),
\item \label{formula:blow-up:eq} $\tilde C(\vec{ag})$ is scheme-theoretically equal to $V(u_1 - a_1, \ldots, u_n - a_n)$ near $\pi^{-1}(X \times \{0\}) \cap \tilde U_{n+1}$ (where $\pi: \tilde X \to X \times \kk$ is the projection map).
\end{enumerate}
Let $\tilde \scrE_0$ be the (finite) collection of $n$-dimensional subvarieties $E$ of $\pi^{-1}(X \times \{0\})$ such that $E \cap \tilde U_{n+1} \neq \emptyset$, and $\pi_W|_E: E \to W$ is generically finite. Then
\begin{enumerate}[resume]
\item \label{formula:blow-up:ord} for each $\vec a = (a_1, \ldots, a_n) \in \scrA^*$ and for each $z \in \scrZ'_0$,
\begin{align*}
\len(\local{C(\vec {ag})}{(z,0)}/t\local{C(\vec {ag})}{(z,0)})
    &=
    \sum_{E \in \tilde \scrE_0}
    \ord_E(t)\deg(\pi_W|_E)
\end{align*}
\end{enumerate}
\end{proclaim}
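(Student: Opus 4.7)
The three parts are tightly linked, and the plan is to prove (2) first as the scheme-theoretic backbone, derive (1) from it almost immediately, and then obtain (3) by combining (2) with a projection-formula computation.

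For part (2), I would first observe that on the open subset $V \subseteq \bar X \times \kk$ where $t \neq 0$ and every $g_i$ is nonvanishing, $\pi$ restricts to an isomorphism and
$$u_i - a_i \;=\; \frac{f_i - a_i t^{r_i} g_i}{t^{r_i} g_i},$$
so $\tilde C(\vec{ag})$ and $V(u_1 - a_1, \ldots, u_n - a_n)$ already agree scheme-theoretically on $\pi^{-1}(V) \cap \tilde U_{n+1}$. I would then argue that for a Zariski open set of $\vec a \in (\kk \setminus \{0\})^n$, the functions $u_i - a_i$ form a regular sequence on $\tilde U_{n+1}$ near $\pi^{-1}(\bar X \times \{0\})$: a standard incidence-variety / dimension-count argument shows that the bad locus where some $u_i - a_i$ becomes a zero divisor modulo the previous equations is constructible of strictly smaller dimension than $\kk^n$. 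Then $V(u_1 - a_1, \ldots, u_n - a_n)$ is a pure $1$-dimensional complete intersection, hence Cohen--Macaulay with no embedded components, and must coincide with $\tilde C(\vec{ag})$ since both schemes are determined by their top-dimensional primary components and agree on a dense open subset.

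For part (1), (2) identifies $C(\vec{ag}) \cap (\bar X \times \{0\})$ with the image under $\pi$ of $V(u_1 - a_1, \ldots, u_n - a_n) \cap V(t) \cap \tilde U_{n+1}$. The inclusion $\subseteq \scrZ'_0 \times \{0\}$ is immediate from property \eqref{length:confirmation:assumption:nonzero} together with the definition of $\scrZ'_0$ in property \eqref{length:confirmation:assumption:finite}: any limit point $(z,0)$ of $C(\vec{ag})$ forces $(g_{i'_j}/h_{\rho_z,k_z,i'_j})(z) = 0$ for every $j$. For the reverse inclusion, given $z \in \scrZ'_0$, I would produce an analytic branch of $C(\vec{ag})$ approaching $(z,0)$ by applying the implicit function theorem (or a Puiseux-style computation) to the local equations $f_i/h_{\rho_z,k_z,i} - a_i t^{r_i} g_i/h_{\rho_z,k_z,i} = 0$ supplied by property \eqref{length:confirmation:assumption:F_i}: for generic $\vec a$ the non-degenerate equations indexed by $i_j(z)$ cut out a smooth branch, while the remaining equations are satisfied thanks to the vanishing of the $g_{i'_j}/h_{\rho_z,k_z,i'_j}$ at $z$.

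For part (3), the length $\len(\local{C(\vec{ag})}{(z,0)}/t\local{C(\vec{ag})}{(z,0)})$ is the local contribution at $(z,0)$ to the intersection of the curve $C(\vec{ag})$ with the divisor $V(t)$. Passing to the strict transform via the proper birational morphism $\pi|_{\tilde C(\vec{ag})}$, which is an isomorphism away from $t = 0$, and applying the projection formula, this length equals $\sum_{\tilde z} \ord_{\tilde z}(t|_{\tilde C(\vec{ag})})$, the sum being taken over $\tilde z \in \tilde C(\vec{ag}) \cap \pi^{-1}(z,0)$. Each such $\tilde z$ lies on some $E \in \tilde \scrE_0$, and by (2) together with genericity of $\vec a$, $\tilde C(\vec{ag})$ meets $E$ transversally at $\tilde z$, giving $\ord_{\tilde z}(t|_{\tilde C(\vec{ag})}) = \ord_E(t)$. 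The number of intersection points of $\tilde C(\vec{ag}) = V(u_1 - a_1, \ldots, u_n - a_n)$ with a fixed $E$ lying above $(z,0)$ counts preimages of $[a_1 : \cdots : a_n : 1] \in W$ under the generically finite map $\pi_W|_E$, contributing $\deg(\pi_W|_E)$ for generic $\vec a$; summing over $E \in \tilde \scrE_0$ yields the claimed formula. The main obstacle will be simultaneously arranging on a single Zariski open $\scrA^* \subseteq (\kk \setminus \{0\})^n$ that (a) the sequence $u_i - a_i$ is regular, (b) $\tilde C(\vec{ag})$ meets each $E$ transversally, and (c) the expected count $\deg(\pi_W|_E)$ of preimages is attained on each $E$; each is a Bertini / generic-smoothness argument in isolation, but the bookkeeping is complicated by the weighted projective factor and the structure of the exceptional fiber above $\scrZ'_0 \times \{0\}$.
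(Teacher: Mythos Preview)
Your proposal is correct and follows essentially the same approach as the paper, with two minor differences worth noting.

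First, the paper reverses your order: it proves (1) before (2), and does so more cheaply. Since $L^*_1$ is open and $\vec g \in L^*_1$, there is an open set $\scrA^*_1 \subseteq (\kk\setminus\{0\})^n$ with $\vec{ag} \in L^*_1$ for all $\vec a \in \scrA^*_1$; assertion (1) then follows directly from the defining properties \eqref{length:confirmation:assumption:nonzero} and \eqref{length:confirmation:assumption:finite} of $L^*_1$, without any implicit-function or Puiseux argument. Your route through (2) works, but the paper's is shorter.

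Second, for (2) and (3) the paper is slightly more concrete than your regular-sequence/Cohen--Macaulay argument: it simply notes that $\pi_W|_{\tilde U_{n+1}}$ is given by $(u_1,\ldots,u_n)$, so for generic $\vec a$ the fiber $\tilde C_{\vec a} := V(u_1-a_1,\ldots,u_n-a_n)$ meets each $E \in \tilde\scrE_0$ in finitely many points, is purely one-dimensional near those points, avoids all pairwise intersections $E\cap E'$, and avoids every component of $\pi^{-1}(X\times\{0\})$ outside $\tilde\scrE_0$. These four properties immediately give $\tilde C(\vec{ag}) = \tilde C_{\vec a}$ near the exceptional locus and the length formula in (3), bypassing the explicit transversality bookkeeping you flagged as the main obstacle. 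Your abstract argument and the paper's concrete one are interchangeable here.
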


\begin{proof}
Since $L^*_1$ is open (and nonempty), there is a nonempty open subset $\scrA^*_1$ of $(\kk \setminus \{0\})^n$ such that $\vec{ag}\in L^*_1$ for all $\vec a \in \scrA^*_1$. Assertion \eqref{formula:blow-up:set} follows (with $\scrA^* = \scrA^*_1$) from properties \eqref{length:confirmation:assumption:nonzero} and \eqref{length:confirmation:assumption:finite} of $L^*_1$ from \Cref{sec:formula:set-up:1}. Since $\pi_W|_{\tilde U_{n+1}}$ is defined by $(u_1, \ldots, u_n)$, by definition of $\tilde \scrE_0$ there is a nonempty open subset $\scrA^*$ of $\scrA^*_1$ such that for each $\vec a \in \scrA^*$, the subscheme $\tilde C_{\vec a} := V(u_1 - a_1, \ldots, u_n - a_n)$ of $\tilde U_{n+1}$
\begin{prooflist}
\item intersects each $E \in \tilde \scrE_0$ at finitely many points,
\item is purely one dimensional near every such point,
\item does {\em not} intersect any points in $E \cap E'$ for any $E \neq E' \in \tilde \scrE_0$,
\item does {\em not} intersect any irreducible component of $\pi^{-1}(X \times \{0\})$ other than those in $\tilde \scrE_0$.
\end{prooflist}
For each $\vec a \in \scrA^*$ and each $z \in X$, it follows that $\tilde C(\vec{ag}) = \tilde C_{\vec a}$ near $\pi^{-1}(z,0) \cap \tilde U_{n+1}$, and
\begin{align*}
\len(\local{C(\vec {ag})}{(z,0)}/t\local{C(\vec {ag})}{(z,0)})
    &=
    \sum_{\tilde z \in \pi^{-1}(z,0)}
    \len(\local{\tilde C(\vec {ag})}{\tilde z}/t\local{\tilde C(\vec {ag})}{\tilde z}) \\
    &=
    \sum_{E \in \tilde \scrE_0}
    \ord_E(t)\deg((\pi_W)|_E)
\end{align*}
which completes the proof of \Cref{prop:formula:blow-up}.
\end{proof}

\subsubsection{} \label{sec:length:confirmation:constant-m}
Let $\nu, r_1, \ldots, r_n, \vec g, \scrA^*$ be as in \Cref{prop:formula:blow-up}. For a positive integer $m$, define
\begin{align*}
\scrA^*_m
    := \{\vec a \in \scrA^* :~
    & \vec{\mu a} := (\mu_1 a_1, \ldots, \mu_n a_n) \in \scrA^*\ \text{for all}\ \vec{\mu} := (\mu_1, \ldots, \mu_n)\\
    & \text{such that}\ \mu_1^m = \cdots = \mu_n^m = 1\}
\end{align*}
Then $\scrA^*_m$ is a nonempty Zariski open subset of $\scrA^*$. Pick $\vec a \in \scrA^*_m$ and $z \in \bar X$ such that $(z,0) \in C(\vec{ag})$. For convenience write $\rho' = \rho_z$, $k = k_z$ and $i_j = i_j(z)$, $j = 1, \ldots, \rho'$ (where $\rho_z, k_z, i_j(z)$ are as in \Cref{sec:formula:set-up:1}). Let $i'_1 < \cdots < i'_{n-\rho'}$ be the elements of $\{1, \ldots, n\}\setminus \{i_1, \ldots, i_{\rho'}\}$.

\begin{procor} \label{prop:length:confirmation:constant-m}
Let $C_{m, i_1, \ldots, i_{\rho'}}(\vec{ag};\vec r)$ (or in short, $C_{m, i_1, \ldots, i_{\rho'}}(\vec{ag})$) be the scheme-theoretic closure in $X \times \kk$ of the subscheme of $X \times (\kk \setminus \{0\})$ defined by the following equations:
\begin{alignat*}{2}
&f_{i_j} - a_{i_j}t^{r_{i_j}}g_{i_j}
    = 0,\quad
    &&j = 1, \ldots, \rho', \\
&(f_{i'_j})^m - (a_{i'_j})^m t^{mr_{i'_j}} (g_{i'_j})^m
    = 0,\quad
    &&j = 1, \ldots, n - \rho',
\end{alignat*}
where $i'_1 < \cdots < i'_{n-\rho'}$ are the elements of $\{1, \ldots, n\}\setminus \{i_1, \ldots, i_{\rho'}\}$. %If $\vec a \in \scrA^*_m$ and $z \in \bar X$ are such that $(z,0) \in C(\vec{ag})$, then
Then
\begin{align*}
\len(\local{C(\vec {ag})}{(z,0)}/t\local{C(\vec {ag})}{(z,0)})
    &=  \frac{1}{m^{n-\rho'}}
        \len(\local{C_{m, i_1, \ldots, i_{\rho'}}(\vec{ag})}{(z,0)}/t\local{C_{m, i_1, \ldots, i_{\rho'}}(\vec{ag})}{(z,0)})
\end{align*}
\end{procor}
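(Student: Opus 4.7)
The plan is to decompose $C_{m,i_1,\ldots,i_{\rho'}}(\vec{ag};\vec r)$ formally near $(z,0)$ into $m^{n-\rho'}$ pieces indexed by vectors of $m$-th roots of unity, to show that each piece contributes the same length as $C(\vec{ag};\vec r)$, and to add up the contributions. The key algebraic input is the factorization $X^m-Y^m=\prod_{\zeta^m=1}(X-\zeta Y)$, applied with $X=f_{i'_j}$ and $Y=a_{i'_j}s^{r_{i'_j}}g_{i'_j}$, which is valid only after adjoining an $m$-th root $s$ of $t$.

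Concretely, I would pass to the finite flat cover $\kk[t]\hookrightarrow\kk[s]=\kk[t][s]/(s^m-t)$. Letting $\hat C$ denote the pullback of $C_{m,i_1,\ldots,i_{\rho'}}(\vec{ag};\vec r)$, a direct computation (using $\kk[s]/(s)\cong\kk$ and $t\mapsto 0$) gives
\[
\len\bigl(\local{\hat C}{(z,0)}/s\local{\hat C}{(z,0)}\bigr)=\len\bigl(\local{C_m}{(z,0)}/t\local{C_m}{(z,0)}\bigr).
\]
On this cover, $t^{r_{i'_j}}=s^{mr_{i'_j}}$, and the factorization above decomposes the defining ideal of $\hat C$, localized near the preimage of $(z,0)$ and saturated with respect to $s$, as the intersection of the $m^{n-\rho'}$ ideals $I_{D_{\vec\mu}}=(f_{i_j}-a_{i_j}s^{mr_{i_j}}g_{i_j},\;f_{i'_j}-\mu_{i'_j}a_{i'_j}s^{r_{i'_j}}g_{i'_j})$ indexed by $\vec\mu=(\mu_{i'_1},\ldots,\mu_{i'_{n-\rho'}})$ with $\mu_{i'_j}^m=1$. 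These ideals are pairwise coprime on the locus $s\ne 0$---the differences of the relevant generators are unit multiples of $s^{r_{i'_j}}g_{i'_j}$---so additivity of length modulo $s$ yields $\len(\local{\hat C}{(z,0)}/s)=\sum_{\vec\mu}\len(\local{D_{\vec\mu}}{(z,0)}/s)$.

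Each $D_{\vec\mu}$ is a scheme of the same shape as the one considered in \Cref{prop:formula:blow-up}, built from the generic data $\vec{\mu a}\vec g$ and the rescaled weight vector $\vec{r''}$ with $r''_{i_j}=mr_{i_j}$ and $r''_{i'_j}=r_{i'_j}$, in the parameter $s$. Since $\vec a\in\scrA^*_m$, every $\vec{\mu a}\in\scrA^*$, and the length formula $\sum_{E\in\tilde\scrE_0}\ord_E(t)\deg(\pi_W|_E)$ from \Cref{prop:formula:blow-up}\eqref{formula:blow-up:ord} depends only on the modification $\tilde X$ and on $\vec g$, not on $\vec a$. Hence every $D_{\vec\mu}$ contributes the same length as $C(\vec{ag};\vec r)$, and summing the $m^{n-\rho'}$ equal contributions gives the asserted identity.

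The hardest step is justifying the scheme-theoretic decomposition of $\hat C$ into the $D_{\vec\mu}$ near $(z,0)$: set-theoretically all pieces share the fiber $V(f_i)$ at $s=0$, so one cannot simply invoke disjointness of supports. Instead one must argue via primary decomposition (in a suitable formal completion) together with the removal of embedded components supported at $s=0$, which corresponds exactly to the scheme-theoretic closure built into the definition of $\hat C$. A secondary issue is that the rescaling $\vec r\to\vec{r''}$ should not affect the applicability of \Cref{prop:formula:blow-up}\eqref{formula:blow-up:ord}; this should follow from the fact that, for $\nu_1^*$ enlarged by a factor depending only on $m$, both $\vec r$ and $\vec{r''}$ satisfy \eqref{condition:>nu} and produce identical data $\tilde\scrE_0$, $\ord_E(t)$, $\deg(\pi_W|_E)$ up to natural identification on the common modification.
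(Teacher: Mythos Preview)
Your plan is correct in outline but takes a detour that the paper avoids entirely. The paper never introduces the cover $s^m=t$. Instead it argues in two lines: first, by \Cref{prop:formula:blow-up}\eqref{formula:blow-up:ord} the length $\len(\local{C(\vec{ag})}{(z,0)}/t)$ is the same for every $\vec a\in\scrA^*$; second, since $\vec a\in\scrA^*_m$ guarantees $\vec{\mu a}\in\scrA^*$ for every $\vec\mu\in\scrM_{i_1,\ldots,i_{\rho'}}:=\{\vec\mu:\mu_i^m=1,\ \mu_{i_j}=1\}$, one has
\[
\len(\local{C(\vec{ag})}{(z,0)}/t)=\frac{1}{m^{n-\rho'}}\sum_{\vec\mu\in\scrM_{i_1,\ldots,i_{\rho'}}}\len(\local{C(\vec{\mu ag})}{(z,0)}/t),
\]
and the right-hand sum equals $\len(\local{C_m}{(z,0)}/t)$ by multi-additivity of intersection products. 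The last step works at the level of one-cycles on $X\times\kk$: on $X\times(\kk\setminus\{0\})$ one has $\prod_{\mu^m=1}(f_{i'_j}-\mu a_{i'_j}t^{r_{i'_j}}g_{i'_j})=f_{i'_j}^m-a_{i'_j}^m t^{mr_{i'_j}}g_{i'_j}^m$, so $[C_m]=\sum_{\vec\mu}[C(\vec{\mu ag})]$ as cycles (closures from $t\neq 0$), and lengths at isolated points of $C\cap V(t)$ read off the cycle class. (The exponent on $t$ in the displayed definition of $C_m$ should accordingly be $mr_{i'_j}$; with that reading, no base change is required.)

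Both of your ``hard steps'' are artifacts of the $s$-cover. The scheme-theoretic splitting you worry about is replaced in the paper by the cycle identity above, which needs no primary-decomposition argument. Your secondary issue---matching the length for $D_{\vec\mu}$ in the $s$-variable with weights $\vec{r''}$ to the length for $C(\vec{ag};\vec r)$ in the $t$-variable---is a genuine obstacle you have not resolved: the weighted blow-ups $\tilde X(\vec g;\vec r)$ and $\tilde X(\vec g;\vec{r''})$ are different, so there is no a priori reason for their exceptional data $\tilde\scrE_0$, $\ord_E(\cdot)$, $\deg(\pi_W|_E)$ to coincide. The paper sidesteps this completely by never leaving the original $t$-parameter.
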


\begin{proof}
Let $\scrM_{i_1, \ldots, i_{\rho'}}$ be the set of all $(\mu_1, \ldots, \mu_n) \in (\kk \setminus \{0\})^n$ such that $\mu_1^m = \cdots = \mu_n^m = 1$, and in addition, $\mu_{i_j} = 1$, $j = 1, \ldots, \rho'$. Since $|\scrM_{i_1, \ldots, i_{\rho'}}| = m^{n-\rho'}$, \Cref{prop:formula:blow-up} implies that
\begin{align*}
\len(\local{C(\vec {ag})}{(z,0)}/t\local{C(\vec {ag})}{(z,0)})
    &=  \frac{1}{m^{n-\rho'}}
        \sum_{\vec \mu \in \scrM_{i_1, \ldots, i_{\rho'}}}
        \len(\local{C(\vec{\mu ag})}{(z,0)}/t\local{C(\vec{\mu ag})}{(z,0)})
\end{align*}
The right hand side of the identity above equals $\frac{1}{{m^{n-\rho'}}} \len(\local{C_{m, i_1, \ldots, i_{\rho'}}(\vec{ag})}{(z,0)}/t\local{C_{m, i_1, \ldots, i_{\rho'}}(\vec{ag})}{(z,0)})$ due to the additivity of the ``order'' function of a non zero-divisor over branches of a curve (see e.g.\ \cite[Theorem IV.24]{howmanyzeroes}).
\end{proof}

\begin{proprop} \label{prop:formula:constant}
%We now show that for
Let $Z', i_1, \ldots, i_{\rho'}$ be such that $Z'$ is an irreducible component of $Z^{(\rho')}_{i_1, \ldots, i_{\rho'}}$, and let $r_1, \ldots, r_n$ be as in \Cref{prop:formula:blow-up}. Then there is a nonempty open subset $L^*$ of $\prod_i L_i$ such that the following sum is constant for each $g \in L^*$:
\begin{align*}
\sum_{\mathclap{\substack{
        z \in Z' \\
        \scrZ^{(\rho_z)}_{k_z} = Z' \\
        (i_1(z), \ldots, i_{\rho_z}(z)) = (i_1, \ldots, i_{\rho'})
    }}} %\hspace{-3em}
    \len(\local{C(\vec {g})}{(z,0)}/t\local{C(\vec {g})}{(z,0)})
\end{align*}
% and a fixed distinguished component $Z' = \scrZ^{(\rho')}_k$ for the ordered intersection of $f_1, \ldots, f_n$, the sum of $\len(\local{C(\vec {ag})}{(z,0)}/t\local{C(\vec {ag})}{(z,0)})$ (the term considered in assertion \eqref{formula:blow-up:ord} of \Cref{prop:formula:blow-up}) over all $z \in \scrZ^{(\rho', *)}_k$
\end{proprop}

%\subsubsection{}
%\begin{proprop} \label{prop:constant}
%Let $X$ be an $n$-dimensional affine variety. For each $i = 1, \ldots, n$, let $f_i$ be either identically zero, or a {\em non} zero-divisor in $\kk[X]$, and $L_i$ be a base-point free (finite dimensional) vector subspace of $\kk[X]$ containing $f_i$. Let $Z$ be an irreducible component of $\scrZ^{(\rho)}$, $\rho \geq 0$. Let $Z'$ %be the union of the intersection of $Z$ with all other irreducible components of $\scrZ^{(\rho)}$, and $Z'_2$
%be the union of the intersection of $Z$ with all $\scrZ^{(\rho')}$ for $\rho' > \rho$. Then for each nonempty open subset $Z^*$ of $Z$, there are $\nu \in \rr$, and a nonempty open subset $L^*$ of $\prod_i L_i$ such that under \eqref{condition:>nu}, for each $(g_1, \ldots, g_n) \in L^*$,
%\begin{align*}
%\sum_{\mathclap{\substack{\text{branch}\ B\ \text{of}\ C(\vec g; \vec r)\\ \Center(B) \in (Z^* \setminus Z') \times \{0\}}}}
%    \ord_B(t)
%    &= \sum_{\mathclap{i_1, \ldots, i_\rho}}
%        \len(\local{X}{Z}/I^{(\rho)}_{i_1, \ldots, i_\rho}\local{X}{Z})
%        \deg_{i'_1, \ldots, i'_{n-\rho}}(Z)
%        %\mult{\scrL_{i'_1}|_Z}{\scrL_{i'_{n-\rho}}|_Z}
%\end{align*}
%where on the right hand side, the $i'_j$ are the elements of $\{1, \ldots, n\}\setminus \{i_1, \ldots, i_\rho\}$, and the sum is over all $i_1, \ldots, i_\rho$ such that $Z$ is an irreducible component of $Z^{(\rho)}_{i_1, \ldots, i_\rho}$.
%\end{proprop}

\begin{proof}
Let $\nu, r_1, \ldots, r_n$ and $W := \pp^n(r_1, \ldots, r_n, 1)$ be as in \Cref{prop:formula:blow-up}, and let $Y$ be the closure in $\prod_i L_i \times \bar X \times \kk \times W$ of the graph of the map
\begin{align*}
\prod_i L_i \times \bar X \times \kk \ni (\vec g, x, t) &\mapsto [\frac{f_1}{g_1}(x): \cdots : \frac{f_n}{g_n}(x): t] \in W
\end{align*}
Fix $i_1, \ldots, i_{\rho'}$ such that $Z'$ is an irreducible component of $Z^{(\rho')}_{i_1, \ldots, i_{\rho'}}$, and consider
\begin{align*}
Y_0 &:= \{(\vec g, x, t) \in Y: x \in Z,\ g_{i'_1}(x) = \cdots = g_{i'_{n-\rho'}}(x) = t = 0\}, %\\
%L'_0 &:= Y'_0 \cap V(w_1 - 1, \ldots, w_n - 1) \cap \pi_W^{-1}(\tilde U_{n+1})
\end{align*}
where the $i'_j$ are the elements of $\{1, \ldots, n\}\setminus \{i_1, \ldots, i_{\rho'}\}$. Note that $Y_0$ is a (possibly reducible) closed subvariety of $Y$. Let $[w_1: \cdots :w_{n+1}]$ be (weighted) homogeneous coordinates on $W$, and $u_i := w_i/(w_{n+1})^{r_i}$, $i = 1, \ldots, n$, be the coordinates on $U_{n+1} := W \setminus V(w_{n+1})$. Let
\begin{align*}
%Y'_0 &:= \{(\vec g, x, t) \in Y: x \in Z,\ g_{i'_1}(x) = \cdots = g_{i'_{n-\rho}}(x) = t = 0\}, \\
%\tilde L_1 &:= \tilde Y_0 \cap V(u_2 - 1, \ldots, u_n - 1) \cap \pi_W^{-1}(\tilde U_{n+1}), \\
%\tilde L_0 &:= \tilde L_1 \cap V(u_1 - 1) = \tilde Y_0 \cap V(u_1 - 1, \ldots, u_n - 1) \cap \pi_W^{-1}(\tilde U_{n+1})
\tilde L_0 &:= Y_0 \cap V(u_1 - 1, \ldots, u_n - 1) \cap \pi_W^{-1}(U_{n+1})
\end{align*}
where $\pi_W: Y \to W$ is the natural projection. Let $\pi_L: Y \to \prod_i L_i$ is the natural projection. If $\pi_L|_{\tilde L_0}$ is {\em not} dominant, then \Cref{prop:formula:constant} holds with the required sum equal to zero. So assume $\pi_L|_{\tilde L_0}$ is dominant. Then the arguments from the proof of \Cref{prop:formula:blow-up} imply that
\begin{prooflist}
\item there is a nonempty open subset $L^*_0$ of $\prod_i L_i$ over which each fiber of $\pi_L|_{\tilde L_0}$ is zero dimensional, and each fiber of $\pi_L|_{Y_0}$ has pure dimension $n$.
\end{prooflist}
In particular, $Y_0$ has codimension one in $Y$, and consequently, some irreducible components of $Y_0$ are also irreducible components of $V(t) \subseteq Y$. The observations from the proof of \Cref{prop:formula:blow-up} then imply that
\begin{prooflist}[resume]
\item there is a nonempty open subset $L^*_1$ of $L^*_0$ such that on $\tilde U^1_{n+1} := \pi_W^{-1}(U_{n+1}) \cap \pi_L^{-1}(L^*_1)$, we can consider $\tilde L_0$ as the (restriction of the) closed subscheme $V(t, u_1 - 1, \ldots, u_n - 1)$.
\item Denote the irreducible components of $\tilde L_0 \cap \tilde U^1_{n+1}$ by $\tilde L_{0,i}$. Then for each $\vec g$ in a nonempty open subset $L^*_2$ of $L^*_1$, the ``scheme-theoretic strict transform'' (see \Cref{footnote:scheme-transform} on \Cpageref{footnote:scheme-transform}) $\tilde C(\vec g)$ of $C(\vec{g})$ on $Y$ is the same as the subscheme $V(u_1 - 1, \ldots, u_n - 1)$ near $\{\vec g\} \times Z \times \{0\}$, and
\begin{align*}
\sum_{\substack{
        z \in Z' \\
        \scrZ^{(\rho_z)}_{k_z} = Z' \\
        (i_1(z), \ldots, i_{\rho_z}(z)) = (i_1, \ldots, i_{\rho'})
    }} \hspace{-3em}
    \len(\local{C(\vec {g})}{(z,0)}/t\local{C(\vec {g})}{(z,0)})
    &=
    \sum_i \len(\local{\tilde L_0}{\tilde L_{0,i}}) \deg(\pi_L|_{\tilde L_{0,i}})
\end{align*}
\end{prooflist}
Since the right hand side does {\em not} depend on $\vec{g} \in L^*_2$, this completes the proof of \Cref{prop:formula:constant} with $L^* = L^*_2$.
\end{proof}

The following theorem is the main result of this section. It confirms that the formula for the ordered intersection multiplicity guessed in \Cref{sec:comb:guess} is the correct one. % Note that in this result we work under the assumption that the base field is uncountable, and even then, only show that the set of $g_1, \ldots, g_n$ for which the formula holds is dense in a somewhat relatively technical sense. These technicalities are removed in \Cref{cor:length:confirmation}.
For this result $X$ is an $n$-dimensional affine variety, $f_1, \ldots, f_n \in \kk[X]$, and $L_i$ are base-point free (finite dimensional) vector subspace of $\kk[X]$ containing $f_i$, $i = 1, \ldots, n$. Let $Z$ be a distinguished component for (the ordered intersection of the hypersurfaces defined by) $V(f_1), \ldots, V(f_n)$ (\Cref{defn:distinguished}), and $\check{Z}$ be the intersection of $Z$ with the union of all other distinguished components $Z'$ with $\codim(Z') \geq \codim(Z)$.

\begin{thm} \label{thm:length:confirmation}
%Let $\check{Z}$ be the union of the intersection of $Z$ with all $\scrZ^{(\rho')}$ for $\rho' > \rho$.
There is $\nu \in \rr$ with the following property: for each $\vec{r} = (r_1, \ldots, r_n)$ satisfying \eqref{condition:>nu} and each nonempty open subset $Z^*$ of $Z$, there is a nonempty open subset $L^*$ of $\prod_i L_i$ such that for each $\vec g = (g_1, \ldots, g_n) \in L^*$,
\begin{align*}
\sum_{\mathclap{\substack{\text{branch}\ B\ \text{of}\ C(\vec{g}; \vec r)\\ \Center(B) \in (Z^* \setminus \check{Z}) \times \{0\}}}}
    \ord_B(t)
    &= \sum_{\mathclap{i_1, \ldots, i_\rho}}
        \len(\local{X}{Z}/I^{(\rho)}_{i_1, \ldots, i_\rho}\local{X}{Z})
        \deg_{i'_1, \ldots, i'_{n-\rho}}(Z)
        %\mult{\scrL_{i'_1}|_Z}{\scrL_{i'_{n-\rho}}|_Z}
\end{align*}
where the $i'_j$ on the right hand side are the elements of $\{1, \ldots, n\}\setminus \{i_1, \ldots, i_\rho\}$, and the sum is over all $i_1, \ldots, i_\rho$ such that $Z$ is an irreducible component of $Z^{(\rho)}_{i_1, \ldots, i_\rho}$.
\end{thm}

\begin{proof}
The proof of \Cref{thm:length:confirmation} straddles from \Crefrange{sec:length:confirmation:I'I''}{sec:length:confirmation:conclusion} below.

\subsubsection{} \label{sec:length:confirmation:I'I''}
Fix $i_1, \ldots, i_\rho$ such that $Z$ is an irreducible component of $Z^{(\rho)}_{i_1, \ldots, i_\rho}$. For each $j = 1, \ldots, \rho-1$,
\begin{itemize}
\item let $I'^{(j)}_{i_1, \ldots, i_\rho}$ be the intersection of all primary components $\qqq$ of $I^{(j)}_{i_1, \ldots, i_\rho}$ which are used in the construction of $I^{(j+1)}_{i_1, \ldots, i_\rho}$, i.e.\ $f_{i_{j+1}} \not\in \sqrt{\qqq}$, and $f_i \in \sqrt{\qqq}$ for each $i < i_j$;
\item let $I''^{(j)}_{i_1, \ldots, i_\rho}$ be the intersection of all other primary components $\qqq$ of $I^{(j)}_{i_1, \ldots, i_\rho}$.
\end{itemize}
It is then clear that for each $j = 1, \ldots, \rho -1$,
\begin{align*}
I^{(j)}_{i_1, \ldots, i_\rho} &= I'^{(j)}_{i_1, \ldots, i_\rho} \cap I''^{(j)}_{i_1, \ldots, i_\rho},\ \text{and} \\
I^{(j+1)}_{i_1, \ldots, i_\rho} &= I'^{(j)}_{i_1, \ldots, i_\rho} + f_{i_{j+1}}\kk[X]
\end{align*}
We will prove the following result in \Cref{proof:length:confirmation:I-g-subset} below.

\begin{proclaim} \label{length:confirmation:I-g-subset}
There is a nonempty open subset $Z^*_0$ of $Z$ and $\nu^*_0 \in \rr$ such that if $z \in Z^*_0$ and $g_1, \ldots, g_n, r_1, \ldots, r_n$ satisfy \eqref{condition:nonzero>nu} with $\nu \geq \nu^*_0$, then
\begin{enumerate}
\item for each $j$, $1 \leq j \leq \rho$,
\begin{align*}
I^{(j)}_{i_1, \ldots, i_\rho} \hatlocal{C}{(z,0)} \subseteq t^{r_{i_j}}\hatlocal{C}{(z,0)}
%I^{(\rho)}_{i_1, \ldots, i_\rho} \local{C}{(z,0)} \subseteq t^{r_{i_\rho}}\local{C}{(z,0)}
\end{align*}
In addition, $Z^*_0$ does {\em not} change (but $\nu^*_0$ possibly changes) if the $f_i$ are replaced by $(f_i)^{m_i}$ for $m_i \geq 1$.
\end{enumerate}
Pick $i' \in \{1, \ldots, n\}\setminus \{i_1, \ldots, i_\rho\}$. Recall that $i_1 = i^*$, where $i^*$ is the smallest index such that $F_{i^*}$ is not identically zero.
\begin{enumerate}[resume]
\item If $i' < i^*$, then $g_{i'}|_C \equiv 0 \in t\hatlocal{C}{(z,0)}$.
\item Assume $i' > i^*$. Pick the largest $j$ such that $i_j < i'$. Then $f_{i'} \in \sqrt{I'^{(j)}_{i_1, \ldots, i_\rho}}$. If $f_{i'} \in I'^{(j)}_{i_1, \ldots, i_\rho}$, then $g_{i'} \in t\hatlocal{C}{(z,0)}$.
\end{enumerate}
\end{proclaim}

\subsubsection{} \label{sec:length:confirmation:I}
We now apply the constructions of \Cref{sec:formula:set-up:0,sec:formula:set-up:1}. In particular, we assume $\bar X$ is a compactification of $X$, and for each $i = 1, \ldots, n$, $F_i$ is either identically zero (in the case that $f_i$ is identically zero), or a Cartier divisor on $\bar X$ extending $V(f_i) \subseteq X$. Let $\scrZ^{(\rho', *)}_k$ be defined as in \Cref{sec:formula:set-up:1} with the following additional constraint:
\begin{prooflist}
\item $\scrZ^{(\rho^*, *)}_{k^*} \subseteq Z^*$, where $\rho^*, k^*$ are the unique indices such that $\scrZ^{(\rho^*)}_{k^*}$ is the closure of $Z$ in $\bar X$ (recall that $Z^*$ is the nonempty open subset of $Z$ from the statement of \Cref{thm:length:confirmation}).
\end{prooflist}
For each $\rho', k$, recall that $F_i$ is locally represented near $\scrZ^{(\rho', *)}_k$ by $f_i/h_{\rho',k,i}$ (Property \eqref{length:confirmation:assumption:F_i} of \Cref{sec:formula:set-up:1}). Consider the ideals $I^{(j)}_{\rho', k; i_1, \ldots, i_{\rho'}}$ obtained by applying the construction of $I^{(j)}_{i_1, \ldots, i_{\rho'}}$ from \Cref{thm:length} with $f_i/h_{\rho',k,i}$ in place of $f_i$, $i = 1, \ldots, n$. Replacing the $\scrZ^{(\rho', *)}_k$ by appropriate open subsets if necessary, we may ensure that for each $\rho', k$,
\begin{prooflist}[resume]
\item \label{length:confirmation:assumption:sep} $\scrZ^{(\rho', *)}_{k}$ does not intersect any irreducible component of $\bigcup_{i_1, \ldots, i_{\rho'}} V(I^{(\rho')}_{i_1, \ldots, i_{\rho'}})$ other than $\scrZ^{(\rho')}_{k}$ itself.
\end{prooflist}

\subsubsection{} \label{sec:length:confirmation:M}
Analogously, define $I'^{(j)}_{\rho', k; i_1, \ldots, i_{\rho'}}$ as in \Cref{sec:length:confirmation:I'I''}, and pick $M \geq 1$ such that
%\begin{itemize}
%\item %\label{length:confirmation:constant-m:M:M}
$(f_{i'}/h_{\rho', k, i'})^M \in I'^{(j)}_{\rho', k; i_1, \ldots, i_{\rho'}}$ for each $k, \rho', i_1, \ldots, i_{\rho'}$, and each $j, l$ such that
    \begin{itemize}
    \item either $i_j < i' < i_{j+1}$, or
    \item $j = \rho$ and $i' > i_{\rho'}$.
    \end{itemize}
%\end{itemize}
For each $k, \rho', i_1, \ldots, i_{\rho'}$, and $m \geq M$, define $C_{m, i_1, \ldots, i_{\rho'}}(\vec{g}; \vec r)$ as in \Cref{prop:length:confirmation:constant-m}. \Cref{length:confirmation:I-g-subset} implies that there are nonempty open subset $\scrZ^{(\rho', *)}_{k, 0} \subseteq \scrZ^{(\rho', *)}_k$ and $\nu'^*_m \in \rr$ such that for each $k, \rho', i_1, \ldots, i_{\rho'}$, if \eqref{condition:>nu} holds with $\nu \geq \nu'^*_m$, and $z \in \scrZ^{(\rho', *)}_{k, 0}$ is such that $(z,0) \in C_{m, i_1, \ldots, i_{\rho'}}(\vec{g}; \vec r)$ and $(g_{i_j}/h_{\rho', k, i_j})(z) \neq 0$, $1 \leq j \leq \rho'$, then
\begin{prooflist}
\item \label{length:confirmation:M:I} $I^{(\rho')}_{\rho', k; i_1, \ldots, i_{\rho'}} \hatlocal{C_{m, i_1, \ldots, i_{\rho'}}(\vec{g}; \vec r)}{(z,0)} \subseteq t\hatlocal{C_{m, i_1, \ldots, i_{\rho'}}(\vec{g}; \vec r)}{(z,0)}$,
\item \label{length:confirmation:M:g} $(g_{i'_j}/h_{\rho', k, i'_j})^m \in t\hatlocal{C_{m, i_1, \ldots, i_{\rho'}}(\vec{g}; \vec r)}{(z,0)}$ for each $j = 1, \ldots, n - \rho'$, where $i'_1 < \cdots < i'_{n-\rho'}$ are the elements of $\{1, \ldots, n\}\setminus \{i_1, \ldots, i_{\rho'}\}$.
\end{prooflist}

\subsubsection{} \label{sec:length:confirmation:intprod}
Now run the constructions of \Cref{sec:formula:set-up:1} with all $\scrZ^{(\rho', *)}_k$ replaced by $\scrZ^{(\rho', *)}_{k,0}$ from \Cref{sec:length:confirmation:M}. Consider resulting $\nu^*_1 \in \rr$ and $L^*_1 \subseteq \prod_i L_i$ from \Cref{sec:formula:set-up:1}. Pick $\nu \geq \nu^*_1$ and $\vec r = (r_1, \ldots, r_n)$ satisfying \eqref{condition:>nu}. Given $\vec g = (g_1, \ldots, g_n) \in L^*_1$ and $m \geq M$, consider the nonempty open subset $\scrA^*_{m, \vec r}(\vec g)$ of $(\kk \setminus \{0\})^n$ constructed in \Cref{sec:length:confirmation:constant-m} (here we write $\scrA^*_{m, \vec r}(\vec g)$ instead of $\scrA^*_m$ in order to explicitly denote the dependence on $\vec r, \vec g$). For each $\vec a = (a_1, \ldots, a_n) \in \scrA^*_{m, \vec r}(\vec g)$, consider $C(\vec {ag}; \vec r)$ where $\vec{ag} = (a_1g_1, \ldots, a_ng_n) \in L^*_1$. Let $\scrS$ be the collection of all the $\scrZ^{(\rho')}_k$ over all $\rho', k$. Then
\begin{align*}
\mult{\scrL_1}{\scrL_n}
    %&= \sum_{\mathclap{\substack{\text{branch}\ B\ \text{of}\ C \\ \Center{B} \in \bar X \times \{0\}}}} \ord_B(t) \\
    &= \sum_{z \in \bar X}
    \len(\local{C(\vec {ag}; \vec r)}{(z,0)}/t\local{C(\vec {ag}; \vec r)}{(z,0)}) \\
    &= \sum_{Z' \in \scrS}
    \sum_{\substack{
        i_1, \ldots, i_{\rho'} \\
        \rho' = \codim(Z') \\
        Z' \subseteq Z^{(\rho')}_{i_1, \ldots, i_{\rho'}}
    }}
    \sum_{\substack{
        z \in Z' \\
        \scrZ^{(\rho_z)}_{k_z} = Z' \\
        (i_1(z), \ldots, i_{\rho_z}(z)) = (i_1, \ldots, i_{\rho'})
    }}
    \len(\local{C(\vec {ag}; \vec r)}{(z,0)}/t\local{C(\vec {ag}; \vec r)}{(z,0)})
\end{align*}

\subsubsection{} \label{sec:length:confirmation:constant-m:M}
For each summand of the (innermost) sum in the above identity, \Cref{prop:length:confirmation:constant-m} implies that
\begin{align*}
\len(\local{C(\vec {ag}; \vec r)}{(z,0)}/t\local{C(\vec {ag}; \vec r)}{(z,0)})
    &=  \frac{1}{m^{n-\rho'}}
        \len(\local{C_{m, i_1, \ldots, i_{\rho'}}(\vec{ag}; \vec r)}{(z,0)}
            / t\local{C_{m, i_1, \ldots, i_{\rho'}}(\vec{ag}; \vec r)}{(z,0)})
\end{align*}
Now assume $\nu$ (from \Cref{sec:length:confirmation:intprod}) is greater than or equal to $\nu'^*_m$ from \Cref{sec:length:confirmation:M}. Then writing $C_* := C_{m, i_1, \ldots, i_{\rho'}}(\vec{ag}; \vec r)$ as a shorthand, we have
\begin{align*}
\len(\local{C_*}{(z,0)}/t\local{C_*}{(z,0)} )
     &=\len(\hatlocal{C_*}{(z,0)}/t\hatlocal{C_*}{(z,0)}) \\
     &= \len(
        \hatlocal{\bar X \times \kk}{(z,0)}/
            \langle
                I(C_*), t
            \rangle \hatlocal{\bar X \times \kk}{(z,0)}
    ) \\
     &= \len(
        \hatlocal{\bar X \times \kk}{(z,0)}/
            \langle
                I(C_*), t,
                I^{(\rho')}_{\rho',k; i_1, \ldots, i_{\rho'}},
                (\frac{g_{i'_1}}{h_{\rho', k, i'_1}})^m , \ldots, (\frac{g_{i'_{n-\rho'}}}{h_{\rho', k, i'_{n-\rho'}}})^m
            \rangle \hatlocal{\bar X \times \kk}{(z,0)}
    ) \\
    & \qquad \text{(due to properties \ref{length:confirmation:M:I} and \ref{length:confirmation:M:g} from \Cref{sec:length:confirmation:M})} \\
     &\leq \len(
        \hatlocal{\bar X \times \kk}{(z,0)}/
            \langle
                t,
                I^{(\rho')}_{\rho',k; i_1, \ldots, i_{\rho'}},
                (\frac{g_{i'_1}}{h_{\rho', k, i'_1}})^m, \ldots, (\frac{g_{i'_{n-\rho'}}}{h_{\rho', k, i'_{n-\rho'}}})^m
            \rangle \hatlocal{\bar X \times \kk}{(z,0)}
    ) \\
    &= \len(
        \hatlocal{\bar X}{z}/
            \langle
                I^{(\rho')}_{\rho',k; i_1, \ldots, i_{\rho'}},
                (\frac{g_{i'_1}}{h_{\rho', k, i'_1}})^m , \ldots, (\frac{g_{i'_{n-\rho'}}}{h_{\rho', k, i'_{n-\rho'}}})^m
            \rangle \hatlocal{\bar X}{z}
    ) %\\
%    &= M^{n-\rho'}\len(
%        \local{\bar X}{z}/
%            \langle
%                I^{(\rho')}_{\rho',k; i_1, \ldots, i_{\rho'}},
%                \frac{g_{i'_1}}{h_{\rho', k, i'_1}}, \ldots, \frac{g_{i'_{n-\rho'}}}{h_{\rho', k, i'_{n-\rho'}}}
%            \rangle \local{\bar X}{z}
%    )
\end{align*}

\begin{proclaim} \label{claim:length:confirmation:limit:0}
For each $m \geq M$, fix $\nu_m \geq \max\{\nu'^*_m, \nu^*_1\}$ (where $\nu^*_1$ is from \Cref{sec:length:confirmation:intprod}) and $\vec r_m$ such that \eqref{condition:>nu} holds with $\vec r = \vec r_m$ and $\nu = \nu_m$. If $\scrA^*_{\vec g} := \bigcap_{m \geq M}\scrA^*_{m, \vec r_m}(\vec g)$ is nonempty, then for all sufficiently large $m$,
\begin{align*}
\len(\local{C(\vec{ag}; \vec r_m)}{(z,0))}/t\local{C(\vec{ag}; \vec r_m)}{(z,0)})
    &\leq
    \len(\local{\bar X}{Z'}/I^{(\rho')}_{\rho',k; i_1, \ldots, i_{\rho'}}\local{\bar X}{Z'})
        \multsub{\frac{g_{i'_1}}{h_{\rho', k, i'_1}}|_{Z'}}{\frac{g_{i'_{n-\rho'}}}{h_{\rho', k, i'_{n-\rho'}}}|_{Z'}}{z}
\end{align*}
for each $\vec a \in \scrA^*_{\vec g}$ and $z \in Z' \cap V(\frac{g_{i'_1}}{h_{\rho', k, i'_1}}, \ldots, \frac{g_{i'_{n-\rho'}}}{h_{\rho', k, i'_{n-\rho'}}})$, where $\multonlysub{\cdot,\cdots,\cdot}{z}$ denotes the intersection multiplicity at $z$ (of Cartier divisors on $Z'$).
\end{proclaim}

\begin{proof}
Indeed, it follows from \Cref{sec:length:confirmation:constant-m:M} that
\begin{align*}
\len(\local{C(\vec{ag}; \vec r_m)}{(z,0))}/t\local{C(\vec{ag}; \vec r_m)}{(z,0)})
    & \leq \frac{1}{m^{n-\rho}}\len(
        \local{\bar X}{z}/
            \langle
                I^{(\rho')}_{\rho',k; i_1, \ldots, i_{\rho'}},
                (\frac{g_{i'_1}}{h_{\rho', k, i'_1}})^m, \ldots, (\frac{g_{i'_{n-\rho'}}}{h_{\rho', k, i'_{n-\rho'}}})^m
            \rangle \local{\bar X}{z}
        )
\end{align*}
The {\em limit formula of Lech} \cite[Theorem 7.5.10]{northcott-lessons} then implies that the limit of the right hand side as $m \to \infty$ is $e_{R_z}(\frac{g_{i'_1}}{h_{\rho', k, i'_1}}, \ldots, \frac{g_{i'_{n-\rho'}}}{h_{\rho', k, i'_{n-\rho'}}} \mid R_z)$, where $R_z := \local{\bar X}{z}/ I^{(\rho')}_{\rho',k; i_1, \ldots, i_{\rho'}} \local{\bar X}{z}$, and $e_\cdot(\cdot \mid \cdot)$ is the ``multiplicity'' symbol. Since the left hand side is always an integer, it follows that for all $m \gg 1$,
\begin{align*}
\len(\local{C(\vec{ag}; \vec r_m)}{(z,0))}/t\local{C(\vec{ag}; \vec r_m)}{(z,0)})
    & \leq e_{R_z}(\frac{g_{i'_1}}{h_{\rho', k, i'_1}}, \ldots, \frac{g_{i'_{n-\rho'}}}{h_{\rho', k, i'_{n-\rho'}}} \mid R_z)
\end{align*}
for each $z \in Z' \cap V(\frac{g_{i'_1}}{h_{\rho', k, i'_1}}, \ldots, \frac{g_{i'_{n-\rho'}}}{h_{\rho', k, i'_{n-\rho'}}})$ (recall that the latter set is finite due to condition \eqref{length:confirmation:assumption:finite} of \Cref{sec:formula:set-up:1}). It follows from the definitions that in this case the multiplicity symbol is same as the intersection multiplicity at $z$ of the Cartier divisor defined by $g_{i'_j}/h_{\rho', k, i'_j}$, $j = 1, \ldots, n - \rho'$, on the subscheme $\tilde Z'$ of $\bar X$ defined by $I^{(\rho')}_{\rho',k; i_1, \ldots, i_{\rho'}}$. Since near each $z \in Z' \cap V(\frac{g_{i'_1}}{h_{\rho', k, i'_1}}, \ldots, \frac{g_{i'_{n-\rho'}}}{h_{\rho', k, i'_{n-\rho'}}})$, the only irreducible component of $\tilde Z'$ is $Z'$ itself (property \ref{length:confirmation:assumption:sep} of \Cref{sec:length:confirmation:I}), it follows that
\begin{align*}
e_{R_z}(\frac{g_{i'_1}}{h_{\rho', k, i'_1}}, \ldots, \frac{g_{i'_{n-\rho'}}}{h_{\rho', k, i'_{n-\rho'}}} \mid R_z)
    &= \multsub{\frac{g_{i'_1}}{h_{\rho', k, i'_1}}|_{\tilde Z'}}{\frac{g_{i'_{n-\rho'}}}{h_{\rho', k, i'_{n-\rho'}}}|_{\tilde Z'}}{z} \\
    &= \len(\local{\bar X}{Z'}/I^{(\rho')}_{\rho',k; i_1, \ldots, i_{\rho'}}\local{\bar X}{Z'})
        \multsub{\frac{g_{i'_1}}{h_{\rho', k, i'_1}}|_{Z'}}{\frac{g_{i'_{n-\rho'}}}{h_{\rho', k, i'_{n-\rho'}}}|_{Z'}}{z}
\end{align*}
This completes the proof of the claim.
\end{proof}

\begin{proprop} \label{claim:length:confirmation:limit:1}
Fix $i_1, \ldots, i_{\rho'}$ such that $Z' = \scrZ^{(\rho')}_k$ is an irreducible component of $Z^{(\rho')}_{i_1, \ldots, i_{\rho'}}$. Then there is $\nu'^*$ with the following property: for all $\nu \geq \nu'^*$ and $\vec r$ satisfying \eqref{condition:>nu}, there is a nonempty open subset $L'^*$ of $\prod_i L_i$ such that for each $\vec g = (g_1, \ldots, g_n) \in L'^*$,
\begin{align*}
%\sum_{\mathclap{\substack{\text{branch}\ B\ \text{of}\ C(\vec{g}; \vec r)\\ \Center(B) \in (Z^* \setminus \check{Z}) \times \{0\}}}}
%    \ord_B(t)
    \sum_{\mathclap{\substack{
            z \in Z' \\
            \scrZ^{(\rho_z)}_{k_z} = Z' \\
            (i_1(z), \ldots, i_{\rho_z}(z)) = (i_1, \ldots, i_{\rho'})
    }}}
    \len(\local{C(\vec {g}; \vec r)}{(z,0)}/t\local{C(\vec {g}; \vec r)}{(z,0)})
    &
    \leq %\sum_{\mathclap{i_1, \ldots, i_{\rho'}}}
        \len(\local{\bar X}{Z'}/I^{(\rho')}_{\rho',k; i_1, \ldots, i_{\rho'}}\local{\bar X}{Z'})
        \deg_{i'_1, \ldots, i'_{n-\rho'}}(Z)
        %\mult{\scrL_{i'_1}|_Z}{\scrL_{i'_{n-\rho}}|_Z}
\end{align*}
%where $\rho' = \codim(Z')$ and $k$ is the (unique) index such that $Z' = \scrZ^{(\rho')}_{k}$.
\end{proprop}

\begin{proof}
We may assume \woutlog\ that $\kk$ is uncountable\footnote{Since one can proceed after embedding $\kk$ into the algebraic closure of power series in a new variable.}. \Cref{prop:formula:constant} implies that for each choice of sufficiently large $\nu$ and $r_1, \ldots, r_n$ satisfying \eqref{condition:>nu}, the sum on the left hand side of the above relation is a constant $e_{\nu, \vec r}$ for all $\vec g$ in a nonempty open subset $L^*_{\nu, \vec r}$ of $\prod_i L_i$. It suffices to show that $e_{\nu, \vec r} \leq \lambda := \len(\local{\bar X}{Z'}/I^{(\rho')}_{\rho',k; i_1, \ldots, i_{\rho'}}\local{\bar X}{Z'}) \deg_{i'_1, \ldots, i'_{n-\rho'}}(Z)$ if $\nu$ is sufficiently large. Indeed, otherwise for each $m$, there are $\nu_m, \vec r_m$ as in \Cref{claim:length:confirmation:limit:0} such that $e_{\nu_m, \vec r_m} > \lambda$. Since $\kk$ is uncountable, there is $\vec g \in L^*_1 \cap \bigcap_m L^*_{\nu_m, \vec r_m}$ (where $L^*_1$ is from \Cref{sec:length:confirmation:intprod}). Now for each $m$, let $\scrA'^*_{m, \vec r_m}(\vec g)$ be the nonempty open subset of $(\kk \setminus \{0\})^n$ consisting of all $\vec a \in \scrA^*_{m, \vec r_m}(\vec g)$ such that $\vec{ag} \in L^*_{\nu_m, \vec r_m}$. The uncountability of $\kk$ again implies that there is $\vec a \in \bigcap_m \scrA'^*_{m, \vec r_m}(\vec g)$, which yields a contradiction via \Cref{claim:length:confirmation:limit:0}.
\end{proof}

\subsubsection{}\label{sec:length:confirmation:conclusion}
Combining the observations from \Cref{sec:length:confirmation:intprod}, \Cref{claim:length:confirmation:limit:1} and \Cref{thm:comb:intersection} we obtain that for each $\vec r$ satisfying \eqref{condition:>nu} with sufficiently large $\nu$, there is a nonempty open subset $L^*$ of $\prod_i L_i$ such that for all $\vec g \in L^*$,
\begin{align*}
\mult{\scrL_1}{\scrL_n}
    &=
    \sum_{Z' \in \scrS}
    \sum_{\substack{
        i_1, \ldots, i_{\rho'} \\
        \rho' = \codim(Z') \\
        Z' \subseteq Z^{(\rho')}_{i_1, \ldots, i_{\rho'}}
    }}
    \sum_{\substack{
        z \in Z' \\
        \scrZ^{(\rho_z)}_{k_z} = Z' \\
        (i_1(z), \ldots, i_{\rho_z}(z)) = (i_1, \ldots, i_{\rho'})
    }} \hspace{-3em}
    \len(\local{C(\vec{g}; \vec r)}{(z,0))}/t\local{C(\vec{g}; \vec r)}{(z,0)}) \\
    &\leq
    \sum_{Z' \in \scrS}
    \sum_{\substack{
        i_1, \ldots, i_{\rho'} \\
        \rho' = \codim(Z') \\
        Z' \subseteq Z^{(\rho')}_{i_1, \ldots, i_{\rho'}}
    }}
    \len(\local{\bar X}{Z'}/I^{(\rho')}_{\rho',k;i_1, \ldots, i_{\rho'}}\local{\bar X}{Z'})
        \deg_{i'_1, \ldots, i'_{n-\rho}}(Z') \\
    &= \mult{\scrL_1}{\scrL_n}
\end{align*}
Then the inequality in the middle must be an equality, and for each $Z', i_1, \ldots, i_{\rho'}$ and each $\vec g \in L^*$ we must have
\begin{align*}
\sum_{\mathclap{\substack{
        z \in Z' \\
        \scrZ^{(\rho_z)}_{k_z} = Z' \\
        (i_1(z), \ldots, i_{\rho_z}(z)) = (i_1, \ldots, i_{\rho'})
    }}}
    \len(\local{C(\vec{g}; \vec r)}{(z,0)}/t\local{C(\vec{g}; \vec r)}{(z,0)})
    &=
    \len(\local{\bar X}{Z'}/I^{(\rho')}_{\rho',k;i_1, \ldots, i_\rho}\local{\bar X}{Z'})
        \deg_{i'_1, \ldots, i'_{n-\rho}}(Z')
\end{align*}
\Cref{thm:length:confirmation} now follows from taking $Z'$ equal to the closure $\bar{Z}$ of $Z$ in $\bar X$.
\end{proof}

\part{Computation: estimates and examples} \label{part:bezout}

\section{Local factors of ordered intersection as (usual) intersection multiplicities} \label{sec:local}
\subsection{} \label{sec:local-set-up}
Recall the basic set up: we are given base-point free linear systems $\scrL_1, \ldots,\scrL_n$ on a purely $n$-dimensional variety $X$, and for each $i$, we have a fixed element $F_i \in \scrL_i$, which is either an effective (Cartier) divisor, or the zero element in $\scrL_i$. \Cref{thm:comb:intersection} shows that\footnote{In \Cref{thm:comb:intersection} no $F_i$ is allowed to be identically zero. If say $k$ of the $F_i$ are identically zero, then one obtains identities \eqref{eq:Fintersection-len} and \eqref{eq:multord} as follows: first reorder the $F_i$ such that all the identically zero elements appear first, and then appear the remaining ones, but with the {\em ordering among the non identically-zero elements unchanged}. Then replace the identically zero elements by generic elements of the corresponding $\scrL_i$, and apply \Cref{thm:comb:intersection} with $i^* = k+1$.}
\begin{align}
\mult{\scrL_1}{\scrL_n}
    &= \sum_Z \multord{F_1}{F_n}{Z}
  \label{eq:Fintersection-len}
\end{align}
where $\multord{F_1}{F_n}{Z}$ is the ``ordered intersection multiplicity'' of $F_1, \ldots, F_n$ along $Z$, which is zero unless $Z$ is a ``distinguished component'' of the ordered intersection of $F_1, \ldots, F_n$, in which case
\begin{align}
\multord{F_1}{F_n}{Z}
 &=
\sum_{i_1, \ldots, i_\rho}
        \len(\local{X}{Z}/I^{(\rho)}_{i_1, \ldots, i_\rho}\local{X}{Z})
        \deg_{i'_1, \ldots, i'_{n-\rho}}(Z)
        \label{eq:multord}
\end{align}
where $\rho := \codim(Z)$, and the sum is over all $i_1, \ldots, i_\rho$ are such that $Z$ is an irreducible component of $Z^{(\rho)}_{i_1, \ldots, i_\rho}$ from \Cref{thm:Z}, and $I^{(\rho)}_{i_1, \ldots, i_\rho}$ are as in \Cref{thm:length}. In this section we describe a few cases where the ``local factors'' $\len(\local{X}{Z}/I^{(\rho)}_{i_1, \ldots, i_\rho} \local{X}{Z})$ can be interpreted as ``intersection multiplicities of regular sequences'' at nonsingular points, and work out basic examples in dimension $\leq 3$.

\subsection{} First we recall the relevant definitions:
\begin{itemize}
\item Let $h_1, \ldots, h_k$ be regular functions on an open subset $U$ of $X$, and $W$ be an irreducible component of $V(h_1, \ldots, h_k)$. If $\codim(W) = k$, then we say that $h_1, \ldots, h_k$ form a {\em regular sequence near} $W$; if in addition $W$ is not contained in the set $\sing(X)$ of singular points of $X$, then the {\em intersection multiplicity of $h_1, \ldots, h_k$ along $W$} is defined as:
\begin{align*}
\multsub{h_1}{h_k}{W}
    &:= \len(\local{X}{W}/\langle h_1, \ldots, h_k \rangle \local{X}{W})
\end{align*}

\item If $D_1, \ldots, D_k$ are effective Cartier divisors on $X$ and $W$ is an irreducible component of $\bigcap_{i=1}^k \supp(D_i)$ such that $\codim(W) = k$ and $W \not\subseteq \sing(X)$, then the intersection multiplicity of $D_1, \ldots, D_k$ along $W$ is:
\begin{align*}
\multsub{D_1}{D_k}{W}
    &:= \multsub{h_1}{h_k}{W \cap U}
\end{align*}
where $h_i$ are local representatives of $D_i$ on an open subset $U$ of $X$ such that $U \cap W \neq \emptyset$.
\end{itemize}

%\subsection{}
%The first result of this section (\Cref{prop:local=mult:1}) collects a few basic cases where the local factor can be expressed in terms of intersection multiplicities. The second result (\Cref{prop:local-curves}) shows that ordered intersection multiplicities can be expressed in terms of ``orders of vanishing'' of non zero-divisors at points of (possibly non-reduced) curves.

\subsection{}
We continue to work in the set up of \Cref{sec:local-set-up}. By a {\em distinguished component} of (the ordered intersection of) $F_1, \ldots, F_n$ we mean an irreducible component of some  $Z^{(\rho)}_{i_1, \ldots, i_\rho}$ from \Cref{thm:Z}. As in \Cref{sec:Z} we write $\scrZ^{(\rho)}$ for the union of all distinguished components of codimension $\rho$. %Throughout this section $Z$ denotes such a distinguished component irreducible component of some $Z^{(\rho)}_{i_1, \ldots, i_\rho}$ (as defined in \Cref{thm:Z}).

\begin{prop} \label{prop:local=mult:1}
Let $Z$ be a distinguished component of $F_1, \ldots, F_n$ such that $Z \not\subseteq \sing(X)$. Let $i^*$ be the smallest index such that $F_{i^*}$ is not identically zero, and $\rho := \codim(Z)$.
\begin{enumerate}
\item \label{local=mult:1:0} If $Z$ is an irreducible component of $\bigcap_{j=1}^\rho \supp(F_{i_j})$, then
\begin{align*}
\len(\local{X}{Z}/I^{(\rho)}_{i_1, \ldots, i_\rho} \local{X}{Z})
    &= \multsub{F_{i_1}}{F_{i_\rho}}{Z}
\end{align*}
%\item If the preceding condition holds for {\em all} $i_1 = i^*, i_2, \ldots, i_\rho$ such that $Z$ is an irreducible component of some $Z^{(\rho)}_{i_1, \ldots, i_\rho}$, then
%\begin{align*}
%\multord{F_1}{F_n}{Z}
%    &=  \sum_{i_1, \ldots, i_\rho}
%            \multsub{F_{i_1}}{F_{i_\rho}}{Z}
%            \mult{\scrL_{i'_1}|_Z}{\scrL_{i'_{n-\rho}}|_Z}
%\end{align*}
%where $i'_1, \ldots, i'_{n-\rho}$ are the elements of $\{1, \ldots, n\} \setminus \{i_1, \ldots, i_\rho\}$.
\item In particular,
\begin{enumerate}
\item \label{local=mult:1:1} If $\rho = 1$, then
\begin{align*}
\len(\local{X}{Z}/I^{(1)}_{i^*} \local{X}{Z})
    &= \multonlysub{F_{i^*}}{Z} \\
\multord{F_1}{F_n}{Z}
    &= \multonlysub{F_{i^*}}{Z}
        \deg_{1, \ldots, \hat{i^*}, \ldots, n}(Z)
        %\mult{\scrL_1|_Z, \ldots, \hat{\scrL}_{i^*}|_Z}{\scrL_n|_Z}
\end{align*}
\item If $\rho = n$ and $Z$ is an isolated point in $\bigcap_{i=1}^n \supp(F_i)$, then
\begin{align*}
\multord{F_1}{F_n}{Z}
    &= \len(\local{X}{Z}/I^{(n)}_{1, \ldots, n} \local{X}{Z})
    = \multsub{F_1}{F_n}{Z}
\end{align*}
\end{enumerate}
\item \label{local=mult:1:2} If $\rho = 2$, then pick an open affine subset $U$ of $X$ such that $U \cap Z \neq \emptyset$, and each $F_i$ is defined on $U$ by some $f_i \in \kk[U]$. Since $\local{X}{Z}$ is a UFD\footnote{Due to Auslander–Buchsbaum theorem.}, it contains (up to multiplication by units) a unique element $f_{i^*, i_2}$ which is the least common multiple of all $f \in \local{X}{Z}$ such that
    \begin{enumerate}
    \item $f$ divides $f_{i^*}$,
    \item each irreducible factor of $f$ divides each $f_i$, $i < i_2$, and
    \item no irreducible factor of $f$ divides $f_{i_2}$.
    \end{enumerate}
    Then
    \begin{align*}
    \len(\local{X}{Z}/I^{(2)}_{i^*, i_2} \local{X}{Z})
        &= \multonlysub{f_{i^*, i_2},f_{i_2}}{Z} \\
    \multord{F_1}{F_n}{Z}
        &= \sum_{i_2}
            \multonlysub{f_{i^*, i_2},f_{i_2}}{Z}
            \deg_{i'_1, \ldots, i'_{n-2}}(Z)
            %\mult{\scrL_{i'_1}|_Z}{\scrL_{i'_{n-2}}|_Z}
    \end{align*}
    where the sum is over all $i_2$ such that $Z$ is an irreducible component of $Z^{(\rho)}_{i^*, i_2}$, and $i'_1, \ldots, i'_{n-2}$ are the elements of $\{1, \ldots, n\} \setminus \{i^*, i_2\}$.
\end{enumerate}
\end{prop}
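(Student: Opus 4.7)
The plan is to identify $I^{(\rho)}_{i_1, \ldots, i_\rho}$ locally at $Z$ with the ideal of a regular sequence, comparing it directly to a standard intersection multiplicity. Throughout, set $R := \local{X}{Z}$; since $Z \not\subseteq \sing(X)$, $R$ is a regular local ring of dimension $\rho$.

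For part (1), I would proceed by induction on $j$ to show $I^{(j)}_{i_1, \ldots, i_\rho} R = (f_{i_1}, \ldots, f_{i_j}) R$ for $j = 1, \ldots, \rho$. Taking $j = \rho$ yields the desired equality, since the hypothesis that $Z$ is an irreducible component of $\bigcap_{k=1}^\rho \supp(F_{i_k})$ makes $(f_{i_1}, \ldots, f_{i_\rho}) R$ an $\mmm_R$-primary ideal: $f_{i_1}, \ldots, f_{i_\rho}$ is then a regular system of parameters in $R$, in particular a regular sequence by Cohen--Macaulayness, so $\multsub{F_{i_1}}{F_{i_\rho}}{Z} = \len(R/(f_{i_1}, \ldots, f_{i_\rho}) R)$ by definition. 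The base case $j=1$ is immediate, and the containment $(f_{i_1}, \ldots, f_{i_j}) R \subseteq I^{(j)} R$ is trivial from $\tilde I^{(j-1)} \supseteq I^{(j-1)}$.

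The heart of the matter is the reverse containment, which reduces, working modulo $f_{i_j}$, to showing that the primary components of $I^{(j-1)}$ dropped in forming $\tilde I^{(j-1)}$ contribute nothing after localization at $Z$. By the inductive hypothesis and Cohen--Macaulayness of $R$, every primary component of $I^{(j-1)} R$ has radical of height exactly $j-1$; hence a component whose radical would contain $f_{i_j}$ (together with $f_{i_1}, \ldots, f_{i_{j-1}}$) would generate a prime of height at least $j$, a contradiction that rules out one of the two drop criteria. \textbf{The main obstacle} is the remaining criterion: a primary component $\qqq$ with $\sqrt{\qqq} \subseteq P_Z$ whose radical fails to contain some $f_i$ for $i \in (i_{j-1}, i_j) \setminus \{i_1, \ldots, i_{j-1}\}$. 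To exclude this, I would invoke the distinguished-component construction --- which forces $\dist{Z}{j-1} \subseteq \supp(F_i)$ for all such intermediate indices $i$ --- together with the isolated-component hypothesis, arguing that a stray primary component would propagate through the remaining steps to a codimension-$\rho$ component of $\bigcap_k \supp(F_{i_k})$ distinct from $Z$, contradicting isolation. This is the delicate technical step.

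Part (2) is a direct consequence of part (1): for $\rho = 1$, $I^{(1)}_{i^*} = (f_{i^*})$ is already principal so the identity is trivial; for $\rho = n$ with $Z$ an isolated point, the hypothesis of part (1) is automatic, the ``global factor'' reduces to $1$ since $\{i'_1, \ldots, i'_{n-\rho}\}$ is empty, and $\multord{F_1}{F_n}{Z}$ coincides with its local factor.

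For part (3), I would exploit that the two-dimensional regular local ring $R$ is a unique factorization domain (by Auslander--Buchsbaum). Factor $f_{i^*}$ in $R$ as a unit times $\prod_k p_k^{n_k}$ with pairwise non-associate prime elements $p_k$; then $(f_{i^*}) R = \bigcap_k (p_k^{n_k}) R$ is the minimal primary decomposition, and a component $(p_k^{n_k}) R$ is retained in $\tilde I^{(1)}_{i^*, i_2} R$ exactly when $p_k$ divides every $f_i$ with $i < i_2$ and does not divide $f_{i_2}$. The product of the retained $p_k^{n_k}$'s is, up to a unit, the element $f_{i^*, i_2}$ characterized in the statement, so $\tilde I^{(1)}_{i^*, i_2} R = (f_{i^*, i_2}) R$ and $I^{(2)}_{i^*, i_2} R = (f_{i^*, i_2}, f_{i_2}) R$. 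By construction $f_{i^*, i_2}$ and $f_{i_2}$ share no common irreducible factor in $R$, hence form a regular sequence, so $\len(R/I^{(2)}_{i^*, i_2} R) = \multonlysub{f_{i^*, i_2}, f_{i_2}}{Z}$. Substituting into the general formula \eqref{eq:multord} then yields the displayed expression for $\multord{F_1}{F_n}{Z}$.
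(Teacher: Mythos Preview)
Your treatment of parts (2) and (3) is fine and amounts to a careful unpacking of the paper's one-line proof (``It follows in a straightforward way from the definitions''). For part (1), however, the obstacle you flag is genuine, and your proposed resolution does not work.

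Consider $X = \kk^4$ with coordinates $(x,y,z,w)$, take $n = 4$, $\rho = 3$, $(i_1,i_2,i_3) = (1,3,4)$, and set
\[
f_1 = xy,\quad f_2 = y,\quad f_3 = x+z,\quad f_4 = y+z^2,\qquad Z = V(x,y,z).
\]
One checks that $Z$ is a $(1,3,4)$-type distinguished component and is the \emph{unique} irreducible component of $V(f_1,f_3,f_4)$, so the hypothesis of part (1) holds. In $R=\local{X}{Z}$ the primary component $(x)$ of $(f_1)R$ fails the condition $f_2\in\sqrt{\qqq}$ and is dropped, so $I^{(3)}_{1,3,4}R = (y,\,x+z,\,z^2)$, giving length $2$; whereas $(f_1,f_3,f_4)R = (xy,\,x+z,\,y+z^2)$ has length $3$. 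Thus the asserted equality fails. Your ``propagation'' argument breaks because the stray component $V(x)$, when intersected with $V(f_3)\cap V(f_4)$, lands back on $Z$ itself, not on a distinct component --- so no contradiction with isolation arises.

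In the paper, every application of part (1) (the special cases in part (2), \Cref{ex:n-1}, and \Cref{sec:tangent:Z}) has consecutive indices $i_{j+1}=i_j+1$, so the problematic intermediate $f_i$ simply do not exist and your height argument alone suffices. The clean fix is to add that hypothesis to part (1), after which your inductive proof goes through verbatim.
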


\begin{proof}
It follows in a straightforward way from the definitions.
\end{proof}

\subsection{} \label{sec:local-curves}
For the next result, we pick $i_1, \ldots, i_\rho$ such that $Z$ is an irreducible component of $Z^{(\rho)}_{i_1, \ldots, i_\rho}$. In \Cref{thm:length} we constructed ideals $I^{(j)}_{i_1, \ldots, i_\rho}$ on an affine neighborhood $U$ in $X$ of some point in $Z$. Recall that $V(I^{(j}_{i_1, \ldots, i_\rho})$ has pure codimension $j$. In particular, if $i'_1, \ldots, i'_{n-\rho}$ are the elements of $\{1, \ldots, n\}\setminus\{i_1, \ldots, i_\rho\}$, and $g_{i'_k}$ are local representatives on $U$ of generic elements in $\scrL_{i'_k}$, then $V(I^{(\rho-1)}_{i_1, \ldots, i_\rho}, g_{i'_1}, \ldots, g_{i'_{n-\rho}})$ has pure dimension one (or is empty); to simplify the notation we write $\vec i := (i_1, \ldots, i_\rho)$, $\vec g := (g_{i'_1}, \ldots, g_{i'_{n-\rho}})$ and $C_{\vec i, \vec g} := V(I^{(\rho-1)}_{i_1, \ldots, i_\rho}, g_{i'_1}, \ldots, g_{i'_{n-\rho}})$.

\begin{prop} \label{prop:local-curves}
With the above notations,
\begin{align*}
\len(\local{X}{Z}/I^{(\rho)}_{i_1, \ldots, i_\rho} \local{X}{Z})\deg_{i'_1, \ldots,i'_{n-\rho}}(Z)
    &= \sum_{P \in Z \cap C_{\vec i, \vec g}} \ord_P(f_{i_\rho}|_{C_{\vec i, \vec g}})
\end{align*}
where $\ord_P(\cdot)$ is either zero (if $C_{\vec i, \vec g} = \emptyset$), or the {\em order of vanishing}\footnote{If $w$ is a (closed) point of a purely one dimensional scheme $W$ and $f \in \local{W}{w}$, then $\ord_w(f)$ is the length of $\local{W}{w}/f\local{W}{w}$ as a module over $\local{W}{w}$ \cite[Section 1.2]{fultersection}.} of $f_{i_\rho}|_{C_{\vec i, \vec g}}$ at $P$. Consequently,
\begin{align*}
\multord{F_1}{F_n}{Z}
    &= \sum_{i_1, \ldots, i_\rho} \sum_{P \in Z \cap C_{\vec i, \vec g}} \ord_P(f_{i_\rho}|_{C_{\vec i, \vec g}})
\end{align*}
where the sum is over all $i_1 = i^* < i_2 < \cdots < i_\rho$ such that $Z$ is an irreducible component of $Z^{(\rho)}_{i_1, \ldots, i_\rho}$.
\end{prop}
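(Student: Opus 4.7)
My plan is to reduce the computation of $\ord_P(f_{i_\rho}|_{C_{\vec i, \vec g}})$ at each point $P \in Z \cap V(\vec g)$ to the length $\len(\local{X}{P}/(I^{(\rho)}_{i_1, \ldots, i_\rho}, \vec g)\local{X}{P})$, and then to assemble these local lengths via the standard ``generic fiber length times degree'' identity. Fix a tuple $(i_1, \ldots, i_\rho)$ for which $Z$ is an irreducible component of $Z^{(\rho)}_{i_1, \ldots, i_\rho}$. Following the proof of \Cref{thm:length:confirmation}, decompose $I^{(\rho-1)}_{i_1, \ldots, i_\rho} = I' \cap I''$, where $I'$ is the intersection of the primary components used to build $I := I^{(\rho)}_{i_1, \ldots, i_\rho}$ (so $I = I' + (f_{i_\rho})$) and $I''$ is the intersection of the remaining primary components.

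First I would verify the local identity. By construction, every primary component of $I''$ whose zero locus contains $Z$ must itself lie in $V(f_{i_\rho})$: otherwise its associated prime would contain $f_i$ for all $i < i_\rho$ (since $Z \subseteq V(f_i)$ for every $i$) but would miss $f_{i_\rho}$, forcing the component into $I'$, a contradiction. Consequently, $f_{i_\rho}$ is a zero-divisor in $\local{C_{\vec i, \vec g}}{P}$ only along those branches of $C_{\vec i, \vec g}$ originating from $V(I'')$, and on these branches $f_{i_\rho}$ vanishes identically; by the convention of \cite[Section~1.2]{fultersection} they contribute nothing to $\ord_P(f_{i_\rho}|_{C_{\vec i, \vec g}})$. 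The remaining branches are precisely those of $V(I', \vec g)$, on which $f_{i_\rho}$ is a non-zero-divisor at $P$ for generic $\vec g$. Summing the contributions from these branches gives
\begin{align*}
\ord_P(f_{i_\rho}|_{C_{\vec i, \vec g}})
    &= \len\bigl(\local{X}{P}/(I', \vec g, f_{i_\rho})\local{X}{P}\bigr)
     = \len\bigl(\local{X}{P}/(I, \vec g)\local{X}{P}\bigr).
\end{align*}

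Next I would carry out the global sum. The module $M := \kk[U]/I$ has $Z$ as a codimension-$\rho$ component of $\supp(M)$ with generic length $m := \len(\local{X}{Z}/I\local{X}{Z})$; for generic $\vec g$ the finitely many other isolated components of $\supp(M)$ (those of higher codimension) miss the finite set $Z \cap V(\vec g)$ entirely. Then the standard ``rank times degree'' identity applied to the rank-$m$ sheaf $M|_Z$ -- equivalently, iterated associativity of the intersection product against the base-point-free $\scrL_{i'_k}|_Z$ -- yields
\begin{align*}
\sum_{P \in Z \cap V(\vec g)} \len\bigl(\local{X}{P}/(I, \vec g)\local{X}{P}\bigr)
    &= m \cdot \deg_{i'_1, \ldots, i'_{n-\rho}}(Z).
\end{align*}
Combining the two displays proves the first identity; the second identity then follows by summing over all admissible tuples $(i_1, \ldots, i_\rho)$ and invoking \eqref{eq:multord}.

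I expect the main obstacle to be the primary-decomposition bookkeeping in the local step: one must verify the trichotomy that each primary component of $I^{(\rho-1)}_{i_1, \ldots, i_\rho}$ surviving localization at $\local{X}{Z}$ either (a) lies in $I'$ (and so contributes cleanly to $\ord_P(f_{i_\rho}|\cdot)$) or (b) is contained in $V(f_{i_\rho})$ (and so contributes nothing under the zero-divisor convention), and that no component simultaneously misses both $I'$ and $V(f_{i_\rho})$ while still passing through $Z$. This should follow directly from the inductive construction of the ideals $I^{(j)}_{i_1, \ldots, i_\rho}$ together with the containment $Z \subseteq \bigcap_i V(f_i)$; with this bookkeeping in hand, the equality $\ord_P(f_{i_\rho}|_{C_{\vec i, \vec g}}) = \len(\local{X}{P}/(I, \vec g)\local{X}{P})$ follows from the additivity of $\ord_P$ over the disjoint branches of $C_{\vec i, \vec g}$ at $P$ and the defining relation $I = I' + (f_{i_\rho})$, and the remainder is routine genericity.
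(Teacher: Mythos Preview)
The paper's own proof is a one-line citation of the cycle-theoretic expansion in Section~4 (identity \eqref{eq:comb:m}) together with \Cref{thm:length}; your approach---split $I^{(\rho-1)}=I'\cap I''$, show the $I''$-part is harmless, then sum the $I'$-contribution via ``length times degree''---is unpacking that same expansion, so the overall strategy is fine and your global step is correct.

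The gap is precisely where you feared. Your key claim is that every primary component of $I''$ whose zero locus contains $Z$ must lie in $V(f_{i_\rho})$, arguing that otherwise ``its associated prime would contain $f_i$ for all $i<i_\rho$ (since $Z\subseteq V(f_i)$).'' That inference is invalid: $Z\subseteq V(\qqq)$ and $Z\subseteq V(f_i)$ do not imply $f_i\in\sqrt{\qqq}$. From the construction every associated prime of $I^{(\rho-1)}$ contains $f_i$ for $i\le i_{\rho-1}$, but nothing forces this for $i_{\rho-1}<i<i_\rho$. Concretely, with $n=3$, $f_1=xy$, $f_2=y$, $f_3=x-y^2$, $(i_1,i_2)=(1,3)$ one gets $I'=(y)$, $I''=(x)$, $Z=V(x,y)$; the $I''$-component $V(x)$ contains $Z$ yet is \emph{not} contained in $V(f_3)$. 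On $C_{\vec i,\vec g}=V(xy,g_2)$ the branch from $V(x)$ contributes $\ord_P(-y^2)=2$ at each $P\in Z\cap V(g_2)$, so $\sum_P\ord_P(f_3|_C)=3\deg_2(Z)$ while the left side equals $\deg_2(Z)$. Separately, even when an $I''$-branch \emph{does} lie in $V(f_{i_\rho})$, the stated definition $\ord_P(f)=\len(\local{C}{P}/f\local{C}{P})$ gives $\infty$, not $0$---Fulton's convention applies only to non-zerodivisors.

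What the paper's citation of \eqref{eq:comb:m} is really using is the cycle $F_{i_1,\dots,i_{\rho-1};\Sigma i_\rho}$, i.e.\ the curve built from $I'^{(\rho-1)}$ rather than from all of $I^{(\rho-1)}$; on $V(I'^{(\rho-1)},\vec g)$ the function $f_{i_\rho}$ is a non-zerodivisor, your local identity $\ord_P(f_{i_\rho}|_{V(I',\vec g)})=\len(\local{X}{P}/(I^{(\rho)},\vec g)\local{X}{P})$ holds, and your global sum then finishes the argument. So your plan succeeds once you work on the $I'$-part of the curve, but the trichotomy you propose does not show that the $I''$-branches through $Z$ are harmless---in general they are not.
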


\begin{proof}
This follows from identity \eqref{eq:comb:m} on page \pageref{eq:comb:m} and \Cref{thm:length}.
\end{proof}

\begin{example} \label{ex:2-1}
Let $F_i = \sum_j m_{i,j}C_j + \sum_k m'_{i,k} C'_{i,k}$, $i = 1, 2$, be curves (i.e.\ purely one dimensional schemes) on a surface $X$, where $m_{i,j}, m'_{i,k}$ are positive integers, and $C_j, C'_{i,k}$ are pairwise distinct irreducible curves; in particular, $C_j$ are precisely the irreducible components common to both $F_1$ and $F_2$. Then
\begin{align}
\begin{split} \label{eq:2-1:supp}
\bigcap_i \supp(F_i)
    &= \bigcup_j C_j \cup \bigcup_{k_1,k_2} (C'_{1,k_1} \cap C'_{2,k_2})\\
\scrZ^{(1)}
    &= \bigcup_j C_j \\
\scrZ^{(2)}
    &=  \bigcup_k (C'_{1,k} \cap \supp(F_2))
\end{split}
\end{align}
For each $j$,
\begin{align*}
\multordonly{F_1,F_2}{C_j}
    &= m_{1,j}\multonly{C_j, F_2}
    = m_{1,j} \deg_2(C_j)
    %\overset{\text{if}\ C_j \not\subseteq \sing(X)}{=}
\end{align*}
If $C_j \not\subseteq \sing(X)$, then $F_1$ can be represented by some $f_1$ near a generic (nonsingular) point of $C_j$, and $m_{1,j} = \multonlysub{f_1}{C_j}$ is the order of vanishing of $f_1$ along $C_j$, so that
\begin{align}
\multordonly{F_1,F_2}{C_j}
    &= \multonlysub{f_1}{C_j} \deg_2(C_j)
    \label{eq:2-1:1:m}
\end{align}
Similarly, if $P \in \scrZ^{(2)}$ is a nonsingular point of $X$, then
\begin{align}
\multordonly{F_1,F_2}{P}
    &= \multonlysub{f'_1, f_2}{P}
    \label{eq:2-1:2:m}
\end{align}
where $f'_1, f_2$ are local equations of respectively $\sum_k m'_{1,k} C'_{1,k}$ and $F_2$ near $P$.%, and $\multonlysub{f'_1, f_2}{P}$ is the usual intersection multiplicity of $f'_1$ and $f_2$ at $P$.
\end{example}

\begin{example} \label{ex:n-1}
More generally, assume $F_1, \ldots, F_{n-1}$ intersect properly, and the scheme theoretic intersection of $F_1, \ldots, F_{n-1}$ is $\sum_j m_j C_j + \sum_k m'_k C'_k$, where the $C_j, C'_k$ are irreducible (reduced) curves such that $\supp(F_n)$ contains each $C_j$, and does {\em not} contain any $C'_k$. Then
\begin{align*}
\scrZ^{(\rho)}
    &=
    \begin{cases}
    \emptyset
        &\text{if}\ \rho < n-1, \\
    \bigcup_j C_j
        &\text{if}\ \rho = n-1, \\
    \bigcup_k (C'_k \cap \supp(F_n))
        &\text{if}\ \rho = n
\end{cases} \\
\multord{F_1}{F_n}{Z}
    &=
    \begin{cases}
    m_j \deg_n(C_j)
        &\text{if}\ Z = C_j, \\
    m'_k \ord_P(F_n|_{C'_k})
        &\text{if}\ Z = P \in C'_k, \\
    0
        &\text{otherwise} \\
\end{cases}
\end{align*}
\end{example}

\begin{example} \label{ex:3-2}
Let $X$ be a variety of pure dimension $3$, and $F_i = \sum_j m_{i,j}H_j + F'_i$, where $m_{i,j}$ are positive integers, $H_j$ are irreducible hypersurfaces which are not contained in $\bigcup_i \supp(F'_i)$, and the $F'_i$ are hypersurfaces with no irreducible components common to all $F'_i$. We will describe the distinguished components of the ordered intersection of the $F_i$ and the corresponding ordered intersection multiplicity.

\subsubsection{}
Since $H_j$ are codimension one irreducible components of each $F_i$, it follows that
\begin{align}
\scrZ^{(1)} &= \bigcup_j \supp(H_j)
    \label{eq:3-2:1:supp}
\end{align}
Since
\begin{align}
\multonly{F_1, F_2, F_3}
    &= \multonly{\sum_j m_{1,j}H_j + F'_1, F_2, F_3}
    = \sum_j m_{1,j}\multonly{H_j, F_2, F_3} + \multonly{F'_1, F_2, F_3}
    \label{eq:3-2:1}
\end{align}
it follows that
\begin{align*}
\multordonly{F_1, F_2, F_3}{H_j}
    &= m_{1,j}\multonly{H_j, F_2, F_3}
    = m_{1,j} \deg_{2,3}(H_j)
\end{align*}
If $H_j \not\subseteq \sing(X)$, then
\begin{align}
\multordonly{F_1, F_2, F_3}{H_j}
    &= \multonlysub{f_1}{H_j} \deg_{2,3}(H_j)
    \label{eq:3-2:1:m}
\end{align}
where $f_1$ is a local equation of $F_1$ near a generic (nonsingular) point of $H_j$.

\subsubsection{}
Now we consider $\multonly{F'_1, F_2, F_3}$, the second summand of the right hand side of \eqref{eq:3-2:1}. Write $F'_1 = F'_{1,2} + F''_1$, where $F'_{1,2}$ is the sum of the irreducible components of $F'_1$ (with appropriate multiplicity) which are common to $F_2$, and $F''_1$ does not have any irreducible component in common with $F'_2$. It follows that
\begin{align*}
\multonly{F'_1, F_2, F_3}
    &= \multonly{F'_{1,2} + F''_1, F_2, F_3}
    = \multonly{F'_{1,2}, F_3, F_2} + \multonly{F''_1, F_2, F_3}
\end{align*}
where the intersection of the first two components in each triplet has codimension two. Write the scheme theoretic intersections
\begin{align*}
F'_{1,2} \cap F_3 &= \sum_k m'_k C'_k, \\
F''_1 \cap F_2 &= \sum_i m''_{1,i} C''_{1,i} + \sum_j m''_{2,j} C''_{2,j}
\end{align*}
where $C''_{1,i}$ are irreducible curves which are contained in $\supp(F_3)$, and $C''_{2,j}$ are irreducible curves which are not contained in $\supp(F_3)$. Then
\begin{align}
\multonly{F'_1, F_2, F_3}
    &= \sum_k m'_k \multonly{C'_k, F_2}
    + \sum_i m''_{1,i} \multonly{C''_{1,i}, F_3} + \sum_j m''_{2,j} \multonly{C''_{2,j}, F_3}
    \label{eq:3-2:2}
\end{align}
It follows that
\begin{align}
\scrZ^{(2)} &= \bigcup_{k} C'_k \cup \bigcup_i C''_{1,i}
    \label{eq:3-2:2:supp}
\end{align}
and if $C$ is one of the $C'_k$ or $C''_{1,i}$, then
\begin{align}
\multordonly{F_1, F_2, F_3}{C}
    &= m'_k \deg_2(C) + m''_{1,i}\deg_3(C)
    \label{eq:3-2:2:m:0}
\end{align}
where $k$ and $i$ are such that $C'_k = C$ and $C''_{1,i} = C$ (if there is no such $k$ or $i$, then we accordingly set $m'_k = 0$ or $m'_{1,i} = 0$). If $C \not\subseteq \sing(X)$, then this multiplicity can also be expressed as:
\begin{align}
\multordonly{F_1, F_2, F_3}{C}
    &= \multonlysub{F'_{1,2}, F_3}{C}\deg_2(C) + \multonlysub{F''_1, F_2}{C}\deg_3(C)
    \label{eq:3-2:2:m}
\end{align}
which is in line with assertion \eqref{local=mult:1:2} of \Cref{prop:local=mult:1}.

\subsubsection{} \label{3-2:3}
Finally, from the last sum in the right hand side of \eqref{eq:3-2:2} we deduce
\begin{align}
\scrZ^{(3)} &= \bigcup_j C''_{2, j} \cap \supp(F_3)
    \label{eq:3-2:3:supp}
\end{align}
and if $P \in \scrZ^{(3)}$, then
\begin{align}
\multordonly{F_1, F_2, F_3}{P}
    &= \sum_{C''_{2,j} \ni P} m''_{2,j}\ord_P(f_{3,P}|_{C''_{2,j}})
    \label{eq:3-2:3:m}
\end{align}
where $f_{3,P}$ is a local equation of $F_3$ near $P$. If $C''_2 := \sum_{C''_{2,j} \ni P} m''_{2,j}C''_{2,j}$ is a local complete intersection near $P$ defined by say $f''_1, f''_2$, then of course $\multordonly{F_1, F_2, F_3}{P}$ would equal the intersection multiplicity of $f''_1, f''_2, f_{3, P}$ at $P$; however, $C''_2$ may {\em not} be a complete intersection at $P$ even when $X$ is nonsingular at $P$. It follows that unlike codimension $\leq 2$ cases, at a codimension three distinguished component, even if it is not contained in $\sing(X)$, the ordered intersection multiplicity can {\em not} in general be represented as a (sum of) product(s) of degree times a usual intersection multiplicity.
\end{example}

\begin{example} \label{ex:3-2:specific}
As an example of the scenario described at the end of the preceding example, consider
\begin{align*}
f_1 &= x^3-yz\\%3*4
f_2 &= y^2-xz\\%3*5
f_3 &= x^2 + xz^2 + y^3 + yz^3
\end{align*}
on $\kk^3$, and let $F_i$ be the closure of $V(f_i)$ on $\pp^3$. Scheme theoretically,
\begin{align}
F_1 \cap F_2 &= C + Z \label{eq:3-2:specific:12}
\end{align}
where $Z := V(x,y)$ is the ``$z$-axis'', and $C$ is the (closure of the) monomial curve $\{(t^3, t^4, t^5): t \in \kk\}$. %
It follows that
\begin{align*}
\bigcap_{i=1}^3 \supp(F_i) = Z \cup S
\end{align*}
for a set $S$ of isolated points. We will compute the number $\multisoonly{F_1, F_2, F_3}{\pp^3}$ (counted with appropriate multiplicities) of isolated points in $\bigcap_i \supp(F_i)$. First note that $\scrZ^{(2)} = \emptyset$, $\scrZ^{(1)} = Z$, and consequently, it follows from identity \eqref{eq:=multiso} on page \pageref{eq:=multiso} that
\begin{align}
\multisoonly{F_1, F_2, F_3}{\pp^3}
    &= \multonly{F_1, F_2, F_3} - \multordonly{F_1, F_2, F_3}{Z} - \sum_{P \in Z} \multordonly{F_1, F_2, F_3}{P}
    \label{eq:3-2:specific:0}
\end{align}
Identities \eqref{eq:3-2:2:m} and \eqref{eq:3-2:specific:12} imply that
\begin{align*}
\multordonly{F_1, F_2, F_3}{Z} &= \multonlysub{F_1, F_2}{Z}\deg_3(Z) = 1 \times 4 = 4
\end{align*}
It follows from \eqref{eq:3-2:3:supp} that the codimension three distinguished components of the ordered intersection of the $F_i$ are simply the points in $C \cap \supp(F_3)$. Consequently, the only nonzero contributions to the last sum of the right hand side of \eqref{eq:3-2:specific:0} come from the points in $Z \cap C$. There are two such points in $\pp^3$, namely the origin $P_0$ in $\kk^3$, and the point $P_\infty$ at infinity on the $z$-axis. Now, it is well known that $C$ is {\em not} a complete intersection near $P_0$ (see e.g.\ \cite[Exercise 1.11]{hart}), so that $\multordonly{F_1, F_2, F_3}{P_0}$, as expressed in identity \eqref{eq:3-2:3:m}, can {\em not} be readily expressed as the usual intersection multiplicity of polynomials at $P_0$ (in \Cref{prop:newton1*} below we provide an estimate which is sometimes useful in these scenarios). In any event, for the present example, we can compute via the explicit parametrization $\phi:t \mapsto (t^3, t^4, t^5)$ of $C$ near $P_0$ that
\begin{align*}
\multordonly{F_1, F_2, F_3}{P_0}
    &= \multonlysub{F_1,F_2}{C}\ord_t(\phi^*(f_3)) = 1 \times 6 = 6
\end{align*}
Similarly, one can compute that
\begin{align*}
\multordonly{F_1, F_2, F_3}{P_\infty}
    & = 1
\end{align*}
It then follows from B\'ezout's theorem and \eqref{eq:3-2:specific:0} that
\begin{align*}
\multisoonly{F_1, F_2, F_3}{\pp^3}
    &%= \multonly{F_1, F_2, F_3} - \multordonly{F_1, F_2, F_3}{Z} - \sum_{P \in Z} \multordonly{F_1, F_2, F_3}{P}
    = 3\times 2 \times 4 - 4 - 6 - 1
    = 13
\end{align*}

%The map $\phi: t \mapsto (t^3, t^4, t^5)$ provides a parametrization of $C$ near $P_0$, so that
%\begin{align*}
%\multordonly{F_1, F_2, F_3}{P_0}
%    &= \ord_t(\phi^*(f_3)) = 6
%\end{align*}
%as implied by identity \eqref{eq:3-2:3:m}. With respect to the coordinate chart $(u,v,w) = (x/z, y/z, 1/z)$, the coordinates of $P_\infty$ are $(0,0,0)$, the parametrization of $C$ is of the form $\psi: s \mapsto (s^2, s, s^5)$, and the local equation of $F_3$ is $f'_3 = u^2w^2 + uw + v^3w^3 + v$. It follows that
%\begin{align*}
%\multordonly{F_1, F_2, F_3}{P_\infty}
%    &= \ord_s(\psi^*(f'_3)) = 1
%\end{align*}
\end{example}

\section{Computation in terms of Newton diagrams} \label{sec:newton}
We continue to work in the set up of \Cref{sec:local-set-up}. \Cref{prop:local=mult:1} shows that the ``local factors'' of ordered intersection multiplicity at a subvariety $Z$ of $X$ can sometimes be represented in terms of intersection multiplicities of irreducible components of complete intersection. In this section we use the results of \cite{howmanyzeroes} to estimate this intersection multiplicity in terms of related ``Newton diagrams'' (provided $Z$ contains nonsingular points of $X$), and give a few examples of how this can be applied to the affine B\'ezout problem of counting isolated points of intersections.

\subsection{(Usual) intersection multiplicity at a nonsingular point} \label{sec:newton:0}
In this section we recall basic results {\cite[Theorems IX.1, IX.8]{howmanyzeroes}} on the computation of intersection multiplicity of $n$ hypersurfaces at a nonsingular point on an $n$-dimensional variety and some relevant notation that we use in subsequent sections.

\subsubsection{}%{Mixed volume}
The $n$-dimensional {\em mixed volume} is the unique symmetric multiadditive functional $\mv$ on $n$-tuples of polytopes in $\rr^n$ such that $\mv(\scrP, \ldots, \scrP) = n!\vol_n(\scrP)$ for each polytope $\scrP$, where $\vol_n$ is the $n$-dimensional Euclidean volume.

\subsubsection{} \label{notation:nd}
Let $\scrA$ be a (possibly infinite) subset of $\znonnegg{n}$. The convex hull of $\scrA + \rnonnegg{n}$ in $\rr^n$ is a convex polyhedron; the {\em Newton diagram} $\nd(\scrA)$ of $\scrA$ is the union of the compact faces of this polyhedron. Given variables $x_1, \ldots, x_n$, the {\em support} of a power series $f = \sum_\alpha c_\alpha x^\alpha$ in $(x_1, \ldots, x_n)$, denoted $\supp(f)$, is the set of all $\alpha$ such that $c_\alpha \neq 0$. The Newton diagram of $f$, denoted $\nd(f)$, is the Newton diagram of $\supp(f)$.

\subsubsection{} \label{notation:[]I}
We write $[n] := \{1, \ldots, n\}$. If $I \subseteq [n]$ and $k$ is a field, we write $k^I$ for the ``$|I|$-dimensional coordinate subspace'' $\{(x_1, \ldots, x_n) \in k^n: x_i =  0\ \text{if}\ i \not\in I\}$ of $k^n$. By $\pi_I: k^n \to k^I$ we denote the natural projection in the coordinates indexed by $I$.

\subsubsection{}
Let $\nu$ be a weighted order on $\kk[x_1, \ldots, x_n]$ corresponding to weights $\nu_j$ for $x_j$, $j = 1, \ldots, n$. We identify $\nu$ with the element in $\rnstar$ with coordinates $(\nu_1, \ldots, \nu_n)$ with respect to the dual basis. For $\scrS \subseteq \rr^n$, we write $\In_\nu(\scrS) := \{\alpha \in \scrS: \langle \nu, \alpha \rangle = \inf_\scrS(\nu)\}$. The ``initial form'' of $f = \sum_\alpha c_\alpha x^\alpha$ with respect to $\nu$ is $\In_\nu(f) := \sum_{\alpha \in \In_\nu(\supp(f))} c_\alpha x^\alpha$. More generally, if $\supp(f) \subseteq \scrS$, we write
\begin{align*}
\In_{\scrS, \nu}(f)
	&:= \sum_{\alpha \in \In_\nu(\scrS)} c_\alpha x^\alpha
    =
	\begin{cases}
	\In_\nu(f) & \text{if}\ \supp(f) \cap \In_\nu(\scrS) \neq \emptyset,\\
	0 & \text{otherwise.}
	\end{cases}
%\label{insnu}
\end{align*}
We say that $\nu$ is {\em centered at the origin} if each $\nu_i$ is positive and that $\nu$ is {\em primitive} if it is nonzero and the greatest common divisor of $\nu_1, \ldots, \nu_n$ is $1$. If $\nu$ is centered at the origin, then it also extends to a weighted order on the ring of power series in $(x_1, \ldots, x_n)$.

\subsubsection{} \label{notation:mult*}
Given Newton diagrams $\Gamma_1, \ldots, \Gamma_n$ in $\rr^n$, define
\begin{align*}
\multGammazero
    &:= \min\{\multfzero: \supp(f_j) \subseteq \Gamma_j  + \rnonnegg{n},\ j = 1, \ldots, n \}
\end{align*}
We now recall the basic result regarding $\multGammazero$. It gives an expression of $\multGammazero$ in terms of the following quantity:
\begin{align*}
\multzerostar{\Gamma_1}{\Gamma_n}
	&:=  \sum_{\nu \in \scrV'_0} \min_{\Gamma_1}(\nu) ~
		\mv'_\nu(\In_\nu(\Gamma_2), \ldots, \In_\nu(\Gamma_n))
    %\label{mult-star}
\end{align*}
where $\scrV'_0$ is the set of primitive weighted orders centered at the origin, and $\mv'_\nu$ is the $(n-1)$-dimensional mixed volume of $\psi_\nu(\In_\nu(\Gamma_j) + \alpha_j)$, $j = 2, \ldots, n$, where $\alpha_j$ is an arbitrary element in $\zz^n$ such that $\In_\nu(\Gamma_j) + \alpha_j \subseteq \rnnuperp := \{\alpha \in \rr^n: \langle \nu, \alpha \rangle = 0\}$, and $\psi_\nu:\rnnuperp \cap \zz^n \to \zz^{n-1}$ is an arbitrary isomorphism of $\zz$-modules.

\begin{prothm}[{\cite[Theorems IX.1, IX.8]{howmanyzeroes}}] \label{thm:mult0}
Let $\Gamma := (\Gamma_1, \ldots, \Gamma_n)$ be a collection of Newton diagrams in $\znonnegg{n}$. For each $I \subseteq [n]$, let $\tiGamma := \{j: \Gamma_j \cap \rr^I \neq \emptyset\}$ be the set of all indices $j$ such that $\Gamma_j$ touches $\rr^I$. Define
\begin{align*}
\tGammaOne:= \{I \subseteq [n]: I \neq \emptyset,\ |\tiGamma| = |I|,\ 1 \in \tiGamma\}
\end{align*}
Then
\begin{enumerate}
\item If $0 \not\in \bigcup_j \Gamma_j$ and there is $I \subset [n]$ such that $|\tiGamma| < |I|$, then $\multGammazero = \infty$.
\item \label{mult0:formula} Otherwise
\begin{align*}
%\begin{split}
\multGammazero
	&= \sum_{I \in \tGammaOne}
							\multzerostar{\Gamma_1 \cap \rr^I, \Gamma_{j_2} \cap \rr^I}{\Gamma_{j_{|I|}} \cap \rr^I} \\
    &\qquad \qquad \quad
							\times
							\multzero{\pi_{[n]\setminus I}(\Gamma_{j'_1})}{\pi_{[n]\setminus I}(\Gamma_{j'_{n-|I|}})}
%\end{split}
%\label{mult-formula}
\end{align*}
where for each $I \in \tGammaOne$, $j_1 = 1, j_2, \ldots, j_{|I|}$ are elements of $\tiGamma$, and $j'_1, \ldots, j'_{n-|I|}$ are elements of $[n]\setminus \tiGamma$.
\end{enumerate}
Pick $f_1, \ldots, f_n \in \kk[[x_1, \ldots, x_n]]$ such that $\supp(f_j) \subseteq \Gamma_j  + \rnonnegg{n}$, $j = 1, \ldots, n$. Then it is clear that $\multfzero \geq \multGammazero$.
\begin{enumerate}[resume]
\item \label{mult0:nondegeneracy} In the situation of assertion \eqref{mult0:formula}, $\multfzero = \multGammazero$ if and only if the following ``non-degeneracy'' condition holds\footnote{There is a more ``efficient'' version of the non-degeneracy condition which allows to disregard certain subsets of $[n]$ {\cite[Theorem IX.9]{howmanyzeroes}}.}: for each nonempty subset $I$ of $[n]$ and each weighted order $\nu$ centered at the origin, there is no common root of $\In_{\Gamma_j \cap \rr^I, \nu}(f_j|_{\kk^I})$, $j = 1, \ldots, n$, on $(\kk\setminus\{0\})^n$. \qed
%\begin{align}
%\parbox{0.8\textwidth}{for each nonempty subset $I$ of $[n]$ and each weighted order $\nu$ centered at the origin, there is no common root of $\In_{\Gamma_j \cap \rr^I, \nu}(f_j|_{\kk^I})$, $j = 1, \ldots, n$, on $\nktorus$.}
%\tag{$\nd_0$}
%\label{b-non-degeneracy-0}
%\end{align}
\end{enumerate}
\end{prothm}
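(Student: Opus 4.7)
The plan is to reduce the computation to an orbit-wise Bernstein--Kushnirenko count on a toric modification of $\kk^n$ and then assemble the pieces according to the coordinate-subspace stratification. First I would choose a simplicial fan $\Sigma$ in $\rnonnegg{n}$ refining the dual fans of $\Gamma_1, \ldots, \Gamma_n$, with associated proper toric birational morphism $\pi: X_\Sigma \to \kk^n$. Along the toric divisor attached to a ray with primitive generator $\nu$, the pullback $\pi^*(f_j)$ factors as a local parameter to the power $\min_{\Gamma_j}(\nu)$ times a ``strict transform'' whose orbit restriction is a Laurent polynomial with Newton polytope a lattice translate of $\In_\nu(\Gamma_j)$. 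Since $\pi$ is proper and an isomorphism over the torus, the isolated multiplicity $\multfzero$ equals the sum of intersection multiplicities of the strict transforms at the points of $\pi^{-1}(\origin)$, and this total is independent of the particular $\Sigma$ chosen.

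Next I would classify these contributions by the torus orbits of $X_\Sigma$, grouped according to which coordinate stratum $(\kk \setminus \{0\})^I \times \{0\}^{[n]\setminus I}$ of $\kk^n$ a zero ``comes to the origin from.'' A zero can come from the $I$-stratum only if the system restricted to $\kk^I$ is balanced in the sense that $|\tiGamma| = |I|$, which is precisely the condition $I \in \tGammaOne$ (together with $1 \in \tiGamma$, which selects the distinguished role of $\Gamma_1$ in the prefactor $\min_{\Gamma_1}(\nu)$). The ``tangential'' contribution inside $\rr^I$ is obtained by applying the ray-by-ray Bernstein--Kushnirenko count on the toric compactification associated to $\Sigma \cap \rr^I$, producing exactly $\multzerostar{\Gamma_1 \cap \rr^I, \Gamma_{j_2} \cap \rr^I}{\Gamma_{j_{|I|}} \cap \rr^I}$, with each summand $\min_{\Gamma_1 \cap \rr^I}(\nu) \cdot \mv'_\nu(\cdots)$ reflecting (the $\min$-factor) the multiplicity of the exceptional toric divisor in $\pi^*(f_1)$ and (the mixed-volume factor) the count of transverse common roots of the initial forms on that orbit. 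The transverse contribution in the $\kk^{[n]\setminus I}$-directions is $\multzero{\pi_{[n]\setminus I}(\Gamma_{j'_1})}{\pi_{[n]\setminus I}(\Gamma_{j'_{n-|I|}})}$, which I would compute by induction on $n$ after slicing by generic hyperplanes transverse to $\kk^I$. Multiplying the two factors and summing over $I \in \tGammaOne$ gives the stated formula for $\multGammazero$.

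The main obstacle is the sharpness of the non-degeneracy criterion (assertion three). Lower semicontinuity of $\multfzero$ in the coefficients of the $f_j$ gives $\multfzero \ge \multGammazero$ always, so the crux is equality under non-degeneracy. In one direction, under non-degeneracy the toric count above produces only isolated transverse intersections on each contributing orbit, with no excess components along any exceptional divisor, and the orbit-wise totals add up via multi-additivity of mixed volumes to exactly $\multGammazero$. Conversely, a failure of non-degeneracy for some pair $(I, \nu)$ produces a common zero of the relevant $\In_{\Gamma_j \cap \rr^I,\nu}(f_j|_{\kk^I})$ in $(\kk \setminus \{0\})^n$; lifting this to $X_\Sigma$ creates either a common zero of the strict transforms on the orbit that is not accounted for by the Bernstein--Kushnirenko formula (forcing strict inequality) or a positive-dimensional component of $V(\tilde f_1, \ldots, \tilde f_n)$ crossing $\pi^{-1}(\origin)$ (forcing $\multfzero = \infty$). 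Assertion one is the degenerate extreme of this mechanism: if $0 \notin \bigcup_j \Gamma_j$ and some $|\tiGamma| < |I|$, then too few of the $f_j$ can be supported off the boundary of $\kk^I$, so $V(f_1, \ldots, f_n) \cap \kk^I$ has a positive-dimensional component through the origin, giving $\multfzero = \infty = \multGammazero$.
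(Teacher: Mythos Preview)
The paper does not prove this statement: it is quoted verbatim from \cite[Theorems IX.1, IX.8]{howmanyzeroes} and closed with a bare \qed. There is therefore no in-paper argument to compare your proposal against; the result is imported as a black box and only \emph{applied} (in \Cref{cor:mult0}, \Cref{prop:newton1*}, and the worked examples).

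That said, your sketch is broadly in the spirit of the standard toric approach to such formulae (and of the arguments in the cited reference): pass to a toric modification adapted to the Newton diagrams, factor each $\pi^*(f_j)$ as an exceptional monomial times a strict transform whose orbit restriction has Newton polytope $\In_\nu(\Gamma_j)$, and count via Bernstein--Kushnirenko orbit by orbit. Two points deserve more care than your outline gives them. First, the identity ``$\multfzero$ equals the sum of intersection multiplicities of the strict transforms over $\pi^{-1}(0)$'' is not automatic: the pullbacks $\pi^*(f_j)$ have exceptional components with multiplicities $\min_{\Gamma_j}(\nu)$, and it is precisely the bookkeeping of these multiplicities (not just the strict transforms) that produces the factor $\min_{\Gamma_1}(\nu)$ in $\multzerostar{\cdot}{\cdot}$; you need to explain why the distinguished index $1$ appears and why the answer is nonetheless symmetric. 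Second, your inductive ``transverse factor'' $\multzero{\pi_{[n]\setminus I}(\Gamma_{j'_1})}{\pi_{[n]\setminus I}(\Gamma_{j'_{n-|I|}})}$ is itself an instance of the theorem in lower dimension, so the recursion needs a base case and a check that the projected diagrams again satisfy the hypothesis of assertion~(2); without this the formula could be vacuous or circular.
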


%\begin{proprothm}[{\cite[Theorems IX.1, IX.8]{howmanyzeroes}}]\label{multiplicity-thm}
%Let $\scrA := (\scrA_1, \ldots, \scrA_n)$ be a collection of subsets of $\znonnegg{n}$ and $\Gamma_j$ be the Newton diagram of $\scrA_j$, $j = 1, \ldots, n$. For each $I \subset [n]$, let $\tiA := \{j: \scrA_j \cap \rr^I \neq \emptyset\}$ be the set of all indices $j$ such that $\scrA_j$ touches the coordinate subspace $\rr^I$ of $\rr^n$. Define
%\begin{align}
%\tAone:= \{I \subseteq [n]: I \neq \emptyset,\ |\tiA| = |I|,\ 1 \in \tiA\} \label{I1-list}
%\end{align}
%Then
%\begin{enumerate}
%\item If $0 \not\in \bigcup_j \Gamma_j$ and there is $I \subset [n]$ such that $|\tiA| < |I|$, then $$\multAzero = \multGammazero = \infty$$
%\item Otherwise
%\begin{align}
%\multAzero
%	= \multGammazero
%	= \sum_{I \in \tAone}
%							\multzerostar{\Gamma_1 \cap \rr^I, \Gamma_{j_2} \cap \rr^I}{\Gamma_{j_{|I|}} \cap \rr^I}
%							\times
%							\multzero{\pi_{[n]\setminus I}(\Gamma_{j'_1})}{\pi_{[n]\setminus I}(\Gamma_{j'_{n-|I|}})}
%	\label{mult-formula}
%\end{align}
%where for each $I \in \tAone$, $j_1 = 1, j_2, \ldots, j_{|I|}$ are elements of $\tiA$, and $j'_1, \ldots, j'_{n-|I|}$ are elements of $[n]\setminus \tiA$.
%\end{enumerate}
%\end{proprothm}

\begin{proexample} \label{ex:3-1-detailed:p0}
Consider
\begin{align*}
f_1 &= x^4 + xy + y^2(1-y+7yz) \\
f_2 &= 2xz^2 + y(1-y)(4+z^2) \\
f_3 &= x^2(5+8xz) + y(1-y+3z^3)
\end{align*}
We will compute the number $\multisoonly{f_1, f_2, f_3}{\kk^3}$ of isolated points in $V(f_1, f_2, f_3)$ (with appropriate multiplicities). For convenience in this example we {\em fix} the characteristic of $\kk$ to zero (essentially since the Newton diagrams depend on characteristics). %in the case that the characteristic of $\kk$ is zero or sufficiently large.
Let $F_i$ be the closure of $V(f_i)$ on $\pp^3$. Then $\bigcap_i \supp(F_i)$ consists of the ``$z$-axis'' $Z := V(x,y)$, and finitely many points. Since $\dim(Z) = 1$, it follows that $\scrZ^{(2)} = \emptyset$ and $\scrZ^{(1)} = Z$ (where $\scrZ^{(\rho)}$ denotes the union of the codimension $\rho$ distinguished components of the ordered intersection of $F_1, F_2, F_3$), and consequently, identity \eqref{eq:=multiso} on page \pageref{eq:=multiso} implies that
\begin{align*}
\multisoonly{F_1, F_2, F_3}{\pp^3}
    &= \multonly{F_1, F_2, F_3}
        - \multordonly{F_1, F_2, F_3}{Z}
        - \sum_{P \in Z} \multordonly{F_1, F_2, F_3}{P}
\end{align*}
Note that
\begin{align*}
\multisoonly{f_1, f_2, f_3}{\kk^3}
    &= \multisoonly{F_1, F_2, F_3}{\pp^3}
        - \sum_{
            \mathclap{
                \substack{
                    P \in \pp^3 \setminus \kk^3\\
                    P\ \text{isolated in}\ \bigcap_i \supp(F_i)
                }
            }
        } \multonlysub{F_1, F_2, F_3}{P}
%\label{eq:=multiso:3-1}
\end{align*}
%x^4 + 7y^3z = 0
%y^2z^2 = 0
%x^3z + yz^3 = 0
An examination of the leading forms of the $f_i$ shows that $\bigcap_i\supp(F_i) \setminus \kk^3$ consists of two points: $P_z := [0:0:1:0]$ and $P_y := [0:1:0:0]$ with respect to homogeneous coordinates $[x:y:z:w]$. Since $P_z \in Z$, it follows that
\begin{align}
 \multisoonly{f_1, f_2, f_3}{\kk^3}
    &= \multonly{F_1, F_2, F_3}
        - \multordonly{F_1, F_2, F_3}{Z}
        - \sum_{P \in Z} \multordonly{F_1, F_2, F_3}{P}
        - \multonlysub{F_1, F_2, F_3}{P_y} \notag \\
    &= 64
        - \multordonly{F_1, F_2, F_3}{Z}
        - \sum_{P \in Z} \multordonly{F_1, F_2, F_3}{P}
        - \multonlysub{F_1, F_2, F_3}{P_y}
        \label{eq:3-1-detailed:0}
\end{align}
since $\multonly{F_1, F_2, F_3} = \prod_{i=1}^3 \deg(f_i) = 64$ due to B\'ezout's theorem. We now compute $\multonlysub{F_1, F_2, F_3}{P_y}$ using \Cref{thm:mult0}. In coordinates $(x',z',w') := (x/y, z/y, 1/y)$, the local equations of the $F_i$ at $P$ are:
\begin{align*}
f_{1,y} &= x'^4 + x'w'^2 + w'^2 - w' + 7z' \\
f_{2,y} &= 2x'z'^2w' + (w'-1)(4w'^2+z'^2) \\
f_{3,y} &= x'^2(5w'^2+8x'z') + w'^3-w'^2+3z'^3
\end{align*}

\def\shiftone{7.5}
\def\colorzero{blue}
\def\colorone{red}
\def\colortwo{yellow}
\def\colorfour{green}
\def\opazero{0.5}
\def\viewx{75}
\def\viewy{30}
\def\tx{1}
\def\ty{-1}

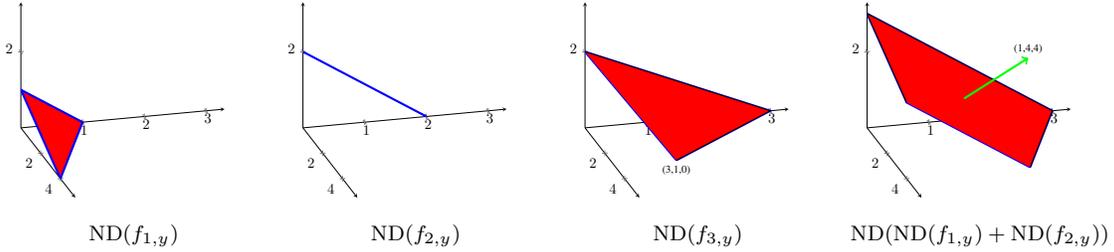
\begin{figure}[h]
\begin{center}
\begin{tikzpicture}[scale=0.5]
\pgfplotsset{every axis title/.append style={at={(0,-0.2)}}, view={\viewx}{\viewy}, axis lines=middle, enlargelimits={upper}}

\begin{scope}
\begin{axis}[%x=1cm, y=2cm, z=2cm, zmin=0,
        xmin=0,
        xmax=5,
        ymin=0,
        ymax=3,
        zmin=0,
        zmax=3,
]%[title=$\Gamma_1$]
	\addplot3[fill=\colorone,opacity=\opazero] coordinates{(4,0,0) (0,1,0) (0,0,1)};
    \addplot3 [\colorzero, ultra thick] coordinates{(4,0,0) (0,1,0) (0,0,1) (4,0,0)};
	%\addplot3 [ultra thick, \colorfour, ->] coordinates{(0.3,0.3,0.4) (0.8, 0.8, 0.9)};
	%\draw (axis cs:0.8,0.8,0.9) node [above] {(1,1,1)};
	%\draw (axis cs:0,0,0) node {$\Gamma_1$};
\end{axis}
\node (A) at (\tx, \ty){};
\node (B) [right of=A] {\picfontsize $\nd(f_{1,y})$};
\end{scope}

\begin{scope}[shift={(\shiftone,0)}]
\begin{axis}[
        %x=1cm,
        %y=1cm,
        %z=1cm,
        xmin=0,
        xmax=5,
        ymin=0,
        ymax=3,
        zmin=0,
        zmax=3,
        %xlabel=$x$,
        %ylabel=$y$,
        %axis lines=middle,
        %clip=true
]%[title=$\Gamma_1$]
    \addplot3 [\colorzero, ultra thick] coordinates{(0,2,0) (0,0,2)};
	%\addplot3[fill=\colorone,opacity=\opazero] coordinates{(3,0,0) (0,3,0) (0,1,2) (1,0,2) (3,0,0)};
	%\addplot3 [ultra thick, \colorfour, ->] coordinates{(0.9,0.9,1.2) (2.4, 2.4, 2.7)};
	%\draw (axis cs:2.4, 2.4, 2.7) node [above] {(1,1,1)};
\end{axis}
\node (A) at (\tx, \ty){};
\node (B) [right of=A] {\picfontsize $\nd(f_{2,y})$};
\end{scope}

\begin{scope}[shift={(2*\shiftone,0)}]
\begin{axis}[
        xmin=0,
        xmax=5,
        ymin=0,
        ymax=3,
        zmin=0,
        zmax=3,
]
    \addplot3 [\colorzero, ultra thick] coordinates{(3,1,0) (0,3,0) (0,0,2) (3,1,0)};
	\addplot3[fill=\colorone,opacity=\opazero] coordinates{(3,1,0) (0,3,0) (0,0,2)};
	%\addplot3 [ultra thick, \colorfour, ->] coordinates{(2,1.25,1.25) (2.25, 2.25, 2.25)};
	\draw (axis cs:3,1,0) node [below] {\picfontsize (3,1,0)};
\end{axis}
\node (A) at (\tx, \ty){};
\node (B) [right of=A] {\picfontsize $\nd(f_{3,y})$};
\end{scope}

\begin{scope}[shift={(3*\shiftone,0)}]
\begin{axis}[
        xmin=0,
        xmax=5,
        ymin=0,
        ymax=3,
        zmin=0,
        zmax=3,
]
    \addplot3 [\colorzero, ultra thick] coordinates{(4,2,0) (0,3,0) (0,0,3) (4,0,2) (4,2,0)};
	\addplot3[fill=\colorone,opacity=\opazero] coordinates{(4,2,0) (0,3,0) (0,0,3) (4,0,2)};
	\addplot3 [ultra thick, \colorfour, ->] coordinates{(2,1.25,1.25) (2.25, 2.25, 2.25)};
	\draw (axis cs:2.25, 2.25, 2.25) node [above] {\picfontsize (1,4,4)};
\end{axis}
\node (A) at (\tx, \ty){};
\node (B) [right of=A] {\picfontsize $\nd(\nd(f_{1,y}) + \nd(f_{2,y}))$};
\end{scope}

%\begin{scope}[shift={(2*\shiftone,0)}]
%\begin{axis}[
%        xmin=0,
%        xmax=5,
%        ymin=0,
%        ymax=3,
%        zmin=0,
%        zmax=3,
%]
%    \addplot3 [\colorzero, ultra thick] coordinates{(3,1,0) (0,3,0) (0,0,2) (3,1,0)};
%	\addplot3[fill=\colorone,opacity=\opazero] coordinates{(3,1,0) (0,3,0) (0,0,2)};
%	%\addplot3 [ultra thick, \colorfour, ->] coordinates{(3,1,1) (3.5, 2.5, 1.5)};
%	%\draw (axis cs:3.5, 2.5, 1.5) node [above] {\picfontsize (1,3,1)};
%\end{axis}
%\node (A) at (\tx, \ty){};
%\node (B) [right of=A] {\picfontsize $\nd(f_{3,y})$};
%\end{scope}
\end{tikzpicture}
\end{center}
\caption{
Newton diagrams of the $f_{i,y}$%when $\character(\kk) \not\in \{2, 3, 7\}$
} \label{fig:3-1-detailed:0}
\end{figure}

To compute $\multonlysub{f_{1,y}, f_{2,y}, f_{3,y}}{P_y}$ we apply \Cref{thm:mult0} with $(\Gamma_1, \Gamma_2, \Gamma_3) := (\nd(f_{3,y}), \nd(f_{2,y}), \allowbreak \nd(f_{1,y}))$ (we could have used any permutation of the $\nd(f_{i,y})$ - this particular choice leads to a relatively shorter computation). Then %In the case that the $\character{\kk}$ is zero or sufficiently large,
the set $\tGammaOne$ from \Cref{thm:mult0} consists only of one $I$, namely $\{1, 2, 3\}$ (see \Cref{fig:3-1-detailed:0}), so that
\begin{align*}
\multonlysub{f_{1,y}, f_{2,y}, f_{3,y}}{P_y}
    &\geq \multzeroonly{\nd(f_{3,y}), \nd(f_{2,y}), \nd(f_{1,y})}
	= \multzerostaronly{\nd(f_{3,y}), \nd(f_{2,y}), \nd(f_{1,y})} \\
    &= \min_{\nd(f_{3,y})}(\nu) \times
		\mv'_{\nu}(\In_{\nu}(\nd(f_{1,y})), \In_{\nu}(\nd(f_{2,y})))\quad (\text{where}\ \nu := (1, 4, 4)) \\
    &= 7 \times 2 = 14
\end{align*}
It is not hard to check that $(f_{3,y}, f_{1,y}, f_{2,y})$ satisfy the non-degeneracy condition from assertion \eqref{mult0:nondegeneracy} of \Cref{thm:mult0}, so that
\begin{align}
\multonlysub{f_{1,y}, f_{2,y}, f_{3,y}}{P_y} &= 14 \label{eq:3-1-detailed:0y}
\end{align}
We will compute the other terms of identity \eqref{eq:3-1-detailed:0} in Examples \ref{ex:3-1-detailed:p1} and \ref{ex:3-1-detailed:p2}.
\end{proexample}

\subsection{(possibly non-isolated) complete intersections} \label{sec:newton:complete}
%As \Cref{prop:local=mult:1} shows, the ``local factors'' of ordered intersection multiplicity can sometimes be represented in terms of intersection multiplicities of irreducible components of complete intersection. If in addition these components contain nonsingular points of the ambient variety, the corresponding multiplicities can be estimated in terms of Newton diagrams using \Cref{thm:mult0}. Indeed, c
Consider an $(n-k)$-dimensional irreducible component $Z$ of $V(f_1, \ldots, f_k)$, where the $f_j$ are regular functions on an $n$-dimensional affine variety $X$. Assume in addition that $Z \not\subseteq \sing(X)$. Under this condition we show that the intersection multiplicity $\multsub{f_1}{f_k}{Z}$ of $f_1, \ldots, f_k$ along $Z$ can be estimated in terms of certain Newton diagrams. Indeed, since $Z \not\subseteq \sing(X)$, there are ``regular coordinates''\footnote{We say $(g_1, \ldots, g_n)$ are {\em regular coordinates} on a nonsingular variety $Y$ if $\dim(Y) = n$ and for each $y \in Y$, $\hatlocal{Y}{y} \cong \kk[[g_1 - g_1(y), \ldots, g_n - g_n(y)]]$.} $(x_1, \ldots, x_n)$ on an open subset $U$ of $X$ such that $U \cap Z = V(x_1, \ldots, x_k) \cap U \neq \emptyset$. Then there are $f_{j,\alpha} \in \kk[U]$ such that for each $a \in U$,
\begin{align*}
f_j &= \sum_{\alpha \in \znonnegg{n}} f_{j,\alpha} \prod_{i=1}^{n}(x_i - a_i)^{\alpha_i} \in \kk[[x_1 - a_1, \ldots, x_n - a_n]]
\end{align*}
where $a_i := x_i(a)$ (see e.g.\ \cite[Lemma 3.5]{bierstone-milman}). Define
\begin{align*}
f^{[k]}_j &:= \sum_{\alpha \in \zz^{[k]}} f_{j,\alpha} \prod_{i=1}^{k}x_i^{\alpha_i} \in \kk[U][[x_1, \ldots, x_k]]
\end{align*}
(recall from \Cref{notation:[]I} that $\zz^{[k]}$ is the subset of all elements in $\zz^n$ whose last $(n-k)$-coordinates are identically zero). The {\em Newton diagram} $\nd(f^{[k]}_j)$ with respect to $(x_1, \ldots, x_k)$ is the Newton diagram of the set of all $\alpha$ such that $f_{j, \alpha} \not\equiv 0$. %The following is then an immediate corollary of \Cref{thm:mult0}: %The {\em support} $\supp(f_j)$ of $f_j$ with respect to $(x_1, \ldots, x_n)$) are then the set of all $\alpha$ such that $f_{j, \alpha} \not\equiv 0$, and the {\em Newton diagram} $\nd(f_j)$ is the Newton diagram of $\supp(f_j)$.

\begin{proprop} \label{cor:mult0}
Let $\pi: \rr^n \to \rr^k$ be the projection on the first $k$ coordinates. Then
\begin{enumerate}
\item \label{cor:mult0:formula} $\multsub{f_1}{f_k}{Z} \geq \multsub{\pi(\nd(f^{[k]}_1))}{\pi(\nd(f^{[k]}_k))}{\origin}$
\item \label{cor:mult0:nondegen} If the right hand side is finite, then the above relation holds with an equality if and only if there is $a \in U \cap Z$ such that the following pair of ``non-degeneracy'' conditions hold:
\begin{defnlist}
\item \label{cor:mult0:nondegen:vertex} for each $j = 1, \ldots, k$, and each vertex $\alpha$ of $\pi(\nd(f^{[k]}_j))$, the coefficient $f_{j,\alpha}$ is nonzero at $a$, and
\item \label{cor:mult0:nondegen:zero} for each nonempty subset $I$ of $[k]$ and each weighted order $\nu$ centered at the origin, there is no common root of $\In_\nu(f^{[k]}_j|_{\kk^I})$, $j = 1, \ldots, k$, on $\{a\} \times (\kk\setminus\{0\})^k$.
\end{defnlist}
\item \label{cor:mult0:irrcomp} Moreover, the set of all $a \in U \cap Z$ such that the above non-degeneracy conditions hold is a (possibly empty) Zariski open subset of $Z$, and every such $a$ (if exists) has an open neighborhood $U_a$ on $X$ such that $V(f_1, \ldots, f_k) \cap U_a$ is a complete intersection with only one irreducible component, namely $Z \cap U_a$.
\end{enumerate}
\end{proprop}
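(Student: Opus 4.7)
The plan is to reduce the computation of $\multsub{f_1}{f_k}{Z}$ to that of an ordinary intersection multiplicity at a generic closed point of $Z$ and then invoke \Cref{thm:mult0}. Since $U \cap Z = V(x_1, \ldots, x_k) \cap U$ is contained in the smooth locus of $X$, for every $a \in U \cap Z$ we have $\hatlocal{X}{a} \cong \kk[[x_1-a_1, \ldots, x_n-a_n]]$, so the specializations
\[
\bar f_j(x_1, \ldots, x_k) := f_j(x_1, \ldots, x_k, a_{k+1}, \ldots, a_n) \in \kk[[x_1-a_1, \ldots, x_k-a_k]]
\]
are well defined. Expanding $f_j$ around $a$ and setting $x_i = a_i$ for $i > k$ yields
\[
\bar f_j = \sum_{\alpha \in \zz^{[k]}} f_{j,\alpha}(a) \prod_{i=1}^k (x_i - a_i)^{\alpha_i},
\]
so $\supp(\bar f_j) \subseteq \pi(\nd(f_j^{[k]})) + \rnonnegg{k}$, with equality on the Newton diagram level precisely when every vertex coefficient of $\pi(\nd(f_j^{[k]}))$ is nonzero at $a$.

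For the reduction I would use that $f_1, \ldots, f_k$ forms a regular sequence in $\local{X}{Z}$ (since $Z$ is a codimension $k$ component of $V(f_1, \ldots, f_k)$ and $X$ is smooth along $Z$). Consequently the morphism $V(f_1, \ldots, f_k) \cap U \to Z \cap U$ defined by $(x_{k+1}, \ldots, x_n)$ is generically finite and flat over some Zariski dense open $Z^\circ \subseteq U \cap Z$. Its constant generic degree equals both $\multsub{f_1}{f_k}{Z}$ (read at the generic point of $Z$) and $\multsub{\bar f_1}{\bar f_k}{a}$ for every closed point $a \in Z^\circ$. Combined with the inequality $\multsub{\bar f_1}{\bar f_k}{a} \geq \multzero{\pi(\nd(f_1^{[k]}))}{\pi(\nd(f_k^{[k]}))}$ coming from \Cref{thm:mult0}, this proves the first assertion.

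For the equality characterization, equality in the Newton bound at some $a \in Z^\circ$ forces $\nd(\bar f_j)$ to equal $\pi(\nd(f_j^{[k]}))$, which is exactly the vertex non-vanishing condition, and forces the initial-form non-degeneracy from assertion \eqref{mult0:nondegeneracy} of \Cref{thm:mult0} applied to $\bar f_1, \ldots, \bar f_k$ at $a$; since $\In_\nu(\bar f_j|_{\kk^I})$ is obtained from $\In_\nu(f_j^{[k]}|_{\kk^I})$ by evaluating the coefficients at $a$, this translates directly to the stated non-degeneracy condition. For the last assertion, both conditions are Zariski open in $a$ (finitely many non-vanishing conditions on regular functions suffice, because only finitely many primitive weights $\nu$ contribute to the second condition); when they hold at $a$, $a$ is isolated in the fibre of the generically flat finite map above, and upper semicontinuity then yields a neighborhood $U_a$ in which $V(f_1, \ldots, f_k)$ is a flat family of zero-dimensional schemes over $Z \cap U_a$, hence a complete intersection whose only irreducible component is $Z \cap U_a$.

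The main obstacle will be the generic-point reduction in the second paragraph: verifying that $\bar f_1, \ldots, \bar f_k$ form a regular sequence for generic $a \in Z$ and that the length of the fibre over such $a$ really equals the multiplicity along $Z$. This is standard from generic flatness and additivity of length in flat families, but requires some care because the specialization $x_i \mapsto a_i$ for $i > k$ need not preserve regular sequences at every $a \in Z$.
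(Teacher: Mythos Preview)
Your reduction for assertions \eqref{cor:mult0:formula} and \eqref{cor:mult0:nondegen} is exactly the paper's: the paper writes the one-line identity $\multsub{f_1}{f_k}{Z} = \multzero{f_1}{f_k,x_{k+1}-a_{k+1},\ldots,x_n-a_n}$ for generic $a \in U \cap Z$ and then invokes \Cref{thm:mult0}. Your generic-flatness language is just a rephrasing of this. One small gap: in the ``if'' direction of \eqref{cor:mult0:nondegen} you only match the Newton bound with $\multsub{f_1}{f_k}{Z}$ when the witnessing $a$ lies in your flat locus $Z^\circ$, whereas the statement allows any $a \in U \cap Z$. You need upper semicontinuity of $a \mapsto \multsub{\bar f_1}{\bar f_k}{0}$ on all of $U \cap Z$ (not just equality on $Z^\circ$) to close this; that is routine but should be said.

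For assertion \eqref{cor:mult0:irrcomp} your route genuinely diverges from the paper's, and as written it has a gap. You pass from ``$a$ is isolated in the fibre'' and ``upper semicontinuity'' to ``flat family over $Z \cap U_a$'' to ``only irreducible component is $Z \cap U_a$'', but the last implication is not automatic: a complete intersection can be flat over the base and still have several codimension-$k$ components through $a$. To finish along your line you would have to compare fibre lengths: show that the $Z$-primary contribution to the slice length at $a$ is already $\geq \multsub{f_1}{f_k}{Z}$ (again semicontinuity, applied to the $Z$-primary part), so that equality with the Newton bound leaves no room for any other component. This works, but it is more than what you wrote.

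The paper bypasses all of this with a two-line branch argument: if another component $Z'$ passes through a non-degenerate $a$, choose a curve $C \subset Z'$ through $a$ with $C \not\subset Z$, take a branch $B$ of $C$ at $a$, let $I = \{i \in [k] : x_i|_C \not\equiv 0\}$, and set $\nu = (\ord_B(x_i))_{i \in I}$; then every $f_j$ vanishes on $C$, so the initial forms $\In_\nu(f^{[k]}_j|_{\kk^I})$ have a common zero on $\{a\} \times (\kk\setminus\{0\})^k$, violating condition \ref{cor:mult0:nondegen:zero}. This is both shorter and avoids the flatness bookkeeping.
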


\begin{proof}
Since $\multsub{f_1}{f_k}{Z} = \multzero{f_1}{f_k,x_{k+1} - a_{k+1}, \ldots, x_n - a_n}$ for generic $a \in U \cap Z$, all statements except for the last one are immediate corollaries of \Cref{thm:mult0}. The Zariski openness of the set of $a \in U \cap Z$ satisfying the non-degeneracy conditions is a consequence of the Zariski openness of the $n$-tuples of polynomials which are non-degenerate at the origin \cite[Theorem IX.8]{howmanyzeroes}. For the local complete intersection property, pick $a \in U \cap Z$ which satisfies condition \ref{cor:mult0:nondegen:vertex} and in addition, also belongs to another irreducible component $Z'$ of $V(f_1, \ldots, f_k)$. Pick an irreducible curve $C$ on $Z'$ such that $a \in C$ and $C \not\subseteq Z$. Let $I$ be the subset of $[k]$ consisting of all $i$ such that $x_i|_C \not\equiv 0$. If $B$ is a branch on $C$ at $a$, then condition \ref{cor:mult0:nondegen:zero} is violated with $I$ and $\nu := (\ord_B(x_i))_{i \in I}$. This completes the proof of the last assertion of \Cref{cor:mult0}.
\end{proof}

\begin{proexample} \label{ex:3-1-detailed:p1}
We continue with the computation in characteristic zero started in \Cref{ex:3-1-detailed:p0}. Now we compute the ordered intersection multiplicity of $F_1, F_2, F_3$ along $Z$, i.e.\ the term $\multordonly{F_1, F_2, F_3}{Z}$ from the right hand side of identity \eqref{eq:3-1-detailed:0}. Since $Z$ is an irreducible component of $V(f_1, f_2)$, assertion \ref{local=mult:1:0} of \Cref{prop:local=mult:1} implies that
\begin{align*}
\len(\local{X}{Z}/I^{(\rho)}_{1,2} \local{X}{Z})
    &= \multonlysub{f_1, f_2}{Z}
\end{align*}

\def\picfontsize{\scriptsize}
\def\colorzero{blue}
\def\colorone{blue}
\def\colortwo{red}

\begin{center}
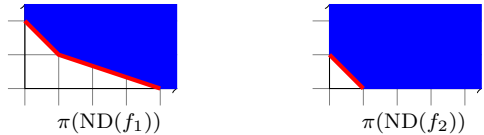
\begin{figure}[htp]
\begin{tikzpicture}[scale=0.45]
\def\shiftone{9}
\def\opazero{0.5}
\def\tx{2.5}
\def\ty{-1}
\def\gridx{4.5}
\def\gridy{2.5}

\draw [gray,  line width=0pt] (-0.5,-0.5) grid (\gridx,\gridy);
\draw [<->] (0, \gridy) |- (\gridx, 0);
\draw[\colorone] (0,\gridy) -- (0,2) -- (1,1) -- (4,0) -- (\gridx,0);
\fill[\colorzero, opacity=\opazero ] (0,\gridy) -- (0,2) -- (1,1) -- (4,0) -- (\gridx,0) -- (\gridx,\gridy) -- cycle;
\draw[ultra thick, \colortwo]  (0,2) -- (1,1) -- (4,0);
\draw (\tx,\ty) node {\picfontsize $\pi(\nd(f_1))$};

\begin{scope}[shift={(\shiftone,0)}]
	\draw [gray,  line width=0pt] (-0.5,-0.5) grid (\gridx,\gridy);
	\draw [<->] (0, \gridy) |- (\gridx, 0);
	\draw[\colorone] (0,\gridy) -- (0,1) -- (1,0) -- (\gridx,0);
    \fill[\colorzero, opacity=\opazero ] (0,\gridy) -- (0,1) -- (1,0) -- (\gridx,0) -- (\gridx,\gridy) -- cycle;
    \draw[ultra thick, \colortwo]  (0,1) -- (1,0);
	\draw (\tx,\ty) node {\picfontsize $\pi(\nd(f_2))$};
\end{scope}

\end{tikzpicture}

\caption{``Generic'' Newton diagrams of $f_1, f_2$ with respect to $(x, y)$-coordinates} \label{fig:3-1-1}
\end{figure}
\end{center}

Since $Z = V(x,y)$, in the notation of \Cref{cor:mult0} we need to consider $f_j^{\{1,2\}}$, $j = 1, 2$, which are simply $f_j$ but where we consider monomials in $z$ as ``coefficients''. \Cref{fig:3-1-1} displays the Newton diagrams of $f_j^{\{1,2\}}$, which are also identical to $\pi(\nd(f_j))$ where $\pi$ is the projection onto $(x,y)$-coordinates. Since $\pi(\nd(f_2))$ has only one edge $E$, and the edge has inner normal $\nu := (1,1)$, the first assertion of \Cref{cor:mult0} and \Cref{thm:mult0} imply that
\begin{align}
\multonlysub{f_1, f_2}{Z}
    &\geq \multonlysub{\pi(\nd(f_1)), \pi(\nd(f_2))}{\origin}
    = \nu(f_1) \times \len(E)
    = 2 \times 1
    = 2
    \label{ineq:3-1-1}
\end{align}
The initial forms of $f^{\{1,2\}}_j$ corresponding to the weighted order $\nu := (1,1)$ are:
\begin{alignat*}{2}
\In_\nu(f^{\{1,2\}}_1) &= xy + y^2 \qquad \quad & \In_\nu(f^{\{1,2\}}_2) &= 2xz^2 + y(4 + z^2)
\end{alignat*}
The second assertion of \Cref{cor:mult0} implies that the inequality \eqref{ineq:3-1-1} is satisfied with an equality if and only if there is $z \in \kk$ such that following non-degeneracy condition is satisfied:
\begin{align}\label{nondegen:3-1-1}
\parbox{0.9\textwidth}{
there is no common solution to $\In_\nu(f^{\{1,2\}}_1) = \In_\nu(f^{\{1,2\}}_2) = 0$ with $xyz(4+z^2) \neq 0$.
}
\end{align}
It is clear that the above condition is satisfied for almost all $z \in \kk$. It follows that
\begin{align}
\begin{split} \label{eq:3-1-1}
\multonlysub{f_1, f_2}{Z}
    & = 2,\ \text{and therefore,} \\
\multordonly{F_1, F_2, F_3}{Z}
    &= \multonlysub{f_1, f_2}{Z} \times \deg_3(Z)
    = 2 \times 4
    = 8
\end{split}
\end{align}
In order to compute $\multisoonly{f_1, f_2, f_3}{\kk^3}$ using \eqref{eq:3-1-detailed:0}, it remains to compute $\sum_{P \in Z} \multordonly{F_1, F_2, F_3}{P}$. Write $F_1 \cap F_2 = mZ + C$, where the $C$ is a (possibly non-reduced) curve whose support does not contain $Z$. Then
\begin{align}\label{eq:3-1-0:0}
\sum_{P \in Z} \multordonly{F_1, F_2, F_3}{P}
    &= \sum_{P \in Z} \ord_P(f_{3,P}|_C)
\end{align}
where $f_{3,P}$ are local equations of $F_3$ near $P$ (see, e.g.\ \Cref{ex:3-2}). Note that the summand on the right hand side is nonzero only for the (finitely many) points $P$ in $Z \cap \supp(C)$. The third assertion of \Cref{cor:mult0} implies that every such $P$ must satisfy one of the following:
\begin{prooflist}
\item $P = (0,0,z)$ where $z$ is such that Condition \eqref{nondegen:3-1-1} does {\em not} hold, i.e.
\begin{prooflist}
\item \label{3-1-detailed:p2:v0:0} $z = 0$,
\item \label{3-1-detailed:p2:v0:2} or $4+z^2 = 0$,
\item \label{3-1-detailed:p2:degen} or $z(4+z^2) \neq 0$, but there is a common solution to $\In_{(1,1)}(f^{\{1,2\}}_1) = \In_{(1,1)}(f^{\{1,2\}}_2) = 0$ at $z$ with $x,y,z \in \kk \setminus \{0\}$ (this is possible if and only if $z = \pm 2$);
    %xy+y^2 = 0 => x = -y => -2z^2 + 4 + z^2 = 0 => z = \pm 2
\end{prooflist}
\item \label{3-1-detailed:p2:infty} or $P$ does {\em not} belong to the coordinate chart of $X$ with coordinates $(x,y,z)$, i.e.\ $P$ is the point $P_\infty := [0:0:1:0]$ with respect to homogeneous coordinates $[x:y:z:w]$.
\end{prooflist}
We will compute $\ord_P(F_3|_C)$ in each of these cases in \Cref{ex:3-1-detailed:p2} below.
\end{proexample}

\subsection{(a special type of) non set-theoretic complete intersections} \label{sec:newton0}
Consider \Cref{ex:3-1-detailed:p1} above, and take one of the points, say $P= (0,0,0)$, that satisfies one of the two conditions given at the end of the example. We need to compute $\ord_P(f_3|_C)$; at the least we would like to estimate it {\em from below}. If $C$ were a complete intersection of $f_1, f_2$ near $P$, then $\ord_P(f_3|_C)$ would simply be the intersection multiplicity $\multonlysub{f_1, f_2, f_3}{P}$ of $f_1, f_2, f_3$ at $P$, and we could have used \Cref{thm:mult0} to estimate it. However, $C$ is {\em not} a complete intersection of $f_1, f_2$ near $P$. As we have seen, $V(f_1, f_2)$ has another irreducible component $Z$ containing $P$, so that $\ord_P(f_3|_C) < \multonlysub{f_1, f_2, f_3}{P}$, i.e.\ $\multonlysub{f_1, f_2, f_3}{P}$ can {\em not} be used to estimate $\ord_P(f_3|_C)$ from below (in fact in this particular case $f_3|_Z \equiv 0$, so that $\multonlysub{f_1, f_2, f_3}{P} = \infty$). The main result of this section (\Cref{prop:newton1*}) can sometimes be used to give such an estimate for $\ord_P(f_3|_C)$ in the case that the ``punctured'' germ of $C$ at $P$ is a complete intersection in the complement of a union of ``coordinate subspaces''.

\subsubsection{}
First we state a variant of the non-degeneracy condition from \Cref{thm:mult0}: we say that $f_1, \ldots, f_m \in \kk[[x_1, \ldots, x_n]]$ are $(*, k)$-non-degenerate at the origin if they satisfy the following condition:
\begin{align}
\parbox{0.87\textwidth}{
for each weighted order $\nu$ centered at the origin such that $\dim(\In_\nu(\sum \supp(f_j))) \leq k$, %(where the sum is over all $j$ such that $f_j$ is {\em not} identically zero),
there is no common root of $\In_\nu(f_j)$, $j = 1, \ldots, m$, on $(\kk\setminus\{0\})^n$.
}
\tag{$\nd^{*,k}_0$}
\label{eq:non-degeneracy-*k}
\end{align}
We say that $f_1, \ldots, f_m$ are $*$-non-degenerate at the origin if they are $(*, k)$-non-degenerate at the origin for each $k \geq 0$.

\subsubsection{} \label{par:newton0*}
Let $z$ be a nonsingular point on a variety $X$ of dimension $n$, and $(x_1, \ldots, x_n)$ be a system of regular coordinates near $z$ such that each $x_i$ vanishes at $z$, i.e.\ $z$ is the ``origin'' with respect to $(x_1, \ldots, x_n)$. Given a subset $I$ of $[n]$, we write $K^I \subseteq X$ for the set of zeroes of all $x_i$ such that $i \not\in I$ (i.e.\ $K^I$ is the analogue of the ``coordinate subspace'' $\kk^I$); in particular,
\begin{align*}
K^I &=
    \begin{cases}
    X &\text{if}\ I = [n],  \\
    V(x_1, \ldots, x_n) &\text{if}\ I = \emptyset.
    \end{cases}
\end{align*}
Let $\scrI$ be a given collection of subsets of $[n]$. % and let $H_\scrI := \bigcup_{I \in \scrI} H_I$.
Consider $f_1, \ldots, f_{n-1} \in \local{X}{z}$. We are interested in the irreducible components of $V(f_1, \ldots, f_{n-1})$ which contain $z$, and in addition, are {\em not} contained in $K^I$ for any $I \in \scrI$; let $V'_\scrI$ be the union of all such components. Write $\bar \scrI := \{I \subseteq [n]: I \subseteq I'$ for some $I' \in \scrI\}$, i.e.\ $\bar \scrI$ is the ``closure'' of $\scrI$ under the operation of taking subsets; note that $\bar \scrI$ is the largest collection of subsets of $[n]$ such that
\begin{align*}
\bigcup_{I \in \bar \scrI} K^I = \bigcup_{I \in \scrI} K^I
\end{align*}

\begin{proprop} \label{prop:newton0*}
For each $I \subseteq [n]$, let $\ti := \{j: f_j|_{K^I} \not\equiv 0$ near $z\}$ (i.e.\ $\ti$ is the analogue of $\tiGamma$ from \Cref{thm:mult0}). With the set up from the preceding paragraph, assume the following conditions hold for all $I \not\in \bar\scrI$:
\begin{enumerate}
\item \label{newton0*:n-nonzero} $|\ti| \geq |I| - 1$,
\item \label{newton0*:non-degen} $f_1|_{K^I}, \ldots, f_{n-1}|_{K^I}$ are $(*, |\ti|-1)$-non-degenerate at the origin.
\end{enumerate}
where we treat the $f_i$ as power series in $(x_1, \ldots, x_n)$. Then either $V'_\scrI = \emptyset$, or $V'_\scrI$ is a curve (i.e.\ $V'_\scrI$ is purely one dimensional).
\end{proprop}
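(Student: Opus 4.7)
The plan is to argue by contradiction: I will show that every irreducible component $W$ of $V(f_1,\dots,f_{n-1})$ through $z$ with $I(W) \notin \bar\scrI$ must satisfy $\dim W = 1$, where $I(W) := \{i \in [n] : x_i|_W \not\equiv 0\}$ is the minimal subset of $[n]$ with $W \subseteq K^{I(W)}$. The lower bound $\dim W \geq 1$ is immediate from Krull's Hauptidealsatz applied to $n-1$ elements in the $n$-dimensional regular local ring $\local{X}{z}$, so it suffices to prove $\dim W \leq 1$. Suppose to the contrary that $\dim W \geq 2$, set $I := I(W) \notin \bar\scrI$, and note that by minimality of $I$ the open subset $W^* := W \cap \{x_i \neq 0 : i \in I\}$ is dense in $W$ and sits inside the torus $T_I \cong (\kk^*)^{|I|}$ of $K^I$. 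Since $f_j|_{K^I} \equiv 0$ for $j \notin \ti$, we have $W^* \subseteq V(f_j|_{K^I} : j \in \ti)$ scheme-theoretically inside $T_I$ near $z$.

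The heart of the argument is to extract tropical data from branches of $W$ at $z$. For any formal branch $\gamma(s) = (a_i s^{\nu_i} + O(s^{\nu_i+1}))_{i \in I}$ of $W$ at $z$, with $\nu_i \in \zpos$ and $a_i \in \kk^*$ (such branches exist since $z \in \overline{W^*}$), substituting into each $f_j|_{K^I}$ and reading off the lowest-order coefficient in $s$ shows that $a := (a_i)_{i \in I} \in T_I$ is a common zero of the initial forms $\In_\nu(f_j|_{K^I})$, $j \in \ti$. The contrapositive of hypothesis (2), applied to $I \notin \bar\scrI$ with weight $\nu$, then forces
\[
\dim\bigl(\In_\nu\bigl(\textstyle\sum_{j \in \ti}\supp(f_j|_{K^I})\bigr)\bigr) \;\geq\; |\ti|.
\]
Dually, setting $P := \sum_{j \in \ti}\np(f_j|_{K^I}) \subseteq \rr^I$, the standard correspondence between faces of a polytope and cells of its normal fan shows that the locus of $\nu \in \rr^I$ with $\dim \In_\nu(P) \geq |\ti|$ is a union of cones of dimension at most $|I| - |\ti|$. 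The weights arising from branches of $W$ at $z$ are Zariski dense in the local tropicalization of $W$ at $z$, a rational polyhedral fan of pure dimension $\dim W$, so comparing dimensions yields $\dim W \leq |I| - |\ti|$. Combined with hypothesis (1) that $|\ti| \geq |I|-1$, we obtain $\dim W \leq 1$, contradicting $\dim W \geq 2$.

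The main obstacle will be making the last dimension comparison rigorous -- specifically, verifying that the primitive weights $\nu \in \zpos^I$ realized by formal branches of $W$ centered at $z$ genuinely sweep out a subset of $\rr^I$ of real dimension $\dim W$. I expect this to follow from the standard theory of local tropicalizations of analytic germs, or alternatively via a toric embedded resolution of the germ $(W,z)$ together with Puiseux parametrizations of the branches on each toric chart (this is also the style of argument used implicitly in the proofs of \Cref{prop:zero,prop:nnonzero}). A related technical point is that hypothesis (2) only constrains weights with strictly positive entries, but the minimality of $I = I(W)$ ensures that every branch of $W$ at $z$ does deliver such a weight, so no extension beyond the positive orthant is required.
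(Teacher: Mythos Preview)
Your approach is correct and is essentially a tropical reformulation of the paper's proof. The paper argues via an explicit toric (monomial) blowup $\tilde K^I \to K^I$ at $z$ adapted to the Newton diagram of $\sum_j f_j|_{K^I}$: hypothesis~(2) forces the strict transform $\tilde V'$ to miss all torus orbits of dimension $\leq m-2$ in the exceptional divisor $E$, and when $|\ti| = m-1$ Bernstein's theorem shows $\tilde V'$ meets each top-dimensional exceptional component in a finite set computed by a mixed volume, so $\tilde V' \cap E$ is finite and hence $\dim V' \leq 1$.

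The two arguments match up dictionary-style: your normal-fan cones of dimension $\leq |I| - |\ti|$ are exactly (the cones dual to) the torus orbits into which the paper's $\tilde V'$ is forced, and your deferred claim that the local tropicalization of $W$ has pure dimension $\dim W$ is precisely what the blowup delivers concretely (since $E$ is Cartier on $\tilde K^I$, one has $\dim(\tilde V' \cap E) \geq \dim V' - 1$, so finiteness of $\tilde V' \cap E$ gives $\dim V' \leq 1$). The paper's route is self-contained modulo the cited \cite[Corollary~VI.34]{howmanyzeroes} for the orbit-avoidance, whereas yours imports a local Bieri--Groves--type statement from tropical geometry. Your own suggested alternative---``toric embedded resolution together with Puiseux parametrizations''---is exactly the paper's method, so if you carry it out to close the gap you will converge to the paper's proof. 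What your formulation buys is a cleaner conceptual picture (the dimension bound $\dim W \leq |I| - |\ti|$ drops out of a single fan comparison); what the paper's buys is that no external tropical machinery is needed and, as a bonus, the Bernstein count of $\tilde V' \cap E$ is already in hand for the subsequent \Cref{prop:newton1*}.
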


\begin{proof}
Assume $V'_\scrI \neq \emptyset$. Pick an irreducible component $V'$ of $V'_\scrI$. First note that $\dim(V') \geq 1$. Let $I$ be the smallest subset of $[n]$ such that $V' \subseteq K^I$. Then $I \not\in \bar \scrI$. \Woutlog\ we may assume that $I = \{1, \ldots, m\}$. Consider a subset $\scrA$ of $\znonnegg{m}$ such that
\begin{prooflist}
\item the radical of the ideal $\qqq_\scrA$ of $\local{K^I}{z}$ generated by monomials $x^\alpha$, $\alpha \in \scrA$, is the maximal ideal of $\local{K^I}{z}$, and
\item \label{newton1*:face} every $m-1$ dimensional face of $\nd(\sum_{i=1}^{n-1} \supp(f_i|_{K^I}))$ (if there is one) is also a face of $\nd(\scrA)$.
\end{prooflist}
Consider the blow up $\tilde K^I$ of $K^I$ at $z$ with respect to $\qqq_\scrA$. The irreducible components of the exceptional divisor $E$ on $\tilde K^I$ are toric varieties $X_\scrQ$ corresponding to $m-1$ dimensional faces $\scrQ$ of $\nd(\scrA)$, and the proper torus orbits of each such $X_\scrQ$ are toric varieties $X_{\scrQ'}$ corresponding to proper faces $\scrQ'$ of $\scrQ$. Recall that $|\ti| \geq m - 1$ (property \eqref{newton0*:n-nonzero} of the $f_j$). It follows from property \eqref{newton0*:non-degen} of the $f_j$ and property \ref{newton1*:face} of $\scrA$ that
\begin{prooflist}[resume]
\item \label{newton0*:ti>m-1} if $|\ti| > m - 1$, then the strict transform $\tilde V'$ of $V'$ on $\tilde K^I$ does {\em not} intersect $X_{\scrQ}$ for any face $\scrQ$ of $\nd(\scrA)$,
\item \label{newton0*:ti=m-1} if $|\ti| = m - 1$, then $\tilde V'$ does {\em not} intersect $X_{\scrQ'}$ for any face $\scrQ$ of $\nd(\scrA)$ such that $\dim(\scrQ') \leq m - 2$
\end{prooflist}
(see e.g.\ \cite[Corollary VI.34]{howmanyzeroes}). Since case \ref{newton0*:ti>m-1} leads to a contradiction, it follows that $|\ti| = m - 1$. Bernstein's theorem then implies that the number of points in $\tilde V' \cap X_\scrQ$ (counted with appropriate multiplicity) is $\mv'_\nu(\In_\nu(\nd(f_{j_1})), \ldots, \In_\nu(\nd(f_{j_{m-1}})))$, where $\nu$ is the primitive inner normal to $\scrQ$, and $\ti = \{j_1, \ldots, j_{m-1}\}$. In particular, it follows that $\tilde V' \cap E$ is finite, and therefore, $\tilde V'$ is a curve, as required.
\end{proof}

\subsubsection{} \label{assumptions:newton1*}
Let $z$ be a nonsingular point on a variety $X$ of dimension $n$ and $C$ be a curve, i.e.\ purely one dimensional scheme, containing $z$. Assume we are interested in estimating $\ord_z(f|_C)$ for some $f \in \local{X}{z}$. \Cref{prop:newton1*} below provides an estimate under the following assumptions: there are $x_1, \ldots, x_n, \allowbreak f_1, \ldots, f_{n-1} \in \local{X}{z}$ and a collection $\scrI$ of subsets of $[n]$ such that

\begin{defnlist}
\item each $x_i$ vanishes at $z$, and $(x_1, \ldots, x_n)$ form a system of regular coordinates near $z$,
\item no irreducible component of $C$ containing $z$ is contained in $K^I := V(x_{i'}: i' \not\in I)$ for any $I \in \scrI$,
\item $f_1, \ldots, f_{n-1}$ generate the ideal of $C$ on $U \setminus \bigcup_{I \in \scrI}K^I$ for some neighborhood $U$ of $z$ in $X$ (or more generally, the irreducible components of $V(f_1, \ldots, f_{n-1})$ which contain $z$ and are not contained in $\bigcup_{I \in \scrI} K^I$ are precisely the irreducible components of $C$ containing $z$, and in addition, the multiplicity of each such component in $V(f_1, \ldots, f_{n-1})$ is also equal to its multiplicity in $C$),
\item \label{assumption:newton1*:non-degen0} when expressed as power series in $(x_1, \ldots, x_n)$, $f_1, \ldots, f_{n-1}$ satisfy conditions \eqref{newton0*:n-nonzero} and \eqref{newton0*:non-degen} from \Cref{prop:newton0*} for each $I \not\in \bar \scrI$, where $\bar \scrI$ is the closure of $\scrI$ under the operation of taking subsets (see \Cref{par:newton0*}).
\end{defnlist}

\begin{proprop} \label{prop:newton1*}
Let $\mscrT :=  \{I \subseteq [n]: I \not\in \bar \scrI,\ I \neq \emptyset,\ |\ti| = |I| - 1\}$, where $\ti$ is as in \Cref{prop:newton0*}.  Under the assumptions in \Cref{assumptions:newton1*}, %$f_n$ does {\em not} identically vanish on any irreducible component of $C$, and
\begin{align}
\begin{split}
\ord_z(f|_C)
    &\geq \sum_{I \in \mscrT}
            \multzerostar{\nd(f) \cap \rr^I, \nd(f_{j_1}) \cap \rr^I}{\nd(f_{j_{|I|-1}}) \cap \rr^I} \\
    &\qquad \qquad \quad
			\times
			\multzero{\pi_{[n]\setminus I}(\nd(f_{j'_1}))}{\pi_{[n]\setminus I}(\nd(f_{j'_{n-|I|}}))}
%\sum_{\nu \in \scrV'_0} \nu(f) \mv'_\nu(\In_\nu(\nd(f_1)), \ldots, \In_\nu(\nd(f_{n-1})))
\end{split}
\label{eq:newton1*}
\end{align}
%where $\scrV'_0$ and $\mv'_\nu$ are defined as in \Cref{notation:mult*}.
where for each $I \in \mscrT$, $j_1, \ldots, j_{|I|-1}$ are elements of $\ti$, and $j'_1, \ldots, j'_{n-|I|}$ are elements of $[n-1]\setminus \ti$, and $\multzerostar{\cdot}{\cdot}$ is defined as in \Cref{notation:mult*}. Moreover, the above relation is satisfied with an equality if and only if $f_1|_{K^I}, \ldots, f_{n-1}|_{K^I}, f|_{K^I}$ are $*$-non-degenerate at the origin for every $I \in \mscrT$.

%Also assume $C = \sum_j m_jC_j$, is a curve, where $C_j$ are the irreducible components of $C$ and $m_j$ are the corresponding multiplicities. Then $f_n$ does {\em not} identically vanish on any $C_j$, and
%\begin{align}
%\sum_j m_j \ord_z(f_n|_{C_j}) = \sum_{\nu \in \scrV'_0} \nu(f_n) \mv'_\nu(\In_\nu(\nd(f_1)), \ldots, \In_\nu(\nd(f_{n-1})))
%\label{eq:newton1*}
%\end{align}
%where $\scrV'_0$ and $\mv'_\nu$ are defined as in \Cref{notation:mult*}.
\end{proprop}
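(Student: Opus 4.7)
The plan is to follow the branch-counting strategy used in the proof of \Cref{thm:mult0}. By \Cref{prop:newton0*} combined with the non-degeneracy hypothesis of \Cref{assumptions:newton1*}, $V(f_1, \ldots, f_{n-1})$ is either empty or purely one-dimensional outside $\bigcup_{I \in \bar\scrI} K^I$, and its branches through $z$ coincide, with multiplicity, with the branches of $C$ at $z$. To each such branch $B$ I would assign its \emph{minimal coordinate hull}, i.e.\ the smallest $I \subseteq [n]$ such that $B \subseteq K^I$ as a germ at $z$; call this $I_B$. Minimality together with the $(*, |\ti|-1)$-non-degeneracy gives (with $I = I_B$) that $|\ti| \geq |I_B| - 1$, and the Bernstein-theorem step inside the proof of \Cref{prop:newton0*} rules out $|\ti| \geq |I_B|$ (otherwise the strict transform of $B$ would miss the entire exceptional divisor of any toric blow-up, contradicting the minimality of $I_B$), so $I_B \in \mscrT$.

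For each fixed $I \in \mscrT$ I would compute $\sum_{I_B = I} \ord_B(f)$ via a toric blow-up of $K^I$ at $z$ whose fan refines all the Newton diagrams $\nd(f|_{K^I})$ and $\nd(f_j|_{K^I})$ for $j \in \ti$. The exceptional divisor decomposes into toric varieties $X_\scrQ$ indexed by facets $\scrQ$ with primitive inner normal $\nu$; Bernstein's theorem on the dense $(|I|-1)$-dimensional torus of $X_\scrQ$ applied to the $|I|-1$ initial forms $\In_\nu(f_j|_{K^I})$ ($j \in \ti$) produces $\mv'_\nu(\In_\nu(\nd(f_{j_1}|_{K^I})), \ldots, \In_\nu(\nd(f_{j_{|I|-1}}|_{K^I})))$ isolated roots, each the center on the blow-up of a distinct branch $B$ with $I_B = I$. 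Along any such branch $\ord_B(f) \geq \min_{\nd(f|_{K^I})}(\nu)$, with equality exactly when $\In_{\nd(f|_{K^I}),\nu}(f|_{K^I})$ is nonzero at the corresponding torus point; summing over all primitive $\nu$ reassembles the factor $\multzerostar{\nd(f) \cap \rr^I, \nd(f_{j_1}) \cap \rr^I}{\nd(f_{j_{|I|-1}}) \cap \rr^I}$.

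To incorporate the $n-|I|$ equations $f_{j'}$ with $j' \in [n-1] \setminus \ti$, which vanish identically on $K^I$, I would pass to a generic transverse slice: these equations cut out a zero-dimensional scheme in the complementary coordinates indexed by $[n] \setminus I$, and \Cref{thm:mult0} applied to the projected Newton diagrams $\pi_{[n]\setminus I}(\nd(f_{j'_k}))$ contributes exactly $\multzero{\pi_{[n]\setminus I}(\nd(f_{j'_1}))}{\pi_{[n]\setminus I}(\nd(f_{j'_{n-|I|}}))}$. Multiplying the face and transverse factors and summing over $I \in \mscrT$ yields \eqref{eq:newton1*}, with equality holding if and only if the extra non-vanishing of $\In_\nu(f|_{K^I})$ on the common torus zero locus of the $\In_\nu(f_j|_{K^I})$ holds for every $I \in \mscrT$ and every primitive $\nu$, which is exactly the $*$-non-degeneracy of $f_1|_{K^I}, \ldots, f_{n-1}|_{K^I}, f|_{K^I}$ at the origin. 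The main obstacle will be the bookkeeping ensuring that each branch is captured in exactly one $(I, \scrQ)$-stratum without leakage into some smaller $I' \subsetneq I$; as in the proof of \Cref{thm:mult0} this is handled by taking a common refinement of all the toric fans involved together with the Zariski openness of the non-degeneracy conditions.
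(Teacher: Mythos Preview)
Your proposal is correct and follows essentially the same route the paper intends: the paper's own proof is a one-line pointer (``follows from the arguments of the proof of \Cref{prop:newton0*} and standard toric geometry arguments, see e.g.\ \cite[Corollary VII.27]{howmanyzeroes}''), and what you have written is precisely the expected unpacking of that pointer --- assigning to each branch its minimal coordinate hull $I_B \in \mscrT$ as in the proof of \Cref{prop:newton0*}, then using a toric blow-up of $K^I$ and Bernstein's theorem to count the branches meeting each exceptional torus orbit and to read off $\ord_B(f)$, with the transverse factor coming from the scheme multiplicity contributed by the $f_{j'}$ that vanish on $K^I$ (exactly as in the product decomposition of \Cref{thm:mult0}). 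Your identification of the equality condition with $*$-non-degeneracy is also the intended one.
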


\begin{proof}
This follows from the arguments of the proof of \Cref{prop:newton0*} and standard toric geometry arguments (see e.g.\ \cite[Corollary VII.27]{howmanyzeroes}).
\end{proof}

\subsubsection{Remarks on the applicability of \Cref{prop:newton1*}} \label{sec:newton1*:nonapp}
\begin{enumerate}
\item The assumptions in \Cref{assumptions:newton1*} in particular implies that on the set-theoretic level there is a decomposition near $z$ of the form $V(f_1, \ldots, f_{n-1}) = C \cup W$ with $W \subseteq \bigcup_i V(x_i)$. We do not know if this is possible to achieve for an arbitrary $C$, since this puts non-trivial constraints on the singularities of $W$ (e.g.\ the local embedding dimension of each irreducible component of $W$ is $\leq n -1$).

\item The following example shows that assumption \ref{assumption:newton1*:non-degen0} in \Cref{assumptions:newton1*} is necessary: consider
    \begin{align*}
    f_1 &= (x-y+z)(x+y-z) \\
    f_2 &= (x-y-2z) \\
    f_3 &= x^2 + 3y^2 + 5z^2
    \end{align*}
    over a field of characteristic zero or greater than $5$. Then $V(f_1, f_2)$ is a curve with two components: $C_1$ with parametrization $t \mapsto (t,t,0)$, and $C_2$ with parametrization $t \mapsto (-3t, t, -2t)$. Let $z$ be the origin, $C = C_2$, and $f = f_3$. Then $\ord_z(f_3|_C) = 2$, but the bound on the right hand side of \eqref{eq:newton1*} (applied with $\scrI = \{\{1,2\}\}$) is
    \begin{align*}
    &\multzerostaronly{\nd(f_3), \nd(f_1), \nd(f_2)} \\
        &\qquad = \min_{\nd(f_3)}(\nu) \times \mv'_\nu(\In_\nu(\nd(f_2)), \In_\nu(\nd(f_3))) \quad (\text{where}\ \nu := (1, 1, 1)) \\
        &\qquad
        = 2 \times 2
        = 4 > 2
        = \ord_z(f_3|_C)
    \end{align*}
    %where $\nu$ is the weighted order with weights $(1,1,1)$ for $(x,y,z)$.
\end{enumerate}

\begin{proexample} \label{ex:3-1-detailed:p2}
We continue with \Cref{ex:3-1-detailed:p0,ex:3-1-detailed:p1}. Recall from the end of \Cref{ex:3-1-detailed:p1} that it remains to compute $\ord_P(F_3|_C)$ in four cases, where $C$ is a (possibly non-reduced) curve such that $F_1 \cap F_2 = mZ + C$, and $Z = V(x,y)$ is the $z$-axis. We will use \Cref{prop:newton1*} for this computation.

\def\shiftone{7.5}
\def\colorzero{blue}
\def\colorone{red}
\def\colortwo{yellow}
\def\colorfour{green}
\def\opazero{0.5}
\def\viewx{75}
\def\viewy{30}
\def\tx{1}
\def\ty{-1}

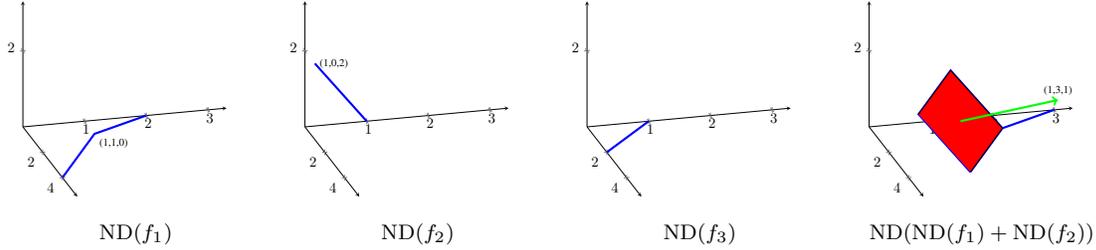
\begin{figure}[h]
\begin{center}
\begin{tikzpicture}[scale=0.5]
\pgfplotsset{every axis title/.append style={at={(0,-0.2)}}, view={\viewx}{\viewy}, axis lines=middle, enlargelimits={upper}}

\begin{scope}
\begin{axis}[%x=1cm, y=2cm, z=2cm, zmin=0,
        xmin=0,
        xmax=5,
        ymin=0,
        ymax=3,
        zmin=0,
        zmax=3,
]%[title=$\Gamma_1$]
	%\addplot3[fill=\colorone,opacity=\opazero] coordinates{(4,0,0) (1,1,0) (0,2,0)};
    \addplot3 [\colorzero, ultra thick] coordinates{(4,0,0) (1,1,0) (0,2,0)};
	%\addplot3 [ultra thick, \colorfour, ->] coordinates{(0.3,0.3,0.4) (0.8, 0.8, 0.9)};
	\draw (axis cs:1,1,0) node [below right] {\picfontsize (1,1,0)};
	%\draw (axis cs:0,0,0) node {$\Gamma_1$};
\end{axis}
\node (A) at (\tx, \ty){};
\node (B) [right of=A] {\picfontsize $\nd(f_1)$};
\end{scope}

\begin{scope}[shift={(\shiftone,0)}]
\begin{axis}[
        %x=1cm,
        %y=1cm,
        %z=1cm,
        xmin=0,
        xmax=5,
        ymin=0,
        ymax=3,
        zmin=0,
        zmax=3,
        %xlabel=$x$,
        %ylabel=$y$,
        %axis lines=middle,
        %clip=true
]%[title=$\Gamma_1$]
    \addplot3 [\colorzero, ultra thick] coordinates{(1,0,2) (0,1,0)};
	%\addplot3[fill=\colorone,opacity=\opazero] coordinates{(3,0,0) (0,3,0) (0,1,2) (1,0,2) (3,0,0)};
	%\addplot3 [ultra thick, \colorfour, ->] coordinates{(0.9,0.9,1.2) (2.4, 2.4, 2.7)};
	\draw (axis cs:1,0,2) node [right] {\picfontsize (1,0,2)};
\end{axis}
\node (A) at (\tx, \ty){};
\node (B) [right of=A] {\picfontsize $\nd(f_2)$};
\end{scope}

\begin{scope}[shift={(2*\shiftone,0)}]
\begin{axis}[
        %x=1cm,
        %y=1cm,
        %z=1cm,
        xmin=0,
        xmax=5,
        ymin=0,
        ymax=3,
        zmin=0,
        zmax=3,
        %xlabel=$x$,
        %ylabel=$y$,
        %axis lines=middle,
        %clip=true
]%[title=$\Gamma_1$]
    \addplot3 [\colorzero, ultra thick] coordinates{(2,0,0) (0,1,0)};
	%\addplot3[fill=\colorone,opacity=\opazero] coordinates{(3,0,0) (0,3,0) (0,1,2) (1,0,2) (3,0,0)};
	%\addplot3 [ultra thick, \colorfour, ->] coordinates{(0.9,0.9,1.2) (2.4, 2.4, 2.7)};
	%\draw (axis cs:1,0,2) node [right] {\picfontsize (1,0,2)};
\end{axis}
\node (A) at (\tx, \ty){};
\node (B) [right of=A] {\picfontsize $\nd(f_3)$};
\end{scope}

\begin{scope}[shift={(3*\shiftone,0)}]
\begin{axis}[
        xmin=0,
        xmax=5,
        ymin=0,
        ymax=3,
        zmin=0,
        zmax=3,
]
    % coordinates{(4,0,0) (1,1,0) (0,2,0)};
    % coordinates{(1,0,2) (0,1,0)};
    \addplot3 [\colorzero, ultra thick] coordinates{(4,1,0) (1,2,0) (2,1,2) (5,0,2) (4,1,0)};
	\addplot3[fill=\colorone,opacity=\opazero] coordinates{(4,1,0) (1,2,0) (2,1,2) (5,0,2)};
    \addplot3 [\colorzero, ultra thick] coordinates{(1,2,0) (0,3,0)};
    \addplot3 [ultra thick, \colorfour, ->] coordinates{(3,1,1) (3.5, 2.5, 1.5)};
	\draw (axis cs:3.5, 2.5, 1.5) node [above] {\picfontsize (1,3,1)};
\end{axis}
\node (A) at (\tx, \ty){};
\node (B) [right of=A] {\picfontsize $\nd(\nd(f_1) + \nd(f_2))$};
\end{scope}
%(-3, 1, 0) x (-1, 1, -2) = (-2, -6, -2) = (1, 3, 1)
%i j k
%-3 1 0
%-1 1 -2
%(-1, 1, 0) x (-1, 1, -2) = (-2, -2, 0) = (1, 1,0)
%i j k
%-1 1 0
%-1 1 -2
%x^2yz^2, xy^2z^2, y^3 xy^2 =
%\begin{scope}[shift={(2*\shiftone,0)}]
%\begin{axis}[
%        xmin=0,
%        xmax=5,
%        ymin=0,
%        ymax=3,
%        zmin=0,
%        zmax=3,
%]
%    \addplot3 [\colorzero, ultra thick] coordinates{(3,1,0) (0,3,0) (0,0,2) (3,1,0)};
%	\addplot3[fill=\colorone,opacity=\opazero] coordinates{(3,1,0) (0,3,0) (0,0,2)};
%	%\addplot3 [ultra thick, \colorfour, ->] coordinates{(3,1,1) (3.5, 2.5, 1.5)};
%	%\draw (axis cs:3.5, 2.5, 1.5) node [above] {\picfontsize (1,3,1)};
%\end{axis}
%\node (A) at (\tx, \ty){};
%\node (B) [right of=A] {\picfontsize $\nd(f_{3,y})$};
%\end{scope}
\end{tikzpicture}
\end{center}
\caption{
Newton diagrams of the $f_i$% when $\character(\kk) \not\in \{2,5\}$
} \label{fig:3-1-detailed:p2:v0:0}
\end{figure}

\paragraph{Case \ref{3-1-detailed:p2:v0:0}} %$P = (0,0,z)$ such that $z(4+z^2) = 0$. First consider the case that
$z = 0$, i.e.\ $P = P_0 := (0,0,0)$. Consider the usual coordinate chart $U \cong \kk^3$ with coordinates $(x,y,z)$. Since $V(f_1, f_2)$ is set-theoretically $Z \cup C$ and $Z \cap U = K^I$ with $I = \{3\}$ (in the notation of in the notation of \Cref{par:newton0*}), we can try to use \Cref{prop:newton1*} to compute $\ord_{P_0}(f_3|_C)$. %In the case that $\character(\kk)$ is not $2$ or $5$,
\Cref{fig:3-1-detailed:p2:v0:0} shows the Newton diagrams of the $f_i$ with respect to $(x,y,z)$-coordinates. It is easy to see that $f_1, f_2$ satisfy condition \ref{assumption:newton1*:non-degen0} from \Cref{assumptions:newton1*}, so that \Cref{prop:newton1*} applies. Since in this case the set $\mscrT$ from \Cref{prop:newton1*} consists of only one set, namely $I := \{1, 2, 3\}$, it follows from inequality \eqref{eq:newton1*} (and the last panel of \Cref{fig:3-1-detailed:p2:v0:0}) that
\begin{align*}
\ord_{P_0}(f_3|_C)
    & \geq \multzerostaronly{\nd(f_3), \nd(f_1), \nd(f_2)} \\
    &= \min_{\nd(f_3)}(\nu) \times
		\mv'_{\nu}(\In_{\nu}(\nd(f_1)), \In_{\nu}(\nd(f_2)))\quad (\text{where}\ \nu := (1, 3, 1)) \\
    &= 2 \times 2 = 4
\end{align*}
It is straightforward to check that the $f_i$ satisfy the non-degeneracy condition of \Cref{prop:newton1*}, so that
\begin{align}
\ord_{P_0}(f_3|_C) &= 4 \label{eq:3-1-detailed:p2:v0:0}
\end{align}

\def\shiftone{7.5}
\def\colorzero{blue}
\def\colorone{red}
\def\colortwo{yellow}
\def\colorfour{green}
\def\opazero{0.5}
\def\viewx{75}
\def\viewy{30}
\def\tx{1}
\def\ty{-1}

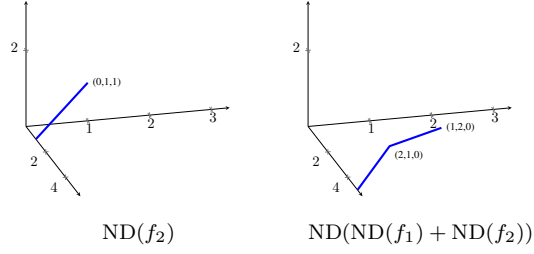
\begin{figure}[h]
\begin{center}
\begin{tikzpicture}[scale=0.5]
\pgfplotsset{every axis title/.append style={at={(0,-0.2)}}, view={\viewx}{\viewy}, axis lines=middle, enlargelimits={upper}}

\begin{scope}
\begin{axis}[
        %x=1cm,
        %y=1cm,
        %z=1cm,
        xmin=0,
        xmax=5,
        ymin=0,
        ymax=3,
        zmin=0,
        zmax=3,
        %xlabel=$x$,
        %ylabel=$y$,
        %axis lines=middle,
        %clip=true
]%[title=$\Gamma_1$]
    \addplot3 [\colorzero, ultra thick] coordinates{(1,0,0) (0,1,1)};
	%\addplot3[fill=\colorone,opacity=\opazero] coordinates{(3,0,0) (0,3,0) (0,1,2) (1,0,2) (3,0,0)};
	%\addplot3 [ultra thick, \colorfour, ->] coordinates{(0.9,0.9,1.2) (2.4, 2.4, 2.7)};
	\draw (axis cs:0,1,1) node [right] {\picfontsize (0,1,1)};
\end{axis}
\node (A) at (\tx, \ty){};
\node (B) [right of=A] {\picfontsize $\nd(f_2)$};
\end{scope}

\begin{scope}[shift={(\shiftone,0)}]
\begin{axis}[
        xmin=0,
        xmax=5,
        ymin=0,
        ymax=3,
        zmin=0,
        zmax=3,
]
    % coordinates{(4,0,0) (1,1,0) (0,2,0)};
    % coordinates{(1,0,2) (0,1,0)};
    \addplot3 [\colorzero, ultra thick] coordinates{(5,0,0) (2,1,0) (1,2,0)};
	%\addplot3 [ultra thick, \colorfour, ->] coordinates{(0.3,0.3,0.4) (0.8, 0.8, 0.9)};
	\draw (axis cs:2,1,0) node [below right] {\picfontsize (2,1,0)};
    \draw (axis cs:1,2,0) node [right] {\picfontsize (1,2,0)};
\end{axis}
\node (A) at (\tx, \ty){};
\node (B) [right of=A] {\picfontsize $\nd(\nd(f_1) + \nd(f_2))$};
\end{scope}
%(-3, 1, 0) x (-1, 1, 1) = (1, 3, -2)
%i j k
%-3 1 0
%-1 1 1
%(-1, 1, 0) x (-1, 1, 1) = (1, 1, 0)
%i j k
%-1 1 0
%-1 1 1
%x^2yz^2, xy^2z^2, y^3 xy^2 =
%\begin{scope}[shift={(2*\shiftone,0)}]
%\begin{axis}[
%        xmin=0,
%        xmax=5,
%        ymin=0,
%        ymax=3,
%        zmin=0,
%        zmax=3,
%]
%    \addplot3 [\colorzero, ultra thick] coordinates{(3,1,0) (0,3,0) (0,0,2) (3,1,0)};
%	\addplot3[fill=\colorone,opacity=\opazero] coordinates{(3,1,0) (0,3,0) (0,0,2)};
%	%\addplot3 [ultra thick, \colorfour, ->] coordinates{(3,1,1) (3.5, 2.5, 1.5)};
%	%\draw (axis cs:3.5, 2.5, 1.5) node [above] {\picfontsize (1,3,1)};
%\end{axis}
%\node (A) at (\tx, \ty){};
%\node (B) [right of=A] {\picfontsize $\nd(f_{3,y})$};
%\end{scope}
\end{tikzpicture}
\end{center}
\caption{
Newton diagrams in $(x,y,\tilde z)$-coordinates% when $\character(\kk) \neq 2$
} \label{fig:3-1-detailed:p2:v0:2}
\end{figure}

\paragraph{Case \ref{3-1-detailed:p2:v0:2}} $z^2 + 4 = 0$, i.e.\ $z = \pm 2\sqrt{-1}$. We start with the case that $z = 2\sqrt{-1}$,
%since the other case is similar. So we consider
i.e.\ $P = (0,0,2\sqrt{-1})$. Consider coordinates $(x, y, \tilde z)$, where $\tilde z:= z - 2\sqrt{-1})$, on $U := \kk^3$. %The Newton diagram of $f_1$ with respect to these coordinates is the same as the one with respect to $(x,y,z)$-coordinates (the first panel of \Cref{fig:3-1-detailed:p2:v0:0}), and if the characteristic of $\kk$ is zero or sufficiently large, the same is true for the Newton diagram of $f_3$ (the third panel of \Cref{fig:3-1-detailed:p2:v0:0}).
The Newton diagram of $f_1$ and $f_3$ with respect to these coordinates are the same as the ones with respect to $(x,y,z)$-coordinates (the first and the third panels of \Cref{fig:3-1-detailed:p2:v0:0}). \Cref{fig:3-1-detailed:p2:v0:2} shows the Newton diagrams of $f_2$ and of the sum of the Newton diagrams of $f_1$ and $f_2$. As in the previous case, it is easy to see that \Cref{prop:newton1*} applies with $\scrI := \{\{3\}\}$, and its non-degeneracy condition is also satisfied. Moreover, since the Newton diagram of $\nd(f_1) + \nd(f_2)$ is one-dimensional, it follows that the right hand side of the inequality \eqref{eq:newton1*} is zero, so that $\ord_{P}(f_3|_C) = 0$. Applying the same arguments for $P = (0,0,-\sqrt{-1})$, it follows that
\begin{align}
\ord_{(0,0,\pm 2\sqrt{-1})}(f_3|_C) &= 0 \label{eq:3-1-detailed:p2:v0:2}
\end{align}
In other words, $C$ does {\em not} intersect the $z$-axis at $(0, 0, \pm 2 \sqrt{-1})$. \\ % in the case that $\character(\kk) \neq 2$. \\

\def\shiftone{7.5}
\def\colorzero{blue}
\def\colorone{red}
\def\colortwo{yellow}
\def\colorfour{green}
\def\opazero{0.5}
\def\viewx{75}
\def\viewy{30}
\def\tx{1}
\def\ty{-1}

\begin{figure}[h]
\begin{center}
\begin{tikzpicture}[scale=0.5]
\pgfplotsset{every axis title/.append style={at={(0,-0.2)}}, view={\viewx}{\viewy}, axis lines=middle, enlargelimits={upper}}

\begin{scope}
\begin{axis}[%x=1cm, y=2cm, z=2cm, zmin=0,
        xmin=0,
        xmax=5,
        ymin=0,
        ymax=3,
        zmin=0,
        zmax=3,
]%[title=$\Gamma_1$]
	%\addplot3[fill=\colorone,opacity=\opazero] coordinates{(4,0,0) (1,1,0) (0,2,0)};
    \addplot3 [\colorzero, ultra thick] coordinates{(4,0,0) (1,1,0) (0,3,0)};
	%\addplot3 [ultra thick, \colorfour, ->] coordinates{(0.3,0.3,0.4) (0.8, 0.8, 0.9)};
	\draw (axis cs:1,1,0) node [below right] {\picfontsize (1,1,0)};
	%\draw (axis cs:0,0,0) node {$\Gamma_1$};
\end{axis}
\node (A) at (\tx, \ty){};
\node (B) [right of=A] {\picfontsize $\nd(f_1)$};
\end{scope}

\begin{scope}[shift={(\shiftone,0)}]
\begin{axis}[
        %x=1cm,
        %y=1cm,
        %z=1cm,
        xmin=0,
        xmax=5,
        ymin=0,
        ymax=3,
        zmin=0,
        zmax=3,
        %xlabel=$x$,
        %ylabel=$y$,
        %axis lines=middle,
        %clip=true
]%[title=$\Gamma_1$]
    \addplot3 [\colorzero, ultra thick] coordinates{(1,0,0) (0,2,0) (0,1,1) (1,0,0)};
	\addplot3[fill=\colorone,opacity=\opazero] coordinates{(1,0,0) (0,2,0) (0,1,1)};
	\addplot3 [ultra thick, \colorfour, ->] coordinates{(0.33,1,0.33) (1.33, 1.5, 0.83)};
	\draw (axis cs:1.33, 1.5, 0.83) node [above] {\picfontsize (2,1,1)};
\end{axis}
\node (A) at (\tx, \ty){};
\node (B) [right of=A] {\picfontsize $\nd(f_2)$};
\end{scope}

\begin{scope}[shift={(2*\shiftone,0)}]
\begin{axis}[
        %x=1cm,
        %y=1cm,
        %z=1cm,
        xmin=0,
        xmax=5,
        ymin=0,
        ymax=3,
        zmin=0,
        zmax=3,
        %xlabel=$x$,
        %ylabel=$y$,
        %axis lines=middle,
        %clip=true
]%[title=$\Gamma_1$]
    \addplot3 [\colorzero, ultra thick] coordinates{(2,0,0) (0,1,0)};
	%\addplot3[fill=\colorone,opacity=\opazero] coordinates{(3,0,0) (0,3,0) (0,1,2) (1,0,2) (3,0,0)};
	%\addplot3 [ultra thick, \colorfour, ->] coordinates{(0.9,0.9,1.2) (2.4, 2.4, 2.7)};
	%\draw (axis cs:1,0,2) node [right] {\picfontsize (1,0,2)};
\end{axis}
\node (A) at (\tx, \ty){};
\node (B) [right of=A] {\picfontsize $\nd(f_3)$};
\end{scope}

\begin{scope}[shift={(3*\shiftone,0)}]
\begin{axis}[
        xmin=0,
        xmax=5,
        ymin=0,
        ymax=6,
        zmin=0,
        zmax=3,
]
    % coordinates{(4,0,0) (1,1,0) (0,2,0)};
    % coordinates{(1,0,2) (0,1,0)};
    \addplot3 [\colorzero, ultra thick] coordinates{(2,1,0) (0,5,0) (0,4,1) (1,2,1) (2,1,0)};
	\addplot3[fill=\colorone,opacity=\opazero] coordinates{(2,1,0) (0,5,0) (0,4,1) (1,2,1)};
    \addplot3 [\colorzero, ultra thick] coordinates{(2,1,0) (5,0,0)};
    \addplot3 [ultra thick, \colorfour, ->] coordinates{(0.75,3,0.5) (2.75, 4, 1.5)};
	\draw (axis cs:2.75, 4, 1.5) node [above] {\picfontsize (2,1,1)};
\end{axis}
\node (A) at (\tx, \ty){};
\node (B) [right of=A] {\picfontsize $\nd(\nd(f_1) + \nd(f_2))$};
\end{scope}
%(-3, 1, 0) x (-1, 1, -2) = (-2, -6, -2) = (1, 3, 1)
%i j k
%-3 1 0
%-1 1 -2
%(-1, 1, 0) x (-1, 1, -2) = (-2, -2, 0) = (1, 1,0)
%i j k
%-1 1 0
%-1 1 -2
%x^2yz^2, xy^2z^2, y^3 xy^2 =
%\begin{scope}[shift={(2*\shiftone,0)}]
%\begin{axis}[
%        xmin=0,
%        xmax=5,
%        ymin=0,
%        ymax=3,
%        zmin=0,
%        zmax=3,
%]
%    \addplot3 [\colorzero, ultra thick] coordinates{(3,1,0) (0,3,0) (0,0,2) (3,1,0)};
%	\addplot3[fill=\colorone,opacity=\opazero] coordinates{(3,1,0) (0,3,0) (0,0,2)};
%	%\addplot3 [ultra thick, \colorfour, ->] coordinates{(3,1,1) (3.5, 2.5, 1.5)};
%	%\draw (axis cs:3.5, 2.5, 1.5) node [above] {\picfontsize (1,3,1)};
%\end{axis}
%\node (A) at (\tx, \ty){};
%\node (B) [right of=A] {\picfontsize $\nd(f_{3,y})$};
%\end{scope}
\end{tikzpicture}
\end{center}
\caption{Newton diagrams in $(x',y,z')$-coordinates} \label{fig:3-1-detailed:p2:degen}
\end{figure}
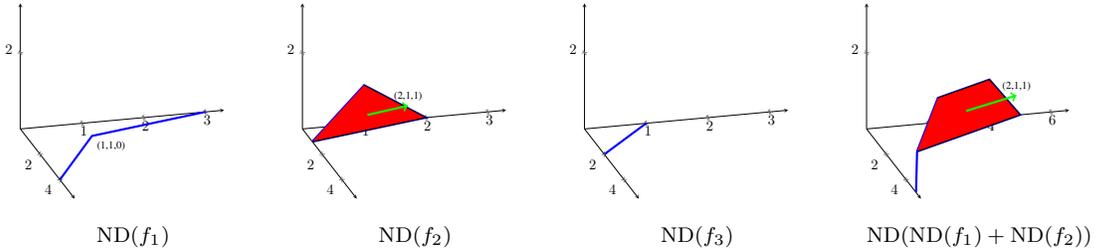

\paragraph{Case \ref{3-1-detailed:p2:degen}} $z = \pm 2$. First we treat the case $z = 2$, i.e.\ $P = (0,0,2)$. With respect to the coordinates $(x, y, z')$, where $z' := z - 2$, we have:
\begin{align*}
f_1 &%= x^4 + xy + y^2(1-y+7yz)
    = x^4 + xy + y^2(1 + 13y + 7yz') \\
f_2 &%= 2x(z' + 2)^2 + y(1-y)(4+(z' + 2)^2)
    %= 8(x+y) + 8xz' + 2xz'^2 - 8y^2 + y(1-y)(4z' + z'^2)
    = 8(x+y) + 2xz'(4 + z'^2) - 8y^2 + yz'(1-y)(4 + z') \\
f_3 &%= x^2(5+8xz) + y(1-y+3z^3)
    = x^2(5+16x+8xz') + y(25-y+3(12z'+6z'^2 + z'^3))
\end{align*}
Note that $f_1, f_2$ violate condition \ref{assumption:newton1*:non-degen0} from \Cref{assumptions:newton1*} since with $\nu = (1,1,1)$, the initial forms of $f_1$ and $f_2$ are respectively $xy + y^2$ and $8(x+y)$, which have common solutions with nonzero coordinates. However, if we change coordinates to $(x',y,z')$ with $x' := x+y$, then
\begin{align*}
f_1 &%= x^4 + xy + y^2(1-y+7yz)
    %= (x'-y)^4 + (x'-y)y + y^2(1 + 13y + 7yz')
    %= x'^4 + x'y(1 - 4x'^2+ 6x'y' - 4y^2) + y^3(13 + y + 7z')
    = x'^4 + x'y + 13y^3 + \cdots \\
f_2 &%= 2x(z' + 2)^2 + y(1-y)(4+(z' + 2)^2)
    %= 8(x+y) + 8xz' + 2xz'^2 - 8y^2 + y(1-y)(4z' + z'^2)
    %= 8(x+y) + 2xz'(4 + z'^2) - 8y^2 + yz'(1-y)(4 + z')
    %= 8x' + 2(x'-y)z'(4+z'^2) - 8y^2 + yz'(1-y)(4+z') \\
    = 8x'-4yz'-8y^2 + \cdots \\
f_3 &%= x^2(5+8xz) + y(1-y+3z^3)
    %= x^2(5+16x+8xz') + y(25-y+3(12z'+6z'^2 + z'^3))
    = 5x'^2 + 25y + \cdots
\end{align*}
Then it is easy to check that with respect to these coordinates $f_1,f_2,C$ satisfy the conditions of \Cref{assumptions:newton1*} with $\scrI := \{\{3\}\}$, and in addition, the non-degeneracy condition of \Cref{prop:newton1*} is also satisfied. It follows that
\begin{align*}
\ord_{P}(f_3|_C)
    &= \multzerostaronly{\nd(f_3), \nd(f_1), \nd(f_2)} \\
    &= \min_{\nd(f_3)}(\nu) \times
		\mv'_{\nu}(\In_{\nu}(\nd(f_1)), \In_{\nu}(\nd(f_2)))\quad (\text{where}\ \nu := (2,1,1)) \\
    &= 1 \times 1 = 1
\end{align*}
Similarly $\ord_{(0,0,-2)}(f_3|_C) = 1$ as well. It follows that
\begin{align}
\ord_{(0,0,2)}(f_3|_C) + \ord_{(0,0,-2)}(f_3|_C) &= 2 \label{eq:3-1-detailed:p2:degen}
\end{align}

\def\shiftone{7.5}
\def\colorzero{blue}
\def\colorone{red}
\def\colortwo{yellow}
\def\colorfour{green}
\def\colorthree{gray}
\def\opazero{0.5}
\def\viewx{75}
\def\viewy{30}
\def\tx{1}
\def\ty{-1}

\begin{figure}[h]
\begin{center}
\begin{tikzpicture}[scale=0.5]
\pgfplotsset{every axis title/.append style={at={(0,-0.2)}}, view={\viewx}{\viewy}, axis lines=middle, enlargelimits={upper}}

\begin{scope}
\begin{axis}[%x=1cm, y=2cm, z=2cm, zmin=0,
        xmin=0,
        xmax=5,
        ymin=0,
        ymax=3,
        zmin=0,
        zmax=3,
]%[title=$\Gamma_1$]
	\addplot3[fill=\colorone,opacity=\opazero] coordinates{(4,0,0) (0,3,0) (1,1,2)};
    \addplot3 [\colorzero, ultra thick] coordinates{(4,0,0) (0,3,0) (1,1,2) (4,0,0)};

    \addplot3[fill=\colortwo,opacity=\opazero] coordinates{(0,3,0) (1,1,2) (0,2,2)};
    \addplot3[\colorzero, ultra thick] coordinates{(0,3,0) (1,1,2) (0,2,2) (0,3,0)};

    \addplot3 [ultra thick, \colorfour, ->] coordinates{(1.66,1.33,0.66) (2.86, 2.93, 1.66)};
	\draw (axis cs:2.86, 2.93, 1.66) node [below right] {\picfontsize (6,8,5)};

    \addplot3 [ultra thick, \colorfour, ->] coordinates{(0.33,2,1.33) (1.33, 3, 1.83)};
	\draw (axis cs:1.33, 3, 1.83) node [above right] {\picfontsize (2,2,1)};

	%\draw (axis cs:0,0,0) node {$\Gamma_1$};
\end{axis}
\node (A) at (\tx, \ty){};
\node (B) [right of=A] {\picfontsize $\nd(\tilde f_1)$};
\end{scope}

\begin{scope}[shift={(\shiftone,0)}]
\begin{axis}[
        %x=1cm,
        %y=1cm,
        %z=1cm,
        xmin=0,
        xmax=5,
        ymin=0,
        ymax=3,
        zmin=0,
        zmax=3,
        %xlabel=$x$,
        %ylabel=$y$,
        %axis lines=middle,
        %clip=true
]%[title=$\Gamma_1$]
    \addplot3 [\colorzero, ultra thick] coordinates{(1,0,1) (0,2,0) (0,1,1) (1,0,1)};
	\addplot3[fill=\colorone,opacity=\opazero] coordinates{(1,0,1) (0,2,0) (0,1,1)};
	\addplot3 [ultra thick, \colorfour, ->] coordinates{(0.33,1,0.66) (0.83, 1.5, 1.16)};
	\draw (axis cs:0.83, 1.5, 1.16) node [above] {\picfontsize (1,1,1)};
\end{axis}
\node (A) at (\tx, \ty){};
\node (B) [right of=A] {\picfontsize $\nd(\tilde f_2)$};
\end{scope}

\begin{scope}[shift={(2*\shiftone,0)}]
\begin{axis}[
        %x=1cm,
        %y=1cm,
        %z=1cm,
        xmin=0,
        xmax=5,
        ymin=0,
        ymax=3,
        zmin=0,
        zmax=3,
        %xlabel=$x$,
        %ylabel=$y$,
        %axis lines=middle,
        %clip=true
]%[title=$\Gamma_1$]
    \addplot3 [\colorzero, ultra thick] coordinates{(3,0,0) (0,1,0) (2,0,2) (3,0,0)};
	\addplot3[fill=\colorone,opacity=\opazero] coordinates{(3,0,0) (0,1,0) (2,0,2)};
	\addplot3 [ultra thick, \colorfour, ->] coordinates{(1.66,0.33,0.66) (2.33,2.33,1)};
	\draw (axis cs:2.33, 2.33, 1) node [above] {\picfontsize (2,6,1)};
\end{axis}
\node (A) at (\tx, \ty){};
\node (B) [right of=A] {\picfontsize $\nd(\tilde f_3)$};
\end{scope}

\begin{scope}[shift={(3*\shiftone,0)}]
\begin{axis}[
        xmin=0,
        xmax=5,
        ymin=0,
        ymax=6,
        zmin=0,
        zmax=3,
]
    % coordinates{(4,0,0) (1,1,0) (0,2,0)};
    % coordinates{(1,0,2) (0,1,0)};
    \addplot3[fill=\colorone,opacity=\opazero] coordinates{(5,0,1) (1,3,1) (2,1,3)};
    \addplot3 [\colorzero, ultra thick] coordinates{(5,0,1) (1,3,1) (2,1,3) (5,0,1)};

    \addplot3[fill=\colortwo,opacity=\opazero] coordinates{(1,3,1) (2,1,3) (0,3,3) (0,4,1)};
    \addplot3[\colorzero, ultra thick] coordinates{(1,3,1) (2,1,3) (0,3,3) (0,4,1) (1,3,1)};

    \addplot3 [\colorzero, ultra thick] coordinates{(1,3,1) (0,5,0) (0,4,1) (1,3,1)};
	\addplot3[fill=\colorone,opacity=\opazero] coordinates{(1,3,1) (0,5,0) (0,4,1)};

    \addplot3[fill=\colorthree,opacity=\opazero] coordinates{(5,0,1) (1,3,1) (0,5,0) (4,2,0)};
    \addplot3 [\colorzero, ultra thick] coordinates{(5,0,1) (1,3,1) (0,5,0) (4,2,0) (5,0,1)};

    \addplot3 [ultra thick, \colorfour, ->] coordinates{(2.33,1.33,1.66) (3.53, 2.93, 2.66)};
	\draw (axis cs:3.53, 2.93, 2.66) node [right] {\picfontsize (6,8,5)};

    \addplot3 [ultra thick, \colorfour, ->] coordinates{(0.75,2.75,2) (1.75, 3.75, 2.5)};
	\draw (axis cs:1.75, 3.75, 2.5) node [right] {\picfontsize (2,2,1)};

    \addplot3 [ultra thick, \colorfour, ->] coordinates{(0.33,4,0.66) (0.83, 4.5, 1.16)};
	\draw (axis cs:0.83, 4.5, 1.16) node [right] {\picfontsize (1,1,1)};
                                                                                              % (5, 0, 1) => (3.5, 1.5, 0.5)
    \addplot3 [ultra thick, \colorfour, ->] coordinates{(3.5, 1.5, 0.5) (4.1, 2.3, 1.5)}; %{(3.75,1.25,0.75) (4.35, 2.05, 1.75)}; %{(2.5,2.5,0.5) (3.1, 3.3, 1.5)};
	\draw (axis cs:4.1, 2.3, 1.5) node [right] {\picfontsize (3,4,5)};                     % (4,  2,  0)
\end{axis}
\node (A) at (\tx, \ty){};
\node (B) [right of=A] {\picfontsize $\nd(\nd(\tilde f_1) + \nd(\tilde f_2))$};
\end{scope}
%(-3, 1, 0) x (-1, 1, -2) = (-2, -6, -2) = (1, 3, 1)
%i j k
%-3 1 0
%-1 1 -2
%(-1, 1, 0) x (-1, 1, -2) = (-2, -2, 0) = (1, 1,0)
%i j k
%-1 1 0
%-1 1 -2
%x^2yz^2, xy^2z^2, y^3 xy^2 =
%\begin{scope}[shift={(2*\shiftone,0)}]
%\begin{axis}[
%        xmin=0,
%        xmax=5,
%        ymin=0,
%        ymax=3,
%        zmin=0,
%        zmax=3,
%]
%    \addplot3 [\colorzero, ultra thick] coordinates{(3,1,0) (0,3,0) (0,0,2) (3,1,0)};
%	\addplot3[fill=\colorone,opacity=\opazero] coordinates{(3,1,0) (0,3,0) (0,0,2)};
%	%\addplot3 [ultra thick, \colorfour, ->] coordinates{(3,1,1) (3.5, 2.5, 1.5)};
%	%\draw (axis cs:3.5, 2.5, 1.5) node [above] {\picfontsize (1,3,1)};
%\end{axis}
%\node (A) at (\tx, \ty){};
%\node (B) [right of=A] {\picfontsize $\nd(f_{3,y})$};
%\end{scope}
\end{tikzpicture}
\end{center}
\caption{Newton diagrams in $(\tilde x, \tilde y, \tilde w)$-coordinates} \label{fig:3-1-detailed:p2:infty}
\end{figure}
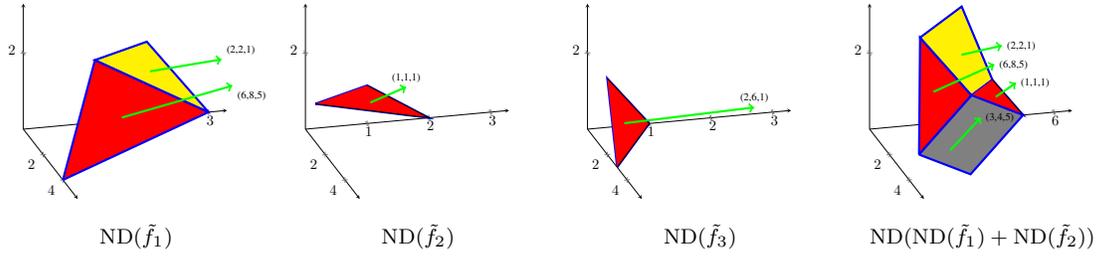

\paragraph{Case \ref{3-1-detailed:p2:infty}} It remains to consider the last case, i.e.\ $P = [0:0:1:0]$ with respect to homogeneous coordinates $[x:y:z:w]$. With respect to coordinates $(\tilde x, \tilde y, \tilde w) := (x/z, y/z, 1/z)$, $P$ is the origin, and the $f_i$ take the following form:
%
%\begin{align*}
%f_1 &= x^4 + xy + y^2(1-y+7yz) \\
%f_2 &= 2xz^2 + y(1-y)(4+z^2) \\
%f_3 &= x^2(5+8xz) + y(1-y+3z^3)
%\end{align*}

\begin{align*}
\tilde f_1 &%= x^4 + xy + y^2(1-y+7yz)
    := \tilde x^4 + \tilde x \tilde y \tilde w^2 + \tilde y^2(\tilde w^2 -\tilde y \tilde w + 7\tilde y) \\
\tilde f_2 &%= 2x(z' + 2)^2 + y(1-y)(4+(z' + 2)^2)
    := 2\tilde x \tilde w + \tilde y(\tilde w - \tilde y)(4\tilde w^2 + 1) \\
\tilde f_3 &%= x^2(5+8xz) + y(1-y+3z^3)
    := \tilde x^2(5\tilde w^2 + 8 \tilde x) + \tilde y(\tilde w^3 -\tilde y\tilde w^2 +3)
\end{align*}
It is easy to check that the conditions of \Cref{assumptions:newton1*} and the non-degeneracy condition of \Cref{prop:newton1*} are satisfied by $\tilde f_1,\tilde f_2,C$ with $\scrI := \{\{3\}\}$. It follows that
\begin{align*}
\ord_{P}(\tilde f_3|_C)
    &= \multzerostaronly{\nd(\tilde f_3), \nd(\tilde f_1), \nd(\tilde f_2)} \\
    &= \sum_{i=1}^4 \min_{\nd(\tilde f_3)}(\nu_i) \times
		\mv'_{\nu_i}(\In_{\nu_i}(\nd(\tilde f_1)), \In_{\nu_i}(\nd(\tilde f_2)))
\end{align*}
where $\nu_i$ vary over the (inner) normals to the four facets of the Newton diagram of the sum of $\nd(\tilde f_1)$ and $\tilde f_2$ (see the last panel of \Cref{fig:3-1-detailed:p2:infty}). Since the initial term of $\tilde f_2$ (respectively, $\tilde f_1$) with respect to weights $(6,8,5)$ (respectively, $(1,1,1)$) is a singleton, it follows that $\mv'_{\nu_i}(\In_{\nu_i}(\nd(\tilde f_1)), \allowbreak \In_{\nu_i}(\nd(\tilde f_2))) = 0$ for $\nu_i = (6,8,5)$ or $(1,1,1)$. Consequently,

\begin{align*}
\ord_{P}(\tilde f_3|_C)
    &= \min_{\nd(\tilde f_3)}((2,2,1)) \times
		\mv'_{(2,2,1)}(\In_{(2,2,1)}(\nd(\tilde f_1)), \In_{(2,2,1)}(\nd(\tilde f_2)))
        \\
    &\qquad
       + \min_{\nd(\tilde f_3)}((3,4,5)) \times
		\mv'_{(3,4,5)}(\In_{(3,4,5)}(\nd(\tilde f_1)), \In_{(3,4,5)}(\nd(\tilde f_2))) \\
    &= 2 \times 1 + 4 \times 1 = 6
\end{align*}
It then follows from identities \eqref{eq:3-1-detailed:0}, \eqref{eq:3-1-detailed:0y}, \eqref{eq:3-1-1}, \eqref{eq:3-1-detailed:p2:v0:0}, \eqref{eq:3-1-detailed:p2:v0:2}, \eqref{eq:3-1-detailed:p2:degen} that
\begin{align}
 \multisoonly{f_1, f_2, f_3}{\kk^3}
%    &= 64
%        - \multordonly{F_1, F_2, F_3}{Z}
%        - \sum_{P \in Z} \multordonly{F_1, F_2, F_3}{P}
%        - \multonlysub{F_1, F_2, F_3}{P_y}
%        \label{eq:3-1-detailed:0}
    &= 64
        - 8
        - 4 - 0 - 2 - 6
        - 14
    = 30
        \label{eq:3-1-detailed:answer}
\end{align}
\end{proexample}

\section{An example: the number of tangent lines to four generic spheres} \label{sec:tangent}
\subsection{}
In this section we apply the results of \Cref{sec:local,sec:newton} to compute the number of tangent lines to four generic spheres in $\kk^3$ in the case that $\character(\kk) = 0$. This number was determined to be 12 by Macdonald, Pach and Theobald \cite{macdonald-pach-theobald} by expressing the governing equations in a cleverly chosen system of coordinates so that a direct application of B\'ezout's theorem solves the problem. On the other hand, a natural setting of this counting problem is the Grassmannian of lines in $\pp^3$ (or equivalently, planes in $\kk^4$). Formulating the problem in corresponding Pl\"ucker coordinates gives $5$ quadric equations in $\pp^5$ whose common zeroes include a one-dimensional component at infinity. We explain the difference of $2^5 - 12 = 20$ between the B\'ezout bound and the correct number as the sum of ordered intersection multiplicities corresponding to the excess intersection. This section was inspired by a paragraph by Sottile and Theobald (in \cite{sottile-theobald} which solved the tangent line problem in higher dimensions) which mentions a similar explanation in terms of usual excess intersection multiplicities in a manuscript by Aluffi and Fulton.

\subsection{}
Consider the open subset $U$ of the space of lines in $\kk^3$ parameterized by
\begin{align*}
(u_1, u_2, v_1, v_2) \in \kk^4 \mapsto L_{u_1, u_2, v_1, v_2} := \{(u_1 + tv_1, u_2 + tv_2, t): t \in \kk\}
\end{align*}
The points of intersection of $L_{u_1, u_2, v_1, v_2}$ and the sphere in $\kk^3$ with center $(a_1, a_2, a_3)$ and radius $r_1$ correspond to solutions (in $t$) of the following equation:
\begin{align*}
(u_1 + tv_1 - a_1)^2 + (u_2 + tv_2 - a_2)^2 + (t - a_3)^2 = r_1^2
\end{align*}
Consequently, $L_{u_1, u_2, v_1, v_2}$ is tangent to the sphere if and only if the discriminant of the above equation is zero, i.e.
\begin{align*}
(v_1(u_1 - a_1) + v_2(u_2 - a_2) - a_3)^2 - (v_1^2 + v_2^2 + 1)((u_1 - a_1)^2 + (u_2 - a_2)^2 + a_3^2-r_1^2) = 0
\end{align*}
%A simplification yields
%\begin{align*}
%0
%&= (v_1^2(u_1-a_1)^2 + v_2^2(u_2-a_2)^2 + a_3^2 + 2v_1v_2(u_1-a_1)(u_2-a_2) - 2a_3v_1(u_1-a_1) - 2a_3v_2(u_2-a_2) \\
%&\qquad -v_1^2(u_1-a_1)^2 - v_2^2(u_2-a_2)^2 - a_3^2 + r_1^2 - v_1^2(u_2-a_2)^2 - v_1^2(a_3^2 - r_1^2)  \\
%&\qquad - v_2^2(u_1-a_1)^2 - v_2^2(a_3^2 - r_1)^2 - (u_1 - a_1)^2 - (u_2 - a_2)^2 \\
%0
%&= (2v_1v_2(u_1-a_1)(u_2-a_2) - v_1^2(u_2-a_2)^2 - v_2^2(u_1-a_1)^2) \\
%&\qquad - (2a_3v_1(u_1-a_1) + (u_1 - a_1)^2 + v_1^2a_3^2)
%        - (2a_3v_2(u_2-a_2) + (u_2 - a_2)^2 + v_2^2a_3^2) \\
%&\qquad + r_1^2(1 + v_1^2 + v_2^2)\\
%0
%&=
%(v_1(u_2-a_2) - v_2(u_1 - a_1))^2 + (u_1 - a_1 + a_3v_1)^2 + (u_2 - a_2 + a_3v_2)^2 - r_1^2(1+v_1^2 + v_2^2)\\
%0
%&= ((u_2v_1 - u_1v_2) - (a_2v_1 - a_1v_2))^2 + (u_1 + a_3v_1 - a_1)^2 + (u_2 + a_3v_2 - a_2)^2 - r_1^2(v_1^2 + v_2^2 + 1)
%\end{align*}
which simplifies to
\begin{align*}
((u_2v_1 - u_1v_2) - (a_2v_1 - a_1v_2))^2 + (u_1 + a_3v_1 - a_1)^2 + (u_2 + a_3v_2 - a_2)^2 - r_1^2(v_1^2 + v_2^2 + 1)
    &= 0
\end{align*}
It turns out that (up to changing some signs) the Pl\"ucker embedding of the Grassmannian of lines in $\pp^3$ is determined by $(u_1,u_2, v_1, v_2, w)$, where $w := u_2v_1 - u_1v_2$. Consequently, in the Pl\"ucker coordinates, the lines tangent to four spheres are precisely the points in $V(f_0, \ldots, f_4) \subseteq \kk^5$, where
\begin{align*}
%\begin{split}
f_0 &:= w - u_2v_1 + u_1v_2 \\
f_1 &:= (w - (a_2v_1 - a_1v_2))^2 + (u_1 + a_3v_1 - a_1)^2 + (u_2 + a_3v_2 - a_2)^2 - r_1^2(v_1^2 + v_2^2 + 1) \\
f_2 &:= (w - (b_2v_1 - b_1v_2))^2 + (u_1 + b_3v_1 - b_1)^2 + (u_2 + b_3v_2 - b_2)^2 - r_2^2(v_1^2 + v_2^2 + 1) \\
f_3 &:= (w - (c_2v_1 - c_1v_2))^2 + (u_1 + c_3v_1 - c_1)^2 + (u_2 + c_3v_2 - c_2)^2 - r_3^2(v_1^2 + v_2^2 + 1) \\
f_4 &:= (w - (d_2v_1 - d_1v_2))^2 + (u_1 + d_3v_1 - d_1)^2 + (u_2 + d_3v_2 - d_2)^2 - r_4^2(v_1^2 + v_2^2 + 1)
%\end{split}
\end{align*}
where the $r_1, \ldots, r_4$ are the radii and $(a_1, a_2, a_3), \ldots, (d_1, d_2, d_3)$ are the centers of the spheres. Since $f_0$ and $f_1$ does not have any irreducible components in common, the codimension of $V(f_0, f_1)$ is $2$ in $\kk^5$. Then for generic choices e.g.\ of $b_1, c_1, d_1$, it follows that $V(f_0, \ldots, f_4)$ only has isolated points in $\kk^5$. We will show that the number (counted with appropriate multiplicities) of these points is 12. Note that the B\'ezout bound for the number of solutions of the $f_i$ in $\pp^5$ is $2^5 = 32$.

\subsection{} Choosing appropriate coordinates on $\kk^3$, without loss of generality we may assume $a_1 = a_2 = a_3 = b_1 = b_2 = 0$ and $r_1 = 1$. It follows that
\begin{align*}
f_1 &= w^2 + u_1^2 + u_2^2 - (v_1^2 + v_2^2 + 1) \\
f_2 &= w^2 + (u_1 + b_3v_1)^2 + (u_2 + b_3v_2)^2 - r_2^2(v_1^2 + v_2^2 + 1)
\end{align*}
%With respect to homogeneous coordinates $[u_1:u_2:v_1:v_2:w:z]$ on $\pp^5$, the homogeneous forms of the $f_i$ are then
%\begin{align*}
%f_1 &= w^2 + u_1^2 + u_2^2 - (v_1^2 + v_2^2 + 1) \\
%f_2 &= (w - (b_2v_1 - b_1v_2))^2 + (u_1 - b_1)^2 + (u_2 - b_2)^2 - r_2^2(v_1^2 + v_2^2 + 1) \\
%f_2 &= (w - (b_2v_1 - b_1v_2))^2 + (u_1 + b_3v_1 - b_1)^2 + (u_2 + b_3v_2 - b_2)^2 - r_2^2(v_1^2 + v_2^2 + 1) \\
%f_3 &= (w - (c_2v_1 - c_1v_2))^2 + (u_1 + c_3v_1 - c_1)^2 + (u_2 + c_3v_2 - c_2)^2 - r_3^2(v_1^2 + v_2^2 + 1) \\
%f_4 &= (w - (d_2v_1 - d_1v_2))^2 + (u_1 + d_3v_1 - d_1)^2 + (u_2 + d_3v_2 - d_2)^2 - r_4^2(v_1^2 + v_2^2 + 1)
%\end{align*}

\subsection{}
Let $F_i$ be the divisor on $\pp^5$ corresponding to $f_i$. To determine the intersection ``at infinity'' on $\pp^5$ of the $F_i$, we consider their leading forms:
\begin{align*}
\ld(f_0) &= u_2v_1 - u_1v_2 \\
\ld(f_1) &= w^2 + u_1^2 + u_2^2 - v_1^2 - v_2^2 \\
\ld(f_2) &= w^2 + (u_1 + b_3v_1)^2 + (u_2 + b_3v_2)^2 - r_2^2(v_1^2 + v_2^2) \\
\ld(f_3) &= (w - (c_2v_1 - c_1v_2))^2 + (u_1 + c_3v_1)^2 + (u_2 + c_3v_2)^2 - r_3^2(v_1^2 + v_2^2) \\
\ld(f_4) &= (w - (d_2v_1 - d_1v_2))^2 + (u_1 + d_3v_1)^2 + (u_2 + d_3v_2)^2 - r_4^2(v_1^2 + v_2^2)
\end{align*}
$V(\ld(f_0), \ld(f_1))$ has codimension two in the hyperplane at infinity on $\pp^5$. For generic choices of the $b_3, c_i, d_j$ it follows that the common zero set of the $\ld(f_i)$ is empty if either $v_1$ or $v_2$ is nonzero, and consequently,
\begin{align*}
\bigcap_i \supp(F_i) \cap (\pp^5 \setminus \kk^5) = Z := V(z, v_1, v_2, w^2 + u_1^2 + u_2^2)
\end{align*}
where $z$ is the ``homogenizing variable'' which forms a system of homogeneous coordinates $[u_1:u_2:v_1:v_2:w:z]$ on $\pp^5$; note that $Z$ is a one dimensional ``imaginary circle at infinity''. It then follows from identity \eqref{eq:=multiso} on page \pageref{eq:=multiso} and B\'ezout's theorem that
\begin{align}
 \multiso{f_0}{f_4}{\kk^5}
    &= \mult{F_0}{F_4}
        - \multord{F_0}{F_4}{Z}
        - \sum_{P \in Z} \multord{F_0}{F_4}{P} \notag \\
    &= 32
        - \multord{F_0}{F_4}{Z}
        - \sum_{P \in Z} \multord{F_0}{F_4}{P}
        \label{eq:tangent:0}
\end{align}

\subsection{} \label{sec:tangent:Z}
Since $Z$ is an irreducible component of $\bigcap_{j=0}^3 \supp(F_i)$, assertion \eqref{local=mult:1:0} of \Cref{prop:local=mult:1} implies that
\begin{align}
\multord{F_0}{F_4}{Z}
    &= \multsub{F_0}{F_3}{Z} \deg_4(Z)
    = \multsub{F_0}{F_3}{Z} \deg(Z) \deg(F_4) \notag \\
    &= 4\multsub{F_0}{F_3}{Z}
    \label{eq:tangent:Z:0}
\end{align}
To compute $\multsub{F_0}{F_3}{Z}$, we work on the coordinate chart $U_w := \pp^5 \setminus V(w)$. With the coordinates $(\tilde u_1, \tilde u_2, \tilde v_1, \tilde v_2, \tilde z) := (u_1/w, u_2/w, v_1/w, v_2/w, 1/w)$ on $U_w$, the polynomials defining $F_i \cap U_w$ are:
\begin{align*}
\tilde f_0 &:= \tilde z - \tilde u_2 \tilde v_1 + \tilde u_1 \tilde v_2 \\
\tilde f_1 &:= 1 + \tilde u_1^2 + \tilde u_2^2 - (\tilde v_1^2 + \tilde v_2^2 + \tilde z^2) \\
\tilde f_2 &:= 1 + (\tilde u_1 + b_3\tilde v_1)^2 + (\tilde u_2 + b_3\tilde v_2)^2 - r_2^2(\tilde v_1^2 + \tilde v_2^2 + \tilde z^2) \\
\tilde f_3 &:= (1 - (c_2\tilde v_1 - c_1\tilde v_2))^2 + (\tilde u_1 + c_3\tilde v_1 - c_1\tilde z)^2
                    + (\tilde u_2 + c_3\tilde v_2 - c_2\tilde z)^2 - r_3^2(\tilde v_1^2 + \tilde v_2^2 + \tilde z^2) \\
\tilde f_4 &:= (1 - (d_2\tilde v_1 - d_1\tilde v_2))^2 + (\tilde u_1 + d_3\tilde v_1 - d_1\tilde z)^2
                    + (\tilde u_2 + d_3\tilde v_2 - d_2\tilde z)^2 - r_4^2(\tilde v_1^2 + \tilde v_2^2 + \tilde z^2)
\end{align*}
Note that $\tilde S := V(\tilde f_0, \tilde f_1)$ is a nonsingular threefold near each point of $Z \cap U_w$, so that $\multsub{F_0}{F_3}{Z}$ is the intersection multiplicity along $Z$ of the (restrictions of the) divisors of $\tilde f_2$ and $\tilde f_3$ on $\tilde S$; in other words,
\begin{align}
\multsub{F_0}{F_3}{Z}
    &= \multonlysub{\tilde f_2|_{\tilde S}, \tilde f_3|_{\tilde S}}{Z}
    \label{eq:tangent:Z:mult}
\end{align}

\subsubsection{} \label{sec:tangent:Z:f2}
To compute the divisor of $\tilde f_2$ on $\tilde S$ note that
\begin{align*}
\tilde f_2
    &= 1 + \tilde u_1^2 + \tilde u_2^2 + 2b_3(\tilde u_1 \tilde v_1 + \tilde u_2 \tilde v_2) - r_2^2(\tilde v_1^2 + \tilde v_2^2 + \tilde z^2)\\
    &\equiv 2b_3(\tilde u_1 \tilde v_1 + \tilde u_2 \tilde v_2) + (1- r_2^2)(\tilde v_1^2 + \tilde v_2^2 + \tilde z^2) \mod \tilde f_1
\end{align*}
Now
\begin{align*}
\tilde v_1^2 + \tilde v_2^2 + \tilde z^2
    &\equiv \tilde v_1^2 + \tilde v_2^2 + (\tilde u_2\tilde v_1 - \tilde u_1 \tilde v_2)^2  \mod \tilde f_0 \\
    &= \tilde v_1^2(1+\tilde u_2^2) + \tilde v_2^2(1 + \tilde u_1^2) -2\tilde u_1\tilde u_2 \tilde v_1 \tilde v_2 \\
    &= \tilde v_1^2(1+\tilde u_1^2 +\tilde u_2^2) + \tilde v_2^2(1 + \tilde u_1^2 + \tilde u_2^2)
            - (\tilde u_1^2\tilde v_1^2 + \tilde u_2^2 \tilde v_2^2 + 2\tilde u_1\tilde u_2 \tilde v_1 \tilde v_2) \\
    &= (1+\tilde u_1^2 +\tilde u_2^2)(\tilde v_1^2 + \tilde v_2^2) - (\tilde u_1\tilde v_1 + \tilde u_2 \tilde v_2)^2 \\
    &\equiv (v_1^2 + \tilde v_2^2 + \tilde z^2)(\tilde v_1^2 + \tilde v_2^2) - (\tilde u_1\tilde v_1 + \tilde u_2 \tilde v_2)^2
        \mod \tilde f_1
\end{align*}
so that, modulo $\tilde f_0$ and $\tilde f_1$,
\begin{align*}
\tilde v_1^2 + \tilde v_2^2 + \tilde z^2
    \equiv - \frac{(\tilde u_1\tilde v_1 + \tilde u_2 \tilde v_2)^2}{1 - \tilde v_1^2 - \tilde v_2^2}
\end{align*}
near $Z \cap U_w$. Consequently,
\begin{align}
\tilde f_2
    &\equiv \tilde v_0 ( 2b_3 - \frac{1- r_2^2}{1 - \tilde v_1^2 - \tilde v_2^2} \tilde v_0)
    \label{eq:tangent:Z:f2}
\end{align}
modulo $\tilde f_0$ and $\tilde f_1$ near $Z \cap U_w$, where $\tilde v_0 := (\tilde u_1 \tilde v_1 + \tilde u_2 \tilde v_2)$. In particular, when $b_3 \neq 0$, the divisor of $\tilde f_2$ on $\tilde S$ is defined by $\tilde v_0$ near all points of $Z \cap U_w$.

\subsubsection{} \label{sec:tangent:Z:f3}
Now we compute the divisor of $\tilde f_3$ on $\tilde S$. Since $(\tilde v_0, \tilde v_1)$ have linearly independent differentials whenever $\tilde u_2 \neq 0$, we may mod out by the relation $\tilde v_2 = (\tilde v_0 - \tilde u_1 \tilde v_1)/\tilde u_2$ as well as by $f_0$ and $f_1$. This yields, after a long computation, that
\begin{align}
\tilde f_3
    &\equiv
    %2(c_3 - c_1\tilde u_2 + c_2 \tilde u_1)\tilde v_0 + \frac{2(c_1\tilde v_2 - c_2\tilde v_1) + 1 - r_3^2}{1-\tilde v_1^2 - \tilde v_2^2}\tilde v_0^2
    \frac{\tilde h_{3,0}}{(1-\tilde v_1^2 - \tilde v_2^2)\tilde u_2^2}\tilde v_0
    + \frac{1}{\tilde u_2^2}((c_1 \tilde u_1 + c_2 \tilde u_2)^2 + (c_3 \tilde u_2 + c_1)^2 + (c_3\tilde u_1 - c_2)^2)\tilde v_1^2
    \label{eq:tangent:Z:f3}
\end{align}
where $\tilde h_{3,0}$ is a polynomial in all $\tilde u_i, \tilde v_j, c_k$ and $r_3$.

\subsubsection{} \label{sec:tangent:Z:ord}
It follows that at each point $P$ of $Z \cap U_w \cap \{\tilde u_2 \neq 0\}$, the ideal generated by $\tilde f_2, \tilde f_3$ in $\local{\tilde S}{P}$ is the same as the ideal generated by $\tilde v_0$ and $\tilde h_{3,1} \tilde v_1^2$, where
\begin{align*}
\tilde h_{3,1} &:= (c_1 \tilde u_1 + c_2 \tilde u_2)^2 + (c_3 \tilde u_2 + c_1)^2 + (c_3\tilde u_1 - c_2)^2
\end{align*}
Since $\tilde h_{3,1} \neq 0$ near generic points of $Z$, it follows from identities \eqref{eq:tangent:Z:f2} and \eqref{eq:tangent:Z:f3} that
\begin{align*}
\multonlysub{\tilde f_2|_{\tilde S}, \tilde f_3|_{\tilde S}}{Z}
    &= \multonlysub{\tilde v_0|_{\tilde S}, \tilde v_1^2|_{\tilde S}}{Z}
    = 2
\end{align*}
as long as $b_3 \neq 0$ (since $\tilde v_0, \tilde v_1$ generate the ideal of $Z$ in $\local{\tilde S}{P}$ for generic $P \in \tilde S \cap Z$, the above equality can be regarded as a (rather trivial) application of \Cref{cor:mult0}). Identities \eqref{eq:tangent:Z:0} and \eqref{eq:tangent:Z:mult} then imply that
\begin{align}
\multord{F_0}{F_4}{Z}
    &= 4 \times 2
    = 8
    \label{eq:tangent:Z:0:computation}
\end{align}

\subsection{}
It remains to compute the sum $\sum_{P \in Z} \multord{F_0}{F_4}{P}$ from the right hand side of \eqref{eq:tangent:0}. Write the scheme theoretic intersection of $F_0, \ldots, F_3$ as $mZ + C$. Then
\begin{align}
\sum_{P \in Z} \multord{F_0}{F_4}{P}
    &= \sum_{P \in Z} \ord_P(F_4|_C)
    \label{eq:tangent:dim0}
\end{align}
(see e.g.\ \Cref{ex:n-1}).

\subsubsection{} \label{sec:tangent:dim0:Uw2}
First we consider the case that $P \in U_{w,2} := U_w \cap \{\tilde u_2 \neq 0\}$. The arguments from \Cref{sec:tangent:Z} imply that
\begin{itemize}
\item if $b_3 \neq 0$, then $C \cap U_{w,2}$ is the scheme theoretic intersection of $\tilde S$ and $V(\tilde v_0, \tilde h_{3,1})$, where $\tilde h_{3,1} := (c_1 \tilde u_1 + c_2 \tilde u_2)^2 + (c_3 \tilde u_2 + c_1)^2 + (c_3\tilde u_1 - c_2)^2$, and
\item the restriction of $\tilde f_4$ to $C \cap U_{w,2}$ equals (the restriction of) $\tilde h_{4,1}\tilde v_1^2/\tilde u_2^2$, where $\tilde h_{4,1} := (d_1 \tilde u_1 + d_2 \tilde u_2)^2 + (d_3 \tilde u_2 + d_1)^2 + (d_3\tilde u_1 - d_2)^2$.
\end{itemize}
Since $U_{w,2} \cap Z \cap C = U_{w,2} \cap V(\tilde z, \tilde v_1, \tilde v_2, \tilde u_1^2 + \tilde u_2^2 + 1, \tilde h_{3,1})$, for generic $c_1, c_2, c_3$, there are precisely $2 \times 2 = 4$ points in $U_{w,2} \cap Z \cap C$ due to B\'ezout's theorem. If $d_1, d_2, d_3$ are generic, then $\tilde h_{4,1}$ is nonzero at each of these points, so that
\begin{align}
\sum_{P \in Z \cap U_{w,2}} \ord_P(F_4|_C)
    &= \sum_{P \in Z \cap C \cap U_{w,2}} \ord_P((\tilde h_{4,1}\tilde v_1^2)|_C) \notag \\
    &= \sum_{P \in Z \cap C \cap U_{w,2}} \ord_P(\tilde v_1^2|_C)
    = 4 \times 2
    = 8
    \label{eq:tangent:dim0:Uw2}
\end{align}

\subsubsection{} It remains to consider the points in $Z \setminus U_{w,2} = V(z,v_1, v_2, u_2, u_1^2 + w^2) \cup V(z,v_1, v_2, w, u_1^2 + u_2^2)$. First we consider the case that $P \in  V(z,v_1, v_2, u_2, u_1^2 + w^2)$. Running the same computations from \Cref{sec:tangent:Z:f3} with the substitution $\tilde v_1 = (\tilde v_0 - \tilde u_2\tilde v_2)/\tilde u_1$ (instead of the substitution of $\tilde v_2$) shows that near $Z$ on $\tilde S \cap \{\tilde u_1 \neq 0\}$,
\begin{align*}
\tilde f_3 &\equiv \frac{\tilde h_{3,1}}{\tilde u_1^2}\tilde v_1^2
\end{align*}
modulo (the ideal generated by) $\tilde v_0$. If $c_1, c_2, c_3$ are generic, then $\tilde h_{3,1}$ is nonzero when $\tilde u_2 = 0$. It follows that for each $P \in  V(z,v_1, v_2, u_2, u_1^2 + w^2)$, $Z$ is the {\em only} irreducible component of $\bigcap_{i=0}^3 F_i$ near $P$; in particular,
\begin{align}
\sum_{P \in V(z,v_1, v_2, u_2, u_1^2 + w^2)} \ord_P(F_4|_C)
    &= 0
    \label{eq:tangent:dim0:u2}
\end{align}

\subsubsection{} Now we treat the case that $P \in  V(z,v_1, v_2, w, u_1^2 + u_2^2)$. Consider the chart $U_2 := \pp^5 \setminus V(u_2)$ with coordinates $(u'_1, v'_1, v'_2, w', z') := (u_1/u_2, v_1/u_2, v_2/u_2, w/u_2, 1/u_2)$. The polynomials defining $F_i|_{U_2}$ are:
\begin{align*}
f'_0 &:= z'w' - v'_1 + u'_1v'_2 \\
f'_1 &:= w'^2 + u'^2_1 + 1 - (v'^2_1 + v'^2_2 + z'^2) \\
f'_2 &:= w'^2 + (u'_1 + b_3v'_1)^2 + (1 + b_3v'_2)^2 - r_2^2(v'^2_1 + v'^2_2 + z'^2) \\
f'_3 &:= (w' - (c_2v'_1 - c_1v'_2))^2 + (u'_1 + c_3v'_1 - c_1z')^2 + (1 + c_3v'_2 - c_2z')^2 - r_3^2(v'^2_1 + v'^2_2 + z'^2) \\
f'_4 &:= (w' - (d_2v'_1 - d_1v'_2))^2 + (u'_1 + d_3v'_1 - d_1z')^2 + (1 + d_3v'_2 - d_2z')^2 - r_4^2(v'^2_1 + v'^2_2 + z'^2)
\end{align*}
Let $S' := V(f'_0, f'_1) \subseteq U_2$. Computations similar to those in \Cref{sec:tangent:Z:f2} show that the following hold in $\hatlocal{S'}{P} \cong \kk[[v'_2, w', z']]$:
\begin{align*}
f'_2 &= (1-r_2^2) z'^2 + 2b_3u'_1w'z' - 2b_3v'_2w'^2 + \cdots \\
f'_3 &= -2(c_2 + c_1u'_1)z' - 2(c_2u'_1 - c_1) v'_2w' + (c_2u'_1 - c_1)^2 v'^2_2 + \cdots
\end{align*}

\def\shiftone{7.5}
\def\colorzero{blue}
\def\colorone{red}
\def\colortwo{yellow}
\def\colorfour{green}
\def\colorthree{gray}
\def\opazero{0.5}
\def\viewx{75}
\def\viewy{30}
\def\tx{1}
\def\ty{-1}

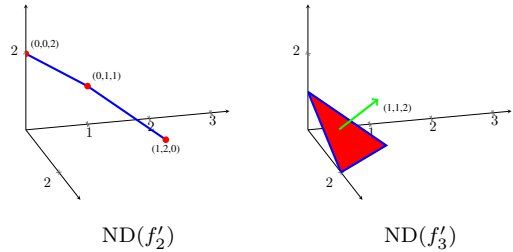
\begin{figure}[h]
\begin{center}
\begin{tikzpicture}[scale=0.5]
\pgfplotsset{every axis title/.append style={at={(0,-0.2)}}, view={\viewx}{\viewy}, axis lines=middle, enlargelimits={upper}}

\tikzstyle{dot} = [red, circle, minimum size=5pt, inner sep = 0pt, fill]

\begin{scope}
\begin{axis}[%x=1cm, y=2cm, z=2cm, zmin=0,
        xmin=0,
        xmax=3,
        ymin=0,
        ymax=3,
        zmin=0,
        zmax=3,
]%[title=$\Gamma_1$]
	%\addplot3[fill=\colorone,opacity=\opazero] coordinates{(4,0,0) (1,1,0) (0,2,0)};
    \addplot3 [\colorzero, ultra thick] coordinates{(0,0,2) (0,1,1) (1,2,0)};
	%\addplot3 [ultra thick, \colorfour, ->] coordinates{(0.3,0.3,0.4) (0.8, 0.8, 0.9)};
    \node[dot] at (axis cs:0,0,2) {};
    \node[dot] at (axis cs:0,1,1) {};
    \node[dot] at (axis cs:1,2,0) {};
    \draw (axis cs:0,0,2) node [above right] {\picfontsize (0,0,2)};
    \draw (axis cs:0,1,1) node [above right] {\picfontsize (0,1,1)};
    \draw (axis cs:1,2,0) node [below] {\picfontsize (1,2,0)};
	%\draw (axis cs:0,0,0) node {$\Gamma_1$};
\end{axis}
\node (A) at (\tx, \ty){};
\node (B) [right of=A] {\picfontsize $\nd(f'_2)$};
\end{scope}

\begin{scope}[shift={(\shiftone,0)}]
\begin{axis}[%x=1cm, y=2cm, z=2cm, zmin=0,
        xmin=0,
        xmax=3,
        ymin=0,
        ymax=3,
        zmin=0,
        zmax=3,
]%[title=$\Gamma_1$]
	\addplot3[fill=\colorone,opacity=\opazero] coordinates{(0,0,1) (1,1,0) (2,0,0)};
    \addplot3 [\colorzero, ultra thick] coordinates{(0,0,1) (1,1,0) (2,0,0) (0,0,1)};

    \addplot3 [ultra thick, \colorfour, ->] coordinates{(0.66,0.33,0.33) (1.16,0.83,1.33)};
	\draw (axis cs:1.16,0.83,1.33) node [below right] {\picfontsize (1,1,2)};

	%\draw (axis cs:0,0,0) node {$\Gamma_1$};
\end{axis}
\node (A) at (\tx, \ty){};
\node (B) [right of=A] {\picfontsize $\nd(f'_3)$};
\end{scope}

\end{tikzpicture}
\end{center}
\caption{Newton diagrams of $f'_2$ and $f'_3$ in $(v'_2, w', z')$-coordinates} \label{fig:tangent:dim0:w':z'}
\end{figure}

Since near $P$ on a set-theoretic level $V(f'_2, f'_3) \cap S'$ is the union of $C$ and $Z \cap S' \cong V(v'_2, z')$, we may try to use \Cref{prop:newton1*} to estimate $\ord_P(f'_4|_C)$. However, it is straightforward to check that for weighted orders with weights $\nu := (1,\omega,1+\omega)$ with respect to $(v'_2, w', z')$-coordinates, where $0 < \omega < 1$, the initial forms of $f'_2$ and $f'_3$ at $P$ are:
\begin{align*}
\In_\nu(f'_2) &= 2b_3w'(u'_1z' - v'_2w') = 2b_3u'_1w'(z' +u'_1v'_2w')\\
\In_\nu(f'_3) &= -2((c_2 + c_1u'_1)z' + (c_2u'_1 - c_1) v'_2w') = -2(c_2 +c_1u'_1)(z' + u'_1v'_2w')
\end{align*}
(since $u'^2_1 = -1$ at $P$) which are {\em proportional}, and consequently, have common solutions in $(\kk\setminus\{0\})^3$, e.g.\ $(v'_2, w', z') = (1,1,-u'_1)$. Consequently, $(f'_2, f'_3)$ violates condition \ref{assumption:newton1*:non-degen0} of \Cref{assumptions:newton1*}, and \Cref{prop:newton1*} does {\em not} apply. However, if we let $z'' := z' + u'_1v'_2w'$ and expand $f'_2, f'_3$ in $(v'_2, w', z'')$-coordinates, then

\begin{align*}
f'_2 &= (1-r_2^2) z''^2 + 2b_3u'_1w'z'' + \cdots \\
f'_3 &= -2(c_2 + c_1u'_1)z'' + (c_2u'_1 - c_1)^2 v'^2_2 + \cdots
\end{align*}

\def\shiftone{7.5}
\def\colorzero{blue}
\def\colorone{red}
\def\colortwo{yellow}
\def\colorfour{green}
\def\colorthree{gray}
\def\opazero{0.5}
\def\viewx{75}
\def\viewy{30}
\def\tx{1}
\def\ty{-1}

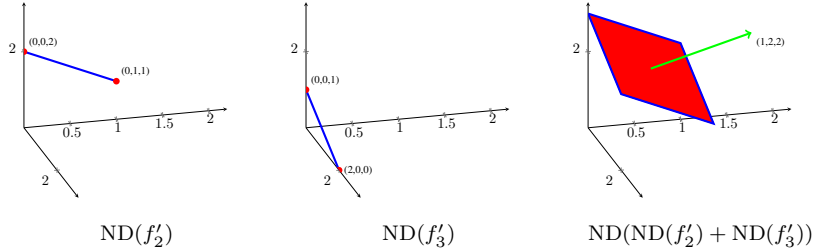
\begin{figure}[h]
\begin{center}
\begin{tikzpicture}[scale=0.5]
\pgfplotsset{every axis title/.append style={at={(0,-0.2)}}, view={\viewx}{\viewy}, axis lines=middle, enlargelimits={upper}}

\tikzstyle{dot} = [red, circle, minimum size=5pt, inner sep = 0pt, fill]

\begin{scope}
\begin{axis}[%x=1cm, y=2cm, z=2cm, zmin=0,
        xmin=0,
        xmax=3,
        ymin=0,
        ymax=2,
        zmin=0,
        zmax=3,
]%[title=$\Gamma_1$]
	%\addplot3[fill=\colorone,opacity=\opazero] coordinates{(4,0,0) (1,1,0) (0,2,0)};
    \addplot3 [\colorzero, ultra thick] coordinates{(0,0,2) (0,1,1)};
	%\addplot3 [ultra thick, \colorfour, ->] coordinates{(0.3,0.3,0.4) (0.8, 0.8, 0.9)};
    \node[dot] at (axis cs:0,0,2) {};
    \node[dot] at (axis cs:0,1,1) {};
    %\node[dot] at (axis cs:1,2,0) {};
    \draw (axis cs:0,0,2) node [above right] {\picfontsize (0,0,2)};
    \draw (axis cs:0,1,1) node [above right] {\picfontsize (0,1,1)};
    %\draw (axis cs:1,2,0) node [below] {\picfontsize (1,2,0)};
	%\draw (axis cs:0,0,0) node {$\Gamma_1$};
\end{axis}
\node (A) at (\tx, \ty){};
\node (B) [right of=A] {\picfontsize $\nd(f'_2)$};
\end{scope}

\begin{scope}[shift={(\shiftone,0)}]
\begin{axis}[%x=1cm, y=2cm, z=2cm, zmin=0,
        xmin=0,
        xmax=3,
        ymin=0,
        ymax=2,
        zmin=0,
        zmax=3,
]%[title=$\Gamma_1$]
	%\addplot3[fill=\colorone,opacity=\opazero] coordinates{(4,0,0) (1,1,0) (0,2,0)};
    \addplot3 [\colorzero, ultra thick] coordinates{(0,0,1) (2,0,0)};
	%\addplot3 [ultra thick, \colorfour, ->] coordinates{(0.3,0.3,0.4) (0.8, 0.8, 0.9)};
    \node[dot] at (axis cs:0,0,1) {};
    \node[dot] at (axis cs:2,0,0) {};
    \draw (axis cs:0,0,1) node [above right] {\picfontsize (0,0,1)};
    \draw (axis cs:2,0,0) node [right] {\picfontsize (2,0,0)};
	%\draw (axis cs:0,0,0) node {$\Gamma_1$};
\end{axis}
\node (A) at (\tx, \ty){};
\node (B) [right of=A] {\picfontsize $\nd(f'_3)$};
\end{scope}

\begin{scope}[shift={(2*\shiftone,0)}]
\begin{axis}[%x=1cm, y=2cm, z=2cm, zmin=0,
        xmin=0,
        xmax=3,
        ymin=0,
        ymax=2,
        zmin=0,
        zmax=3,
]%[title=$\Gamma_1$]
	\addplot3[fill=\colorone,opacity=\opazero] coordinates{(0,0,3) (2,0,2) (2,1,1) (0,1,2)};
    \addplot3 [\colorzero, ultra thick] coordinates{(0,0,3) (2,0,2) (2,1,1) (0,1,2) (0,0,3)};

    \addplot3 [ultra thick, \colorfour, ->] coordinates{(1,0.5,2) (1.5,1.5,3)};
	\draw (axis cs:1.5,1.5,3) node [below right] {\picfontsize (1,2,2)};

	%\draw (axis cs:0,0,0) node {$\Gamma_1$};
\end{axis}
\node (A) at (\tx, \ty){};
\node (B) [right of=A] {\picfontsize $\nd(\nd(f'_2) + \nd(f'_3))$};
\end{scope}

\end{tikzpicture}
\end{center}
\caption{Newton diagrams in $(v'_2, w', z'')$-coordinates} \label{fig:tangent:dim0:w':z''}
\end{figure}

It is straightforward to check that if $b_3, c_1, c_2$ are generic, then $(f'_2, f'_3, f'_4)$ satisfy all assumptions and non-degeneracy conditions of \Cref{prop:newton1*}, so that
\begin{align*}
\ord_P(f'_4|_C)
    &= \multzerostaronly{\nd(f'_2), \nd(f'_3), \nd(f'_4)} \\
    &= \min_{\nd(f'_4)}(\nu) \times
		\mv'_{\nu}(\In_{\nu}(\nd(f'_2)), \In_{\nu}(\nd(f'_4)))\quad (\text{where}\ \nu := (1, 2, 2)) \\
    &= 2 \times 1 = 2
\end{align*}
Since there are two points in $V(z,v_1, v_2, w, u_1^2 + u_2^2)$, it follows that
\begin{align}
\sum_{P \in V(z,v_1, v_2, w, u_1^2 + u_2^2)} \ord_P(F_4|_C)
    &= 2 \times 2
    = 4
    \label{eq:tangent:dim0:w=0}
\end{align}

\subsection{}
Combining equations \eqref{eq:tangent:0}, \eqref{eq:tangent:Z:0:computation}, \eqref{eq:tangent:dim0}, \eqref{eq:tangent:dim0:Uw2}, \eqref{eq:tangent:dim0:u2}, \eqref{eq:tangent:dim0:w=0} we obtain
\begin{align*}
 \multiso{f_0}{f_4}{\kk^5}
%    &= 32
%        - \multord{F_0}{F_4}{Z}
%        - \sum_{P \in Z} \multord{F_0}{F_4}{P}
%        %\label{eq:tangent:0}
    &= 32 - 8 - 8 - 0 - 4
    = 12
\end{align*}
when $b_3, c_i, d_j$ are generic. This completes the computation of the number of tangent lines to four generic spheres in $\kk^3$ (when $\character(\kk) = 0$). Note that the above arguments did {\em not} require any $r_i$ to be generic, i.e.\ we showed that the bound of 12 tangent lines can be achieved even if the radii of the spheres are fixed (and nonzero).

\section{Inductive ``solutions'' to the affine B\'ezout problem} \label{sec:ind}
In \Crefrange{sec:local}{sec:tangent} we described a few scenarios in which %the ``local factors'' of the
ordered intersection multiplicity %(i.e.\ terms of the form $\len(\local{X}{Z}/I^{(\rho)}_{i_1, \ldots, i_\rho} \local{X}{Z})$ from identity \eqref{eq:multord} on page \pageref{eq:multord})
can be interpreted in terms of intersection multiplicities of regular sequences. However, as pointed out in \Cref{sec:newton1*:nonapp}, those techniques are {\em not} always applicable. In this section we present some other interpretations in terms of intersection multiplicities of regular sequences which, in theory, are more generally available, but seem rather non-constructive (those from \Cref{sec:ind:asymp,sec:ind:pos}) or complicated to obtain (the interpretation from \Cref{sec:ind:res}). Each of these interpretations, say interpretation $\mscrI$, gives rise to a (similarly non-constructive or complicated) algorithm for counting isolated solutions of (square) systems of polynomials as follows:
\begin{enumerate}
\item start with an {\em upper} bound which is exact in a generic scenario (e.g.\ B\'ezout's bound or some other Bernstein-Kushnirenko type estimate \cite[Theorems X.4 and X.7]{howmanyzeroes} when the ambient space is $\kk^n$ or $\pp^n$); if the system is {\em non-degenerate}, i.e.\ there are no ``extra intersections'' (e.g.\ ``intersections at infinity'' or non-isolated points in the intersection), then this bound gives the correct result;
\item \label{sol:step:interprete} otherwise express the corresponding ordered multiplicities in terms of usual intersection multiplicities of regular sequences using interpretation $\mscrI$;
\item \label{sol:step:newton} compute Bernstein-Kushnirenko type {\em lower} bounds for the latter intersection multiplicity in terms of associated Newton diagrams (using \Cref{thm:mult0} and \Cref{cor:mult0}); if the system is non-degenerate, then these estimates are exact;
\item \label{sol:step:repeat} otherwise there are excess intersections in corresponding toric blow-ups at the origin, and repeat the process starting from step \eqref{sol:step:interprete} %to \eqref{sol:step:repeat}
    to estimate the corresponding ordered intersection multiplicities.
\end{enumerate}

\subsection{An ``asymptotic'' interpretation} \label{sec:ind:asymp}
Consider the set up from \Cref{sec:formula:set-up:0}. Given $\vec g = (g_1, \ldots, g_n) \allowbreak \in \prod_i L_i$ and $\vec r = (r_1, \ldots, r_n) \in \rr^n$, write $\tilde X(\vec g; \vec r)$ for the ``weighted blow up'' of $\bar X \times \kk$ considered in \Cref{sec:formula:blow-up}, i.e.\ the closure in $\bar X \times \kk \times \pp^n(r_1, \ldots, r_n, 1)$ of the graph of the map
\begin{align*}
\bar X \times \kk \ni (x, t) &\mapsto [\frac{f_1}{g_1}(x): \cdots : \frac{f_n}{g_n}(x): t] \in \pp^n(r_1, \ldots, r_n, 1)
\end{align*}
As in \Cref{sec:formula:blow-up} denote the (weighted) homogeneous coordinates on $\pp^n(r_1, \ldots, r_n, 1)$ by $[w_1: \cdots :w_{n+1}]$, and let $u_i := w_i/(w_{n+1})^{r_i}$, $i = 1, \ldots, n$, be the standard affine coordinates on $\tilde U_{n+1} := \tilde X(\vec g; \vec r) \setminus V(w_{n+1})$. The following result shows that under \eqref{condition:>>} %each `local factor'' of
the ordered intersection multiplicity can be interpreted as a sum of usual intersection multiplicities at nonsingular points on the normalization of $\tilde U_{n+1}$.

\begin{proprop}
Let $X$ be an $n$-dimensional affine variety, $f_1, \ldots, f_n \in \kk[X]$, and for each $i = 1, \ldots, n$, let $L_i$ be a base-point free (finite dimensional) vector subspace of $\kk[X]$ containing $f_i$. %Let $Z$ be an irreducible component of $\scrZ^{(\rho)}$, $\rho \geq 0$. Let $\check{Z}$ %be the union of the intersection of $Z$ with all other irreducible components of $\scrZ^{(\rho)}$, and $Z'_2$
%be the union of the intersection of $Z$ with all $\scrZ^{(\rho')}$ for $\rho' \geq \rho$.
Let $Z$ be a distinguished component for (the ordered intersection of the hypersurfaces defined by) $V(f_1), \ldots, V(f_n)$ (\Cref{defn:distinguished}), and $\check{Z}$ be the intersection of $Z$ with the union of all other distinguished components $Z'$ with $\codim(Z') \geq \codim(Z)$. %
Then for each nonempty open subset $Z^*$ of $Z$, there are $\nu \in \rr$, and a nonempty open subset $L^*$ of $\prod_i L_i$ such that under \eqref{condition:>nu}, for each $(g_1, \ldots, g_n) \in L^*$, the normalization $\tilde U'_{n+1}$ of $\tilde U_{n+1}$ is nonsingular at each point of $V(t, u_1 - 1, \ldots, u_n - 1)$, and
\begin{align*}
\multord{F_1}{F_n}{Z}
    &= \sum_{u \in \tilde U'_{n+1} \cap \pi^{-1}(Z^*\setminus \check{Z})} \multsub{t,u_1-1}{u_n-1}{u}
\end{align*}
where $\pi: \tilde U'_{n+1} \to \bar X$ is the natural projection.
\end{proprop}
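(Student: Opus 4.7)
The plan is to reduce the proposition to the branch formulation already delivered by \Cref{cor:length:confirmation}, and then to transport the branch data to intersection-multiplicity data on the normalization of the weighted blow-up. First I would apply \Cref{cor:length:confirmation} to the given $Z^*$: this produces $\nu \in \rr$ and a nonempty open subset $L^*_0 \subseteq \prod_i L_i$ such that, under \eqref{condition:>nu} and for each $\vec g \in L^*_0$,
\begin{align*}
\multord{F_1}{F_n}{Z} \;=\; \sum_B \ord_B(t),
\end{align*}
where $B$ ranges over branches of $C(\vec g;\vec r)$ centered at $(Z^*\setminus Z')\times\{0\}$. It therefore suffices to match each term $\ord_B(t)$ with a pointwise intersection multiplicity on $\tilde U'_{n+1}$.

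Next I would pass to the weighted blow-up $\tilde X(\vec g;\vec r)$ of \Cref{sec:formula:blow-up}. After shrinking $L^*_0$ to a nonempty open subset $L^*_1$ on which the scaling $\vec a = (1,\ldots,1)$ lies in the set $\scrA^*$ of \Cref{prop:formula:blow-up} (by absorbing generic scalars into $\vec g$), part (ii) of that proposition gives that the scheme-theoretic strict transform $\tilde C(\vec g;\vec r)$ coincides with $V(u_1-1,\ldots,u_n-1)$ on $\tilde U_{n+1}$ near $\pi^{-1}(\bar X\times\{0\})$. Each branch $B$ of $C(\vec g;\vec r)$ centered at $(Z^*\setminus Z')\times\{0\}$ lifts to a unique branch $\tilde B$ of $\tilde C(\vec g;\vec r)$ centered at a unique preimage $\tilde z \in \tilde U_{n+1}$, with $\ord_{\tilde B}(t)=\ord_B(t)$; each such $\tilde B$ in turn corresponds to a unique point $u$ of the normalization $\tilde U'_{n+1}$ lying over $\tilde z$.

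Now I would pass to $\tilde U'_{n+1}$ and argue that, after possibly shrinking $L^*_1$ to a nonempty open subset $L^*$, the normalization is nonsingular at every point of $V(t,u_1-1,\ldots,u_n-1)\cap\pi^{-1}(Z^*\setminus Z')$. Once this smoothness is established, the $n$ elements $u_1-1,\ldots,u_n-1$ cut out the unique branch of $\tilde C(\vec g;\vec r)$ through $u$ as a smooth curve (by part (ii) of \Cref{prop:formula:blow-up} applied on the normalization), so they form part of a regular system of parameters, and $t$ completes them (its restriction to that branch has positive order). Therefore
\begin{align*}
\multsub{t, u_1-1}{u_n-1}{u} \;=\; \len\bigl(\local{\tilde U'_{n+1}}{u}/\langle t, u_1-1,\ldots, u_n-1\rangle\bigr) \;=\; \ord_{\tilde B}(t) \;=\; \ord_B(t).
\end{align*}
Summing over $u$ and invoking the first step yields the desired identity.

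The main obstacle is the smoothness claim for $\tilde U'_{n+1}$ at the relevant points for generic $\vec g$. My approach would be a Bertini-type genericity argument: the universal family of strict transforms over $\prod_i L_i$ is flat (being defined locally by the complete intersection $u_i-g_i/f_i$ transported to the blow-up), and the non-smooth locus of the normalization meets the finitely many relevant points of the fibre only over a proper closed subset of $L^*_1$; this follows essentially because the map $\tilde U'_{n+1}\to \bar X\times\kk$ is generically an isomorphism and the functions $u_i$ separate the branches of $\tilde U_{n+1}$ appropriately for generic $\vec g$. Once this genericity is in hand, the translation between branch orders and pointwise intersection multiplicities is routine.
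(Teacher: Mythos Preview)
Your overall structure matches the paper's: reduce to the branch-order expression via \Cref{cor:length:confirmation}, lift to the weighted blow-up using \Cref{prop:formula:blow-up}, and identify each branch contribution with a local intersection multiplicity on the normalization. Where you diverge is precisely at the step you flag as the main obstacle, namely the smoothness of $\tilde U'_{n+1}$ at the points of $V(t,u_1-1,\ldots,u_n-1)$.

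The paper's argument here is a one-liner: a normal variety is nonsingular in codimension one. For any fixed $\vec g$, the normalization $\tilde U'_{n+1}$ therefore has singular locus $S$ of codimension $\geq 2$, i.e.\ $\dim S \leq n-1$. Consequently $V(t)\cap S$ has dimension $\leq n-1$, and its image under the map $(u_1,\ldots,u_n):\tilde U'_{n+1}\to\kk^n$ is a constructible set of dimension $\leq n-1$. For generic $\vec a\in\kk^n$ (absorbed into $\vec g$ exactly as you do), the fibre $V(t,u_1-a_1,\ldots,u_n-a_n)$ misses $S$ entirely, and the smoothness claim follows. No Bertini, no families, no flatness.

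Your proposed Bertini-type argument is not only more elaborate but also shaky as stated. Normalization does not commute with base change in general, so ``the non-smooth locus of the normalization'' is not a well-behaved object in the family over $\prod_i L_i$; your flatness claim for the universal family of strict transforms does not by itself control the singularities of the fibrewise normalizations. You could perhaps repair this, but it is unnecessary: the paper's observation that normal $\Rightarrow$ $R_1$ delivers the smoothness immediately and for each $\vec g$ separately, so the only genericity needed is the finite-dimensional one in $\vec a$ already built into \Cref{prop:formula:blow-up}.
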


\begin{proof}
This follows from arguments as in the proof of \Cref{prop:formula:blow-up} and \Cref{prop:formula:constant} and the observation that a normal variety is nonsingular in codimension one.
\end{proof}

\subsection{An expression in positive characteristic} \label{sec:ind:pos}
Cowsik and Nori \cite{cowsik-nori} showed that affine curves are set theoretic complete intersections in positive characteristic. It follows that in positive characteristic each ``summand'' of $\multord{F_1}{F_n}{Z}$ can be represented as an intersection multiplicity up to a multiple; more precisely, for each $\vec i = (i_1, \ldots, i_\rho)$, consider the curves $C_{\vec i, \vec g} = V(I^{(\rho-1)}_{i_1, \ldots, i_\rho}, g_{i'_1}, \ldots, g_{i'_{n-\rho}})$ considered in \Cref{sec:local-curves}, where $i'_j$ are as usual the elements of $\{1, \ldots, n\}\setminus \{i_1, \ldots, i_\rho\}$, and $g_{i'_j}$ are generic elements from $\scrL_{i'_j}$. By Cowsik and Nori's theorem, for each irreducible component $C_{\vec i, \vec g; k}$ of $C_{\vec i, \vec g}$, one can find $h_{i_1, \ldots, i_\rho;k,l}$, $l = 1, \ldots, n-1$, such that set theoretically
\begin{align*}
C_{\vec i, \vec g; k}
    &= \bigcap_{l = 1, \ldots, n-1} V(h_{i_1, \ldots, i_\rho;k,l})
\end{align*}

\begin{proprop}
With the above set up,
\begin{align*}
\len(\local{X}{Z}/I^{(\rho)}_{i_1, \ldots, i_\rho} \local{X}{Z}) \deg_{i'_1, \ldots,i'_{n-\rho}}(Z)
    &= \sum_k
        \frac{1}{n_{i_1, \ldots, i_\rho; k}}\sum_{z \in Z}
        \multsub{h_{i_1, \ldots, i_\rho;k,1}}{h_{i_1, \ldots, i_\rho;k,n-1}, f_{i_\rho}}{z} \\
\multord{F_1}{F_n}{Z}
    &= \sum_{\substack{i_1, \ldots, i_\rho, k \\ z \in Z}}
        \frac{1}{n_{i_1, \ldots, i_\rho; k}}%\sum_{z \in Z}
        \multsub{h_{i_1, \ldots, i_\rho;k,1}}{h_{i_1, \ldots, i_\rho;k,n-1}, f_{i_\rho}}{z}
    \end{align*}
\end{proprop}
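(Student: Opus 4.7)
My plan is to deduce the proposition directly from \Cref{prop:local-curves} together with a length calculation on the set-theoretic complete intersection curves supplied by Cowsik--Nori \cite{cowsik-nori}. By \Cref{prop:local-curves}, the left hand side of the first identity already equals
$\sum_{P \in Z \cap C_{\vec i, \vec g}} \ord_P(f_{i_\rho}|_{C_{\vec i, \vec g}})$,
so my task reduces to rewriting this sum in terms of the prescribed intersection multiplicities.

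First, I would decompose the pure one-dimensional scheme $C_{\vec i, \vec g}$ as a cycle $\sum_k m_k [C_{\vec i, \vec g; k}]$, where $m_k$ denotes the length of the local ring of $C_{\vec i, \vec g}$ at the generic point of the reduced irreducible component $C_{\vec i, \vec g; k}$. By the construction of $Z^{(\rho)}_{\vec i}$, the function $f_{i_\rho}$ does not vanish identically on any component of $C_{\vec i, \vec g}$ (for generic $\vec g$), so additivity of the order function on one-dimensional schemes yields
\begin{align*}
\sum_{P \in Z \cap C_{\vec i, \vec g}} \ord_P(f_{i_\rho}|_{C_{\vec i, \vec g}})
    &= \sum_k m_k \sum_{z \in Z} \ord_z(f_{i_\rho}|_{C_{\vec i, \vec g; k}}).
\end{align*}

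Next, I would bring in the Cowsik--Nori equations $h_{\vec i; k, l}$. By Krull's height theorem the scheme $V(h_{\vec i; k, 1}, \ldots, h_{\vec i; k, n-1})$ is purely one-dimensional, and so, having support exactly equal to the reduced curve $C_{\vec i, \vec g; k}$, coincides as a cycle with $\tilde n_{\vec i; k} \cdot [C_{\vec i, \vec g; k}]$, where $\tilde n_{\vec i; k}$ is the length of its local ring at the generic point of $C_{\vec i, \vec g; k}$. Since $h_{\vec i; k, 1}, \ldots, h_{\vec i; k, n-1}$ form a regular sequence locally, the resulting quotient is Cohen--Macaulay of dimension one, and a direct length computation on a one-dimensional module modulo a non zero-divisor gives, for each $z \in Z$,
\begin{align*}
\multsub{h_{\vec i; k, 1}}{h_{\vec i; k, n-1}, f_{i_\rho}}{z}
    &= \tilde n_{\vec i; k}\, \ord_z(f_{i_\rho}|_{C_{\vec i, \vec g; k}}).
\end{align*}

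Setting $n_{\vec i; k} := \tilde n_{\vec i; k}/m_k$ and combining the two displays will yield the first identity; the second then follows by summing over those $\vec i$ for which $Z$ is an irreducible component of $Z^{(\rho)}_{\vec i}$ and invoking the second assertion of \Cref{prop:local-curves}. The main obstacle I anticipate is a bookkeeping one rather than a substantive one: Cowsik--Nori supplies the $h_{\vec i; k, l}$ only on a neighborhood of $C_{\vec i, \vec g; k}$, so before writing $\sum_{z \in Z}$ I would need to shrink to an affine open subset of $X$ that still contains the finitely many points $z \in Z \cap C_{\vec i, \vec g; k}$ at which the summand is nonzero. No deeper difficulty is expected, which matches the article's framing of this interpretation as theoretically clean but algorithmically non-constructive.
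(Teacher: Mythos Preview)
Your proposal is correct and takes essentially the same approach as the paper, whose entire proof is the single sentence ``This follows immediately from \Cref{prop:local-curves}.'' You have simply unpacked that immediacy: the cycle decomposition of $C_{\vec i,\vec g}$, the additivity of $\ord_z$, and the identification of the set-theoretic complete intersection as a multiple of the reduced component are exactly the bookkeeping the paper suppresses, and your definition of $n_{\vec i;k}$ supplies the constant the paper leaves implicit.
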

\begin{proof}
This follows immediately from \Cref{prop:local-curves}.
\end{proof}

\subsection{Expression for irreducible crossing divisors} \label{sec:ind:res}
Given a Cartier divisor $D$ on $X$, we say that $D$ is an {\em irreducible crossing divisor} if for each $z \in \supp(D)$, there is an open neighborhood $U$ of $z$ in $X$, and regular functions $h_1, \ldots, h_k$ on $U$ such that
\begin{enumerate}
\item $D$ is represented on $U$ by a monomial in $h_1, \ldots, h_k$,
\item \label{property:irreducible-crossing:irr} $V(h_{k_1}, \ldots, h_{k_m}) \cap U$ is irreducible for each choice of $k_1, \ldots, k_m$.
\end{enumerate}
We say that $D$ is a {\em simplicial crossing divisor} if the following stronger version of property \eqref{property:irreducible-crossing:irr} is satisfied:
\begin{enumerate}[label=(\arabic*${'}$)]
\setcounter{enumi}{1}
\item \label{property:irreducible-crossing:simpl} if $k_1, \ldots, k_m$ are pairwise distinct, then $V(h_{k_1}, \ldots, h_{k_m}) \cap U$ is irreducible of {\em codimension} $m$ if $m \leq n$, and empty if $m > n$.
\end{enumerate}
We now show that the ordered intersection multiplicity of $F_1, \ldots, F_n$ along a variety $Z$ can be expressed in terms of the usual intersection multiplicities if $\bigcup \supp(F_i)$ is the support of an irreducible crossing divisor near generic points of $Z$. Since a strict normal crossing divisor is a simplicial crossing divisor, and in particular, an irreducible crossing divisor, this gives another inductive route to the affine B\'ezout problem when resolution of singularities is available (e.g.\ in characteristic zero).

\begin{proprop} \label{prop:irreducible-crossing}
Let $Z$ be an irreducible component of $Z^{(\rho)}_{i_1, \ldots, i_\rho}$ (as defined in \Cref{thm:Z}) such that $Z \not\subseteq \sing(X)$. Assume there is an open subset $\tilde U$ of $X$ such that $\tilde U \cap Z \neq \emptyset$, and $\tilde U \cap \sum_i F_i$ is an irreducible crossing divisor. Then there is an open subset $U$ of $\tilde U$ and $f_1, \ldots, f_n \in \kk[U]$ such that
\begin{enumerate}
\item $U \cap Z \neq \emptyset$,
\item each $F_i$ is represented by $f_i$ on $U$, and
\item there are divisors $f_{i_1, \ldots, i_\rho;j,s_j}$ of $f_{i_j}$ in $\local{X}{Z}$, $j = 1, \ldots, \rho$, such that
\begin{align*}
\len(\local{X}{Z}/I^{(\rho)}_{i_1, \ldots, i_\rho} \local{X}{Z})
    &= \sum_{s_1, \ldots, s_\rho} \multsub{f_{i_1, \ldots, i_\rho;1,s_1}}{f_{i_1, \ldots, i_\rho;\rho,s_\rho}}{Z}
\end{align*}
\end{enumerate}
If $\sum_i F_i$ is a simplicial crossing divisor, then there is only one possible choice of $s_j$ for each $j$, and more precisely, there are irreducible elements $f'_{i_1, \ldots, i_\rho; j} \in \local{X}{Z}$ and positive integers $\alpha_{i_1, \ldots, i_\rho;j}$, $j = 1, \ldots, \rho$, such that for each $j$,
\begin{enumerate}[resume]
\item up to multiplication by a unit in $\local{X}{Z}$, $f_{i_j}$ equals $(f'_{i_1, \ldots, i_\rho; j})^{\alpha_{i_1, \ldots, i_\rho;j}}$ times a monomial in $f'_{i_1, \ldots, i_\rho; j+1}, \ldots, f'_{i_1, \ldots, i_\rho; \rho}$; in particular,
\item $f_{i_\rho}$ equals $(f'_{i_1, \ldots, i_\rho; \rho})^{\alpha_{i_1, \ldots, i_\rho;\rho}}$ times a unit in $\local{X}{Z}$, and
\begin{align*}
\len(\local{X}{Z}/I^{(\rho)}_{i_1, \ldots, i_\rho} \local{X}{Z})
    &= \multsub{(f'_{i_1, \ldots, i_\rho; 1})^{\alpha_{i_1, \ldots, i_\rho;1}}}
        {(f'_{i_1, \ldots, i_\rho; \rho})^{\alpha_{i_1, \ldots, i_\rho;\rho}}}{Z}
\end{align*}
\end{enumerate}
\end{proprop}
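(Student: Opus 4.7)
\emph{Setup and reduction.} Since $Z\not\subseteq\sing(X)$, I first shrink $\tilde U$ so that $X$ is nonsingular at every point of $\tilde U\cap Z$; then $R:=\local{X}{Z}$ is a regular local ring of dimension $\rho$, and in particular a UFD. Using the irreducible crossing hypothesis, I further restrict to $U\subseteq\tilde U$ carrying regular functions $h_1,\ldots,h_k$ such that each $F_i$ is represented on $U$ by a monomial $f_i=u_i\prod_l h_l^{\beta_{i,l}}$ with $u_i$ a unit on $U$, and every $V(h_{l_1},\ldots,h_{l_m})\cap U$ is irreducible. In $R$ each $h_l$ not vanishing on $Z$ becomes a unit; for the remaining $h_l$, irreducibility of $V(h_l)$ together with $R$ being a UFD forces $\sqrt{(h_l)R}$ to be a principal prime, generated by an irreducible $\tilde h_l\in R$, so $h_l=v_l\tilde h_l^{e_l}$ with $v_l\in R^\times$. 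Hence $f_i=\tilde u_i\prod_l\tilde h_l^{\tilde\beta_{i,l}}$ in $R$, and the whole problem is translated into a purely combinatorial computation with monomials in the $\tilde h_l$.

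\emph{Inductive description of $I^{(j)}$.} I will prove by induction on $j$ that $I^{(j)}_{i_1,\ldots,i_\rho}\cdot R=(g_{j,j},g_{j,j-1},\ldots,g_{j,1})$, where each $g_{j,j'}$ is a monomial in $\{\tilde h_l\}$ (times a unit) dividing $f_{i_{j'}}$ in $R$. The base case $j=1$ is immediate: $(f_{i_1})R=\bigcap_l(\tilde h_l^{\tilde\beta_{i_1,l}})$ is the primary decomposition in the UFD $R$, and $g_{1,1}:=f_{i_1}$ is itself a monomial. For the step $j-1\to j$, the primary components of $I^{(j-1)}R$ are, by the inductive description, again of the form $(\tilde h_l^a)$ for specific $(l,a)$; the intersection of those whose radicals contain $f_i$ for $i<i_j$ but not $f_{i_j}$ is by the UFD structure a single monomial $g_{j,j}\in R$, and I set $g_{j,j'}:=g_{j-1,j'}$ for $j'<j$. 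At $j=\rho$ this gives $I^{(\rho)}R=(g_1,\ldots,g_\rho)$ with each $g_j$ a monomial dividing $f_{i_j}$.

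\emph{Length as a sum of intersection multiplicities.} Factor each $g_j$ into its distinct prime-power pieces $g_j=\prod_{s_j}\tilde h_{l(j,s_j)}^{\gamma(j,s_j)}$ and define $f_{i_1,\ldots,i_\rho;j,s_j}:=\tilde h_{l(j,s_j)}^{\gamma(j,s_j)}$, which divides $f_{i_j}$ by construction. I then apply repeatedly the length identity
\begin{align*}
\len(R/(\ldots,g'g'',\ldots))=\len(R/(\ldots,g',\ldots))+\len(R/(\ldots,g'',\ldots))
\end{align*}
coming from the short exact sequence $0\to R/(\ldots,g'',\ldots)\xrightarrow{\cdot g'}R/(\ldots,g'g'',\ldots)\to R/(\ldots,g',\ldots)\to 0$, which is valid whenever $g',g''$ are coprime in the UFD $R$ and multiplication by $g'$ is injective on the left-hand quotient. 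Splitting out every factor of every $g_j$ yields
\begin{align*}
\len(R/I^{(\rho)}R)=\sum_{s_1,\ldots,s_\rho}\len(R/(f_{i_1,\ldots,i_\rho;1,s_1},\ldots,f_{i_1,\ldots,i_\rho;\rho,s_\rho}))=\sum_{s_1,\ldots,s_\rho}\multsub{f_{i_1,\ldots,i_\rho;1,s_1}}{f_{i_1,\ldots,i_\rho;\rho,s_\rho}}{Z}
\end{align*}
where summands corresponding to tuples that do not form a regular sequence at $Z$ are taken to vanish.

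\emph{Simplicial case and main obstacle.} When $\sum_iF_i$ is simplicial, the codimension-equals-number-of-generators property forces the non-units among the $\tilde h_l$ in $R$ to be exactly $\rho$ and to form a regular system of parameters. A second induction on $j$ then shows that for $Z$ to be an $(i_1,\ldots,i_\rho)$-type distinguished component there must be a unique ``witness'' $\tilde h_{l_j}$ at each level---namely the unique $\tilde h_l$ dividing $f_{i_j}$ and each $f_i$ for $i_j<i<i_{j+1}$ but failing to divide $f_{i_{j'}}$ for $j'>j$. Setting $f'_j:=\tilde h_{l_j}$ and $\alpha_j:=\tilde\beta_{i_j,l_j}$, the $l_j$ are distinct, each $g_j$ factors simply as $\tilde h_{l_j}^{\alpha_j}$ times a unit, only one choice $s_j$ per level survives, and the sum collapses to the asserted product formula. \textbf{Expected main obstacle:} in the general (non-simplicial) irreducible crossing case, the $\tilde h_l$'s need not form a regular sequence, so the non-zerodivisor hypothesis in the splitting step must be verified by hand at each stage; this is exactly where the full strength of irreducibility of every $V(\tilde h_{l_1},\ldots,\tilde h_{l_s})$ enters, ensuring that the primary components of the $I^{(j)}R$ remain principal monomials in $\tilde h_l$ throughout the inductive construction and that no extraneous embedded components arise to spoil the length identity.
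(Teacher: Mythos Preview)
Your inductive description of $I^{(j)}R$ is where the argument breaks. You assert that the primary components of $I^{(j-1)}R$ are of the form $(\tilde h_l^a)$, i.e.\ \emph{principal}, and that their selected intersection is therefore a single monomial $g_{j,j}$. This is only true for $j=2$. For $j\geq 3$ the ideal $I^{(j-1)}R$ is generated by several monomials, and its primary components are generated by $j-1$ prime powers, not one. Concretely, take $g_1=h_1^ah_2^b$ and $f_{i_2}=h_3^ch_4^d$ in a regular local ring where $h_1,\ldots,h_5$ are part of a regular system of parameters; then
\[
I^{(2)}R=(h_1^ah_2^b,\,h_3^ch_4^d)=(h_1^a,h_3^c)\cap(h_1^a,h_4^d)\cap(h_2^b,h_3^c)\cap(h_2^b,h_4^d),
\]
none of which is principal. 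Moreover the selection rule for $I'^{(2)}$ need not pick a ``product'' subset: if between $i_2$ and $i_3$ there are indices $i',i''$ with $f_{i'}=h_1h_4$ and $f_{i''}=h_2h_3$, and $f_{i_3}=h_5$, then exactly $(h_1^a,h_3^c)$ and $(h_2^b,h_4^d)$ are selected, and
\[
I'^{(2)}R=(h_1^ah_2^b,\,h_1^ah_4^d,\,h_2^bh_3^c,\,h_3^ch_4^d),
\]
which is not generated by two monomials. Hence $I^{(3)}R=I'^{(2)}R+(h_5^e)$ is not generated by three monomials either, and your scheme $g_{j,j'}:=g_{j-1,j'}$ collapses. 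The subsequent splitting-by-exact-sequences step, which already needed non-zerodivisor checks you flagged but did not carry out, never gets off the ground.

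The paper avoids this by tracking primary components rather than generators: it shows inductively that every primary component of $I^{(j)}$ has the form $(h_{s_1}^{m_1},\ldots,h_{s_j}^{m_j})$ with the $s_l$ pairwise distinct and $h_{s_l}^{m_l}\mid f_{i_l}$, using the irreducibility of all $V(h_{s_1},\ldots,h_{s_m})$ at the inductive step. Applied at $j=\rho$ this immediately gives the length as a sum over the primary components supported at $Z$, each already in regular-sequence form. In the simplicial case the paper uses $\rho=k$ and the codimension hypothesis to force a unique chain $h_{l_1},\ldots,h_{l_\rho}$, which is essentially your backward-induction argument for that part; that portion of your proof is fine.
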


\begin{proof}
Fix an open subset $U$ of $\tilde U$ such that $U \cap Z \neq \emptyset$, and each $F_i$ is represented by some $f_i \in \kk[U]$, and there are regular functions $h_1, \ldots, h_k$ on $U$ such that
\begin{prooflist}
\item $\sum_i F_i$ is represented on $U$ by a monomial in $h_1, \ldots, h_k$, and
\item \label{irreducible-crossing:irr} $V(h_{k_1}, \ldots, h_{k_m}) \cap U$ is irreducible for each choice of $k_1, \ldots, k_m$.
\end{prooflist}
Choosing $U$ sufficiently small if necessary, we may also assume that
\begin{prooflist}[resume]
\item each $h_i$ is an irreducible element in $\local{X}{Z}$, and
\item \label{irreducible-crossing:subset} $Z \cap U \subseteq V(h_1, \ldots, h_k)$.
\end{prooflist}
Now consider the construction of the ideals $I^{(j)}_{i_1, \ldots, i_\rho}$ from \Cref{thm:length}. Since $Z \not\subseteq \sing(X)$, the Auslander–Buchsbaum theorem implies that $\local{X}{Z}$ is a UFD, so that
\begin{prooflist}[resume]
\item \label{irreducible-crossing:mon} each $f_i$ is a monomial in $h_1, \ldots, h_k$.
\end{prooflist}
Since all sets of the form $V(h_{s_1}, \ldots, h_{s_m}) \cap U$ are irreducible, this implies that for each $j = 1, \ldots, \rho$,
\begin{prooflist}[resume]
\item \label{irreducible-crossing:j} each primary component of $I^{(j)}_{i_1, \ldots, i_\rho} \cap U$ is of the form $V(h_{s_1}^{m_1}, \ldots, h_{s_j}^{m_j})$ where $s_1, \ldots, s_j$ are pairwise distinct, and $h_{s_l}^{m_l}$ are divisors of $f_{i_l}$, $l = 1, \ldots, j$.
\end{prooflist}
The preceding observation, applied to $j = \rho$, implies that
\begin{align*}
\len(\local{X}{Z}/I^{(\rho)}_{i_1, \ldots, i_\rho} \local{X}{Z})
    &= \sum_{s_1, \ldots, s_\rho} \multsub{h_{s_1}^{m_1}}{h_{s_\rho}^{m_\rho}}{Z}
\end{align*}
where the sum is over all $s_1, \ldots, s_\rho$ such that $h_{s_1}^{m_1}, \ldots, h_{s_\rho}^{m_\rho}$ generate a primary component of $I^{(\rho)}_{i_1, \ldots, i_\rho}$. This proves the first assertion of \Cref{prop:irreducible-crossing}. Now since $Z$ is an irreducible component of $V(I^{(\rho)}_{i_1, \ldots, i_\rho})$, observations \ref{irreducible-crossing:subset} and \ref{irreducible-crossing:j} imply that
\begin{prooflist}[resume]
\item $Z = V(h_1, \ldots, h_k)$.
\end{prooflist}
Replacing $U$ by a smaller open subset if necessary, we may assume that $Z \cap U$ is the only irreducible component of $Z^{(\rho)}_{i_1, \ldots, i_\rho} \cap U = Z^{(\rho-1)}_{i_1, \ldots, i_\rho} \cap V(f_{i_\rho}) \cap U$. Now assume $\sum_i F_i$ is a simplicial crossing divisor. Then
\begin{prooflist}[resume]
\item $\rho = \codim(Z) = \codim(h_1, \ldots, h_k) = k$.
\end{prooflist}
On the other hand, it follows from the definition of the $I^{(j)}_{i_1, \ldots, i_\rho}$ that
\begin{prooflist}[resume]
\item if $h_{s_1}^{m_1}, \ldots, h_{s_{j}}^{m_{j}}$ generate a primary component of $I^{(j)}_{i_1 ,\ldots, i_\rho}$, then none of the $h_{s_l}$ divides $\prod_{j'>j}f_{i_{j'}}$.
\end{prooflist}
Since $\rho = k$, applying the preceding observation to $j = \rho-1$, then to $j = \rho -2$, and so on up to $j = 1$, it follows after a renumbering of the $h_s$ if necessary that for each $j = 1, \ldots, \rho$,
\begin{prooflist}[resume]
\item up to multiplication by a unit in $\local{X}{Z}$, $f_{i_j}$ equals $h_j^{m_j}$ times a monomial in $h_{j+1}, \ldots, h_\rho$;
\item $I^{(j)}_{i_1, \ldots, i_\rho}$ is generated by $h_1^{m_1}, \ldots, h_j^{m_j}$.
\end{prooflist}
This completes the proof of \Cref{prop:irreducible-crossing}.
\end{proof}

\part{Appendix}

\appendix
\section{Miscellaneous algebraic results used in the article}

\begin{lemma} \label{lemma:nilorder}
Let $C$ be a (possibly non-reduced) curve, $z \in C$, and $t$ be a non zero-divisor and non-unit in $\local{C}{z}$, so that $m := \len(\hatlocal{C}{z}/t\hatlocal{C}{z})$ is {\em positive} and finite. If $h \in \local{C}{z}$ is nilpotent, then $h^m = 0$.
\end{lemma}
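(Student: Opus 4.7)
The plan is to reduce to the complete local ring $\hat R := \hatlocal{C}{z}$ and then combine primary decomposition with the associativity formula for Hilbert--Samuel multiplicities. The natural map $\local{C}{z} \to \hat R$ is injective by Krull's intersection theorem and preserves nilpotency, so it suffices to prove $h^m = 0$ for every nilpotent $h \in \hat R$.

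First I would observe that, since $t$ is a non zero-divisor and $(t)$ is $\mathfrak{m}$-primary (forced by $\len(\hat R/t\hat R) = m < \infty$), the ring $\hat R$ is one-dimensional and $\mathfrak{m} \notin \mathrm{Ass}(\hat R)$. Consequently every associated prime of $\hat R$ is a minimal prime $\mathfrak{p}_i$ ($i = 1, \ldots, r$), each quotient $\hat R/\mathfrak{p}_i$ is a one-dimensional local domain, and in a minimal primary decomposition $0 = \mathfrak{q}_1 \cap \cdots \cap \mathfrak{q}_r$ with $\sqrt{\mathfrak{q}_i} = \mathfrak{p}_i$, the localization $\hat R_{\mathfrak{p}_i}$ is an Artinian local ring of some finite length $\lambda_i$. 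In particular $(\mathfrak{p}_i \hat R_{\mathfrak{p}_i})^{\lambda_i} = 0$, whence $\mathfrak{p}_i^{\lambda_i} \subseteq \mathfrak{q}_i$.

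The key step then combines two estimates. Since any nilpotent $h \in \hat R$ lies in $\bigcap_i \mathfrak{p}_i$, one has $h^{\lambda_i} \in \mathfrak{p}_i^{\lambda_i} \subseteq \mathfrak{q}_i$ for each $i$, and therefore $h^{\max_i \lambda_i} \in \bigcap_i \mathfrak{q}_i = 0$. On the other hand, applying the associativity formula for Hilbert--Samuel multiplicities to the parameter $t$ on the one-dimensional ring $\hat R$ gives
\[
m \;=\; \len(\hat R/t\hat R) \;=\; e(t;\hat R) \;=\; \sum_{i=1}^{r} \lambda_i \cdot e(t;\hat R/\mathfrak{p}_i) \;\geq\; \sum_{i=1}^{r} \lambda_i \;\geq\; \max_i \lambda_i,
\]
where the first equality uses that $t$ is a non zero-divisor (so the filtration $t^j\hat R/t^{j+1}\hat R \cong \hat R/t\hat R$ is exact in each degree), and each $e(t;\hat R/\mathfrak{p}_i) \geq 1$ since $t$ is a non-unit in the one-dimensional local domain $\hat R/\mathfrak{p}_i$. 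Combining the two estimates yields $h^m = 0$.

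The main technical point to be careful about is the invocation of the associativity formula---verifying that every minimal prime contributes (which holds here because $\dim \hat R/\mathfrak{p}_i = 1 = \dim \hat R$ for all $i$, as $t \in \mathfrak{m} \setminus \mathfrak{p}_i$) and that $e(t;\hat R) = \len(\hat R/t\hat R)$ (which is the content of the $t$-regular filtration argument above). Once these are in place the computation is short.
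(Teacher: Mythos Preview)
Your proof is correct, but it takes a genuinely different route from the paper's argument. The paper exploits the fact that, since $t$ is a non zero-divisor and $(t)$ is $\mathfrak m$-primary with $\len(\hat R/t\hat R)=m$, the completion $\hat R$ is a \emph{free} $\kk[[t]]$-module of rank $m$; hence $\hat R_t$ is an $m$-dimensional vector space over $\kk((t))$. The paper then shows that if $s$ is the nilpotency index of $h$, the powers $1,h,\ldots,h^{s-1}$ are $\kk((t))$-linearly independent in $\hat R_t$ (a short contradiction argument using that any expression $\phi_0(t)+h\phi_1(t)+\cdots$ with $\phi_0\neq 0$ is a non zero-divisor), forcing $s\le m$.

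Your argument instead decomposes the zero ideal into minimal-prime primary components and bounds the nilpotency index by $\max_i \lambda_i$ where $\lambda_i=\len(\hat R_{\mathfrak p_i})$, then uses the associativity formula $m=e(t;\hat R)=\sum_i \lambda_i\, e(t;\hat R/\mathfrak p_i)\ge \sum_i\lambda_i$ to get $\max_i\lambda_i\le m$. This is heavier machinery, but it has the virtue of not invoking the $\kk[[t]]$-module structure (so it works verbatim for any one-dimensional Noetherian local ring with a regular parameter, without assuming a coefficient field). The paper's approach, on the other hand, is more self-contained and gives a very concrete picture of why the bound holds: nilpotent elements simply cannot contribute more than $m$ linearly independent powers in a rank-$m$ free module. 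Both arguments hinge on the same identity $\len(\hat R/t^n\hat R)=nm$, just packaged differently.
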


\begin{proof}
Since $t$ is a non zero-divisor in $\local{C}{z}$, it follows from standard commutative algebra that there is a $\kk[[t]]$-module isomorphism
\begin{align*}
\hatlocal{C}{z} \cong \kk[[t]]^m
\end{align*}
(see e.g.\ \cite[Proof of Proposition C.6]{howmanyzeroes}). Since $t$ is a non zero-divisor, it follows that
\begin{align*}
(\hatlocal{C}{z})_t \cong \kk((t))^m
\end{align*}
as a vector space over the field $\kk((t))$ of Laurent series in $t$. Now pick the smallest $s \geq 1$ such that $h^s = 0$. We claim that $1, h, \ldots, h^{s-1}$ are linearly independent over $\kk((t))$. Indeed, otherwise there is a relation of the form
\begin{align*}
h^{s'}(\phi_0(t) + \sum_{j=1}^{s''} h^j\phi_j(t)) = 0
\end{align*}
with $0 \leq s' < s$ and each $\phi_j \in \kk[[t]]$. However, since $t$ is a non zero-divisor and $h$ is nilpotent, it follows that $\phi_0(t) + \sum_{j=1}^{s''} h^j\phi_j(t)$ is also a non zero-divisor. Then $h^{s'} = 0$, contradicting the choice of $s$. This proves the claim. It follows that $s \leq m$, as required.
\end{proof}

\begin{prop} \label{prop:length_f}
Let $I$ be an ideal of pure codimension $k$ in a Noetherian ring $R$. Let $f \in R$ and $I' := IR_f \cap R$.
\begin{enumerate}
\item If $f \in \sqrt{I}$, then $I' = R$.
\item If $f \not\in \sqrt{I}$, then $I'$ has pure codimension $k$ and $I' + fR$ has pure codimension $k+1$ in $R$. Moreover, for each codimension $k+1$ prime $\ppp'$ of $R$,
\begin{align}
\begin{split}
\len(R_{\ppp'}/(I'R_{\ppp'} + fR_{\ppp'}))
    &= \sum_{\substack{\ppp \subseteq \ppp' \\ \codim(\ppp) = k}}
        \len(R_\ppp/I'R_\ppp)\len(R_{\ppp'}/(\ppp R_{\ppp'} + fR_{\ppp'})) \\
    &= \sum_{\substack{f \not\in \ppp \subseteq \ppp' \\ \codim(\ppp) = k}}
        \len(R_\ppp/IR_\ppp)\len(R_{\ppp'}/(\ppp R_{\ppp'} + fR_{\ppp'}))
\end{split}
\label{eq:length_f}
\end{align}
\end{enumerate}
\end{prop}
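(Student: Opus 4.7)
Part (1) is immediate: if $f \in \sqrt{I}$, then $f^n \in I$ for some $n \geq 1$, so $1 = f^n/f^n \in IR_f$, which gives $IR_f = R_f$ and hence $I' = R$.

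For Part (2), the plan is to go through primary decomposition. Since $R$ is Noetherian, fix a minimal primary decomposition $I = \qqq_1 \cap \cdots \cap \qqq_m$ with $\ppp_i := \sqrt{\qqq_i}$. A standard fact about primary ideals says $\qqq_i R_f = R_f$ if $f \in \ppp_i$, whereas $\qqq_i R_f \cap R = \qqq_i$ if $f \notin \ppp_i$. Intersecting and contracting yields
\begin{align*}
I' \;=\; \bigcap_{f \notin \ppp_i} \qqq_i.
\end{align*}
In particular, the associated primes of $I'$ are exactly the $\ppp_i$ with $f \notin \ppp_i$, and its minimal primes are the minimal primes of $I$ not containing $f$. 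Since $I$ has pure codimension $k$, these all have codimension $k$; this gives the pure codimension $k$ statement for $I'$. For $I' + fR$, any minimal prime $\ppp'$ of $I' + fR$ is minimal over $\ppp_i + fR$ for some minimal prime $\ppp_i$ of $I'$. Since $f \notin \ppp_i$, Krull's Hauptidealsatz forces $\codim(\ppp'/\ppp_i) = 1$, and hence $\codim(\ppp') = k+1$.

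For the length formula, set $M := R_{\ppp'}/I' R_{\ppp'}$ and note that $M/fM = R_{\ppp'}/(I' R_{\ppp'} + fR_{\ppp'})$. The key observation is that $f$ is a non zero-divisor on $M$: by the description of $I'$ above, no associated prime of $I'$ (and therefore of $I'R_{\ppp'}$) contains $f$. Now take a prime filtration $0 = M_0 \subset M_1 \subset \cdots \subset M_N = M$ with successive quotients $R_{\ppp'}/\qqq_j$. Each $\qqq_j$ lies in $\supp M$, hence is either of the form $\ppp R_{\ppp'}$ for a codimension $k$ minimal prime $\ppp \subseteq \ppp'$ of $I'$, or equals $\ppp' R_{\ppp'}$. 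Counting multiplicities in the filtration gives that each $\ppp R_{\ppp'}$ appears exactly $\len(R_\ppp/I'R_\ppp)$ times. Applying the additive Euler characteristic $\chi(f,\cdot) = \len(\cdot/f\cdot) - \len(\ker(f|_\cdot))$ along the filtration: quotients $R_{\ppp'}/\ppp' R_{\ppp'}$ have finite length so contribute $\chi = 0$; quotients $R_{\ppp'}/\ppp R_{\ppp'}$ (with $f$ a non zero-divisor since $f \notin \ppp$) contribute $\chi = \len(R_{\ppp'}/(\ppp R_{\ppp'} + fR_{\ppp'}))$. Since $f$ is a non zero-divisor on $M$, $\ker(f|_M) = 0$ and we obtain the first equality of \eqref{eq:length_f}.

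For the second equality, one just observes that primes $\ppp$ with $f \in \ppp$ contribute nothing: if $\codim(\ppp) = k$ and $I' \subseteq \ppp$, then $\ppp$ would be a minimal prime of $I'$, contradicting $f \notin \ppp_i$ for all associated primes of $I'$; whereas if $I' \not\subseteq \ppp$ then $I'R_\ppp = R_\ppp$ and $\len(R_\ppp/I'R_\ppp) = 0$. Finally, when $f \notin \ppp$, $f$ is invertible in $R_\ppp$ so $I'R_\ppp = I R_f R_\ppp = IR_\ppp$, which identifies the two summands. The only subtle step is the length-additivity argument in the third paragraph; verifying carefully that $f$ is a non zero-divisor on $M$ and that embedded primes contribute trivially to the Euler characteristic is where the care is required, but both follow cleanly from the explicit primary decomposition of $I'$.
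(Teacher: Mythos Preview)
Your proof is correct and follows essentially the same approach as the paper's: both hinge on the fact that $f$ is a non zero-divisor on $R_{\ppp'}/I'R_{\ppp'}$ and on the additivity of the Euler characteristic $\chi(f,\cdot)=\len(\cdot/f\cdot)-\len(\ker(f\mid_\cdot))$ along a filtration. The only difference is packaging: the paper invokes \cite[Lemma A.2.7]{fultersection} directly for the multiplicity identity, whereas you unpack that lemma inline via a prime filtration, and you make the primary decomposition of $I'$ explicit where the paper leaves it implicit.
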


\begin{proof}
The first assertion is immediate. So assume $f \not\in \sqrt{I}$. %Let $I = \bigcap_i \qqq_i$ be a primary decomposition of $I$. Then $I' = \bigcap_{f \not\in \sqrt{\qqq_i}} \qqq_i$. In particular, $f$ is a {\em non} zero-divisor in $R/I'$.
Then $I' + fR$ has pure codimension $k+1$ in $R$ due to Krull's principal ideal theorem. %
%Indeed, $I = \bigcap_i \qqq_i$, where each $\qqq_i$ is primary. Then
So for each prime ideal $\ppp$ associated to $I$, we have
\begin{align*}
\codim(I' + fR) = k+1 > k = \codim(\ppp)
\end{align*}
so that
\begin{align*}
I' + fR \not \subseteq \ppp
\end{align*}
In particular, if $f \in \ppp$, then $I' \not\subseteq \ppp$, so that $I'R_\ppp = R_\ppp$. The second equality of \eqref{eq:length_f} then follows from the following observation that
\begin{align*}
I'R_\ppp
    &=
    \begin{cases}
    R_\ppp & \text{if}\ f \in \ppp, \\
    IR_\ppp & \text{if}\ f \not\in \ppp.
    \end{cases}
\end{align*}
Fix a codimension $k+1$ prime $\ppp'$ of $R$ containing $I' + fR$. An application of \cite[Lemma A.2.7]{fultersection} with $M = A = R_{\ppp'}/I'R_{\ppp'}$ shows that
\begin{align*}
\sum_{\substack{I' \subseteq \ppp \subseteq \ppp' \\ \codim(\ppp) = k}}
        \len(R_\ppp/I'R_\ppp)\len(R_{\ppp'}/(\ppp R_{\ppp'} + fR_{\ppp'}))
    &= e_{R_{\ppp'}/I'R_{\ppp'}}(f)
\end{align*}
where $e_{\cdot}(\cdot)$ is the ``multiplicity'' defined by
\begin{align*}
e_A(a) &:= \len(A/aA) - \len(0:a)
\end{align*}
However, it follows from the definition of $I'$ that $f$ is a non zero-divisor in $R_{\ppp'}/I'R_{\ppp'}$, and consequently,
\begin{align*}
e_{R_{\ppp'}/I'R_{\ppp'}}(f) &= \len(R_{\ppp'}/(I'R_{\ppp'} + fR_{\ppp'}))
\end{align*}
as required.
\end{proof}

\section{A result about completions of local rings at a subvariety}

The goal of this section is to prove \Cref{claim:integral-claim-1} which is used in \Cref{proof:length:confirmation:I-g-subset} to prove \Cref{length:confirmation:I-g-subset}.

\begin{thm} \label{prop:generically-integral}
Let $X$ be an irreducible affine variety, $Z$ be an irreducible subvariety of $X$. Let $\phi_1, \ldots, \phi_q, f_1, \ldots, f_r \in \kk[X]$ such that
\begin{enumerate}
\item $\kk[Z]$ is algebraic over $\kk[\phi_1|_Z, \ldots, \phi_q|_Z]$,
\item $Z$ is set-theoretically the closed subset $V(f_1, \ldots, f_r)$ of $X$.
\end{enumerate}
Then
\begin{prooflist}
\item \label{generically-integral:0} $\kk(\phi_1, \ldots, \phi_q)[[f_1, \ldots, f_r]]$ is a subring of $\hatlocal{X}{Z}$, and there is a nonzero element $\phi \in \kk[\phi_1, \ldots, \phi_q]$ such that $\kk[X]$ is integral over $\kk[\phi_1, \ldots, \phi_q]_\phi[[f_1, \ldots, f_r]]$ in $\hatlocal{X}{Z}$.
\item \label{generically-integral:1} If $\kk[Z]$ is integral over $\kk[\phi_1|_Z, \ldots, \phi_q|_Z]$, then
\begin{prooflist}
\item \label{generically-integral:1:1} we can take $\phi = 1$ in the preceding sentence, i.e.\ $\kk[X]$ is integral over $\kk[\phi_1, \ldots, \phi_q][[f_1, \ldots, f_r]]$ in $\hatlocal{X}{Z}$,
\item \label{generically-integral:1:k[X]} in addition, the completion $\widehat{\kk[X]}$ of $\kk[X]$ with respect to the prime ideal of $Z$ is integral over the subring $\kk[\phi_1, \ldots, \phi_q][[f_1, \ldots, f_r]]$.
\end{prooflist}
\end{prooflist}
\end{thm}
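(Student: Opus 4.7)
The plan treats this theorem as a completion-theoretic Noether normalization. Set $R := \hatlocal{X}{Z}$, a complete Noetherian local ring of dimension $h := \codim(Z)$ with maximal ideal $\hat{\mmm}_Z$ and residue field $\kk(Z)$. Two structural observations drive everything: (a) since $V(f_1, \ldots, f_r) = Z$ set-theoretically, $(f_1, \ldots, f_r)\local{X}{Z}$ is $\mmm_Z$-primary, so the $(f_1, \ldots, f_r)$-adic and $\hat{\mmm}_Z$-adic topologies on $R$ coincide and power series $\sum_\alpha c_\alpha f^\alpha$ converge in $R$ for coefficients $c_\alpha$ in any subring; (b) Cohen's structure theorem supplies a coefficient subfield $K \subseteq R$ with $K \cong \kk(Z)$, realizing $R$ as a module-finite extension of $K[[t_1, \ldots, t_h]]$ for any system of parameters $t_1, \ldots, t_h$ of $\local{X}{Z}$.

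For the first assertion of (i), the algebraicity hypothesis combined with the transcendence-degree count $\dim Z = \mathrm{tr.deg}_\kk(\kk[\phi_1|_Z, \ldots, \phi_q|_Z]) \le q$ forces $q \ge \dim Z$, with the natural case being $q = \dim Z$, in which the $\phi_i|_Z$ are algebraically independent in $\kk[Z]$ and the map $A := \kk[\phi_1, \ldots, \phi_q] \to \kk[Z]$ is injective. Then every nonzero $\phi \in A$ avoids $\hat{\mmm}_Z$, is a unit in $R$, and $\kk(\phi_1, \ldots, \phi_q) := \mathrm{Frac}(A)$ embeds in $R$; combined with observation (a), the subring $\kk(\phi_1, \ldots, \phi_q)[[f_1, \ldots, f_r]] \subseteq R$ is well-defined.

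The integrality claim in (i) has two stages. \emph{Stage~1 (integrality over $\mathrm{Frac}(A)[[f]]$):} Prime avoidance picks $t_1, \ldots, t_h$ as $\kk$-linear combinations of the $f_j$ forming a system of parameters; Cohen then makes $R$ module-finite over $K[[t_1, \ldots, t_h]] \subseteq K[[f_1, \ldots, f_r]]$. Since $\kk(Z)/\kk(\phi_1, \ldots, \phi_q)$ is a finite extension (a finitely generated algebraic extension of fields), $K[[f_1, \ldots, f_r]]$ is module-finite over $\kk(\phi_1, \ldots, \phi_q)[[f_1, \ldots, f_r]]$, and transitively $R$---hence $\kk[X]$---is integral over the latter. \emph{Stage~2 (descent to $A_\phi[[f]]$):} Classical Noether normalization for $A|_Z \hookrightarrow \kk[Z]$ produces nonzero $\bar\phi \in A|_Z$ with $\kk[Z]$ module-finite over $(A|_Z)_{\bar\phi}$; lift to $\phi \in A$ (so $\phi \in R^\times$). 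For each generator $x \in \kk[X]$, lift the integral equation of $\bar x := x|_Z$ over $(A|_Z)_{\bar\phi}$ to a monic $\tilde P(T) \in A_\phi[T]$ with $\tilde P(x) \in \hat{\mmm}_Z$; a Hensel-style iterative correction, converging in the $(f_1, \ldots, f_r)$-adic topology of $R$ and controlled by the degree bound from Stage~1, produces a monic lift $Q(T) \in A_\phi[[f_1, \ldots, f_r]][T]$ with $Q(x) = 0$.

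Part (ii)(a) follows from the integrality hypothesis by running Stage~2 with $\bar\phi = \phi = 1$. For (ii)(b), the integral closure of $B := A[[f_1, \ldots, f_r]]$ in $R$ is closed in the $(f_1, \ldots, f_r)$-adic topology---Cauchy sequences of integral elements of bounded integral degree (the degree bound again coming from Stage~1) have integral limits---so the $(f_1, \ldots, f_r)$-adic closure $\widehat{\kk[X]}$ of $\kk[X]$ in $R$ remains integral over $B$. The principal obstacle throughout is Stage~2: a naive element-by-element argument requires controlling the denominators of infinitely many power-series coefficients simultaneously, resolved by combining the module-finiteness of Stage~1 (which bounds degrees of integral equations) with successive approximation in the complete ring $R$ using the single $\bar\phi$ from classical Noether normalization at the $Z$-level.
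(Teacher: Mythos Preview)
Your Stage~1 is a legitimate alternative for the integrality over $L[[f]]$ with $L := \kk(\phi_1,\ldots,\phi_q)$: Cohen's structure theorem together with the finiteness of $\kk(Z)/L$ does make $R$ module-finite over $L[[f_1,\ldots,f_r]]$. One point you should make explicit is that the coefficient field $K \subseteq R$ can be chosen to contain the \emph{given} elements $\phi_i \in R$, not merely some abstract lifts of $\phi_i|_Z$; this is possible in equal characteristic (adjoin the specific transcendentals $\phi_i$ first, then complete to a coefficient field via Hensel), but it deserves a sentence.

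Stage~2, however, is where the real content lies, and ``Hensel-style iterative correction'' is not a proof. Hensel's lemma lifts \emph{roots} of a fixed polynomial; you want the reverse --- to perturb $\tilde P$ with $\tilde P(x)\in\hat\mmm_Z$ into a monic $Q\in A_\phi[[f]][T]$ with $Q(x)=0$. To iterate you would have to express the error at each step as an $A_\phi$-combination of a \emph{fixed finite} set of monomials in the generators of $\kk[X]$ times monomials in the $f_j$, and that is exactly the statement you are trying to prove. The degree bound from Stage~1 only gives linear dependence over $L[[f]]$, with no mechanism to keep the denominators in $A$ bounded as the iteration proceeds. Note too that $R$ itself is typically \emph{not} module-finite over $A_\phi[[f]]$ (take $X=\mathbb{A}^2$, $Z=V(y)$, $\phi_1=x$, $f_1=y^2$: then $R=\kk(x)[[y]]$ while $A_\phi[[f]]=\kk[x]_\phi[[y^2]]$, and $\kk(x)$ is not finite over $\kk[x]_\phi$), so any argument routed through $R$ rather than $\kk[X]$ cannot succeed.

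The paper avoids this by fixing the denominator once and for all. After embedding $X\subseteq\kk^N$, the algebraicity of each coordinate $x_i|_Z$ yields (after raising to a power so as to land in $I(X)+(f_1,\ldots,f_r)$ rather than merely its radical) a relation
\[
a_{i,0}(\phi)\,x_i^{d_i}+\sum_{j=1}^{d_i}a_{i,j}(\phi)\,x_i^{d_i-j}\in I(X)+(f_1,\ldots,f_r)
\]
with $a_{i,0}|_Z\neq 0$. Setting $\phi:=\prod_i a_{i,0}$, one shows by repeatedly substituting for each $x_i^{d_i}$ that any $h\in\kk[X]$ lies in the $A_\phi[[f]]$-span of the finite set $\{x^\alpha:\alpha_i<d_i\ \text{for all}\ i\}$; the point is that every substitution introduces only the \emph{same} denominator $\phi$, so the power-series coefficients remain in $A_\phi$. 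The determinantal (Cayley--Hamilton) trick on this spanning set then gives the integral equation. For (ii)(b) the paper extends the same spanning statement to $h\in\widehat{\kk[X]}$ by writing $h$ as a limit of elements of $\kk[X]$ and assembling the resulting power-series coefficients using a finite-depth monomial ordering on $\zz_{\geq 0}^r$; your closure argument for (ii)(b) is reasonable \emph{once} (ii)(a) is in hand, but it too rests on the missing Stage~2.
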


%\begin{rem}
%\Cref{??}
%I don't think the condition $q = \dim(Z)$ is used anywhere in the proof. It is only required when we {\em use} this proposition in the text, to conclude that a nonzero coefficient, which is a member of $\kk[\phi_1, \ldots, \phi_q]$, does {\em not} vanish identically on $Z$.
%\end{rem}

\begin{proof}
Assume $X$ is a subvariety of $\kk^N$ with coordinates $(x_1, \ldots, x_N)$. We abuse the notation to use $\phi_i$ (respectively, $f_j$) also for a representative of $\phi_i$ (respectively, $f_j$) in $\kk[x_1, \ldots, x_N]$. Let $J$ be the ideal of $\kk[x_1, \ldots, x_N]$ generated by $f_1, \ldots, f_r$, so that
\begin{align*}
I(Z) = \sqrt{I(X)+J}
\end{align*}
For each $i$, $x_i|_Z$ is algebraic over $\kk(\phi_1|_Z, \ldots, \phi_q|_Z)$. Taking a power of the minimal equation of $x_i|_Z$ over $\kk(\phi_1|_Z, \ldots, \phi_q|_Z)$ if necessary, we have that
\begin{align*}
a_{i,0}(\phi_1, \ldots, \phi_q) x_i^{d_i} + \sum_{j=1}^{d_i} a_{i,j}(\phi_1, \ldots, \phi_q)\in (I(X)+J)
\end{align*}
for some $a_{i,0}, \ldots, a_{i,d_i} \in \kk[\phi_1, \ldots, \phi_q]$ with $a_{i,d_i} \neq 0$. It follows that
\begin{align*}
x_i^{d_i} + \sum_{j=1}^{d_i} \frac{a_{i,j}}{a_{i,0}} x_i^{d_i -j} = \frac{1}{a_{i,0}}\left( \sum_{j=1}^r g_{i,j}f_j + g'_i \right)
\end{align*}
where $g_{i,1}, \ldots, g_{i,r}, g'_i$ are polynomials in $(x_1, \ldots, x_N)$ and $g'_i \in I(X)$. Let
\begin{align*}
\phi := \prod_{i=1}^N a_{i,0} \in \kk[\phi_1, \ldots, \phi_q]
\end{align*}
and $\scrA$ be the (finite) set of all $\alpha = (\alpha_1, \ldots, \alpha_N) \in \znonnegg{N}$ such that $\alpha_i < d_i$ for each $i$.

\begin{proclaim} \label{claim:generically-finite-module}
Each $h \in \kk[X]$ can be represented in $\hatlocal{X}{Z}$ as a $\kk[\phi_1, \ldots, \phi_q]_\phi[[f_1, \ldots, f_r]]$-linear combination of $x^\alpha$, $\alpha \in \scrA$.
\end{proclaim}

\begin{proof}
Given $h \in \kk[x_1, \ldots, x_N]$, replacing each $(x_i)^{d_i}$ using the above equation and repeating as many times as necessary, one has that
\begin{align*}
h &= \sum_{\alpha \in \scrA} b_\alpha x^\alpha + \frac{1}{\phi^{m_1}}\left(\sum_{j=1}^{r} f_j h_j + h'_1\right)
\end{align*}
where the $b_\alpha \in \kk[\phi_1, \ldots, \phi_q]_\phi$, $h_1, \ldots, h_r, h'_1$ are polynomials in $(x_1, \ldots, x_N)$ and $h'_1 \in I(X)$.  Then continue the same process with each $h_j$ to yield
\begin{align*}
h &= \sum_{\alpha \in \scrA} b_\alpha x^\alpha
        + \sum_{j=1}^{r} \frac{1}{\phi^{m_1}} f_j
            \left(
                \sum_{\alpha \in \scrA} b_{\alpha,j} x^\alpha +
                \frac{1}{\phi^{m_2}}\left(\sum_{k=1}^{r} f_k h_{j,k} + h'_{1,j}\right)
            \right)
        + \frac{h'_1}{\phi^{m_1}}
\end{align*}
which can be rewritten as
\begin{align*}
h &= \sum_{\alpha \in \scrA} \left(b_\alpha + \sum_{j=1}^{r} \frac{b_{\alpha,j}}{\phi^{m_1}}f_j\right)x^\alpha
        + \frac{1}{\phi^{m_1 + m_2}}\left(\sum_{j,k} f_j f_k h_{j,k} + h'_2\right)
\end{align*}
This can be continued with
\begin{align*}
h &= \sum_{\alpha \in \scrA}
        \left(
            b_\alpha
            + \sum_j \frac{b_{\alpha,j}}{\phi^{m_1}}f_j
            + \sum_{j,k} \frac{b_{\alpha,j,k}}{\phi^{m_1+m_2}}f_jf_k
        \right) x^\alpha
        + \frac{1}{\phi^{m_1 + m_2+m_3}}\left(\sum_{j,k,l} f_j f_k f_l h_{j,k,l} + h'_3\right)
\end{align*}
and so on to obtain a sum $\sum_{\alpha \in \scrA} c_\alpha x^\alpha$ with $c_\alpha \in \kk[\phi_1, \ldots, \phi_q]_\phi [[f_1, \ldots, f_r]]$ such that for each $M$, $h|_X \equiv \sum_{\alpha \in \scrA} (c_\alpha x^\alpha)|_X$ modulo the ideal in $\hatlocal{X}{Z}$ generated by $f^{\alpha}$ with $|\alpha| > M$. Since each $f_j$ is in the maximal ideal of $\hatlocal{X}{Z}$, it follows (e.g.\ since the intersection of all powers of a proper ideal in a Noetherian ring is zero) that
\begin{align*}
h &= \sum_{\alpha \in \scrA} c_\alpha x^\alpha \in \hatlocal{X}{Z}
\end{align*}
This completes the proof.
\end{proof}

%\begin{lemma}
%Let $Z$ be an irreducible subvariety of an affine variety $X$ and $\phi, f_1, \ldots, f_r \in \kk[X]$ such that each $f_i \in I(Z)$, and
%\begin{align*}
%\phi = \sum_\alpha \phi_\alpha \prod_j (f_j)^{\alpha_j} \in \hatlocal{X}{Z}
%\end{align*}
% where each $\phi_\alpha \in \kk[X]$. Then for each $z \in Z$, there is a relation
%\end{lemma}

Assertions \ref{generically-integral:0} and \ref{generically-integral:1:1} of \Cref{prop:generically-integral} immediately follow from \Cref{claim:generically-finite-module} via the ``determinantal trick''. For the remaining assertion \ref{generically-integral:1:k[X]} it suffices to show that \Cref{claim:generically-finite-module} continues to hold if we replace $\kk[X]$ by $\widehat{\kk[X]}$, since then we can use the determinantal trick to prove integrality. So assume $\phi = 1$ as in assertion \ref{generically-integral:1}, and pick $h \in \widehat{\kk[X]}$. Assume $h$ is the inverse limit of $(h_n)_{n \geq 0}$, with
\begin{align*}
h_n \equiv h_m \mod (I(Z))^n
\end{align*}
for all $m \geq n$. We are going to construct $c_{\alpha, \beta} \in \kk[\phi_1, \ldots, \phi_q]$ such that $h = \sum_{\alpha, \beta} c_{\alpha,\beta}x^\alpha f^\beta$, where the $\alpha$ vary over $\scrA$ and the $\beta$ over $\znonnegg{r}$, and $x^\alpha, f^\beta$ are short-hands for respectively $\prod_{i=1}^N x_i^{\alpha_i}$ and $\prod_{i=1}^r f_i^{\beta_i}$. Fix a monomial ordering
\begin{align*}
0 = \beta^{0} \prec \beta^1 \prec \cdots
\end{align*}
of $\znonnegg{r}$ which has ``finite depth'', i.e.\ for each $\beta \in \znonnegg{r}$, there are only finitely many $\beta' \in \znonnegg{r}$ such that $\beta' \preceq \beta$. For each $k \geq 0$, let $J_k$ be the ideal generated by $\{f^{\beta^{j}}: j \geq k\}$. Fix a set of generators $g_1, \ldots, g_s$ of $I(Z)$. Since $\prec$ has finite depth, we can pick $0 < n_0 < n_1 < \cdots$ such that for each $\gamma \in \znonnegg{s}$ with $|\gamma| := \sum_i \gamma_i = n_k$, the ``$g$ monomial'' $g^\gamma := \prod_i (g_i)^{\gamma_i}$ is in $J_k$. Due to \Cref{claim:generically-finite-module}, $h_{n_1}$ has an expression of the form
\begin{align*}
h_{n_1} &= \sum_{\alpha, \beta} c_{n_1, \alpha,\beta}(\phi_1, \ldots, \phi_q)  x^\alpha f^\beta
\end{align*}
For each $\alpha \in \scrA$, define
\begin{align*}
c_{\alpha, \beta^0} &:= c_{n_1, \alpha, \beta^0}
\end{align*}
For each $n \geq n_1$, since $h_n - h_{n_1} \in (I(z))^{n_1}$, by definition of $n_1$ and using \Cref{claim:generically-finite-module} we can write
\begin{align*}
h_{n_2} - h_{n_1} &= \sum_{\alpha \in \scrA} \sum_{\beta \succeq \beta^1} c_{n_2, \alpha,\beta}(\phi_1, \ldots, \phi_q) x^\alpha f^\beta
\end{align*}
For each $\alpha \in \scrA$, define
\begin{align*}
c_{\alpha, \beta^1} &:= c_{n_1, \alpha, \beta^1} + c_{n_2, \alpha, \beta^1}
\end{align*}
In the same way, for each $k$, we write
\begin{align*}
h_{n_{k+1}} - h_{n_k} &= \sum_{\alpha \in \scrA} \sum_{\beta \succeq \beta^k} c_{n_{k+1}, \alpha,\beta}(\phi_1, \ldots, \phi_q) x^\alpha f^\beta, \\
c_{\alpha, \beta^k} &:= c_{n_1, \alpha, \beta^k} + \cdots + c_{n_{k+1}, \alpha, \beta^k}
\end{align*}
Finally, define
\begin{align*}
h' :=  \sum_{\alpha \in \scrA} \sum_{k \geq 0} c_{\alpha,\beta^k} x^\alpha f^{\beta^k},
\end{align*}
It is straightforward to check that
\begin{align*}
h' &\equiv h_{n_k} \mod J_k
\end{align*}
which implies that $h' = h$, as required to complete the proof.
\end{proof}

\begin{lemma}\label{claim:integral-claim-1}
In the set up of \Cref{prop:generically-integral}, assume $\kk[Z]$ is integral over $\kk[\phi_1|_Z, \ldots, \phi_q|_Z]$. Let $s \leq r$ and $f \in \kk[X]$ be such that $f$ does not vanish identically on any irreducible component of $V_s := V(f_1, \ldots, f_s)$ or equivalently, $f$ is not in any minimal prime ideal in $\kk[X]$ of the radical of the ideal $\qqq_s$ generated by $f_1, \ldots, f_s$. Then $f$ satisfies an identity of the form
\begin{align*}
f^d + \sum_{j=1}^d \xi_j f^{d-j} + \sum_{k=1}^s f_k \chi_k= 0
\end{align*}
in the completion $\widehat{\kk[X]}$ of $\kk[X]$ with respect to $I(Z)$, where
\begin{enumerate}
\item each $\xi_j \in \kk[\phi_1, \ldots, \phi_q][[f_1, \ldots, f_r]]$,
\item \label{integral-claim:xi-d} $\xi_d$ is nonzero and has a monomial in $(f_1, \ldots, f_r)$ in which $f_k$ does not appear for $k \leq s$,
\item each $\chi_k \in \widehat{\kk[X]}$.
\end{enumerate}
\end{lemma}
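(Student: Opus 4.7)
By the final assertion of \Cref{prop:generically-integral}, the completion $\widehat{\kk[X]}$ is integral over $R := \kk[\phi_1, \ldots, \phi_q][[f_1, \ldots, f_r]]$, so $f$ satisfies some monic equation $f^D + \sum_{j=1}^D a_j f^{D-j} = 0$ with $a_j \in R$. Viewing $R = A[[f_1, \ldots, f_s]]$, where $A := \kk[\phi_1, \ldots, \phi_q][[f_{s+1}, \ldots, f_r]]$ is a Noetherian integral domain, I split each $a_j$ uniquely as $a_j = \xi_j + \sum_{k=1}^s f_k c_{j,k}$ with $\xi_j \in A$ and $c_{j,k} \in R$; gathering the terms involving the $f_k$ already yields an identity of the advertised shape with $\chi_k := -\sum_j c_{j,k} f^{D-j} \in \widehat{\kk[X]}$. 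Only the required nonvanishing of $\xi_d$ needs to be arranged.

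The crux is the following \emph{key claim}: $f$ is a non-zero-divisor in $\widehat{\kk[X]}/\sqrt{\qqq_s}\widehat{\kk[X]}$. Indeed, by flatness of the $I(Z)$-adic completion of the Noetherian ring $\kk[X]$, this quotient is canonically the $I(Z)/\sqrt{\qqq_s}$-adic completion of the reduced ring $\kk[X]/\sqrt{\qqq_s}$; the associated primes of the zero ideal in a reduced Noetherian ring are precisely its minimal primes, so the hypothesis that $f$ avoids every minimal prime of $\qqq_s$ makes $f$ a non-zero-divisor in $\kk[X]/\sqrt{\qqq_s}$, and flatness of the completion propagates this property upward.

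Granting the claim, I factor $P(T) := T^D + \sum_j \xi_j T^{D-j}$ in $A[T]$ as $P(T) = T^m Q(T)$ with $m$ maximal; then either $m = D$ with $Q = 1$, or $m < D$ and $Q(0) = \xi_{D-m}$ is nonzero in $A$. The case $m = D$ is impossible: it forces $f^D \in \qqq_s\widehat{\kk[X]} \subseteq \sqrt{\qqq_s}\widehat{\kk[X]}$, whereas the key claim implies that $\bar f^D$ is a nonzero element of $\widehat{\kk[X]}/\sqrt{\qqq_s}\widehat{\kk[X]}$ (any power of a non-zero-divisor is again a non-zero-divisor, hence nonzero in a nonzero ring). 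So $m < D$, and from $f^m Q(f) \equiv 0 \pmod{\sqrt{\qqq_s}\widehat{\kk[X]}}$, cancelling the non-zero-divisor $\bar f^m$ yields $Q(f) \in \sqrt{\qqq_s}\widehat{\kk[X]}$. Choosing $N$ with $(\sqrt{\qqq_s})^N \subseteq \qqq_s$ in $\kk[X]$ (possible by Noetherianness), we get $Q(f)^N \in \qqq_s\widehat{\kk[X]}$, i.e.\ $Q(f)^N = \sum_{k=1}^s f_k \chi_k'$ for some $\chi_k' \in \widehat{\kk[X]}$. Expanding $Q(f)^N$ exhibits it as a monic polynomial in $f$ of degree $N(D-m)$ with coefficients in $A$ whose constant term $\xi_{D-m}^N$ is nonzero (since $A$ is a domain); rearranging gives the desired identity with $d := N(D-m)$ and $\xi_d := \xi_{D-m}^N$.

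The principal obstacle is the key claim: the hypothesis only controls the \emph{minimal} primes of $\qqq_s$ rather than its associated primes, which forces the detour through $\sqrt{\qqq_s}$ rather than $\qqq_s$ itself; raising to the $N$-th power in order to trade $\sqrt{\qqq_s}$ for $\qqq_s$ is then the reason $d$ comes out as $N(D-m)$ rather than something closer to $D$.
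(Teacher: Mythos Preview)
Your proof is correct and follows essentially the same approach as the paper's: both start from the integral equation supplied by \Cref{prop:generically-integral}, establish that $f$ is a non-zero-divisor in $\widehat{\kk[X]}/\sqrt{\qqq_s}\widehat{\kk[X]}$ via flatness of completion, factor out the maximal power of $f$ from the part of the equation lying outside $\qqq_s\widehat{\kk[X]}$, and then raise the remaining monic expression to a power to pass from $\sqrt{\qqq_s}$ to $\qqq_s$. Your packaging via $P(T) = T^m Q(T)$ and the explicit splitting $a_j = \xi_j + \sum_k f_k c_{j,k}$ is a bit more systematic than the paper's phrasing (which instead tracks the largest index $d-e$ with $\xi_{d-e}\notin\qqq_s\widehat{\kk[X]}$), but the underlying argument is identical.
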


\begin{proof}
By \Cref{prop:generically-integral}, $f$ satisfies an integral equation in $\widehat{\kk[X]}$ of the form
\begin{align*}
f^d + \sum_{j=1}^d \xi_j f^{d-j} = 0
\end{align*}
with each $\xi_j \in \kk[\phi_1, \ldots, \phi_q][[f_1, \ldots, f_r]]$. If $\xi_d$ has a monomial in $(f_1, \ldots, f_r)$ in which $f_k$ does not appear for $k \leq s$, then \Cref{claim:integral-claim-1} clearly holds. So assume either $\xi_d = 0$ or each monomial in $\xi_d$ contains some $f_i$ with $i \leq s$. Now, by assumption, $f$ is a non zero-divisor in $\kk[X]/\sqrt{\qqq_s}$ (indeed, if $fg \in \sqrt{\qqq_s}$, then $fg$ vanishes identically on every irreducible component of $V(f_1, \ldots, f_s$), and consequently, $g$ vanishes identically on every irreducible component of $V(f_1, \ldots, f_s$)). It then follows from the flatness of completion that $f$ is also a non zero-divisor in $\widehat{\kk[X]}/\widehat {\sqrt{\qqq_s}} \cong \widehat{\kk[X]}/\sqrt{\qqq_s}\widehat{\kk[X]}$. However, by the assumption on $\xi_d$, the above integral equation of $f$ can be written as
\begin{align*}
f^{e}(f^{d-e} + \sum_{j=1}^{d-e} \xi_j f^{d-e-j}) \in \qqq_s \widehat{\kk[X]}
\end{align*}
where $d-e$ is the largest index such that $\xi_{d-e} \not \in \qqq_s \widehat{\kk[X]}$. Since $f$ is non zero-divisor in $\widehat{\kk[X]}/\sqrt{\qqq_s}\widehat{\kk[X]}$, it follows that
\begin{align*}
f^{d-e} + \sum_{j=1}^{d-e} \xi_j f^{d-e-j} \in \sqrt{\qqq_s} \widehat{\kk[X]}
\end{align*}
and consequently,
\begin{align*}
(f^{d-e} + \sum_{j=1}^{d-e} \xi_j f^{d-e-j})^m \in \qqq_s \widehat{\kk[X]}
\end{align*}
for some $m \geq 1$. This completes the proof.
\end{proof}

\section{Proof of \Cref{length:confirmation:I-g-subset}} \label{proof:length:confirmation:I-g-subset}
In this section we prove \Cref{length:confirmation:I-g-subset}. Given an $n$-dimensional affine variety $X$ and {\em fixed} $f_1, \ldots, f_n, \allowbreak g_1, \ldots, g_n \in \kk[X]$, we will consider $C(g_1, \allowbreak \ldots, \allowbreak g_n; \allowbreak r_1, \allowbreak \ldots, \allowbreak r_n)$ (defined as in \Cref{sec:prelim:C}) for various $r_1, \ldots, r_n$. As in the proof of \Cref{thm:length:confirmation}, we assume $Z$ is an irreducible component of $Z^{(\rho)}_{i_1, \ldots, i_\rho}$, and let $I'^{(j)}_{i_1, \ldots, i_\rho}, I''^{(j)}_{i_1, \ldots, i_\rho}$ be as in \Cref{sec:length:confirmation:I'I''}.

\begin{lemma} \label{prop:cutting-out}
There are $\nu^*_0 \in \rr$, a finite set $Q \subseteq \qq$, and an open subset $U^*$ of $X$, and $h''_j \in I''^{(j)}_{i_1, \ldots, i_\rho}\kk[U^*]$, $j = 1, \ldots, \rho - 1$, such that
\begin{enumerate}
\item $U^* \cap Z \neq \emptyset$,
\item \label{cutting-out:ord} if $r_1, \ldots, r_n \in \rr$ and a branch $B$ of $C = C(g_1, \ldots, g_n;r_1, \ldots, r_n)$ centered at $(U^* \cap Z) \times \{0\}$ satisfy condition \eqref{condition:nonzero>nu} for $\nu \geq \nu^*_0$, then for each $j = 1, \ldots, \rho - 1$,
\begin{align*}
\ord_B(h''_j)
    = \sum_{k=1}^{\rho-j} q_k r_{i_{j+k}}\ord_B(t)
    = \sum_{k=1}^{\rho-j} q_k \ord_B(f_{i_{j+k}})
\end{align*}
for $q_1, \ldots, q_{\rho - j} \in Q$.
\end{enumerate}
In addition, $U^*$ does not change (but $\nu^*_0$ possibly changes) if the $f_i$ are replaced by $(f_i)^{m_i}$ for $m_i \geq 1$.
\end{lemma}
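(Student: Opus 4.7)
The strategy is to produce $h''_j$ from the primary decomposition of $I^{(j)}_{i_1, \ldots, i_\rho}$ restricted to a small affine neighborhood of $Z$. I shrink to an affine $U^* \subseteq X$ with $U^* \cap Z \neq \emptyset$ so that each of $I^{(1)}\kk[U^*], \ldots, I^{(\rho)}\kk[U^*]$ admits a finite primary decomposition. By the definitions in \Cref{sec:length:confirmation:I'I''}, each primary component $\qqq$ of $I^{(j)}\kk[U^*]$ contributing to $I''^{(j)}$ (rather than to $I'^{(j)}$) falls into at least one of two categories: (A) $f_{i_{j+1}} \in \sqrt{\qqq}$, or (B) $f_i \notin \sqrt{\qqq}$ for some $i < i_j$.

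For each such $\qqq$ I produce an element $\tilde h_\qqq \in \qqq$ whose order along any branch $B$ centered in $(U^* \cap Z) \times \{0\}$ satisfying \eqref{condition:nonzero>nu} for $\nu$ sufficiently large takes the form $\sum_{k=1}^{\rho-j} q_{\qqq, k} r_{i_{j+k}}\ord_B(t)$ with $q_{\qqq,k} \in \qq$ drawn from a finite set. In Case (A), set $\tilde h_\qqq := f_{i_{j+1}}^{M_\qqq}$ where $M_\qqq$ is the least integer with $f_{i_{j+1}}^{M_\qqq} \in \qqq$; by \Cref{observation:ord} this gives $\ord_B(\tilde h_\qqq) = M_\qqq r_{i_{j+1}}\ord_B(t)$, exactly of the desired form since $g_{i_{j+1}}(z) \neq 0$ by \eqref{condition:nonzero}. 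Case (B) is handled by induction on $j$: for $j = 1$ it is vacuous since there is no $i < i_1 = i^*$, and for $j > 1$ I use the recursive identity $I^{(j)} = I'^{(j-1)} + (f_{i_j})$, the inductively constructed $h''_{j'}$'s for $j' < j$, and a Cohen-type description of $\hatlocal{X}{Z}$ afforded by \Cref{prop:generically-integral} to express some integer power of $\tilde h_\qqq$ as a monomial in $f_{i_{j+1}}, \ldots, f_{i_\rho}$ times a local unit at $z$, modulo an error term whose $B$-order is strictly larger under \eqref{condition:>>}.

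Setting $h''_j := \prod_\qqq \tilde h_\qqq$, the product lies in $I''^{(j)}\kk[U^*]$ because each factor lies in the corresponding primary component, and $\ord_B(h''_j) = \sum_\qqq \ord_B(\tilde h_\qqq)$ is itself of the required form. I take $Q$ to be the finite union of all the rational coefficients $q_{\qqq,k}$ produced above; it depends only on $X$, $Z$, the $f_i$'s, and $(i_1, \ldots, i_\rho)$, and in particular is finite and independent of $B$ and $\vec r$. Finally $\nu^*_0$ is chosen large enough that, under \eqref{condition:>nu} with $\nu \geq \nu^*_0$, the strong separation of consecutive $r_{i_k}$'s forces any ``error terms'' introduced by the unit corrections in Case (B) to have strictly higher $B$-order than the leading monomial contributions, promoting each inequality into the claimed equality.

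The main obstacle is Case (B): primary components with $f_i \notin \sqrt{\qqq}$ for some $i < i_j$ do not directly involve the functions $f_{i_{j+1}}, \ldots, f_{i_\rho}$ whose orders appear in the target expression. Producing an element of $\qqq$ whose $B$-order is an exact $\qq$-linear combination of $r_{i_{j+1}}\ord_B(t), \ldots, r_{i_\rho}\ord_B(t)$ (rather than merely bounded above by such a combination) requires simultaneously exploiting the recursive structure of the $I^{(j)}$'s, the inductive hypothesis on earlier $h''_{j'}$'s, and the widening separation of the $r_i$'s forced by \eqref{condition:>nu} with $\nu$ large; it is this interplay that necessitates both the induction and the threshold $\nu^*_0$.
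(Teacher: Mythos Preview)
Your Case (A) is fine, and indeed matches the paper's treatment of components $W \subseteq V(f_{i_{j+1}})$. The problem is Case (B), which is the entire difficulty of the lemma.

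First, your claim that Case (B) is vacuous for $j=1$ rests on reading the index bound as $i < i_j$; the paper's own proof (the ``otherwise'' case at $j=1$) explicitly treats components $W$ of $V(I''^{(1)})$ with $f_{i_W}|_W \not\equiv 0$ for some $i_1 < i_W < i_2$, so Case (B) is \emph{not} vacuous at the base. You therefore cannot launch the induction.

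Second, and more seriously, your handling of Case (B) for $j>1$ is not a proof. You assert that ``some integer power of $\tilde h_\qqq$'' can be written as ``a monomial in $f_{i_{j+1}}, \ldots, f_{i_\rho}$ times a local unit at $z$, modulo an error term''. There is no mechanism offered for this: the defining feature of a Case (B) component is that some \emph{earlier} $f_i$ fails to vanish on it, which gives no direct relation to the \emph{later} $f_{i_{j+1}}, \ldots, f_{i_\rho}$ whose orders you need. Invoking \Cref{prop:generically-integral} in one sentence does not bridge this.

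The paper does something structurally different. It proves an auxiliary result (\Cref{prop:nonzero:++j}) which, given a partial sequence $h_1, \ldots, h_j$ satisfying \eqref{nonzero:+++}, first \emph{extends} it to a full regular sequence $h_1, \ldots, h_\rho$ cutting out $Z$, then uses \Cref{claim:integral-claim-1} (an integral equation over $\kk[u_1,\ldots,u_{n-\rho}][[h_1,\ldots,h_\rho]]$) and a \emph{backward} induction on $k$ from $\rho$ down to $j+1$ to pin down $\ord_B(h_k)$ exactly. The exactness comes from comparing two terms of equal minimal order in the integral equation, not from a monomial-times-unit expression. Crucially, carrying this through the forward induction on $j$ requires maintaining auxiliary elements $h'_k, \tilde h'_k \in I'^{(k)}$ satisfying a long list of non-vanishing conditions (on components of $V(h'_1,\ldots,h'_k)$, of $V(I''^{(k)})$, etc.), precisely so that \Cref{prop:nonzero:++j} can be re-applied at step $k+1$. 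None of this auxiliary bookkeeping appears in your sketch, and without it the inductive step does not close.
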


%\subsection{Proof of \Cref{prop:nonzero:++j}}

%\subsection{Proof of \Cref{prop:cutting-out}} \label{sec:cutting-out-proof}

%\subsection{} \label{sec:formula:assumtions}
\begin{proof}
The first ingredient of our proof of \Cref{prop:cutting-out} is \Cref{prop:nonzero:++j} below. Here we consider a stronger version of the condition \eqref{nonzero:+} from \Cref{sec:nonzero:+}: pick $1 \leq i_1 < i_2 < \cdots < i_\rho \leq n$, a nonempty open subset $Z^*_0$ of an irreducible subvariety $Z$ of $X$, and a collection $\scrH^*$ of pairs $(h,i)$ such that $1 \leq i \leq n$, and $h$ is a regular function on an open subset of $X$ containing $Z^*_0$. We say that $\scrH^*, Z^*_0$ satisfy \eqref{nonzero:++} if the following holds:
\begin{align}
\parbox{0.9\textwidth}{
for {\em each} $\mu^*, \nu^* \in \rr$ and {\em each} nonempty subset $Z^*$ of $Z^*_0$, condition \eqref{nonzero:+} holds for $\scrH^*, Z^*$, i.e.\ there are $\nu \geq \nu^*$, $r_1, \ldots, r_n \in \rr$ and a branch $B$ of $C = C(g_1, \ldots, g_n;r_1, \ldots, r_n)$ centered at $Z^* \times \{0\}$ such that \eqref{condition:nonzero>nu} holds, and in addition, $\ord_B(h) > \mu^* \ord_B(f_{i}) \geq \mu^* r_i\ord_B(t)$ for each $(h,i) \in \scrH^*$.
}
\tag{$\cancel{0}_+$}
\label{nonzero:++}
\end{align}
We also consider a stronger version of \eqref{nonzero:++}: we say that $\scrH^*, Z^*_0$ satisfy \eqref{nonzero:+++} if the following holds:
\begin{align}
\parbox{0.9\textwidth}{
for {\em each} $\mu^*, \nu^* \in \rr$ and {\em each} nonempty subset $Z^*$ of $Z^*_0$, there is $\nu^*_+ \geq \nu^*$ such that for each $\nu \geq \nu^*_+$,
\begin{defnlist}
\item there are $r_1, \ldots, r_n \in \rr$ and a branch $B$ of $C = C(g_1, \ldots, g_n;r_1, \ldots, r_n)$ centered at $Z^* \times \{0\}$ such that \eqref{condition:nonzero>nu} holds, and in addition,
\item for {\em each} such $B$, $\ord_B(h) > \mu^* \ord_B(f_{i}) \geq \mu^* r_i\ord_B(t)$ for each $(h,i) \in \scrH^*$.
\end{defnlist}
}
\tag{$\cancel{0}_{++}$}
\label{nonzero:+++}
\end{align}

\begin{prolemma} \label{prop:nonzero:++j}
Fix $i^* = i_1 < i_2 < \cdots < i_\rho \leq n$ (where $i^*$, as in \Cref{observation:ord}, is the smallest index such that $f_{i^*}$ is not identically zero), and an irreducible subvariety $Z$ of $X$. Let $U^*$ be an open subset of $X$ such that $U^* \cap Z \neq \emptyset$, and $\scrH$ be a collection of pairs $(h, i)$, where $h \in \kk[U^*]$ and $1 \leq i \leq n$, such that
\begin{defnlist}
\item \label{nonzero:++j:assumption:H} condition \eqref{nonzero:++} (respectively, \eqref{nonzero:+++}) is satisfied with $\scrH^* = \scrH$ and $Z^*_0 = U^* \cap Z$.
\end{defnlist}
Assume in addition that
\begin{defnlist}[resume]
\item there is $j$, $1 \leq j \leq \rho - 1$, such that
\begin{defnlist}
\item $\scrH$ consists of $j$ pairs $(h_{k}, \tilde i_{k})$, $k = 1, \ldots, j$;
\item $\tilde i_{k} \leq i_{j+1}$ for each $k = 1, \ldots, j$;
\item \label{nonzero:++j:assumption:codimension} $V(h_1, \ldots, h_j) \cap \bigcup_{k = j+1}^\rho V(f_{i_{k}})$ is (set-theoretically) a codimension $j+1$ subvariety of $U^*$ containing $U^* \cap Z$.
\end{defnlist}
\end{defnlist}
Then
\begin{enumerate}
\item \label{nonzero:++j:h} There are $h_{j+1}, \ldots, h_\rho \in \kk[U^*]$ such that
\begin{enumerate}
\item $V(h_1, \ldots, h_k)$ has codimension $k$ for each $k = j+1, \ldots, \rho$,
\item $V(h_1, \ldots, h_k) \cap \bigcup_{k'=k+1}^\rho V(f_{i_{k'}})$ has codimension $k+1$ for each $k = j, \ldots, \rho-1$,
\item \eqref{nonzero:++} (respectively, \eqref{nonzero:+++}) continues to hold with $Z^*_0 = U^* \cap Z$ and $\scrH^* = \scrH \allowbreak \cup \{(h_{j+1}, i_{j+2}), \allowbreak \ldots, \allowbreak (h_{\rho-1}, i_{\rho})\}$.
\end{enumerate}
In addition, one can take
\begin{enumerate}[resume]
\item $h_\rho := f_{i_\rho}$.
\end{enumerate}
\end{enumerate}
Pick $h \in \kk[U^*]$ which does not vanish identically on any irreducible component of $V(h_1, \ldots, h_j)$ containing $U^* \cap Z$.
\begin{enumerate}[resume]
\item \label{nonzero:++j:h:++} If the \eqref{nonzero:++}-version of assumption \ref{nonzero:++j:assumption:H} holds, then there is a {\em finite} set $Q \subseteq \qq$, and an open subset $U'^* \subseteq U^*$ intersecting $Z$ such that for each $\nu^* \in \rr$ and each nonempty open subset $Z^*$ of $U'^* \cap Z$, there are $\nu \geq \nu^*, r_1, \ldots, r_n \in \zz$ and a branch $B$ of $C = C(g_1, \ldots, g_n;r_1, \ldots, r_n)$ centered at $Z^* \times \{0\}$ such that \eqref{condition:nonzero>nu} holds, and
\begin{align*}
\ord_B(h)
    = \sum_{k=1}^{\rho-j} q_k r_{i_{j+k}}\ord_B(t)
    = \sum_{k=1}^{\rho-j} q_k \ord_B(f_{i_{j+k}})
\end{align*}
for some $q_1, \ldots, q_{\rho-j} \in Q$.

\item \label{nonzero:++j:h:+++} If the \eqref{nonzero:+++}-version of assumption \ref{nonzero:++j:assumption:H} holds, then there are $\nu^* \in \rr$, a {\em finite} set $Q \subseteq \qq$, and an open subset $U'^* \subseteq U^*$ intersecting $Z$ such that for each $\nu \geq \nu^*$ and each nonempty open subset $Z^*$ of $U'^* \cap Z$,
\begin{enumerate}
\item there are $r_1, \ldots, r_n \in \rr$ and a branch $B$ of $C = C(g_1, \ldots, g_n;r_1, \ldots, r_n)$ centered at $Z^* \times \{0\}$ such that \eqref{condition:nonzero>nu} holds, and in addition,
\item for {\em each} such $B$,
\begin{align*}
\ord_B(h)
    = \sum_{k=1}^{\rho-j} q_k r_{i_{j+k}}\ord_B(t)
    = \sum_{k=1}^{\rho-j} q_k \ord_B(f_{i_{j+k}})
\end{align*}
for some $q_1, \ldots, q_{\rho-j} \in Q$.
\end{enumerate}
\item In addition, $U'^*$ from assertions \eqref{nonzero:++j:h:++} and \eqref{nonzero:++j:h:+++} does {\em not} change if the $f_i$ and $h_j$ are replaced respectively by $(f_i)^{m_i}$ and $(h_j)^{n_j}$ for $m_i, n_j \geq 1$.
\end{enumerate}
\end{prolemma}

\begin{proof}
Note that $Z  \cap U^* \subseteq V(f_1, \ldots, f_n) \cap V(h_1, \ldots, h_j)$ (due to assumptions \ref{nonzero:++j:assumption:H} and \ref{nonzero:++j:assumption:codimension}, and the definition of $C(g_1, \ldots, g_n; r_1, \ldots, r_n)$). \Cref{prop:nonzero:+j} implies that $V(h_1, \ldots, h_j) \not\subseteq V(f_{i_{j+1}})$. Pick $h_{j+1}$ such that
\begin{prooflist}
\item $h_{j+1}$ vanishes identically on each irreducible component of $V(h_1, \ldots, h_j) \cap V(f_{i_{j+1}})$ not contained in $\bigcup_{k=j+1}^\rho V(f_{i_k})$,
\item $V(h_1, \ldots, h_{j+1}) \cap \bigcup_{k = j+2}^\rho V(f_{i_{k}})$ has codimension $j+2$ in $U^*$.
\end{prooflist}
It follows from the arguments of the proof of \Cref{prop:nonzero:+j} that the property \ref{nonzero:++j:assumption:H} from the statement of \Cref{prop:nonzero:++j} continues to hold if $\scrH$ is replaced by $\scrH \cup \{(h_{j+1}, i_{j+2})\}$. Repeating this process we obtain $h_{j+1}, \ldots, h_\rho \in \kk[U^*]$ such that
\begin{prooflist}[resume]
\item \label{nonzero:++j:codimension:h} $V(h_1, \ldots, h_k)$ has codimension $k$ for each $k = j+1, \ldots, \rho$,
\item \label{nonzero:++j:codimension:f} $V(h_1, \ldots, h_k) \cap \bigcup_{k'=k+1}^\rho V(f_{i_{k'}})$ has codimension $k+1$ for each $k = j, \ldots, \rho-1$,
\item \label{nonzero:++j:nonzero:+} property \ref{nonzero:++j:assumption:H} continues to hold with $Z^*_0 = U^* \cap Z$ and $\scrH^* = \scrH \cup \{(h_{j+1}, i_{j+2}), \ldots, (h_{\rho-1}, i_{\rho})\}$.
\end{prooflist}
It is straightforward to check that we can in fact take
\begin{prooflist}[resume]
\item $h_\rho := f_{i_\rho}$.
\end{prooflist}
In particular, this implies assertion \eqref{nonzero:++j:h} of \Cref{prop:nonzero:++j}. Now we prove assertions \eqref{nonzero:++j:h:++} and \eqref{nonzero:++j:h:+++}. Due to property \ref{nonzero:++j:codimension:h}, we can pick an open subset $U'^*_0$ of $U^*$ such that $U'^*_0 \cap Z \neq \emptyset$ and $U'^*_0 \cap V(h_1, \ldots, h_\rho)$ has only one irreducible component, namely $Z \cap U'^*_0$.

\begin{subproclaim} \label{claim:nonzero:++j:h_k}
There are $\mu'^*, \nu'^* \in \rr$, a {\em finite} set $Q' \subseteq \qq$ and an open subset $U''^*_0$ of $U'^*_0$ such that if \eqref{nonzero:+} is satisfied for some $\mu^* \geq \mu'^*$ and $\nu^* \geq \nu'^*$ with $\scrH^* = \scrH \cup \{(h_{j+1}, i_{j+2}), \ldots, (h_{\rho-1}, i_{\rho})\}$ and $Z^* = U''^*_0 \cap Z$, then for each $k = j+1, \ldots, \rho$,
\begin{align*}
\ord_B(h_k)
    = \sum_{l=k}^\rho q'_l r_{i_l}\ord_B(t)
    = \sum_{l=k}^\rho q'_l \ord_B(f_{i_l})
\end{align*}
for some $q'_k, \ldots, q'_\rho \in Q'$.
\end{subproclaim}

\begin{proof}
Since $h_\rho = f_{i_\rho}$, the claim is true for $k = \rho$. We proceed by backward induction on $k$ and assume that
\begin{prooflist}[resume]
\item \Cref{claim:nonzero:++j:h_k} holds for $k = \rho, \rho - 1, \ldots, k'+1$ with a finite subset $Q'_{k'+1}$ in place of $Q'$, and a nonempty subset $U''^*_{k'+1}$ of $U'^*_0$ in place of $U''^*_0$.
\end{prooflist}
By Noether normalization there are $u_1, \ldots, u_{n-\rho} \in \kk[U'^*]$ such that
\begin{prooflist}[resume]
\item \label{nonzero:++j:independent} $u_1|_Z, \ldots, u_{n-\rho}|_Z$ are algebraically independent over $\kk$, and
\item \label{nonzero:++j:integral} $\kk[Z \cap U'^*]$ is integral over $\kk[u_1|_Z, \ldots, u_{n-\rho}|_Z]$.
\end{prooflist}
Due to properties \ref{nonzero:++j:codimension:f} and \ref{nonzero:++j:integral}, \Cref{claim:integral-claim-1} applies (with $h_i$ in place of the $f_i$, the $u_i$ in place of the $\phi_i$, and $f = f_{i_{k'}}$ and $s = k'-1$) and implies that $f_{i_{k'}}$ satisfies an identity in the completion $\widehat{\kk[U'^*]}$ of $\kk[U'^*]$ with respect to the ideal $I(Z)$ of $Z$ of the form
\begin{align*}
f_{i_{k'}}^d + \sum_{e=1}^d \xi_e f_{i_{k'}}^{d-e} + \sum_{k=1}^{k'-1} h_k \chi_k
    &= 0 \in \widehat{\kk[U'^*]}
\end{align*}
where
\begin{prooflist}[resume]
\item each $\xi_e \in \kk[u_1, \ldots, u_{n-\rho}][[h_1, \ldots, h_\rho]]$,
\item \label{integral-claim:xi-d} $\xi_d$ is nonzero and has a monomial in $(h_1, \ldots, h_\rho)$ in which $h_k$ does not appear for $k \leq k'-1$,
\item each $\chi_k \in \widehat{\kk[X]}$.
\end{prooflist}
For each $e$, let $\alpha_e := (\alpha_{e,1}, \ldots, \alpha_{e,\rho})$ denote the {\em lexicographically minimal} element of $\supp(\xi_d)$ (which is the set of all $(\beta_1, \ldots, \beta_\rho)$ such that the coefficient of $\prod_{k=1}^\rho h_k^{\beta_k}$ in $\xi_e$ is nonzero). Then the above integral equation of $f_{i_{k'}}$ can be expressed as
\begin{align*}
f_{i_{k'}}^d
    + \sum_l (c_{e_l}\prod_{k = k'}^\rho h_k^{\alpha_{e_l,k}})f_{i_{k'}}^{d-e_l}
    + \sum_l \xi'_{e_l}  f_{i_{k'}}^{d-e_l}
    + \sum_{k=1}^{k'-1} h_k \chi'_k
    &= 0
\end{align*}
where the second (and the third) sum is over all $l$ such that $\alpha_{e_l, k} = 0$ for $k = 1, \ldots, k' - 1$, and for each such $l$,
\begin{prooflist}[resume]
\item $c_{e_l} \in \kk[u_1, \ldots, u_{n-\rho}]$ is the {\em coefficient} $\prod_{k=1}^{\rho} h_k^{\alpha_{e_l, k}}$ in $\xi_{e_l}$ is nonzero, and
\item $\xi'_{e_l} := \xi_{e_l} - c_{e_l}\prod_{k=1}^{\rho} h_k^{\alpha_{e_l, k}}$.
\end{prooflist}
Since $u_1|_Z, \ldots, u_n|_Z$ are algebraically independent over $\kk$, none of the $c_{e_l}$ vanishes identically on $Z$. Consequently, $U''^*_{k'} := U''^*_{k'+1} \setminus V(\prod_l c_{e_l})$ has a nonempty intersection with $Z$ (where $U''^*_{k'+1}$ is from the inductive hypothesis). It then follows from the inductive hypothesis and property \ref{nonzero:++j:nonzero:+} that if $r_1, \ldots, r_n \in \zz$ and $C(g_1, \ldots, g_n;r_1, \ldots, r_n)$ centered at $(U''^*_{k'} \cap Z) \times \{0\}$ satisfy condition \eqref{condition:nonzero>nu} for $\nu \gg 1$, then there must be two distinct $l \neq m$, such that
\begin{align*}
\ord_B(f_{i_{k'}}^{d-e_l}\prod_{k=k'}^\rho h_k^{\alpha_{e_l, k}})
    &= \ord_B(f_{i_{k'}}^{d-e_m}\prod_{k=k'}^\rho h_k^{\alpha_{e_m, k}})
\end{align*}
and in addition, $\alpha_{e_{l,k'}} \neq \alpha_{e_{m,k'}}$. It is then straightforward to check that \Cref{claim:nonzero:++j:h_k} holds for $k = k'$ with $U''^*_{k'}$ in place of $U''^*_0$, and $\scrQ'$ equal to the union of $\scrQ'_{k'+1}$ (from the inductive hypothesis) and at most finitely many rational numbers.
\end{proof}
Assertions \eqref{nonzero:++j:h:++} and \eqref{nonzero:++j:h:+++} of \Cref{prop:nonzero:++j} now follows from applying \Cref{claim:integral-claim-1} (with $h_i$ in place of the $f_i$, the $u_i$ in place of the $\phi_i$, and $f = h$ and $s = j$), and following the arguments from the proof of \Cref{claim:nonzero:++j:h_k}.
\end{proof}

\subsubsection{} \label{cutting-out:nonempty}
We go back to the proof of \Cref{prop:cutting-out}. Due to \Cref{thm:Z-existence} we may assume \woutlog\ that $Z^{(\rho)}_{i_1, \ldots, i_\rho} \neq \emptyset$.

\subsubsection{} \label{cutting-out:j=1}
For the case of $j = 1$, pick an irreducible component $W$ of $V(I''^{(1)}_{i_1, \ldots, i_\rho})$. We will find $h_W \in \kk[X]$ as follows:
\begin{enumerate}
\item If $W \subseteq V(f_{i_2})$, then set $h_W := f_{i_2}$.
\item Otherwise there is $i_W$, $i_1 < i_W < i_2$, such that $f_{i_W}|_{W} \not\equiv 0$. Pick $h_W$ such that $h_W$ vanishes on $W$ but does not vanish identically on any irreducible component of $V(f_{i_W})$.
\end{enumerate}
We claim that assertion \eqref{cutting-out:ord} of \Cref{prop:cutting-out} holds for $j = 1$ if we take $h''_1 = h_W$. Indeed, this is clear in the first case when $h_W = f_{i_2}$, and in the second case this follows from applying the \eqref{nonzero:+++} version of \Cref{prop:nonzero:++j} with $\scrH = \{(f_{i_W}, i_2)\}$ and $h = h_W$. Take $h''_1 := \prod_W h_W$. Then it is easy to see that
\begin{enumerate}[resume]
\item \Cref{prop:cutting-out} holds for $j = 1$ with $h''_1$.
\end{enumerate}
We now choose $h'_1, \tilde h'_1 \in I'^{(1)}_{i_1, \ldots, i_\rho}$ as follows (these elements will help inductive construction of $h''_j$ for $j > 1$):
\begin{enumerate}[resume]
\item
\begin{enumerate}
\item $h'_1 \in I'^{(1)}_{i_1, \ldots, i_\rho}$,
\item $h'_1$ does not vanish identically on any irreducible component of $V(\prod_{j\geq 2}f_{i_j})$,
\item $h'_1$ does not vanish identically on any irreducible component of $V(I''^{(1)}_{i_1, \ldots, i_\rho})$.
\end{enumerate}
\item
\begin{enumerate}
\item $\tilde h'_1 \in I'^{(1)}_{i_1, \ldots, i_\rho}$,
\item $\tilde h'_1$ does not vanish identically on any irreducible component of $V(\prod_{j\geq 2}f_{i_j})$,
\item $\tilde h'_1$ does not vanish identically on any irreducible component of $V(h'_1)$ which is not an irreducible component of $V(I'^{(1)}_{i_1, \ldots, i_\rho})$,
\item $\tilde h'_1$ does not vanish identically on any irreducible component of $V(h'_1, f_{i_2})$ which is not contained in $V(I'^{(1)}_{i_1, \ldots, i_\rho})$.
\end{enumerate}
\end{enumerate}
Since $h'_1h''_1$ and $\tilde h'_1h''_1$ are elements of $\sqrt{I^{(1)}_{i_1, \ldots, i_\rho}} = \sqrt{\langle f_{i_1} \rangle}$, the arguments from the proof of \Cref{prop:nonzero:+j} imply that
\begin{enumerate}[resume]
\item Condition \eqref{nonzero:+++} holds with $\scrH^* = \{(h'_1, i_2)\}$ and also with $\scrH^* = \{(\tilde h'_1, i_2)\}$.
\end{enumerate}

\subsubsection{} \label{sec:cutting-out:ind}
By induction, assume there are $h''_1, \ldots, h''_k$, $1 \leq k \leq \rho - 2$, such that for each $j = 1, \ldots, k$,
\begin{enumerate}
\item $h''_j \in I''^{(j)}_{i_1, \ldots, i_\rho}$,
\item \Cref{prop:cutting-out} holds for $j = 1, \ldots, k$ (with $h''_1, \ldots, h''_j$)
\end{enumerate}
Assume in addition that there are $h'_1, \ldots, h'_k$, $1 \leq k \leq \rho - 2$, such that
\begin{enumerate}[resume]
\item \label{cutting-out:ind:I} $h'_j \in I'^{(j)}_{i_1, \ldots, i_\rho}$ for each $j$,
\item \label{cutting-out:ind:I''} $V(h'_1, \ldots, h'_k)$ does not contain any irreducible component of $V(I''^{(k)}_{i_1, \ldots, i_\rho})$.
\item \label{cutting-out:ind:codim:0} $V(h'_1, \ldots, h'_k)$ has pure codimension $k$,
\item \label{cutting-out:ind:codim:1} $V(h'_1, \ldots, h'_k, \prod_{j=k+1}^\rho f_{i_j})$ has pure codimension $k+1$,
\item \label{cutting-out:ind:+++} $\scrH^* = \{(h'_1, i_2), \ldots, (h'_k, i_{k+1})\}$ satisfies \eqref{nonzero:+++} with an appropriate $Z^*_0$.
\end{enumerate}
For technical reasons we also assume that there is $\tilde h'_k \in I'^{(k)}_{i_1, \ldots, i_\rho}$ such that
\begin{enumerate}[resume]
\item \label{cutting-out:ind:extra} each of the properties \eqref{cutting-out:ind:I} to \eqref{cutting-out:ind:+++} continues to be satisfied if $h'_k$ is replaced by $\tilde h'_k$,
\item \label{cutting-out:ind:extra:codim} $\tilde h'_k$ does not vanish identically on any irreducible component of $V(h'_1, \ldots, h'_k)$ which is not an irreducible component of $V(I'^{(j)}_{i_1, \ldots, i_\rho})$.
\item \label{cutting-out:ind:extra:codim:f} $\tilde h'_k$ does not vanish identically on any irreducible component of $V(h'_1, \ldots, h'_k, f_{i_{k+1}})$ which is not contained in $V(I'^{(k)}_{i_1, \ldots, i_\rho})$.
\end{enumerate}
We will construct $h'_{k+1}, \tilde h'_{k+1}, h''_{k+1}$ extending the above properties.

\subsubsection{} \label{sec:cutting-out:ind:k+1}
%Let $h'_{k+1}, \tilde h'_{k+1}$ be arbitrary elements in $I'^{(k+1)}_{i_1, \ldots, i_\rho}$ such that
%\begin{enumerate}
%\item neither $h_{k+1}$ nor $\tilde h_{k+1}$ vanishes identically on any irreducible component of $I''^{(k+1)}_{i_1, \ldots, i_\rho}$,
%\item neither $h_{k+1}$ nor $\tilde h_{k+1}$ vanishes identically on any irreducible component of $V(h_1, \ldots, h_k)$.
%\end{enumerate}
Let $h'_{k+1} \in I'^{(k+1)}_{i_1, \ldots, i_\rho}$ be such that
\begin{enumerate}
\item $h'_{k+1}$ does not vanish identically on any irreducible component of $V(h'_1, \ldots, h'_k)$,
\item $h'_{k+1}$ does not vanish identically on any irreducible component of $V(h'_1, \ldots, h'_k, \prod_{j=k+2}^\rho f_{i_j})$,
\item \label{cutting-out:ind:k+1:I''} $h'_{k+1}$ does not vanish identically on any irreducible component of $V(I''^{(k+1)}_{i_1, \ldots, i_\rho})$.
\end{enumerate}
Pick $\tilde h'_{k+1} \in I'^{(k+1)}_{i_1, \ldots, i_\rho}$ be such that
\begin{enumerate}[resume]
\item $\tilde h'_{k+1}$ satisfies each of the above properties of $h'_{k+1}$, and in addition,
\item $\tilde h'_{k+1}$ does not vanish identically on any irreducible component of $V(h'_1, \ldots, h'_{k+1})$ which is not an irreducible component of $V(I'^{(k+1)}_{i_1, \ldots, i_\rho})$.
\item $\tilde h'_{k+1}$ does not vanish identically on any irreducible component of $V(h'_1, \ldots, h'_{k+1}, f_{i_{k+2}})$ which is not contained in $V(I'^{(k+1)}_{i_1, \ldots, i_\rho})$.
\end{enumerate}

\begin{proclaim}\label{claim:cutting-out:ind}
Condition \eqref{nonzero:+++} holds with $\scrH^* = \{(h'_{k+1}, i_{k+2})\}$ and also with $\scrH^* = \{(\tilde h'_{k+1}, i_{k+2})\}$.
\end{proclaim}

\begin{proof}
%If $k = 0$, then $h'_{k+1}, \tilde h'_{k+1}, h''_{k+1}$ can be constructed from \Cref{cutting-out:j=1}. So assume $k \geq 1$.
We only show that $\scrH^* = \{(h'_{k+1}, i_{k+2})\}$ satisfies \eqref{nonzero:+++}, since the proof of the case of $\{(\tilde h'_{k+1}, i_{k+2})\}$ is completely analogous, and follows from running the arguments below after swapping $h'_{k+1}$ with $\tilde h'_{k+1}$ (and vice versa). \\

\paragraph{} \label{cutting-out:ind:condition}
Pick an irreducible component $W$ of $V(h'_1, \ldots, h'_k, f_{i_{k+1}})$ such that $h'_{k+1}|_W \not\equiv 0$ (which means in particular that $W \not \subseteq V(I'^{(k+1)}_{i_1, \ldots, i_\rho})$). We will construct regular functions $h_{W,j}$, $j = 1, \ldots, n_W$, which vanish identically on $W$, and satisfy the following condition:
\begin{align*}
\parbox{0.9\textwidth}{
there are real numbers $\mu^*_W, \nu^*_W$ and a nonempty open subset $Z^*_W$ of $Z$ such that if \eqref{condition:nonzero>nu} holds with $\nu \geq \nu^*_W$ and $\Center(B) \in Z^*_W$, then $\ord_B(h_{W,j}|_C) \leq \mu^*_W r_{i_{k+2}}\ord_B(t)$ for some $j = 1, \ldots, n_W$.
}
%\label{cutting-out:ind:ord}
\end{align*}
%The number $n_W$ of these elements will depend on $W$.
There are three possible options for $W$:\\

\paragraph{}
First consider the case that $W \subseteq V(\prod_{l=k+2}^\rho f_{i_l})$. In this case take $n_W := 1$ and $h_{W,1} := \prod_{l=k+2}^\rho f_{i_l}$. \\

\paragraph{}\label{cutting-out:ind:iW}
The second possibility is that $W \not \subseteq V(\prod_{l=k+2}^\rho f_{i_l})$, and there is $i_W$, $i_{k+1} < i_W < i_{k+2}$, such that $f_{i_W}|_W \not\equiv 0$. In this case take $n_W := k + 1$, and inductively choose $h_{W,1}, \ldots, h_{W,k+1}$ such that
\begin{enumerate}
\item \label{cutting-out:ind:iW:1} $h_{W,1}$ vanishes identically on $W$ but does not vanish identically on any irreducible component of $V(f_{i_W} \prod_{l=k+2}^\rho f_{i_l})$,
\item \label{cutting-out:ind:iW:ind} for each $j = 1, \ldots, k$,
\begin{enumerate}
\item $h_{W,j+1}$ vanishes identically on $W$,
\item $h_{W, j+1}$ does not vanish identically on any irreducible component of $V(h_{W,1}, \ldots, h_{W,j})$,
\item $h_{W,j+1}$ does not vanish identically on any irreducible component of $V(h_{W,1}, \ldots, h_{W,j}, \allowbreak f_{i_W} \prod_{l=k+2}^\rho f_{i_l})$.
\end{enumerate}
\end{enumerate}
We claim that $h_{W,j}$, $j = 1, \ldots, k+1$, satisfy the condition from \Cref{cutting-out:ind:condition} above. Indeed, otherwise due to the assumption in \Cref{cutting-out:nonempty}, condition \eqref{nonzero:++} is satisfied with $\scrH^* = \{(h_{W,1}, i_{k+2}), \allowbreak \ldots, \allowbreak (h_{W,k+1}, i_{k+2})\}$. Consequently, the \eqref{nonzero:++} version of \Cref{prop:nonzero:++j} applies with $h = f_{i_W}$ and yields a contradiction. \\

\paragraph{}
In the remaining scenario, $W \not\subseteq V(\prod_{l=k+2}^\rho f_{i_l} I''^{(k+1)}_{i_1, \ldots, i_\rho})$. Then in particular, $W \not\subseteq V(I'^{(k+1)}_{i_1, \ldots, i_\rho}) \allowbreak \cup \allowbreak V(I''^{(k+1)}_{i_1, \ldots, i_\rho}) = V(I^{(k+1)}_{i_1, \ldots, i_\rho})$. Since $W \subseteq V(f_{i_{k+1}})$, it follows that $W \not\subseteq V(I'^{(k)}_{i_1, \ldots, i_\rho})$ (see the definition of these ideals in \Cref{sec:length:confirmation:I'I''}). Due to inductive property \eqref{cutting-out:ind:extra:codim:f} of \Cref{sec:cutting-out:ind} we may inductively pick $h_{W,1}, \ldots, h_{W,k+1}$ which vanish identically on $W$, and satisfy does both properties \eqref{cutting-out:ind:iW:1} and \eqref{cutting-out:ind:iW:ind} from \Cref{cutting-out:ind:iW} with $f_{i_W}$ replaced by $\tilde h'_k$. An application of the \eqref{nonzero:++} version of \Cref{prop:nonzero:++j} with $\scrH^* = \{(h_{W,1}, i_{k+2}), \allowbreak \ldots, \allowbreak (h_{W,k+1}, i_{k+2})\}$ then shows that $h_{W,1}, \ldots, h_{W, k+1}$ satisfy the condition from \Cref{cutting-out:ind:condition} above with $n_W = k + 1$. \\

\paragraph{}
For each tuple $\sigma = (\sigma_W)_W$ with $\sigma_W \in \{1, \ldots, n_W\}$, define
\begin{align*}
h_\sigma &:= \prod_W h_{W,\sigma_W}
\end{align*}
If \eqref{condition:nonzero>nu} holds with $\nu \geq \max_W \nu^*_W$ and $\Center(B) \in \bigcap_W Z^*_W$, then there is $\sigma$ such that $\ord_B(h_\sigma|_C) \leq \sum_W\mu^*_W r_{i_{k+2}}\ord_B(t)$. Since $h'_{k+1}h_\sigma \in \sqrt{\langle h'_1, \ldots, h'_k, f_{i_{k+1}} \rangle}$ for each $\sigma$, it follows from the inductive property \eqref{cutting-out:ind:+++} of \Cref{sec:cutting-out:ind} and the arguments from the proof of \Cref{prop:nonzero:+j} that \eqref{nonzero:+++} holds with $\scrH^* = \{(h'_{k+1}, i_{k+2})\}$, as required.
\end{proof}

\subsubsection{}
Now pick $h''_{k+1} \in I''^{(k+1)}_{i_1, \ldots, i_\rho}$ which does not vanish identically on any irreducible component of $V(h'_1, \ldots, h'_{k+1})$ (this is possible due to property \eqref{cutting-out:ind:k+1:I''} from \Cref{sec:cutting-out:ind:k+1}). Then due to the inductive hypothesis and \Cref{claim:cutting-out:ind}, the \eqref{nonzero:+++} version of \Cref{prop:nonzero:++j} can be applied with $\scrH^* = \{(h'_1, i_2), \ldots, (h'_{k+1}, i_{k+2})\}$ and $h = h''_{k+1}$, and implies that \Cref{prop:cutting-out} holds for $j = 1, \ldots, k+1$ with $h''_1, \ldots, h''_{k+1}$. This completes the inductive step and concludes the proof of \Cref{prop:cutting-out}.
\end{proof}

\begin{claim}[\Cref{length:confirmation:I-g-subset}] \label{proof:length:confirmation:I-g-subset:copy}
There is a nonempty open subset $Z^*_0$ of $Z$ and $\nu^*_0 \in \rr$ such that if $z \in Z^*_0$ and $g_1, \ldots, g_n, r_1, \ldots, r_n$ satisfy \eqref{condition:nonzero>nu} with $\nu \geq \nu^*_0$, then
\begin{enumerate}
\item \label{length:confirmation:I-g-subset:copy:Ijt} for each $j$, $1 \leq j \leq \rho$,
\begin{align*}
I^{(j)}_{i_1, \ldots, i_\rho} \hatlocal{C}{(z,0)} \subseteq t^{r_{i_j}}\hatlocal{C}{(z,0)}
%I^{(\rho)}_{i_1, \ldots, i_\rho} \local{C}{(z,0)} \subseteq t^{r_{i_\rho}}\local{C}{(z,0)}
\end{align*}
In addition, $Z^*_0$ does {\em not} change (but $\nu^*_0$ possibly changes) if the $f_i$ are replaced by $(f_i)^{m_i}$ for $m_i \geq 1$.
\end{enumerate}
Pick $i' \in \{1, \ldots, n\}\setminus \{i_1, \ldots, i_\rho\}$. Recall that $i_1 = i^*$, where $i^*$ is the smallest index such that $F_{i^*}$ is not identically zero.
\begin{enumerate}[resume]
\item \label{length:confirmation:I-g-subset:copy:g:zero} If $i' < i^*$, then $g_{i'}|_C \equiv 0 \in t\hatlocal{C}{(z,0)}$.
\item \label{length:confirmation:I-g-subset:copy:g:t} Assume $i' > i^*$. Pick the largest $j$ such that $i_j < i'$. Then $f_{i'} \in \sqrt{I'^{(j)}_{i_1, \ldots, i_\rho}}$. If $f_{i'} \in I'^{(j)}_{i_1, \ldots, i_\rho}$, then $g_{i'} \in t\hatlocal{C}{(z,0)}$.
\end{enumerate}
\end{claim}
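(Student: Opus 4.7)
The plan is to deduce all three assertions from two ingredients: \Cref{prop:cutting-out}, which furnishes elements $h''_j \in I''^{(j)}_{i_1, \ldots, i_\rho}$ whose order on every branch $B$ of $C$ satisfying \eqref{condition:nonzero>nu} is of the form $\sum_{k=1}^{\rho-j} q_k r_{i_{j+k}}\ord_B(t)$ with the rational coefficients $q_k$ drawn from a fixed finite set $Q$; and the observation that $t$ is a non zero-divisor in $\hatlocal{C}{(z,0)}$, since $C$ is the closure of a curve in $X \times (\kk\setminus\{0\})$ so no irreducible component of $C$ lies in $V(t)$. Setting $M := \max\{|q| : q \in Q\}$ and choosing $\nu$ large enough that $r_{i_k}/r_{i_{k+1}} > 2$ for each $k$, telescoping will give the uniform bound $\ord_B(h''_j|_C) \leq 2M r_{i_{j+1}}\ord_B(t)$ which I will use throughout.

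Assertion \eqref{length:confirmation:I-g-subset:copy:g:zero} will follow immediately: $f_{i'} \equiv 0$ on $X$ gives $t^{r_{i'}}g_{i'}|_C = f_{i'}|_C = 0$ in $\local{C}{(z,0)}$, and cancelling the non zero-divisor $t^{r_{i'}}$ yields $g_{i'}|_C = 0 \in t\hatlocal{C}{(z,0)}$. For assertion \eqref{length:confirmation:I-g-subset:copy:Ijt} I will induct on $j$: the base $j = 1$ is trivial since $I^{(1)}_{i_1, \ldots, i_\rho} = (f_{i_1})$ and $f_{i_1}|_C = t^{r_{i_1}}g_{i_1}|_C$, and for the inductive step I will exploit the decomposition $I^{(j)} = (f_{i_j}) + I'^{(j-1)}$. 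The generator $f_{i_j}$ lies in $t^{r_{i_j}}\hatlocal{C}{(z,0)}$ trivially; for $h' \in I'^{(j-1)}$, the key observation is that $h' \cdot h''_{j-1} \in I'^{(j-1)} \cdot I''^{(j-1)} \subseteq I^{(j-1)}$, so the inductive hypothesis combined with the bound $\ord_B(h''_{j-1}|_C) \leq 2Mr_{i_j}\ord_B(t)$ will give $\ord_B(h'|_C) \geq (r_{i_{j-1}} - 2M r_{i_j})\ord_B(t) \geq r_{i_j}\ord_B(t)$ once $\nu > 2M + 1$. Since this holds on every branch and $t$ is a non zero-divisor, $h'$ lies in $t^{r_{i_j}}\hatlocal{C}{(z,0)}$.

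For assertion \eqref{length:confirmation:I-g-subset:copy:g:t}, the containment $f_{i'} \in \sqrt{I'^{(j)}_{i_1, \ldots, i_\rho}}$ will follow directly from the definition of $I'^{(j)}$: every primary component $\qqq$ of $I^{(j)}$ appearing in $I'^{(j)}$ has $f_i \in \sqrt{\qqq}$ for every $i < i_{j+1}$, in particular for $i = i'$ (and the edge case $j = \rho$ is handled analogously by interpreting $I'^{(\rho)} := I^{(\rho)}$ and using that the relevant primary components correspond to subvarieties of $\bigcap_i V(f_i)$). To establish $g_{i'} \in t\hatlocal{C}{(z,0)}$ under the hypothesis $f_{i'} \in I'^{(j)}$, I will take $f_{i'} \cdot h''_j \in I'^{(j)} \cdot I''^{(j)} \subseteq I^{(j)}$, apply assertion \eqref{length:confirmation:I-g-subset:copy:Ijt} at level $j$, and subtract the order bound on $h''_j$ to get $\ord_B(f_{i'}|_C) \geq (r_{i_j} - 2M r_{i_{j+1}})\ord_B(t)$; combining with the branch identity $\ord_B(f_{i'}|_C) = r_{i'}\ord_B(t) + \ord_B(g_{i'}|_C)$ yields $\ord_B(g_{i'}|_C) \geq (r_{i_j} - 2M r_{i_{j+1}} - r_{i'})\ord_B(t)$. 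Using $i_j < i' < i_{j+1}$ (so $i_{j+1} \geq i_j + 2$), the asymptotics $r_{i_j} > \nu r_{i'}$ and $r_{i_j} > \nu^2 r_{i_{j+1}}$ will force the right-hand side to exceed $\ord_B(t)$ for $\nu$ sufficiently large; the edge case $j = \rho$ is even simpler as assertion \eqref{length:confirmation:I-g-subset:copy:Ijt} applied to $f_{i'} \in I^{(\rho)}$ already gives $\ord_B(f_{i'}|_C) \geq r_{i_\rho}\ord_B(t) \gg r_{i'}\ord_B(t)$.

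The principal technical obstacle is ensuring that one threshold $\nu^*_0$ works uniformly across all branches $B$ and all levels of the induction; this is precisely where the finiteness of $Q$ in \Cref{prop:cutting-out} is essential, as it fixes $M$ and makes the finitely many inductive thresholds combine into a single uniform constant.
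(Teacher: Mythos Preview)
Your overall strategy --- using the elements $h''_j$ from \Cref{prop:cutting-out}, the containment $h' h''_{j-1} \in I'^{(j-1)} I''^{(j-1)} \subseteq I^{(j-1)}$, and induction on $j$ --- matches the paper's approach, and your handling of assertion \eqref{length:confirmation:I-g-subset:copy:g:zero} is fine. However, there is a genuine gap in the key step of assertions \eqref{length:confirmation:I-g-subset:copy:Ijt} and \eqref{length:confirmation:I-g-subset:copy:g:t}.

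The problematic inference is: ``since $\ord_B(h'|_C) \geq r_{i_j}\ord_B(t)$ on every branch $B$ and $t$ is a non zero-divisor, $h' \in t^{r_{i_j}}\hatlocal{C}{(z,0)}$.'' This fails already for reduced but non-normal curves (e.g.\ on $C = V(y^2 - x^3)$ with $t = x$, the element $y$ has branch-order $3 \geq 2 = \ord_B(x)$ yet $y \notin x\hatlocal{C}{0}$), and fails more dramatically when $C$ is non-reduced: any nilpotent has infinite branch-order but need not lie in $t\hatlocal{C}{(z,0)}$. Since the paper treats $C$ as a possibly non-reduced scheme in \Cref{sec:formula}, you cannot bypass this.

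The paper closes this gap by never leaving $\hatlocal{C}{(z,0)}$. Instead of a branch-wise inequality, it manufactures an actual \emph{identity}
\[
t^{Nr_{i_j}} = \sum_{k=1}^{N} \xi_{j-1,k}\, t^{(N-k)r_{i_j}}(h''_{j-1})^{\alpha_{j-1,k}} \quad \in \hatlocal{C}{(z,0)}
\]
with each $\alpha_{j-1,k} \geq 1$, obtained by taking integral equations of $t^{m_{j-1,B} r_{i_j}}/(h''_{j-1})^{n_{j-1,B}}$ over the normalization, pushing them down to $\hatlocal{C'}{(z,0)}$ (reduced), and then invoking \Cref{lemma:nilorder} to raise to a bounded power and kill the nilpotent discrepancy in $\hatlocal{C}{(z,0)}$. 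Multiplying this identity against $\phi_j = \phi'_{j-1} + f_{i_j}\psi_j$ converts the $I'^{(j-1)}$-piece into (something times) an element of $I^{(j-1)}$; iterating to $j = 1$ yields $t^{N'\sum_{k \geq 2} r_{i_k}}\phi_j \in t^{N'\sum_{k \geq 2} r_{i_k} + r_{i_j}}\hatlocal{C}{(z,0)}$, and \emph{now} cancelling the non zero-divisor $t^{N'\sum r_{i_k}}$ is legitimate. The uniformity in $\nu^*_0$ that you correctly flag as the main technical concern is handled by bounding $N$, the number of branches, and the nilpotency exponent all by $\mult{\scrL_1}{\scrL_n}$ together with the finite set $Q$.
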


\begin{proof}
We start with the proof of assertion \eqref{length:confirmation:I-g-subset:copy:Ijt}. Fix $\phi_j \in I^{(j)}_{i_1, \ldots, i_\rho}\local{C}{(z,0)}$. Then $\phi_j = \phi'_{j-1} + f_{i_j}\psi_j$ for some $\phi'_{j-1} \in I'^{(j-1)}_{i_1, \ldots, i_\rho}\local{C}{(z,0)}$ and $\psi_j \in \local{C}{(z,0)}$. However, then
\begin{align*}
h''_{j-1}\phi_j &= \phi_{j-1} + h''_{j-1}f_{i_j}\psi_j
\end{align*}
where $h''_{j-1}$ is from \Cref{prop:cutting-out} and $\phi_{j-1} := h''_{j-1}\phi'_{j-1} \in I'^{(j-1)}_{i_1, \ldots, i_\rho}I''^{(j-1)}_{i_1, \ldots, i_\rho} \subseteq I^{(j-1)}_{i_1, \ldots, i_\rho}$. \Cref{prop:cutting-out} implies that for each branch $B$ of $C$ at $(z,0)$, there are integers $m_{j-1,B}, n_{j-1,B}$ such that
\begin{align*}
\ord_B(h''_{j-1}) = \frac{m_{j-1,B}}{n_{j-1,B}}r_{i_j}\ord_B(t)
\end{align*}

\begin{proclaim}
There are invertible elements $\xi_{j-1,B, k} \in \hatlocal{C}{(z,0)}$ and an integer $m'_{j-1}$ such that
\begin{align*}
(t^{r_{i_j}m_{j-1,B}N_{j-1,B}} + \sum_{k=1}^{N_{j-1,B}} \xi_{j-1, B, k} t^{r_{i_j}m_{j-1,B}(N_{j-1,B}-k)} (h''_{j-1})^{kn_{j-1, B}})^{m'_{j-1}}
    &= 0 \in \hatlocal{C}{(z,0)}
\end{align*}
\end{proclaim}

\begin{proof}
Let $C'$ be the (unique) reduced curve with support equal to $\supp(C)$. Let $\pi: \tilde C \to C'$ be a desingularization of $C'$, and $\tilde z_B$ be the center of $B$ on $\tilde C$. Since $\hatlocal{\tilde C}{\tilde z_B}$ is integral over $\pi^*(\hatlocal{C'}{(z,0)})$, we may take an integral equation of $t^{m_{j-1,B}r_{i_j}}/(h''_{j-1})^{n_{j-1, B}}$ over $\pi^*(\hatlocal{C'}{(z,0)})$ and clear out the denominators to obtain an identity of the following form in $\hatlocal{\tilde C}{\tilde z_B}$:
\begin{align*}
%(\frac{t^{m_{j-1,B}r_{i_j}}}{(h''_{j-1})^{n_{j-1, B}}})^{N_{j-1,B}}
%    + \sum_{k = 1}^{N_{j-1,B}} \xi_{j-1, B, k}(\frac{t^{m_{j-1,B}r_{i_j}}}{(h''_{j-1})^{n_{j-1, B}}})^{N_{j-1,B}-k} = 0
t^{r_{i_j}m_{j-1,B}N_{j-1,B}} + \sum_{k=1}^{N_{j-1,B}} \pi^*(\xi_{j-1, B, k}) t^{r_{i_j}m_{j-1,B}(N_{j-1,B}-k)} (h''_{j-1})^{kn_{j-1, B}}
    &= 0
\end{align*}
%\red{Why is the coefficient of $t^{r_{i_j}m_{j-1,B}N_{j-1,B}}$ invertible?} Well, denote the value of $\chi := t^{m_{j-1,B}r_{i_j}}/(h''_{j-1})^{n_{j-1, B}}$ at $\tilde z_B$ by $a \neq 0 \in \kk$. Then $\chi - a$ is also integral over ...\\
Due to exactness of completions (i.e.\ the injectivity of the map $\hatlocal{C'}{(z,0)} \to \prod_B \hatlocal{\tilde C}{\tilde z_B}$), it then follows that the product over all $B$ of the left hand side of the above equation is zero in $\hatlocal{C'}{(z,0)}$. Since the kernel of the natural map from $\hatlocal{C}{(z,0)} \to \hatlocal{C'}{(z,0)}$ is nilpotent, it becomes zero in $\hatlocal{C}{(z,0)}$ after raising to a power, as required to prove the claim.
\end{proof}

\begin{proclaim}
The total number of branches $B$, and all $m_{j-1,B}, n_{j-1,B}$, and $m'_{j-1}$ can be bounded above by an integer which is independent of $\nu^*_0$ and $B$.
\end{proclaim}

\begin{proof}
The total number of branches $B$ is clearly  $\leq \mult{F_1}{F_n}$. \Cref{prop:cutting-out} implies that each $m_{j-1,B}, n_{j-1,B}$ can be chosen to be bounded above by a quantity independent of $\nu^*_0$ and $B$. On the other hand, \Cref{lemma:nilorder} implies that $m'_{j-1}$ is bounded by above $\ord_B(t)$. Since $\ord_B(t) \leq \mult{F_1}{F_n}$, this completes the proof.
\end{proof}
%
%Now fix a branch $B$ of $C$ at $(z,0)$. Since $z$ satisfies the property from \Cref{sec:length:confirmation:assumption:nonzero}, \Cref{prop:length:confirmation:order:small} implies that there are integers $m_{j-1,B}, n_{j-1,B}$ such that
%\begin{align*}
%\ord_B(h''_{j-1}) = \frac{m_{j-1,B}}{n_{j-1,B}}r_{i_j}\ord_B(t)
%\end{align*}
%
%\begin{proproclaim}
%It follows that there are invertible elements $\xi_{j-1,B} \in \hatlocal{C}{(z,0)}$ and an integer $m'_{j-1}$ such that
%\begin{align*}
%(\prod_B ((h''_{j-1})^{n_{j-1, B}} - \xi_{j-1,B}t^{m_{j-1,B}r_{i_j}}))^{m'_{j-1}} \equiv 0 \in \hatlocal{C}{(z,0)}
%\end{align*}
%\end{proproclaim}
%%and moreover, each $\xi_{j-1,B}$ is of the form $a_{j-1,B} + \xi'_{j-1,B}$ with $a_{j-1,B} \in \kk \setminus \{0\}$ and $\xi'_{j-1,B}$ in the maximal ideal of $\hatlocal{C}{(z,0)}$.
%\begin{proof}

%\end{proof}

\subsubsection{} \label{length:confirmation:Isubset:ind1}
It follows that for each $j = 2, \ldots, \rho$, there is an identity of the form
\begin{align*}
t^{Nr_{i_j}} - \sum_{k=1}^{N} \xi_{j-1,k} t^{(N-k)r_{i_j}}(h''_{j-1})^{\alpha_{j-1,k}}
    \equiv 0 \in \hatlocal{C}{(z,0)}
\end{align*}
where $N$ is independent of $\nu^*_0$ and $B$, and each $\alpha_{j-1,k}$ is {\em positive}. Then in $\hatlocal{C}{(z,0)}$ one has
\begin{align*}
t^{Nr_{i_j}} \phi_j
    &= t^{Nr_{i_j}} (\phi'_{j-1} + f_{i_j}\psi_j) \\
    &= \sum_{k=1}^{N} \xi_{j-1,k} t^{(N-k)r_{i_j}}(h''_{j-1})^{\alpha_{j-1,k}}
        \phi'_{j-1}
        + t^{Nr_{i_j}} f_{i_j}\psi_j \\
    &= \chi_{j-1} \phi_{j-1}
        + t^{Nr_{i_j}+r_{i_j}} g_{i_j}\psi_j
\end{align*}
where
\begin{align*}
\chi_{j-1}
    &:= \sum_{k=1}^{N} \xi_{j-1,k} t^{(N-k)r_{i_j}}(h''_{j-1})^{\alpha_{j-1,k}-1}
    \in \hatlocal{C}{(z,0)},\ \text{and} \\
\phi_{j-1}
    &:= h''_{j-1}\phi'_{j-1} \in I^{(j-1)}_{i_1, \ldots, i_\rho}
\end{align*}

\subsubsection{} \label{length:confirmation:Isubset:indall}
Repeating the arguments of the preceding paragraph for $j-1, j-2, \ldots, 2$ yields, with some $N' \geq N$, that
\begin{align*}
t^{N' \sum_{k=2}^j r_{i_k}} \phi_j
    &= t^{N'(\sum_{k=2}^j r_{i_k})+r_{i_j}} g_{i_j}\psi_j
        + \chi_{j-1} t^{N'(\sum_{k=2}^{j-1} r_{i_k})+r_{i_{j-1}}} g_{i_{j-1}}\psi_{j-1} \\
    &\quad \qquad
       + \cdots
        + (\prod_{k=2}^{j-1} \chi_k)t^{N'r_{i_2}+r_{i_2}}g_{i_2}\psi_2
        + (\prod_{k=1}^{j-1} \chi_k) \phi_1 \\
 %   &= t^{N(\sum_{k=2}^j r_{i_k})+r_{i_j}} g_{i_j}\psi_j
%        + \sum_{k=2}^{j-1} (\prod_{k=j-k+1}^{j-1} \chi_k) t^{N(\sum_{k=2}^{j-k+1} r_{i_k})+r_{i_{j-k+1}}} g_{i_{j-k+1}}\psi_{j-k+1} \\
%    &\quad \qquad
%        + (\prod_{k=1}^{j-1} \chi_k) g_1t^{r_{i_1}}
    &= t^{N'(\sum_{k=2}^j r_{i_k})+r_{i_j}} g_{i_j}\psi_j
        + \chi_{j-1} t^{N'(\sum_{k=2}^{j-1} r_{i_k})+r_{i_{j-1}}} g_{i_{j-1}}\psi_{j-1} \\
    &\quad \qquad
       + \cdots
        + (\prod_{k=2}^{j-1} \chi_k)t^{N'r_{i_2}+r_{i_2}}g_{i_2}\psi_2
        + (\prod_{k=1}^{j-1} \chi_k)t^{r_{i_1}}g_{i_1}\psi_1
\end{align*}
where $\psi_1, \ldots, \psi_\rho$ are elements of $\local{C}{(z,0)}$ which depend on $\phi_\rho$. Now if $r_1, \ldots, r_n$ satisfy \eqref{condition:>nu} for $\nu > N'+1$, then
\begin{align*}
r_{i_1} > N'r_{i_2} + r_{i_2} > N'(r_{i_2} + r_{i_3}) + r_{i_3} > \cdots > N'(\sum_{j=2}^\rho r_{i_j})+r_{i_\rho}
\end{align*}
so that
\begin{align*}
t^{N'\sum_{k=2}^j r_{i_k}}\phi_j = t^{N'(\sum_{k=2}^j r_{i_k})+r_{i_j}}\phi'_j
\end{align*}
for some $\phi'_j \in \hatlocal{C}{(z,0)}$. Then $t^{N'\sum_{k=2}^j r_{i_k}}(\phi_j - t^{r_{i_j}}\phi'_j) = 0$. Since $t$ is a non zero-divisor in $\hatlocal{C}{(z,0)}$, it follows that $\phi_j = t^{r_{j}}\phi'_j$. Consequently, $I^{(j)}_{i_1, \ldots, i_\rho}\hatlocal{C}{(z,0)} \subseteq t^{r_{i_j}}\hatlocal{C}{(z,0)}$, which completes the proof of assertion \eqref{length:confirmation:I-g-subset:copy:Ijt} of \Cref{proof:length:confirmation:I-g-subset:copy}. %Applying these arguments to a (finite) set of generators of $I^{(\rho)}_{i_1, \ldots, i_\rho}$ shows that $I^{(\rho)}_{i_1, \ldots, i_\rho}\hatlocal{C}{(z,0)} \subseteq t^{r_{i_\rho}}\hatlocal{C}{(z,0)}$, as required. %\Cref{length:confirmation:Isubset}.

\subsubsection{}
Assertion \eqref{length:confirmation:I-g-subset:copy:g:zero} of \Cref{proof:length:confirmation:I-g-subset:copy} follows directly from the definition of $i^*$. For assertion  \eqref{length:confirmation:I-g-subset:copy:g:t}, assume $i'_k > i^*$. Pick the largest $j$ such that $i_j < i'_k$. Then $g_{i'_k}t^{r_{i'_k}} = f_{i'_k} \in \sqrt{I'^{(j)}_{i_1, \ldots, i_\rho}}$. Assume now that $f_{i'_k} \in I'^{(j)}_{i_1, \ldots, i_\rho}$. Then we claim that $g_{i'_k} \in t\hatlocal{\tilde C}{\tilde z_B}$. Indeed, it follows from the arguments in \Cref{length:confirmation:Isubset:ind1,length:confirmation:Isubset:indall} that
\begin{align*}
t^{r_{i'_k} + N'r_{i_{j+1}}} g_{i'_k}
    &= t^{N'r_{i_{j+1}}} f_{i'_k}
    = \chi_j\phi_j
\end{align*}
with $\phi_j \in I$, and in turn,
\begin{align*}
t^{N'\sum_{j'=2}^j r_{i_{j'}} + r_{i'_k} + N'r_{i_{j+1}}} g_{i'_k}
    &= t^{N'\sum_{j'=2}^j r_{i_{j'} + r_{i_j}}} \phi'_j
\end{align*}
for some $\phi'_j \in \hatlocal{C}{(z,0)}$. Then under \eqref{condition:>>}, $r_{i_j} > r_{i'_k} + N'r_{i_{j+1}}$ and consequently, $g_{i'_k} \in \hatlocal{\tilde C}{\tilde z_B}$, as required.

\end{proof}

\bibliographystyle{alpha}
\bibliography{../../../../utilities/bibi}

\end{document}